\documentclass[11pt]{amsart}
\usepackage{amsfonts, amsmath, amssymb, amsthm, stmaryrd, color, enumerate, relsize}
\usepackage{bbm}
\usepackage{accents}
\usepackage{mathrsfs,array}
\usepackage{xy}
\usepackage{hyperref}
\usepackage{tikz-cd}
\usepackage{cleveref}
\input xy
\xyoption{all}
\DeclareMathOperator{\Spa}{Spa}
\DeclareMathOperator{\Spd}{Spd}
\DeclareMathOperator{\GL}{GL}

\DeclareMathOperator{\Sym}{Sym}
\DeclareMathOperator{\Tot}{Tot}
\DeclareMathOperator{\Tr}{Tr}
\DeclareMathOperator{\colim}{colim}
\DeclareMathOperator{\Hom}{Hom}
\DeclareMathOperator{\Ad}{Ad}
\DeclareMathOperator{\End}{End}
\DeclareMathOperator{\Spec}{Spec}
\DeclareMathOperator{\Proj}{Proj}
\DeclareMathOperator{\Bun}{Bun}
\DeclareMathOperator{\Gr}{Gr}
\DeclareMathOperator{\Perf}{Perf}
\DeclareMathOperator{\sPerfd}{sPerfd}
\DeclareMathOperator{\Sch}{Sch}
\DeclareMathOperator{\Grpd}{Grpd}
\DeclareMathOperator{\Set}{Set}
\DeclareMathOperator{\Aut}{Aut}
\DeclareMathOperator{\Mor}{Mor}
\DeclareMathOperator{\Frac}{Frac}
\DeclareMathOperator{\Kos}{Kos}
\DeclareMathOperator{\Rep}{Rep}
\DeclareMathOperator{\fppf}{fppf}

\newcommand{\Sp}{\mathrm{Gr}_{G,S/Div^1}}
\newcommand{\id}{\mathrm{id}}
\newcommand{\set}[1]{\left\{ #1 \right\}}

\newcommand{\powerseries}[1]{[\![ #1 ]\!]}

\newcommand{\et}{{\mathrm{\acute{e}t}}}

\renewcommand*{\diamond}{\diamondsuit}
\renewcommand*{\hat}{\widehat}
\renewcommand*{\tilde}{\widetilde}

\def\injects{\hookrightarrow}
\def\isomto{\stackrel{\sim}{\to}}
\def\Z{\mathbb{Z}}
\def\Q{\mathbb{Q}}
\def\F{\mathbb{F}}
\def\O{\mathcal{O}}
\def\E{\mathcal{E}}
\def\L{\mathcal{L}}
\def\P{\mathcal{P}}
\def\V{\mathcal{V}}
\def\g{\mathfrak{g}}
\def\c{\mathfrak{c}}
\def\iso{\xrightarrow{\sim}}
\def\Div{\mathrm{Div}}
\def\Higgs{\mathrm{Higgs}_G}
\def\Map{\underline{\mathrm{Map}}}
\def\Isom{\underline{\mathrm{Isom}}}

\newtheorem{theorem}{Theorem}
\numberwithin{theorem}{section}
\newtheorem{lemma}[theorem]{Lemma}
\newtheorem{corollary}[theorem]{Corollary}
\newtheorem{proposition}[theorem]{Proposition}

\newtheorem{definition}[theorem]{Definition}

\theoremstyle{definition}
\newtheorem{remark}[theorem]{Remark}
\newtheorem{example}[theorem]{Example}

\DeclareEmphSequence{\bfseries\itshape}

\date{\today}

\title{Higgs bundles on the Fargues-Fontaine curve}
\author{Ho Leung Fong}
\begin{document}

\begin{abstract} In this paper, we introduce a notion of Higgs bundles on the Fargues-Fontaine curve. We establish a version of the BNR correspondence, which relates Higgs bundles to line bundles on suitable curves. We then describe an action of a Picard stack on the moduli stack of Higgs bundles and show that, modulo this action, there is a natural injective map of \'etale-stacks from the product of $B_{dR}^+$-affine Springer fibers to the Hitchin fiber that induces an equivalence of categories on every geometric point. Finally, we discuss connections with number-theoretic objects.
\end{abstract}

\maketitle

\tableofcontents

\section{Introduction}
Higgs bundles on algebraic varieties play an important role in several areas of mathematics.
To motivate this paper, let us give two motivations from the Langlands program for studying Higgs bundles. We will be slightly imprecise to streamline the discussion. 
\begin{enumerate}
    \item \textbf{Langlands duality}\\
    Let $X$ be a smooth projective curve over $\mathbb C$.
    It has been conjectured that (cf. \cite[Conjecture 2.5]{Donagi_Pantev}) there is an equivalence of categories\footnote{This is just an approximation. It is unknown precisely what the two categories should be.}
    \[\mathrm{QCoh}(\Higgs) \simeq \mathrm{QCoh}(\mathrm{Higgs}_{\check{G}}),\]
    where $G$ is a connected reductive group over $\mathbb C$, $\check{G}$ is its dual group, and $\Higgs$ is the moduli stack of $G$-Higgs bundles on $X$.
    This is highly interesting because it is an instance of Langlands duality: it involves both $G$ and $\check G$.
    Even for $G=\GL_n$, this is a non-trivial assertion, because this equivalence is expected to be compatible with certain Fourier-Mukai equivalence, and in particular is not satisfied by the identity functor.
    To be less vague, there is a Hitchin fibration $\Higgs\to \mathcal A_G$.
    By the BNR theorem \cite[Proposition 3.6]{BNR}, the fibres of this map over a suitable locus on $\mathcal A_G$ are given by the Jacobian of the spectral curves (up to some stacky parts).
    In particular, there is a Fourier-Mukai equivalence \cite{Fourier-Mukai} on the derived category of quasi-coherent sheaves on the fibres.
    The conjectured equivalence above should restrict to this Fourier-Mukai equivalence. 
    
    \item \textbf{Proof of the fundamental lemma}\\
    One of the central tools in the study of the Langlands conjectures is the trace formula. Most of its applications rely on a deep and highly nontrivial input known as the fundamental lemma, which relates local orbital integrals attached to two different reductive groups. By \cite{Waldspurger}, to prove the fundamental lemma for all non-Archimedean local fields, it is enough to prove it for non-Archimedean local fields with equal characteristic.
    In \cite{Ngo_2010}, Ngô achieved this using the geometry of Higgs bundles.

    The guiding principle is a global-to-local philosophy. Locally, orbital integrals can be interpreted in terms of point counts on affine Springer fibers, certain closed subsheaves of the affine Grassmannians. Globally, the Hitchin fiber in the moduli stack $\Higgs$ serves as a global avatar of affine Springer fibers. 
    The reason is that there are certain Picard stacks acting on $\Higgs$ and the affine Springer fibers.
    Ngo proves in \cite[Proposition 4.15.1]{Ngo_2010} that modulo the action of these Picard stacks, the Hitchin fiber factorises as a product of affine Springer fibers over all places. This product formula makes precise the global-local relationship and allows one to deduce identities between orbital integrals from global geometric properties of the Hitchin fibration.

    It is worth emphasizing that Ngô works with a variant of the usual notion of Higgs bundles: instead of the canonical line bundle $\Omega_X^1$, one allows an arbitrary line bundle on $X$. This additional flexibility plays an essential role in the geometric arguments.
\end{enumerate}

In this paper, we introduce a notion of Higgs bundles on the fundamental curve of $p$-adic Hodge theory, namely the Fargues-Fontaine curve. Introduced by Laurent Fargues and Jean-Marc Fontaine, this curve provides a geometric framework for studying $p$-adic Hodge theory and the Local Langlands Program. In particular, the study of $G$-bundles on the Fargues-Fontaine curve and $\ell$-adic sheaves on their moduli stacks plays a central role in the geometric approach to local Langlands developed in \cite{FS_geometrisation}.

From this perspective, the local Langlands program can be viewed as a form of geometric Langlands theory on the Fargues-Fontaine curve. It is therefore natural to extend geometric constructions from the classical setting of smooth projective curves to this $p$-adic context. In this paper, we carry out such an extension for the notion of Higgs bundles.

In contrast to the classical setting, the Fargues-Fontaine curve is an adic space\footnote{There is also a schematic version of the Fargues-Fontaine curve, but that is also not an algebraic variety.}, rather than an algebraic variety. As a consequence, certain basic tools from algebraic geometry are not available. In particular, the notion of Kähler differentials does not exist for the Fargues-Fontaine curve, which prevents one from defining Higgs bundles in the usual way. We thus adapt Ngo's definition of Higgs bundles, i.e. we fix a line bundle $\L$ on the Fargues-Fontaine curve $X_S$ and define a ($\L$-twisted $G$-)Higgs bundle on $X_S$ to be a pair $(\E,\phi)$, where
\begin{itemize}
    \item $\E$ is a $G$-bundle on $X_S$,
    \item $\phi\in H^0(X_S, \Ad(\E)\otimes_{\O_{X_S}}\L)$.
\end{itemize}
We let $\Higgs$ denote the moduli stack of $\L$-twisted $G$-Higgs bundles.
We also introduce in \Cref{defn: asf 1} a local analogue of $\Higgs$, which we call $B^+_{dR}$-affine Springer fiber. 

We prove three abelianisation theorems in this paper.
As we noted in motivation 1, to formulate motivation 1 for $\GL_n$, we need to know that the Hitchin fibers are the Jacobians of certain curves (up to some stacky parts), so that we have Fourier-Mukai equivalence on them. 
In \Cref{BNR}, we prove a version of the BNR correspondence, which relates $\GL_n$-Higgs bundles to line bundles on suitable curves.\footnote{These will be the spectral curves. In the same way that the Fargues-Fontaine curve is not a curve in the usual sense, the spectral curve is also not a curve in the usual sense. However, the spectral curve will be a finite covering of the Fargues-Fontaine curve.}
This can be viewed as progress towards formulating an analogue of motivation 1 for the Fargues-Fontaine curve.
In \Cref{Higgs^reg is a gerbe}, we prove that $$\Higgs^{reg}\cong \mathcal P,$$ where $\Higgs^{reg}$ is a certain sub-prestack of $\Higgs$ and $\mathcal P$ is a certain Picard prestack which classifies $J_\L$-bundles on the Fargues-Fontaine curve, for some commutative group scheme $J_\L$ over the Hitchin base.
Similarly, in \Cref{Gr^reg equiv to P^loc}, we prove that $$\Gr_{G,a,v}^{reg}\cong \mathcal P_{a_v}^{loc},$$ where $\Gr_{G,a,v}^{reg}$ is a certain sub-v-sheaf of the $B^+_{dR}$-affine Springer fiber $\Gr_{G,a,v}$ and $\mathcal P_{a_v}^{loc}$ is the $B^+_{dR}$-affine Grassmannian of a commutative group scheme $J_{a_v}$.

In \Cref{product-formula}, we exhibit the close relationship between the Hitchin fibers $\mathrm{Higgs}_a$ and the affine Springer fibers $\Gr_{G,a,v}$ by showing that for generically regular semisimple $a$, there is a natural injective morphism of \'etale-stacks 
$$\prod_{\text{closed }v\in |X_C^{sch}|} [\Gr_{G,a,v}/\mathcal P_{a,v}^{loc}]_\et \to [\mathrm{Higgs}_a/\mathcal P_a]_\et,$$ which is an equivalence on every geometric point. This can be viewed as progress towards motivation 2 for arbitrary non-Archimedean local fields.
Finally, we will discuss connections with number-theoretic objects.

\subsection*{Convention}
If $\mathcal C$ is a category, then by a prestack on $\mathcal C$, we mean a 2-functor $\mathcal C^{op}\to \mathrm{Grpd}$, where $\Grpd$ is the (2,1)-category of (small) groupoids.
For a scheme or adic space $X$, we denote by $|X|$ its underlying topological space, not just the closed points. Unless otherwise stated, by a stack, we mean a stack in groupoids. By a vector bundle on $X$, we mean a locally finite free $\O_X$-module on $X$.
Unless otherwise stated, our $G$-torsors will be \emph{right} $G$-torsors and all other (group) actions will be left actions.

If $\mathcal C$ is a site, $X\in\mathcal C$ and $G$ is a group sheaf on $\mathcal C$, then a $G$-torsor/$G$-bundle on $X$ is a sheaf $\E$ on $\mathcal C_{/X}$ equipped with a right $G|_X$-action on it for which there is a cover $\set{U_i\to X}$ and  $G|_{U_i}$-equivariant isomorphisms $\E|_{U_i}\cong G|_{U_i}$, where $G|_{U_i}$ acts on itself via right multiplication.
If $\mathcal F$ is a sheaf on $\mathcal C$ with a left $G$-action, then we define $\E\times^G \mathcal F$ to be the sheafification of $U\mapsto (\E(U)\times \mathcal F(U))/G(U)$ where $G(U)$ acts as $g\cdot(x,y)=(xg^{-1},gy)$.
We will denote the trivial $G$-bundle by $\E_0$ or $G$.

We let $\Perf, \Perf_{\overline\F_q}, \Perf_S$ be the category of perfectoid spaces over $\F_p$, $\overline{\F_q}$, $S$ respectively.
We let $X_S$ denote the Fargues-Fontaine curve.
If $S$ is affinoid perfectoid, then we let $X_S^{sch}$ denote the schematic Fargues-Fontaine curve.\footnote{In \cite{FS_geometrisation}, this is denoted as $X_S^{alg}$.}
If $\mathcal V$ is a vector bundle on the Fargues-Fontaine curve $X_S$, we let $\mathcal H^0(\mathcal V)$ be the functor on $\Perf_S$ given by $T\mapsto H^0(X_T,\mathcal V)$.
We let $\mathbb D_T$ denote $\Spec B^+_{Div^1}(T)$ and $\mathbb D_T^*$ denote $\Spec B_{Div^1}(T)$. Note that this depends on the map $T\to Div^1_X$, not just on $T$.

If $A$ is a Huber ring, then $\Spa A :=\Spa(A,A^\circ)$.

If $X$ is an affine scheme, then $\O(X)$ is the ring of global sections of its structure sheaf $\O_X$.

Unless otherwise stated, by `ring', we mean a commutative ring.

\subsection*{Acknowledgements}
I would like to thank my supervisor Tobias Berger for providing feedback of drafts of this paper, James Newton and Robert Kurinczuk for pointing out various errors in a previous version of this paper, and Ken Lee for providing the proof of \Cref{lem: exact criterion}.
This work was completed while in receipt of the EPSRC Doctoral Training Partnership (DTP) Studentship with grant number EP/W524360/1.

\section{Background}
In this section, we provide some background useful for this paper. While most of these materials are standard, there are some new results which do not seem to exist in the literature, including \Cref{inertia gives action}, \Cref{inertia gives action over a base}, \Cref{lem: exact criterion}, \Cref{lem: exact 1}, \Cref{exactness for BL}, \Cref{lem: exactness on B_dR}.

\subsection{Recollections on stacks}
Let $\mathcal C$ be a site. We shall mainly apply this to the fppf site of schemes and the \'etale site of sousperfectoid spaces as in the appendix.

If $\mathcal F$ is a sheaf on $\mathcal C$ and $X\in \mathcal C$, then we abbreviate $\mathcal F|_{\mathcal C_{/X}}$ as $\mathcal F|_X$.

Let $G$ be a group sheaf on $\mathcal C$.
\begin{definition}
    For $X\in \mathcal C$, a \emph{$G$-torsor/$G$-bundle} on $X$ is a sheaf $\E$ on $\mathcal C_{/X}$ equipped with a right $G|_X$-action on it for which there is a cover $\set{U_i\to X}$ and  $G|_{U_i}$-equivariant isomorphisms $\E|_{U_i}\cong G|_{U_i}$, where $G|_{U_i}$ acts on itself via right multiplication.
\end{definition}

We recommend the readers to read \cite[Appendix to Lecture XIX]{Berkeley} for some more discussions on $G$-torsors in the settings of schemes and adic spaces.

\begin{definition}\label{defn: quotient stack}
    Let $\mathcal F$ be a sheaf on $\mathcal C$ acted on by $G$. We define $[\mathcal F/G]$ to be the stack
    \begin{align*}
        \mathcal C^{op} &\to \mathrm{Grpd}\\
        S &\mapsto (\E \text{ a $G$-torsor on $S$}, f:\E\to \mathcal F|_S \textrm{ a $G$-equivariant map}).
    \end{align*}
\end{definition}
Since by our convention $\E$ and $\mathcal F$ receive right and left $G$-actions respectively, by `$G$-equivariant', we mean $f(eg)=g^{-1}f(e)$.

Let us recall a well-known lemma.
\begin{lemma}\label{lem:quotient stack}
    $[\mathcal F/G]$ is the stackification of the prestack $[\mathcal F/_{pre}\; G]: \mathcal C^{op} \to \mathrm{Grpd}$ sending $S$ to the quotient groupoid\footnote{The groupoid whose objects are elements of $X(S)$ with morphisms given by $\Mor(x,y)=\set{g\in G(S):gx=y}$.} $\mathcal F(S)/G(S)$.
\end{lemma}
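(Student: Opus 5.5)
We will show that $[\mathcal F/G]$ is a stack, and then construct a morphism of prestacks $\iota:[\mathcal F/_{pre}\,G]\to[\mathcal F/G]$ that is fully faithful and locally essentially surjective. It is standard that a morphism from a prestack to a stack with these two properties exhibits the target as the stackification of the source, so this proves the lemma.

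\textbf{Step 1: $[\mathcal F/G]$ is a stack.} Morphisms between two objects $(\E,f)$ and $(\E',f')$ over $S$ are $G$-equivariant isomorphisms $\E\to\E'$ compatible with $f,f'$. Being a morphism of sheaves, such an isomorphism can be glued along any cover, so morphisms form a sheaf. For effectivity of descent, take a cover $\{U_i\to S\}$ and objects $(\E_i,f_i)$ with cocycle isomorphisms on the overlaps. Sheaves on $\mathcal C$ satisfy descent, so the $\E_i$ glue to a sheaf $\E$ on $\mathcal C_{/S}$. The right $G$-action glues as well. The glued $\E$ is again locally trivial, since it is trivial on a refinement of the cover. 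The maps $f_i$ are compatible with the cocycles, so they glue to an equivariant map $f:\E\to\mathcal F|_S$.

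\textbf{Step 2: the functor $\iota$.} For $x\in\mathcal F(S)$, set $\iota(x)=(G|_S,f_x)$ with $f_x(g)=g^{-1}x$. This map is equivariant in the required sense: $f_x(gh)=h^{-1}g^{-1}x=h^{-1}f_x(g)$. A morphism $g\in G(S)$ with $gx=y$ goes to left multiplication $L_g$, which is an automorphism of the right torsor $G|_S$. Compatibility requires $f_y\circ L_g=f_x$, and indeed $f_y(gh)=h^{-1}g^{-1}y=h^{-1}x=f_x(h)$. The construction is clearly compatible with pullback, so $\iota$ is a morphism of prestacks.

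\textbf{Step 3: $\iota$ is fully faithful.} Every automorphism of the right torsor $G|_S$ is left multiplication by a unique $g\in G(S)$, namely the image of $1$. Under this identification, the condition $f_y\circ L_g=f_x$ is equivalent, after evaluating at $1$, to $g^{-1}y=x$, i.e.\ $gx=y$. So the Hom sets on both sides agree.

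\textbf{Step 4: $\iota$ is locally essentially surjective.} Let $(\E,f)$ be an object over $S$. Choose a cover $\{U_i\to S\}$ on which $\E$ trivializes, via $\E|_{U_i}\cong G|_{U_i}$. On $U_i$ the pair becomes $(G,f')$, where $f'$ is determined by $x=f'(1)$ through $f'(g)=g^{-1}x$. Hence $(\E,f)|_{U_i}\cong\iota(x)$.

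The only point needing care is the left/right sign bookkeeping in Steps 2 and 3, in particular the $g^{-1}$ in the equivariance convention. No step is a serious obstacle.
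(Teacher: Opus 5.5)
Your proof is correct and follows the paper's route: the same morphism $x\mapsto(G|_S,\,g\mapsto g^{-1}x)$, $g\mapsto g\cdot$, checked against the stackification criterion that the morphism be fully faithful and locally essentially surjective into a stack, which is the content of the Stacks Project Lemma 02ZN that the paper invokes. The only difference is that you prove the stack property of $[\mathcal F/G]$ and the two conditions by hand, whereas the paper cites 04TQ/04TR for the former and calls the verification easy.
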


\begin{proof}
    By \cite[04TQ, 04TR]{stacks-project}, $[\mathcal F/G]$ is indeed a stack. 
    We have a map of prestacks $[\mathcal F/_{pre}\; G] \to [\mathcal F/G]$ given on $S$-valued points by
    \begin{align*}
        \mathcal F(S)/G(S) &\to [\mathcal F/G](S)\\
        x \in \mathcal F(S) &\mapsto (\text{trivial $G$-bundle on }S, \; g\mapsto g^{-1}x)\\
        g \in G(S) &\mapsto g\cdot  
    \end{align*}
    It is easy to verify using \cite[Lemma 02ZN]{stacks-project} that this map exhibits $[\mathcal F/G]$ as the stackification of $[\mathcal F/_{pre}\; G]$.
\end{proof}

Let us also recall a well-known description of the sections of associated bundles via $G$-equivariant maps.
\begin{lemma}\label{alt_defn_for_quotient_stack}
    Let $\E$ be a $G$-torsor on $X\in \mathcal C$. We have a bijection between
    \begin{enumerate}
        \item $G$-equivariant maps $\E \to \mathcal F|_X$ and
        \item elements of $(\E\times^G \mathcal F|_X)(X).$
    \end{enumerate}
    In particular, we can identify $[\mathcal F/G]$ with the stack $S \mapsto (\E \text{ a $G$-torsor on $S$}, s \text{ an element of }(\E\times^G \mathcal F|_X)(X)).$
\end{lemma}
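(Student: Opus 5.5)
The plan is to construct maps in both directions and check that they are mutually inverse, working locally on a trivializing cover and gluing with the sheaf property. First I fix what sections of $\E\times^G\mathcal F|_X$ look like. Choose a cover $\set{U_i\to X}$ and trivializations $\E|_{U_i}\cong G|_{U_i}$, which amounts to choosing sections $e_i\in\E(U_i)$. On $U_i$ the presheaf $V\mapsto(\E(V)\times\mathcal F(V))/G(V)$ is already a sheaf: every element has a unique representative of the form $(e_i|_V,y)$, so it is isomorphic to $\mathcal F|_{U_i}$ via $[(e_i,y)]\mapsto y$. Hence a global section $s$ of $\E\times^G\mathcal F|_X$ is the same as a family $y_i\in\mathcal F(U_i)$ such that $[(e_i,y_i)]=[(e_j,y_j)]$ on $U_{ij}$. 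Writing $e_j=e_ig_{ij}$ with $g_{ij}\in G(U_{ij})$, this condition becomes $y_i=g_{ij}y_j$.

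For the direction (1)$\Rightarrow$(2), given an equivariant map $f$, I set $s$ to be the class of $(e,f(e))$ for any local section $e$ of $\E$. This is well defined because $(eg,f(eg))=(eg,g^{-1}f(e))$, and $g\cdot(eg,g^{-1}f(e))=(e,f(e))$ under the stated action. Since any two local sections of $\E$ differ by a unique element of $G$, these local classes agree on overlaps and glue to a global section. For the direction (2)$\Rightarrow$(1), given $s$, I define $f(e)$ for a local section $e$ over $V$ as the unique $y\in\mathcal F(V)$ with $s|_V=[(e,y)]$. Uniqueness holds locally: if $[(e,y)]=[(e,y')]$, then some $g$ satisfies $eg^{-1}=e$, and freeness of the action gives $g=1$, so $y=y'$. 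Existence holds locally on $U_i\times_X V$ by the description above, and these local elements glue by uniqueness. Equivariance follows from $[(eg,g^{-1}y)]=[(e,y)]$. That the two constructions are mutually inverse is immediate from the definitions.

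The main obstacle is the careful handling of sheafification. One needs that two representatives $(e,y)$ and $(e',y')$ defining the same section of the sheafified quotient are related by some $g$ only locally. So every uniqueness and existence claim has to be made on a cover and then glued, using that $\mathcal F$ and $\E$ are sheaves and that $G$ acts freely and transitively on local sections of $\E$.

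For the final clause, I compare with \Cref{defn: quotient stack}. The bijection is natural in $X$, meaning compatible with pullback along $U\to X$, and functorial in isomorphisms of $G$-torsors $\E\to\E'$. Both properties are clear from the formula $s=[(e,f(e))]$. It therefore upgrades to an equivalence of fibered groupoids between $[\mathcal F/G]$ and the described stack, where morphisms in the latter are torsor isomorphisms carrying $s$ to $s'$.
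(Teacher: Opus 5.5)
Your proposal is correct and is essentially the paper's argument written out by hand. The paper's map $X=\E/G\to(\E\times\mathcal F)/G$, induced by the equivariant map $(\id,f)$, is exactly your section $s=[(e,f(e))]$, and its map $r$, the sheafification of $(e,[e,y])\mapsto y$ (well defined by freeness of the $G(U)$-action on $\E(U)$), is exactly your ``unique $y$ with $s|_V=[(e,y)]$''; you just do the local existence, uniqueness and gluing explicitly on a trivializing cover, where the paper phrases these steps in terms of quotient sheaves and sheafification.
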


\begin{proof}
    By replacing $\mathcal C$ by $\mathcal C|_X$, we can assume $X$ is the final object of $\mathcal C$.
    Let $f:\E\to \mathcal F$ be a $G$-equivariant map.
    Then we get a $G$-equivariant map $(\id,f):\E\to \E\times \mathcal F$, and hence a map
    \[X=\E/G \to (\E\times \mathcal F)/G= \E\times^G \mathcal F,\]
    \footnote{The canonical map $\E/G\to X$ is an isomorphism locally, and hence is an isomorphism by \cite[04TQ]{stacks-project}.}i.e. an element of $(\E\times^G \mathcal F)(X).$

    Conversely, let $s\in (\E\times^G \mathcal F)(X).$
    By abuse of notation, we let $s$ also denote the composite $\E \to X \to \E\times^G\mathcal F$, where the second map corresponds to the original $s$.
    We get a $G$-equivariant map
    \[\E \xrightarrow{(id,s)} \E\times (\E\times^G\mathcal F) \xrightarrow{r}\mathcal F\]
    where $r$ is the sheafification of the map (as $U\in \mathcal C$ vary)
    \begin{align*}
        \E(U) \times (\E(U)\times \mathcal F(U))/G(U) &\to \mathcal F(U)\\
        (e,[e,g]&\mapsto g).
    \end{align*}
    When defining $r$, we have used the fact that $G(U)$ acts on $\E(U)$ freely. This follows from the isomorphism $G\times \E \xrightarrow{(pr_2,\text{action})}\E\times \E$, which holds because by \cite[04TQ]{stacks-project}, we can check this \'etale locally on $X$ and hence assume $\E$ is the trivial $G$-torsor, for which the isomorphism is clear.

    It is straightforward to check that these two maps are inverses of each other.
\end{proof}

We will also use the following lemma, whose proof is evident, without explicitly mentioning it.
\begin{lemma}\label{fiber product}
    Suppose we have 
    \[\begin{tikzcd}
        & X \\
        Y & Z
        \arrow["f", from=1-2, to=2-2]
        \arrow["g", from=2-1, to=2-2]
    \end{tikzcd}\]
    where $X,Y,Z$ are groupoids and $f,g$ are functors. Assume that for each $x\in X$ and map $f(x)\xrightarrow{\beta} z$, there is a map $x\xrightarrow{\alpha} x'$ such that $f(\alpha)=\beta$. (I.e. $f$ satisfies the `path lifting property'.) Then the 2-fiber product\footnote{The groupoid whose objects are $(x,y,f(x)\iso g(y))$ and morphisms $(x,y,f(x)\iso g(y))\to (x',y',f(x')\iso g(y'))$ are pairs $(x\xrightarrow{\alpha}x', y\xrightarrow{\alpha'}y')$ such that the obvious diagram commutes.} and the 1-fiber product\footnote{The groupoid whose objects are $(x,y)$ such that $f(x)=g(y)$ and morphisms $(x,y)\to (x',y')$ are pairs $(x\xrightarrow{\alpha}x', y\xrightarrow{\alpha'}y')$ such that $f(\alpha)=g(\alpha')$.} of the diagram are canonically equivalent.
\end{lemma}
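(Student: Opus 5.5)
We compare the two groupoids through the evident comparison functor $\Phi$ from the 1-fiber product $X\times^{1}_Z Y$ to the 2-fiber product $X\times^{2}_Z Y$. On objects, $\Phi$ sends $(x,y)$ with $f(x)=g(y)$ to the triple $(x,y,\id_{f(x)})$. On morphisms, it sends $(\alpha,\alpha')$ with $f(\alpha)=g(\alpha')$ to the same pair. The commutativity required in the 2-fiber product is exactly the equation $f(\alpha)=g(\alpha')$, because the structure isomorphisms are identities. The plan is to show that $\Phi$ is fully faithful and essentially surjective; since both sides are groupoids, this gives an equivalence. Canonicity is automatic, because $\Phi$ uses no choices.

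Full faithfulness is a direct check. A morphism $(x,y,\id)\to(x',y',\id)$ in the 2-fiber product is a pair $(\alpha,\alpha')$ making the square with $\id_{f(x)}$ and $\id_{f(x')}$ commute. That square says precisely $g(\alpha')\circ \id=\id\circ f(\alpha)$, i.e. $f(\alpha)=g(\alpha')$. This is the definition of a morphism in the 1-fiber product, so $\Phi$ is bijective on Hom-sets.

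Essential surjectivity is the only step that uses the path lifting hypothesis. Take an object $(x,y,\beta\colon f(x)\iso g(y))$ of the 2-fiber product. Apply the hypothesis to $x$ and $\beta\colon f(x)\to z$ with $z:=g(y)$. This gives $\alpha\colon x\to x'$ with $f(\alpha)=\beta$, and in particular $f(x')=g(y)$, so $(x',y)$ is an object of the 1-fiber product. The pair $(\alpha,\id_y)$ is then a morphism $(x,y,\beta)\to(x',y,\id_{g(y)})=\Phi(x',y)$ in the 2-fiber product. The required square commutes because $\id\circ f(\alpha)=\beta=g(\id_y)\circ\beta$. Since we are in groupoids, this morphism is an isomorphism.

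None of these steps is a real obstacle. The only point needing care is the direction conventions: the lifted $\alpha$ must have source $x$, which matches the form of the hypothesis. One should also note that the 1-fiber product uses strict equality $f(x')=g(y)$, and this is exactly what the lift provides.
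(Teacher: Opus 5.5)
Your proof is correct. The paper gives no proof, calling it evident, and your argument is exactly the standard verification. The canonical functor is $(x,y)\mapsto(x,y,\id_{f(x)})$; it is fully faithful by direct inspection, and essentially surjective by lifting $\beta$ to $\alpha\colon x\to x'$ and using the morphism $(\alpha,\id_y)$.
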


\subsection{Picard stack}\label{sec: picard stacks}
Recall that a \emph{Picard groupoid} is by definition a symmetric monoidal groupoid $G$ for which every element $x\in G$ admits an inverse $x^{-1}\in G$ such that $x\otimes x^{-1}\cong 1$, where $1$ is the monoidal unit. For example, if $X$ is a scheme, then the groupoid of line bundles on $X$ equipped with the tensor product structure is a Picard groupoid. One can think of Picard groupoids as categorifications of abelian groups.

Let us recall the following definition in \cite[Definition 7.1.2]{tensor_cats}\footnote{There, $X$ is called a left module category over $G$.}:
\begin{definition}\label{action defn}
    Let $X$ be a groupoid and $G$ be a Picard groupoid. An \emph{action} of $G$ on $X$ is a functor $\otimes: G\times X \to X$ together with two natural isomorphisms $m_{g,h,x}:(g\otimes h)\otimes x \iso g\otimes(h\otimes x)$ and $l_x:1\otimes x \iso x$ such that the obvious diagrams
\[\begin{tikzcd}
	& {((g\otimes h)\otimes k)\otimes x} \\
	{(g\otimes (h\otimes k))\otimes x} && {(g\otimes h)\otimes (k\otimes x)} \\
	{g\otimes ((h\otimes k)\otimes x)} && {g\otimes (h\otimes (k\otimes x))}
	\arrow[from=1-2, to=2-1]
	\arrow[from=1-2, to=2-3]
	\arrow[from=2-1, to=3-1]
	\arrow[from=2-3, to=3-3]
	\arrow[from=3-1, to=3-3]
\end{tikzcd}\]
and
\[\begin{tikzcd}
	{(g\otimes 1)\otimes x} && {g\otimes (1\otimes x)} \\
	& {g\otimes x}
	\arrow[from=1-1, to=1-3]
	\arrow[from=1-1, to=2-2]
	\arrow[from=1-3, to=2-2]
\end{tikzcd}\]
commute for all $g,h,k\in G$ and $x\in X$.
\end{definition}

In this case, we can form the $2$-categorical quotient of $[X/G]$. It is a $2$-category which can be explicitly described as follows (cf. \cite[end of section 4]{Ngo_2006}):
\begin{itemize}
    \item The objects are objects of $X$.
    \item The $1$-morphisms from $x_1$ to $x_2$ are pairs $(g,\alpha)$, where $g\in G$ and $\alpha\in Mor_X(gx_1,x_2)$. 
    \item The $2$-morphisms from $(g,\alpha)$ to $(g',\alpha')$ are $\phi\in Mor_G(g,g')$ such that 
\[\begin{tikzcd}
	{gx_1} & {x_2} \\
	{g'x_1}
	\arrow["\alpha", from=1-1, to=1-2]
	\arrow["{\phi\otimes id_{x_1}}"', from=1-1, to=2-1]
	\arrow["{\alpha'}"', from=2-1, to=1-2]
\end{tikzcd}\]
commutes.
\end{itemize}

It turns out that in the case of interest in this paper, every $2$-quotient is equivalent to a $1$-groupoid.
\begin{lemma}\label{2-quot equiv to 1-cat}\cite[Lemme 4.7]{Ngo_2006}
    Let $X$ be a groupoid and $G$ be a Picard groupoid acting on $X$. Then the $2$-quotient $[X/G]$ is equivalent to a $1$-groupoid if and only if the natural map induced by the action
    \begin{equation*}
        \Aut_G(1_G) \to \Aut_X(x)
    \end{equation*}
    is an injection for every $x\in X$.
\end{lemma}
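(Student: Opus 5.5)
We make precise what ``equivalent to a $1$-groupoid'' means: the $2$-quotient is equivalent to a $1$-groupoid exactly when, for every pair of $1$-morphisms $(g,\alpha),(g',\alpha')\colon x_1\to x_2$, there is at most one $2$-morphism between them. Every $2$-morphism $\phi$ is invertible, since $G$ is a groupoid and $\phi^{-1}$ again satisfies the compatibility. So the $2$-category is locally a groupoid, and it is equivalent to a $1$-category precisely when each $\mathrm{Hom}$-groupoid is discrete up to equivalence, i.e.\ has trivial automorphism groups. The plan is therefore to compute the automorphism group of an arbitrary $1$-morphism $(g,\alpha)$ and to compare it with the kernel of $\Aut_G(1_G)\to\Aut_X(x_1)$.

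\textbf{Step 1.} Fix $(g,\alpha)\colon x_1\to x_2$. A $2$-automorphism is $\phi\in\Aut_G(g)$ with $\alpha\circ(\phi\otimes \mathrm{id}_{x_1})=\alpha$. Since $\alpha$ is an isomorphism, this is equivalent to $\phi\otimes\mathrm{id}_{x_1}=\mathrm{id}_{gx_1}$. Hence the $2$-automorphism group is the kernel of the map $\Aut_G(g)\to\Aut_X(gx_1)$, $\phi\mapsto \phi\otimes\mathrm{id}_{x_1}$.

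\textbf{Step 2.} We reduce from $g$ to $1_G$. In a Picard groupoid, tensoring with $g$ gives an isomorphism $\Aut_G(1_G)\iso\Aut_G(g)$, $\psi\mapsto \mathrm{id}_g\otimes\psi$ (conjugated by the unit isomorphism $g\otimes 1\cong g$). It is bijective because tensoring with $g^{-1}$ is an inverse up to natural isomorphism. We then use the associator $m_{g,1,x_1}$, the unitor $l_{x_1}$ and naturality to identify $(\mathrm{id}_g\otimes\psi)\otimes \mathrm{id}_{x_1}$ with $\mathrm{id}_g\otimes(\psi\otimes\mathrm{id}_{x_1})$, viewed in $\Aut_X(g\otimes x_1)$ via $1\otimes x_1\cong x_1$. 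The second coherence diagram in \Cref{action defn} is exactly what makes this identification commute.

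\textbf{Step 3.} The functor $g\otimes -\colon X\to X$ is an equivalence, with quasi-inverse $g^{-1}\otimes -$ by the action axioms. In particular it is faithful on automorphism groups. Therefore the kernel in Step 1 is isomorphic to the kernel of $\Aut_G(1_G)\to\Aut_X(x_1)$, $\psi\mapsto l_{x_1}\circ(\psi\otimes\mathrm{id})\circ l_{x_1}^{-1}$.

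It follows that all $2$-automorphism groups are trivial if and only if this map is injective for every $x$. The ``only if'' direction uses the $1$-morphism $(1_G,l_x)\colon x\to x$. If the $2$-automorphism groups are trivial, then choosing one representative per isomorphism class of $1$-morphisms gives the equivalent $1$-groupoid; this is the truncation of a $(2,1)$-category with discrete hom-groupoids.

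The main obstacle is the coherence bookkeeping in Step 2, namely checking that the identification $\Aut(g)\cong\Aut(1)$ is compatible with the action maps via the associator and unitor. I would handle it by invoking naturality of $m$ and $l$ rather than a full coherence theorem.
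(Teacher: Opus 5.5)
Your proposal is correct and follows essentially the same route as the paper: you read off from the explicit description that the $2$-quotient is a $1$-groupoid exactly when each $\Aut_G(g)\to\Aut_X(gx)$ is injective, and then reduce to $g=1_G$ via the isomorphism $\Aut_G(1_G)\cong\Aut_G(g)$ induced by $g\otimes-$. The only difference is cosmetic: you compare with $\Aut_X(x)$ through the faithful functor $g\otimes-$ on $X$, which makes the use of the unit coherence diagram explicit, whereas the paper's triangle compares with the action of $\Aut_G(1_G)$ on the object $gx$ itself.
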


\begin{proof}
    It is evident from the explicit description above that the $2$-quotient $[X/G]$ is equivalent to a $1$-groupoid if and only if for every $g\in G, x\in X$, the natural map 
    \begin{equation*}
        \Aut_G(g) \to \Aut_X(gx)
    \end{equation*}
    is an injection.
    This is equivalent to the condition in the statement of the lemma, because we have the following commutative diagram
\[\begin{tikzcd}
	{ \Aut_G(g)} & {\Aut_X(gx)} \\
	{\Aut_G(1_G)}
	\arrow[from=1-1, to=1-2]
	\arrow["{g\otimes-}", from=2-1, to=1-1]
	\arrow[from=2-1, to=1-2]
\end{tikzcd}\]
where the vertical map is an isomorphism.
\end{proof}

Now let $\mathcal C$ be a site. Let $*$ be the terminal object in the topos.
Note that for any stack $\mathcal P$ on $\mathcal C$, an element in $\mathcal P(*):=\Mor(*,\mathcal P)$ is the same as a compatible collection of objects in $\mathcal P(U)$ as $U\in \mathcal C$ varies.
\begin{definition}
    A \emph{Picard prestack} on $\mathcal C$ is a prestack $\mathcal P$ on $\mathcal C$ together with 
    \begin{itemize}
        \item a morphism $\otimes: \mathcal P \times \mathcal P \to \mathcal P$,
        \item a 2-morphism\footnote{Here, both $\otimes \circ (\otimes \times \id)$ and $\otimes \circ (\id \times \otimes)$ are morphisms $\mathcal P\times \mathcal P \times \mathcal P \to \mathcal P$. A map between these morphisms is thus a 2-morphism in the category of stacks.} $\otimes \circ (\otimes \times \id) \Rightarrow \otimes \circ (\id \times \otimes)$,
        \item a 2-morphism\footnote{Here, $\mathrm{flip}$ is the morphism $\mathcal P\times \mathcal P \to \mathcal P\times \mathcal P$ given by $(x,y)\mapsto (y,x)$.} $\otimes \Rightarrow \otimes\circ \mathrm{flip}$,
        \item an element $1\in \mathcal P(*)$,
        \item a 2-morphism $1\otimes \id \Rightarrow \id$,
        \item a 2-morphism $\id \otimes 1 \Rightarrow \id$,
    \end{itemize}
    such that for every $U\in \mathcal C$, these data make $\mathcal P(U)$ a Picard groupoid.
\end{definition}

\begin{remark}
    One should think of a Picard prestack as a `commutative group prestack', i.e. a prestack $\mathcal P$ with a group operation $\otimes$ such that $x\otimes y = y\otimes x$. However, it is unnatural to assume equality holds. One should instead have an isomorphism $x\otimes y \isomto y\otimes x$ for every $x,y$. Moreover, these isomorphisms should be compatible. The definition above formalises this idea.
\end{remark}

\begin{definition}
    Let $X$ be a prestack and $\mathcal P$ be a Picard prestack. An \emph{action} of $\mathcal P$ on $X$ is 
    \begin{itemize}
        \item a morphism $\otimes:\mathcal P\times X \to X$,
        \item a 2-morphism\footnote{Both $\otimes \circ (\otimes \times \id_X)$ and $\otimes \circ (\id_{\mathcal P} \times \otimes)$ are morphisms $\mathcal P\times\mathcal P\times X \to X$.} $\otimes \circ (\otimes \times \id_X) \Rightarrow \otimes \circ (\id_{\mathcal P} \times \otimes)$
        \item a 2-morphism $1_{\mathcal P}\otimes \id_X \Rightarrow \id_X$
    \end{itemize}
    such that for every $U\in \mathcal C$, these data define an action of $\mathcal P(U)$ on $X(U)$ in the sense of \Cref{action defn}.
\end{definition}

Let $J$ be a commutative group sheaf on $\mathcal C$.
Since $J$ is commutative, we can identify left $J$-torsors and right $J$-torsors in the naive way.\footnote{I.e. if $Q$ is a right $J$-torsor, then we view it as a left $J$-torsor with the same $J$-action, without taking inverse.}
In particular, we can form the contracted product $Q_1\times^{J} Q_2$ of two $J$-torsors $Q_1$ and $Q_2$. The contracted product $Q_1\times^{J} Q_2$ is still a $J$-torsor by the commutativity of $J$. Moreover, if $Q$ is a $J$-torsor, then $Q'$ defined to be the same sheaf $Q$ with $J$-action given by 
\begin{align*}
    Q\times J &\to Q\times J \to Q\\
    (q,j)&\mapsto (q,j^{-1})
\end{align*}
satisfies $Q\times^{J}Q' \cong J_a$.
Thus $[*/J]$ is a Picard stack.

Recall that for a prestack $X$ on a site $\mathcal C$, its inertia prestack $\mathcal I_X$ is the prestack whose $U$-valued points are the groupoid of pairs $(x,\theta)$, where $x\in X(U), \theta\in\mathrm{Aut}(x)$. 
If $x\in X(U)$, then we let $\underline{\Aut}(x)$ denote the presheaf on $\mathcal C_{/U}$ given by $V\mapsto \Aut(x|_V)$.
\begin{proposition}\label{inertia gives action}
    Let $\mathcal C$ be a site, $X$ be a stack over $B$, and $J$ be a commutative group sheaf on $\mathcal C$.
    Suppose we have a morphism $$f:J\times X \to \mathcal I_X$$ over $X$.
    Equivalently, for every $U\in\mathcal C$ and $x\in X(U)$, we have a morphism 
    \begin{equation}\label{J to Aut}
        J|_U\to\underline{\Aut}(x)
    \end{equation}
    such that for every $U\in\mathcal C$ and every morphism $x\isomto y$ in $X(U)$, the following commutes
    \[\begin{tikzcd}
        & {J|_U} & \\
        {\underline{\Aut}(x)} && {\underline{\Aut}(y)}
        \arrow[from=1-2, to=2-1]
        \arrow[from=1-2, to=2-3]
        \arrow[from=2-1, to=2-3]
    \end{tikzcd}\]
    where the horizontal map is given by conjugation.
    If \eqref{J to Aut} is a group homomorphism for every $U\in\mathcal C$ and $x\in X(U)$,
    then there is a canonical action 
    \begin{equation}\label{eq:action-general}
        [*/J]\times X \to X.
    \end{equation}
\end{proposition}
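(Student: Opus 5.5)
The plan is to build the action first on the prestack level using trivial $J$-torsors, and then pass to stackifications. By \Cref{lem:quotient stack} (with $\mathcal F=*$), $[*/J]$ is the stackification of the prestack $B_{pre}J$ whose $U$-points form the one-object groupoid with automorphism group $J(U)$. Since $X$ is a stack, a morphism of stacks $[*/J]\times X\to X$ is the same as a morphism of prestacks $B_{pre}J\times X\to X$: the product of stackifications is the stackification of the product, and we use the universal property. The same reduction applies to the required 2-morphisms, because 2-morphisms between maps into a stack out of a stackification are determined by their restriction to the prestack.

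On $U$-points I define $\otimes: B_{pre}J(U)\times X(U)\to X(U)$ on objects by $(*,x)\mapsto x$. On morphisms, a pair $(j,\alpha)$ with $j\in J(U)$ and $\alpha:x\isomto y$ is sent to $\alpha\circ f_x(j)=f_y(j)\circ\alpha$, where $f_x:J|_U\to\underline{\Aut}(x)$ is \eqref{J to Aut}. The two expressions agree precisely because of the commutative triangle in the hypothesis. Functoriality, i.e. $(j'j,\alpha'\alpha)\mapsto \alpha'f_y(j')\alpha f_x(j)=\alpha'\alpha f_x(j')f_x(j)$, uses this same compatibility together with the fact that $f_x$ is a group homomorphism. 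Compatibility with pullback along $V\to U$ is part of the data of $f$ as a morphism $J\times X\to\mathcal I_X$ over $X$.

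For the 2-morphisms, I take the associator $m$ and unitor $l$ to be identities. This works because the objects $(*\otimes *)\otimes x$ and $*\otimes(*\otimes x)$ both equal $x$, and the two functors agree on morphisms: $(j,j',\alpha)$ goes to $\alpha f_x(jj')$ in both cases, again by the homomorphism property, and commutativity of $J$ makes the order of multiplication immaterial. The unit $1\in B_{pre}J(*)$ acts by $f_x(1)=\id$, since $f_x$ is a homomorphism. With identity 2-cells, the pentagon and triangle diagrams of \Cref{action defn} commute trivially. It remains to check that the identity 2-cells are natural transformations, which is exactly the equality of functors just verified. Extending to the stackification gives \eqref{eq:action-general}.

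The main obstacle is making the stackification step rigorous: the extension of a morphism, together with its coherence 2-cells and the coherence axioms, from $B_{pre}J\times X$ to $[*/J]\times X$. I would argue via the 2-universal property of stackification. Restriction along the map to a stackification is an equivalence of groupoids of morphisms into the stack $X$, and applying this to the products $B_{pre}J^{n}\times X$ transports the 2-cells and the validity of the commutative diagrams, since these are equalities of 2-morphisms, which are detected after restriction. Concretely, a general $J$-torsor $Q$ acts on $x$ by gluing the local actions $x|_{U_i}$ via the cocycle of $Q$ in $f_x$. The homomorphism and conjugation-compatibility hypotheses are exactly what make this descent datum well defined.
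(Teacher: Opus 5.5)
Your proposal is correct and follows the paper's proof. The paper likewise defines the functor $[*/_{pre}\,J]\times X\to X$ by $(*,x)\mapsto x$ on objects and $(j,\theta)\mapsto \theta\circ j_x=j_y\circ\theta$ on morphisms, and then stackifies; you additionally spell out the functoriality check, the identity associator and unitor, and the 2-universal property of stackification, all of which the paper leaves implicit.
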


\begin{remark}Informally, the action is given by twisting objects and morphisms in $X$ by $J$-bundles.
\end{remark}

\begin{remark}
    The assumption that $J$ is commutative ensures that $[*/J]$ is a Picard stack, so its action on another stack makes sense.
\end{remark}

\begin{proof}
    For $U\in\mathcal C$ and $x\in X(U)$, we denote the image of $j$ under $J|_U\to\underline{\Aut}(x)$ as $j_x$.

    We define a morphism of prestacks $[*/_{pre}\;J]\times X \to X$, which on $U$-valued points (for $U\in \mathcal C$) is given by $$(*,x)\mapsto x$$ on objects and is given by $$((*,x)\xrightarrow{(j,\theta)} (*,y))\mapsto (x\xrightarrow{\theta\circ j_x = j_y\circ\theta}y)$$
    on morphisms. Taking the stackification gives the desired action on stacks.
\end{proof}

\begin{remark}
    Let us describe the action explicitly on objects. To avoid confusing with index, let us write $j(x)$ for what we denote as $j_x$ above.
    Let $Q$ be a $J$-bundle on $U$ and $x\in X(U)$.
    Fix a covering $\set{U_a\to U}$ of $U$ and trivialisations $\iota_a:Q|_{U_a}\isomto J|_{U_a}$.
    Write $U_{ab}=U_a\times_U U_b$.
    Let $$\iota_b|_{U_{ab}}\circ \iota_a|_{U_{ab}}^{-1}=:j_{ab}\in H^0(U_{ab}, Q).$$
    Then $$(x|_{U_a}, j_{ab}(x|_{U_{ab}})),$$
    where $j_{ab}(x|_{U_{ab}})$ is viewed as an isomorphism $(x|_{U_a})|_{U_{ab}} \isomto (x|_{U_b})|_{U_{ab}}$,
    is a descent datum on $X$, so it glues to an object on $X(U)$.
    It is straightforward to check that this is the image of $(j,x)$ under the map \eqref{eq:action-general}.
\end{remark}

We will need the following slight generalisation of \Cref{inertia gives action}.
\begin{proposition}\label{inertia gives action over a base}
    Let $\mathcal C$ be a site, $B$ be a sheaf, $X$ be a stack on $\mathcal C$, and $h:J\to B$ be a commutative group sheaf over $B$.
    Suppose we have a morphism $$f:J\times_B X \to \mathcal I_X$$ over $X$.
    In particular, for all $U\in\mathcal C$, $x\in X(U)$ having image $b$ in $B(U)$, we get a map
    \begin{equation*}
        \set{j\in J(U):h(j)=b} \to \Aut(x)\\
    \end{equation*}
    Suppose that this is a group homomorphism.
    Then there is a canonical action 
    \begin{equation*}
        [B/J]\times_B X \to X
    \end{equation*}
    of stacks on $\mathcal C$ over $B$.
\end{proposition}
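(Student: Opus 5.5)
The plan is to imitate the proof of \Cref{inertia gives action}, working throughout over $B$. The first step is to make $[B/J]$ into a Picard stack over $B$. Here $J$ acts on $B$ trivially through $h$, so an object of $[B/J](U)$ is a pair $(b,Q)$ with $b\in B(U)$ and $Q$ a torsor under the commutative group sheaf $J_b:=J\times_{B,b}U$ on $U$. Contracted product of $J_b$-torsors, together with the inverse torsor defined as in the discussion preceding \Cref{inertia gives action}, makes each fiber over $b$ a Picard groupoid, and these structures are compatible with pullback. This is the same argument that showed $[*/J]$ is Picard, now carried out fiberwise over $B$.

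Next I define a morphism of prestacks over $B$,
\[[B/_{pre}\,J]\times_B X \to X,\]
on $U$-valued points. An object is a pair $(b,x)$ with $x\in X(U)$ lying over $b$, and I send it to $x$. A morphism $(b,x)\to(b,y)$ is a pair $(j,\theta)$ with $j\in J(U)$, $h(j)=b$ and $\theta:x\isomto y$ in $X(U)$, and I send it to $\theta\circ j_x$, where $j_x\in\Aut(x)$ is the image of $j$ under $f$. The fact that $f$ is a morphism over $X$ into $\mathcal I_X$ gives $\theta\circ j_x=j_y\circ\theta$, by the naturality square from \Cref{inertia gives action}. Using this together with the homomorphism hypothesis, composition is respected:
\[(\theta'\circ j'_y)\circ(\theta\circ j_x)=\theta'\circ\theta\circ j'_x j_x=(\theta'\theta)\circ(j'j)_x,\]
where commutativity of $J$ removes any ordering issue. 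Identities go to identities because $f$ sends $1$ to $\id$.

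I then stackify. By \Cref{lem:quotient stack}, applied to $J$ acting on the sheaf $B$ in the slice, $[B/J]$ is the stackification of $[B/_{pre}\,J]$. Fiber products commute with stackification, and $X$ is already a stack, so the prestack morphism extends uniquely, up to unique isomorphism, to $[B/J]\times_B X\to X$. This extension lies over $B$ because the prestack map does.

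Finally, I supply the associativity $2$-morphism and the unit $2$-morphism and check the coherence diagrams of \Cref{action defn}. On the prestack level they can be taken to be identities, since the underlying object is unchanged. The coherence conditions then hold trivially there, and they transfer to the stackification by its universal property. I expect the main obstacle to be this transfer: the $2$-morphisms and coherence for $[B/J]$ come from contracted products of torsors rather than directly from the prestack. To handle it, I would first check everything on the prestack $[B/_{pre}\,J]$, where trivial torsors tensor by multiplication in $J_b$. Then I would use that morphisms of stacks out of a stackification, and $2$-morphisms between them, are determined by their restriction to the prestack, since this is full faithfulness of restriction along stackification. That reduces every identity to the strict identities already verified.
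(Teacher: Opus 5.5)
Your proposal is correct and is essentially the paper's argument. The paper just applies \Cref{inertia gives action} on the slice site $\mathcal C_{/B}$. That amounts to the construction $(b,x)\mapsto x$, $(j,\theta)\mapsto\theta\circ j_x$ followed by stackification, which you carry out directly over $B$. Your extra discussion of the fiberwise Picard structure and of transferring the associativity and unit $2$-morphisms through stackification fills in details the paper leaves implicit.
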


\begin{proof}
    This follows from \Cref{inertia gives action} by replacing $\mathcal C$ by $\mathcal C_{/B}$ and identifying stacks over $B$ with stacks on $\mathcal C_{/B}$.
\end{proof}

\subsection{An exactness criterion}
We will often prove results about $G$-bundles by first proving them for vector bundles, and then generalise the results using the Tannakian formalism, namely by interpreting $G$-bundles on $X$ as \textit{exact} tensor functors $\mathrm{Rep}(G) \to \set{\text{vector bundles on }X}$. The following result is useful for proving exactness of functors in various situations.
\begin{lemma}\label{lem: exact criterion}
    Let $\set{X_i\xrightarrow{f_i} X}_{i\in I}$ be morphisms of locally ringed spaces such that $\cup f_i(X_i) = X$. Then a sequence of vector bundles on $X$
    \begin{equation}\label{E_i exact}
        0 \to \E \xrightarrow{a} \E' \xrightarrow{b} \E'' \to 0 
    \end{equation}
    with $b\circ a =0$
    is exact if and only if
    \begin{equation}\label{pullback of E_i exact}
        0 \to f_i^*\E \to f_i^*\E' \to f_i^*\E'' \to 0 
    \end{equation}
    is exact for every $i$.
\end{lemma}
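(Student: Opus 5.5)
The forward direction is immediate. If \eqref{E_i exact} is exact, it is a short exact sequence of vector bundles, hence locally split: around each point $x\in X$ there is a neighbourhood over which $\E'\cong\E\oplus\E''$ compatibly with $a$ and $b$. Indeed, $\E''$ is locally free, so $b$ admits a local section, and then $a$ identifies $\E$ with the kernel. Pullback along $f_i$ commutes with direct sums, so $f_i^*$ of a split exact sequence is split exact. Hence \eqref{pullback of E_i exact} is exact for every $i$.

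For the converse, exactness is a stalkwise statement, so I fix $x\in X$. By hypothesis I can choose $i$ and $y\in X_i$ with $f_i(y)=x$. Let $\varphi\colon A:=\O_{X,x}\to B:=\O_{X_i,y}$ be the induced map on stalks; it is a \emph{local} homomorphism because $f_i$ is a morphism of locally ringed spaces. The stalks $E=\E_x$, $E'=\E'_x$, $E''=\E''_x$ are finite free $A$-modules, and $(f_i^*\E)_y=E\otimes_A B$, and similarly for the others. So I reduce to the following algebraic claim. Let $A\to B$ be a local homomorphism of local rings, and let
\[0\to E\xrightarrow{a} E'\xrightarrow{b} E''\to 0\]
be a complex of finite free $A$-modules whose base change to $B$ is exact. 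Then the original complex is exact.

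To prove the claim, I tensor further down to the residue field $\kappa(y)$ of $B$. Right exactness of $\otimes$ shows that exactness over $B$ (which is split, since the terms are free) gives exactness over $\kappa(y)$. Because $\varphi$ is local, $\kappa(y)$ is a field extension of $k=\kappa(x)$, and this extension is faithfully flat, so the complex is already exact after tensoring with $k$. I then apply Nakayama twice. First, $b\otimes k$ is surjective, so $b$ is surjective. Hence $\ker b$ is a direct summand of $E'$ and is free, and $E'\cong\ker b\oplus E''$. Second, the map $a\colon E\to\ker b$ between free modules becomes an isomorphism mod $\mathfrak m_A$, since both sides are exact after $\otimes k$ and $\ker b\otimes k=\ker(b\otimes k)$ by splitness. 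By Nakayama $a$ is then surjective onto $\ker b$, and a surjection between free modules of the same finite rank is an isomorphism. This gives exactness at the stalk $x$.

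The main obstacle I expect is the bookkeeping in this second step. The ranks must match, which follows from comparing dimensions over $k$, and the statement that a surjective endomorphism of a finite free module is injective needs a citation (for instance Vasconcelos' theorem or a determinant argument). One must also note that the hypothesis $b\circ a=0$ is what makes $a$ land in $\ker b$.
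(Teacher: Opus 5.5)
Your proof is correct and follows essentially the same route as the paper: reduce to exactness of the fibers at $\kappa(x)$, choose a point $x'$ over $x$, and use faithful flatness of $\kappa(x)\to\kappa(x')$ together with $\E(x)\otimes_{\kappa(x)}\kappa(x')=(f_i^*\E)(x')$. The only difference is that you prove the fiberwise criterion by hand via Nakayama (a complex of finite free modules over a local ring is exact if it is exact modulo $\mathfrak m$), whereas the paper cites Stacks Project Lemma 00O0 for it.
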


The proof is given to us by Ken Lee in the context of part \eqref{item:seq} of \Cref{lem: exact 1}, namely in the case where $X$ and $X_i$ are the Fargues-Fontaine curves. We sincerely thank him for it. The generalisation to locally ringed spaces is proved in the same way.

To simplify notation, in the proof, whenever $Y$ is a locally ringed space and $y\in Y$, then we let $\kappa(y)=\O_{Y,y}/m_y$ denote the residue field of $Y$ at $y$. If $\mathcal F$ is a sheaf of $Y$, then $\mathcal F(y):=\mathcal F_y \otimes_{\O_{Y,y}} \kappa(y)$.

\begin{proof}
    The 'only if' direction is clear by considering stalks. Let us prove the `if' direction.
    By \cite[Lemma 00O0]{stacks-project}, to check that \eqref{E_i exact} is exact, it suffices to show that for every $x\in X$,  
    \begin{equation}\label{new seq}
        0\to \E(x) \to\E'(x) \to\E''(x) \to 0
    \end{equation}
    is exact.
    By assumption, there exists $i\in I$ and $x'\in X_i$ such that $f_i(x')=x$.
    We can check the exactness of \eqref{new seq} after base changing along $\kappa(x) \to \kappa(x')$.
    Note that $\E(x)\otimes_{\kappa(x)} \kappa(x') = (f_i^*\E)(x')$, so we would like to show the exactness of
    \[0\to (f_i^*\E)(x') \to (f_i^*\E')(x') \to (f_i^*\E'')(x') \to 0.\]
    This follows from the exactness of \eqref{pullback of E_i exact}.
\end{proof}

\subsection{Fargues-Fontaine curve}
Let $E$ be a non-Archimedean local field with residue field $\F_q$. Let $\pi$ be a uniformizer of $E$.
\begin{definition}
    For an affinoid perfectoid $S=\Spa(R,R^+)\in\Perf_{\F_q}$, define $$\mathcal Y_{E,S}:=\Spa W_{\O_E}(R^+) \setminus V([\varpi])$$ and $$Y_{E,S}:=\Spa W_{\O_E}(R^+) \setminus V(\pi[\varpi]),$$ where $\varpi$ is a pseudouniformizer of $R$ and $W_{\O_E}(R^+)$ is equipped with the $(\pi,[\varpi])$-adic topology.
    We define the \emph{(relative) Fargues-Fontaine curve} to be $$X_{E,S}:=Y_{E,S}/\phi_S^\Z,$$ where\footnote{The quotient makes sense as an adic space because the action of $\phi$ on on $Y_S$ is free and totally discontinuous by \cite[Proposition II.1.16]{FS_geometrisation}.} $\phi_S$ is the automorphism on $Y_{E,S}$ induced by the $q$-Frobenius\footnote{The morphism $x\mapsto x^q$.} on $R^+$.
    For general $S\in\Perf_{\F_q}$, we define $X_{E,S}$ to be the adic space obtained by gluing.\footnote{This gluing makes sense because if $S\to T$ is an open immersion of affinoid perfectoid spaces, then $X_S\to X_T$ is an open immersion by \cite[Proposition II.1.3]{FS_geometrisation}.}
\end{definition}
When there is no ambiguity, we will write $\mathcal Y_S$ for $\mathcal Y_{E,S}$, $Y_S$ for $Y_{E,S}$, and $X_S$ for $X_{E,S}$.

Here, $W_{\O_E}(R^+)$ is the unique $\pi$-adically complete, flat $\O_E$-algebra such that $W_{\O_E}(R^+)/\pi = R^+$ over $\F_q$.
Its existence and uniqueness can be proved using the vanishing of cotangent complex on perfect rings and the relationship between cotangent complex and lift of algebras, as summarised in \cite[Theorem 5.11]{Perfectoid_spaces}.
We also have $W_{\O_E}(R^+) = W(R^+)\hat{\otimes}_{W(\F_q)} \O_E$, where the completion is the $\pi$-adic completion.
Explicitly,
\begin{align*}
    W_{\O_E}(R^+) &= W(R^+)\otimes_{W(\mathbb F_q)}\O_E &\text{if } \mathrm{char} E=0,\\
    W_{\O_E}(R^+) &= R^+[[t]] &\text{if } E=\F_q((t)).
\end{align*}
In any case, every element in $W_{\O_E}(R^+)$ can be written uniquely as $\sum_{i=0}^\infty [a_i]\pi^i$, where $a_i\in R^+$ and $[-]$ is the Teichmuller lift, and conversely every such element defines an element of $W_{\O_E}(R^+)$.

While the definition of $Y_S$ and $X_S$ may look a bit complicated, it turns out that the diamonds associated with them have simple and intuitive descriptions.
\begin{proposition}\cite[Proposition II.1.17]{FS_geometrisation}
    For every $S\in\Perf_{\F_q}$, $Y_S^\diamond = \Spd E\times S$ and $X_S^\diamond = \Spd E\times S/\phi_S^\Z$.
\end{proposition}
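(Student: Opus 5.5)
The plan is to identify $Y_S^\diamond$ with $\Spd E\times S$ by computing the functor of points on $\Perf$. Then I will deduce the statement for $X_S$ by passing to the quotient by $\phi_S^\Z$.

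\textbf{Reduction to affinoid $S$.} Both sides are compatible with the gluing used to define $Y_S$ for general $S$: open immersions $S\to T$ give open immersions on both sides. So it suffices to treat $S=\Spa(R,R^+)$ affinoid perfectoid.

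\textbf{Case $\mathrm{char} E=p$.} Here $E=\F_q((t))$ and $W_{\O_E}(R^+)=R^+[[t]]$. Hence $Y_S$ is the open locus $\{t\neq0,\varpi\neq0\}$ of $\Spa R^+[[t]]$, which is just $\mathbb D^*_S$, the punctured open unit disc over $S$. This space is already perfectoid-like, and its diamond is $\Spd\F_q((t))\times S$. Concretely, a $T$-point for $T=\Spa(A,A^+)$ is a map $R^+\to A^+$ together with $t\mapsto$ a topologically nilpotent unit of $A$. This is exactly a pair consisting of a map $T\to S$ and a map $T\to\Spd E$.

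\textbf{Case $\mathrm{char} E=0$.} A map $T\to Y_S^\diamond$ is an untilt $T^\sharp=\Spa(A^\sharp,A^{\sharp+})$ together with a map $T^\sharp\to Y_S$. This amounts to a continuous map $W_{\O_E}(R^+)\to A^{\sharp+}$ under which $\pi$ and $[\varpi]$ become invertible in $A^\sharp$. Any map $W_{\O_E}(R^+)\to A^{\sharp+}$ factors through the universal property of Witt vectors. Indeed, it is the same as an $\O_E$-structure on $A^{\sharp+}$ together with a map $R^+\to A^{\sharp+\flat}=A^+$, using tilting and $\theta$. The condition that $\pi$ is invertible says $T^\sharp$ lies over $\Spa E$. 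So the data are an untilt over $E$, i.e.\ a map $T\to\Spd E$, plus a map $T\to S$. The condition $[\varpi]$ invertible is automatic once $R\to A$ sends $\varpi$ to a unit. This gives $Y_S^\diamond=\Spd E\times S$.

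\textbf{Main obstacle and the quotient.} The main obstacle is verifying that these bijections are functorial and respect the analytic topology. In particular, one must check that the open conditions defining $Y_S$ match exactly, with no boundary issues. For this I would use the standard description of $\Spa W_{\O_E}(R^+)$ points via the $\theta$-map, as in Scholze's theory of untilts.

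For $X_S$, the isomorphism is $\phi_S$-equivariant, with $\phi_S$ acting on $\Spd E\times S$ through the Frobenius of $S$. The functor $(-)^\diamond$ commutes with quotients by free, totally discontinuous actions, and the cited \cite[Proposition II.1.16]{FS_geometrisation} shows the action is of this kind. Therefore $X_S^\diamond=(Y_S/\phi_S^\Z)^\diamond=Y_S^\diamond/\phi_S^\Z$, which finishes the proof.
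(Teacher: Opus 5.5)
Your proposal is correct and is essentially the standard argument behind the cited result. The paper gives no proof of its own and simply quotes \cite[Proposition II.1.17]{FS_geometrisation}, whose proof is the same tilting/Witt-vector adjunction you use: continuous $\O_E$-algebra maps $W_{\O_E}(R^+)\to A^{\sharp+}$ correspond to continuous maps $R^+\to A^{\sharp+\flat}=A^+$, with invertibility of $\pi$ and $[\varpi]$ matching ``untilt over $E$'' and ``$\varpi\mapsto$ unit'' in both directions. That argument is then followed, as in your sketch, by passing to the free, totally discontinuous $\phi^{\Z}$-quotient, where $\phi$ acts through the Frobenius of $S$.
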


\begin{definition}
    For $\lambda=\frac{d}{r}$ with $d,r\in \mathbb Z$ and $r>0$, we have the vector bundle $\O_{Y_S}^r$  with a $\phi_S$-linear automorphism
    $\begin{pmatrix}
0 & 1 & 0 & \dots &  \\
 & 0 & 1 & 0 &  \\
 &  & \ddots & \ddots & \ddots \\
0 &  &  & 0 & 1 \\
\pi^{-d} & 0 &  &  & 0 
\end{pmatrix}\phi_S,$
where the matrix is of size $r\times r$.
This descends to a vector bundle on $X_S$, which we denote by $\O(\lambda)$.
\end{definition}
Note that the power of $\pi$ in the definition is $-d$, not $d$.

When $S$ is an affinoid perfectoid space over $\mathbb F_q$, there is a schematic version of the Fargues-Fontaine curve.
\begin{definition}
    $X_S^{sch}:=\Proj\left(\bigoplus_{m\ge 0}H^0(X_S,\O_{X_S}(m))\right).$
\end{definition}
We have the following GAGA result:
\begin{proposition}\label{thm:GAGA}\cite[after Remark II.2.8]{FS_geometrisation}
    Suppose $S$ is affinoid perfectoid.
    Then we have a natural map of locally ringed spaces 
    \[f:X_S\to X_S^{sch}\]
    and pullback along this map induces an equivalence of categories 
    \begin{equation*}
        f^*:\{\text{vector bundles on }X_S^{sch}\}\xrightarrow{\simeq}  \{\text{vector bundles on }X_S\}.
    \end{equation*}
    In addition, for every vector bundle $\E$ on $X_S^{sch}$, we have
    \[H^i(X_S^{sch},\E)=H^i(X_S,f^*(\E))\]
    for all $i\ge 0$.
\end{proposition}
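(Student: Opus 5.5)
The plan is to assemble the GAGA statement from the standard ingredients of Fargues--Scholze and Kedlaya--Liu rather than reprove it from scratch. Throughout, write $S=\Spa(R,R^+)$ and $P=\bigoplus_{m\ge0}H^0(X_S,\O_{X_S}(m))$.

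\emph{Step 1: the map $f$.} I construct $f$ from the standard construction of a morphism to a $\Proj$. The key input is that $\O_{X_S}(1)$ is ample in the relative sense of Kedlaya--Liu. Concretely, for every point $x\in X_S$ there should exist some $m>0$ and a section $s\in H^0(X_S,\O(m))$ that does not vanish at $x$. On the non-vanishing locus $D(s)\subset X_S$, the degree-zero elements $t/s^k$ with $t\in P_{mk}$ then give a ring map $P_{(s)}\to \O(D(s))$. These maps glue to a map of locally ringed spaces $X_S\to \bigcup_s D_+(s)=\Proj P$.

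To find such sections, I use the description of $H^0(X_S,\O(m))$ as the Banach--Colmez space of $\phi=\pi^{m}$-invariants in $B_{R}$. Nonvanishing at a given point can be arranged pointwise over each geometric point of $S$, where this is classical for the Fargues--Fontaine curve over a perfectoid field. It then spreads out over a neighbourhood, and compactness of $X_S$ (as $S$ is affinoid, $X_S$ is quasicompact) lets me use finitely many such sections.

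\emph{Step 2: cohomology comparison.} The twists $f^*\O_{X_S^{sch}}(m)\cong\O_{X_S}(m)$ hold by construction. The goal is to show:
\begin{itemize}
\item $H^i(X_S,\E(m))=0$ for $i>0$ and $m\gg0$;
\item every vector bundle $\E$ on $X_S$ is a quotient of some $\O(-m)^{\oplus N}$.
\end{itemize}
Both are the Kedlaya--Liu ampleness theorem, proved by reduction to the case $S$ a geometric point, using the explicit slope classification there together with a semicontinuity/compactness argument over $|S|$.

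Granting these, full faithfulness of $f^*$ reduces to $\Hom(\O(-m),\O(-m'))$, i.e.\ to $H^0$ of twists. That case is tautological from the definition of $P$, since the graded pieces of $P$ are exactly these $H^0$'s. Essential surjectivity follows by writing $\E$ as the cokernel of a map $\O(-m')^{N'}\to\O(-m)^N$ and using full faithfulness to descend the map. Finally, the equality of $H^i$ follows by comparing Čech complexes for the finite affine cover $D_+(s_j)$ with the corresponding (Stein-type) cover $D(s_j)$ of $X_S$. On the $D(s_j)$, coherent cohomology vanishes in positive degrees by Tate acyclicity for sousperfectoid spaces, and the sections agree by the degree-zero localisation computation from Step 1.

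\emph{Main obstacle.} The hard step is Step 2's ampleness: the vanishing of $H^1(X_S,\E(m))$ uniformly for large $m$ when $S$ is not a point. Here I would first try the approach of Kedlaya--Liu, which approximates $\E$ by bundles trivialised on a finite cover and controls the $\phi$-module cohomology via the Banach open mapping theorem. Since the statement is attributed to \cite{FS_geometrisation}, in the paper one may simply cite this there.
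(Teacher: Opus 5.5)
Your overall architecture (a map to $\Proj P$ built from non-vanishing sections, with Kedlaya--Liu ampleness as the deep input) is the right one. The paper does not reprove this statement; it cites \cite{FS_geometrisation}. There, the statement is the special case of the abstract algebraization result Proposition II.2.7, for a spectral locally ringed space with an ample line bundle. That proposition is fed by Theorem II.2.6 ($\E(n)$ globally generated with $H^1(X_S,\E(n))=0$ for $n\gg0$, $S$ affinoid), together with $H^{\ge 2}=0$ and the fact that $X_S$ is qcqs. The paper follows exactly this template for its GAGA for covers $C_S$.

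The gap is in your own derivation of the algebraization step, specifically the cohomology comparison. The sets $D(s_j)=\{s_j\neq 0\}\subset X_S$ are complements of zero loci of sections of $\O(m)$, hence not quasi-compact, so Tate acyclicity does not apply to them. More seriously, $H^0(D(s_j),\O_{X_S})$ is \emph{not} $P_{(s_j)}$. Already take $S=\Spa C$ and $0\neq t\in H^0(X_C,\O(1))$ vanishing only at a point $\infty$. Then $P_{(t)}=\colim_k H^0(X_C,\O(k))\,t^{-k}$ consists of functions with poles of finite order at $\infty$. On the other hand, Mayer--Vietoris for $X_C=(X_C\setminus\{\infty\})\cup D$, with $D$ a small disc around $\infty$, together with $H^1(X_C,\O)=0$, shows the following: any function on $D\setminus\{\infty\}$ with an essential singularity at $\infty$ is, up to a function on $D$, the restriction of a function on $X_C\setminus\{\infty\}$. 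So Step 1 gives only a ring map $P_{(s)}\to\O(D(s))$, not an isomorphism. The two \v{C}ech complexes are therefore not termwise isomorphic, and proving that the natural map between them is a quasi-isomorphism is essentially the whole theorem.

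The steps you call formal also need ampleness.
\begin{itemize}
\item The base case $H^0(X_S^{sch},\O(k))=P_k$ is not tautological. For a general graded ring, $\Gamma(\Proj P,\O(k))$ differs from $P_k$ by local cohomology of $P$ along $P_+$, and $\O_{\Proj P}(1)$ need not even be invertible. Both facts hold here but must be proved. One route: if $f_1,\dots,f_r\in P_n$ generate $\O_{X_S}(n)$, then $H^1$-vanishing for large twists of the kernel of $(f_i):\O(-n)^r\to\O$ gives $(f_1,\dots,f_r)\supseteq P_{\ge N}$, so the $D_+(f_i)$ cover $\Proj P$.
\item In essential surjectivity, the cokernel on $X_S^{sch}$ of your descended presentation is only finitely presented. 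You know neither surjectivity nor flatness of $f$, so local freeness of its pullback does not give local freeness on $X_S^{sch}$ without a further argument (e.g.\ via Fitting ideals and the same consequence of ampleness).
\end{itemize}
The remedy is to run every comparison through ampleness, never through sections over the non-quasi-compact $D(s)$. That means working with the graded modules $\bigoplus_n H^0(X_S,\E(n))$ and with resolutions by twists of $\O$, whose cohomology in large degrees is controlled on both sides. Alternatively, simply cite Proposition II.2.7, as the paper does.

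Your spreading-out argument in Step 1 is also unjustified as stated, but it is unnecessary: global generation of $\O_{X_S}(m)$ for one $m\gg0$ already yields finitely many $s$ with $X_S=\bigcup D(s)$.
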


\begin{remark}
    By \cite[proof of Proposition II.2.7]{FS_geometrisation}, the equivalence above is an equivalence of exact categories, i.e. a sequence $0\to \E_1\to \E_2 \to \E_3 \to 0$ of vector bundles on $X_S^{sch}$ is exact if and only if $0\to f^*\E_1\to f^*\E_2 \to f^*\E_3 \to 0$ is exact. 
    It follows from the Tannakian interpretation of $G$-bundles that we have an equivalence of categories \begin{equation*}
        f^*:\{\text{$G$-bundles on }X_S^{sch}\}\xrightarrow{\simeq}  \{\text{$G$-bundles on }X_S\}
    \end{equation*}
    for any linear algebraic group $G$ over $E$.
\end{remark}

\subsection{Moduli stack of \texorpdfstring{$G$}{G}-bundles}
\begin{definition}
    For a linear algebraic group $G$ over $E$,
    we define $\Bun_G$ to be the prestack on\footnote{We switch to $\Perf_{\overline\F_q}$ because this is the setup where $\Bun_G$ is studied in \cite{FS_geometrisation}.} $\Perf_{\overline\F_q}$ taking $S\in \Perf_{\overline\F_q}$ to the groupoid of $G$-bundles on $X_S$.
\end{definition}
This is a v-stack \cite[before Definition III.1.2]{FS_geometrisation}. This is proved as follows: One first proves that $S\mapsto \{\text{vector bundles on }X_S\}$ is a v-stack \cite[Proposition II.2.1]{FS_geometrisation}. One can view a $G$-bundle on $X_S$ as an exact tensor functor $\mathrm{Rep}_E(G)\to  \{\text{vector bundles on }X_S\}$. Using these two facts and part \eqref{item:seq} of the following lemma, one can deduce that $\Bun_G$ is a v-stack.

\begin{lemma}\label{lem: exact 1}
    Let $S'\to S$ be a v-cover of perfectoid spaces over $\Spa \F_q$.
    Let $c:X_{S'} \to X_S$ be the associated map on the Fargues-Fontaine curves.
    \begin{enumerate}
        \item \label{item:top space} The map on the underlying topological spaces $|c|:|X_{S'}| \to |X_S|$ is surjective.
        \item \label{item:seq} A sequence
        \begin{equation*}
            0\to \E_1 \xrightarrow{f} \E_2 \xrightarrow{g} \E_3 \to 0
        \end{equation*}
        of vector bundles on $X_S$ with $g\circ f=0$ is exact if and only if its pullback
        \begin{equation*}
            0\to c^*\E_1 \to c^*\E_2 \to c^*\E_3 \to 0
        \end{equation*}
        is exact.
    \end{enumerate}
\end{lemma}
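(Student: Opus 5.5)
Part \eqref{item:seq} follows from part \eqref{item:top space} by \Cref{lem: exact criterion}, applied to the single morphism $c:X_{S'}\to X_S$ of locally ringed spaces. Its hypothesis $c(X_{S'})=X_S$ is exactly the surjectivity statement of part \eqref{item:top space}. So essentially all the work is in part \eqref{item:top space}.

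For part \eqref{item:top space}, I would pass to diamonds. For an analytic adic space $Z$ over $\Z_p$ (such as $X_S$, which is sousperfectoid), there is a natural homeomorphism $|Z|\cong|Z^\diamond|$. Hence it suffices to show that $|X_{S'}^\diamond|\to|X_S^\diamond|$ is surjective. By the quoted \cite[Proposition II.1.17]{FS_geometrisation},
\[ X_S^\diamond=(\Spd E\times S)/\phi_S^\Z, \]
and similarly for $S'$, compatibly with $c$.

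The first step is to check that $\Spd E\times S'\to\Spd E\times S$ is surjective on topological spaces. This holds because it is the base change of the v-cover $S'\to S$ along $\Spd E\times S\to S$. V-covers are stable under base change, and a v-cover of v-sheaves is surjective on $|\cdot|$. Concretely, any point of $|\Spd E\times S|$ is represented by a map $\Spa(K,K^+)\to\Spd E\times S$ from a perfectoid field. Using the v-cover property, after enlarging $(K,K^+)$ this map lifts through $S'$. Together with the given map to $\Spd E$, the lift gives a point of $\Spd E\times S'$ over it. Equivalently, the same argument works directly on $Y_S$, or on the analytic locus of $\Spa W_{\O_E}(R^+)$ after reducing to affinoid $S$.

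The second step passes to the $\phi^\Z$-quotients. The projection $|Y_S^\diamond|\to|X_S^\diamond|$ is surjective, being a quotient by a free, totally discontinuous group action. The map $c$ is induced by the $\phi$-equivariant map $Y_{S'}\to Y_S$. Surjectivity therefore descends through the commutative square with surjective vertical arrows.

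The main obstacle is making precise the identification of $|X_S|$ with $|X_S^\diamond|$, together with the claim that a v-cover induces a surjection on $|\cdot|$ after this base change. I would cite the standard facts from Scholze's diamonds paper: $|Z^\diamond|=|Z|$ for analytic adic spaces, and surjectivity of v-covers on points. Some care is needed because $\Spd E\times S$ is a product over $\Spd\F_q$, not a perfectoid space in characteristic $p$. A cleaner route is to reduce first to $S=\Spa(R,R^+)$ and $S'=\Spa(R',R'^+)$ affinoid with $S'\to S$ surjective, using quasicompactness of v-covers. Then prove the claim for $Y_{S'}\to Y_S$ by tilting a point: a point of $Y_S$ with completed residue field $L$ gives an untilt $L^\flat$ with a map $\Spa(L^\flat)\to S$. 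This lifts to $S'$ after a field extension, and untilting back yields a preimage point in $Y_{S'}$.
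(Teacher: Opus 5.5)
Your proposal is correct and follows essentially the paper's route: part (2) via the exactness criterion, and part (1) by passing to diamonds, where the base change of the v-cover $S'\to S$ is a v-surjection and hence surjective on $|\cdot|$. The only difference is in handling the $\phi^\Z$-quotient. The paper rewrites $|X_S|\cong|S\times(\Spd E)/\phi^\Z|$, using that absolute Frobenius acts trivially on topological spaces, so it can base change directly along $S'\to S$. You instead base change on $Y_S^\diamond=\Spd E\times S$ and descend through the surjection $|Y_S|\to|X_S|$, which works equally well.
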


\begin{proof}
    For part \eqref{item:top space}, note that as in \cite[Section II.1.2]{FS_geometrisation}, $$|X_S|=|X_S^\diamond| = |S/\phi^\Z \times \Spd E| \cong |S\times (\Spd E)/\phi^\Z|$$ because the absolute Frobenius $\phi\times\phi$ of $S\times \Spd E$ acts trivially on the topological space.
    Since $S'\to S$ is a v-cover, it is a surjection of v-sheaves. It follows that its base change $S'\times (\Spd E)/\phi^\Z \to S\times (\Spd E)/\phi^\Z$ is also a surjection of v-sheaves. The map on the underlying topological spaces is thus surjective.

    Part \eqref{item:seq} follows from part \eqref{item:top space} and \Cref{lem: exact criterion}.
\end{proof}

\subsection{\texorpdfstring{$B^+_{dR}$-}{}Affine Grassmannian}
\begin{lemma}[Beauville-Laszlo lemma]\cite{Beauville-Laszlo}\label{BL lemma}
    Let $A$ be a ring and $f\in R$ be a non-zero-divisor. Let $\hat A=\lim_n A/f^n$. There is an equivalence of categories between $\set{\text{finite projective $A$-modules}}$ and the category of triples $(F,G,\varphi)$, where
    \begin{itemize}
        \item $F$ is a finite projective $A[\frac{1}{f}]$-module
        \item $G$ is a finite projective $\hat A$-module
        \item $\phi:F\otimes_A \hat A \isomto G[\frac{1}{f}]$ is an $\hat A[\frac{1}{f}]$-module isomorphism.
    \end{itemize}
\end{lemma}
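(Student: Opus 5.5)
We construct an explicit functor from finite projective $A$-modules to triples and show it is fully faithful and essentially surjective, the latter by gluing. (In the statement, $f$ should be read as an element of $A$.) Send $M$ to $(M[\frac1f], \hat M, \mathrm{can})$, where $\hat M := M\otimes_A \hat A$ and $\mathrm{can}: M[\frac1f]\otimes_{A}\hat A \cong \hat M[\frac1f]$ is the canonical isomorphism. Everything rests on one key input: $f$ is a non-zero-divisor in $\hat A$, and the square
\[
\begin{tikzcd}
A \arrow[r] \arrow[d] & \hat A \arrow[d] \\
A[\tfrac1f] \arrow[r] & \hat A[\tfrac1f]
\end{tikzcd}
\]
is cartesian, with $A/f^n \cong \hat A/f^n\hat A$ for all $n$. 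Equivalently, the sequence
\[0\to A \to A[\tfrac1f]\oplus \hat A \to \hat A[\tfrac1f]\to 0\]
is exact.

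We prove this first. Since $f$ is a non-zero-divisor, $A[f^\infty]=0$. For $\hat A$: if $fx=0$ with $x=(x_n)$, then $fx_{n+1}\in f^{n+1}A$, so $x_{n+1}\in f^nA$, whence $x_n=0$. Thus $f$ is a non-zero-divisor on $\hat A$. The identification $\hat A/f^n\hat A\cong A/f^n$ is the standard fact for the completion along a principal ideal generated by a non-zero-divisor: surjectivity is clear, and for the kernel, an element with $x_n=0$ has all $x_m\in f^nA$, so it can be divided by $f^n$ compatibly.

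Exactness in the middle works as follows. Given $a/f^k\in A[\frac1f]$ and $b\in\hat A$ with the same image, we have $f^kb=a$ in $\hat A$, so $a\in f^k\hat A\cap A=f^kA$. Hence $a/f^k\in A$, and it maps to $b$ because $f$ is a non-zero-divisor on $\hat A$. Surjectivity on the right: write $y=b/f^k$ with $b\in\hat A$, and choose $a\in A$ with $b-a\in f^k\hat A$. Then $y=a/f^k+(b-a)/f^k$.

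Because finite projective modules are flat, tensoring this sequence with $M$ keeps it exact. This gives $M=M[\frac1f]\times_{\hat M[\frac1f]}\hat M$, and full faithfulness follows. Indeed, a morphism of triples consists of maps on the two pieces that agree on the overlap, and therefore induces a unique map on fibre products. Here we also use $\Hom_A(M,N)\otimes$ compatibility, or simply apply the sequence to $N$ and note that $\Hom_A(M,-)$ is exact for projective $M$.

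For essential surjectivity, given $(F,G,\varphi)$ we set $M:=F\times_{\hat A[1/f]}G$. This is the step I expect to be the main obstacle: showing that $M$ is finite projective and that $M[\frac1f]\cong F$ and $M\otimes_A\hat A\cong G$. My plan is to reduce to the free case. The triples for $M$ free are handled by the exact sequence above. In general, choose $F'$ and $G'$ with $F\oplus F'$ free of rank $n$, and use that $G\oplus G'$ can also be made free after adding further free summands. Then adjust $\varphi\oplus\varphi'$ to a triple with both pieces free, whose gluing matrix lies in $\GL_n(\hat A[\frac1f])$. The gluing of such a triple is finitely presented, by factoring the matrix as a product of an element of $\GL_n(A[\frac1f])$ and an element of $\GL_n(\hat A)$ up to a matrix close to the identity. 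Alternatively, and more cleanly, I would cite the flatness-free argument of Beauville--Laszlo and the Stacks project [Tag 0BNI]: the fibre product functor preserves direct sums, so $M$ is a direct summand of the gluing $M''$ of a free triple. Finally, one shows that a free triple glues to a finite projective module. To do this, check that $M''/f^n\cong G''/f^n$ and $M''[\frac1f]\cong F''$, and use that a module is finite projective if it is so over both $A[\frac1f]$ and $\hat A$ and has the same $f$-adic reductions; this is the descent statement of [Tag 0BP2], which I would invoke.
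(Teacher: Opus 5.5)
The paper does not prove this lemma; it only cites Beauville--Laszlo, so there is no internal argument to compare with. Your first half is a correct, self-contained proof of full faithfulness. The checks that $f$ is a non-zero-divisor on $\hat A$ with $\hat A/f^n\hat A\cong A/f^n$ are right, as is the exactness of $0\to A\to A[\frac1f]\oplus\hat A\to\hat A[\frac1f]\to 0$. The cleanest way to finish is to apply the left exact functor $\Hom_A(M,-)$ to the corresponding sequence for $N$ and use the base-change adjunctions.

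Essential surjectivity, however, has a genuine gap.
\begin{itemize}
\item The exact sequence only covers the identity gluing, not a free triple with arbitrary $\Phi\in\GL_n(\hat A[\frac1f])$.
\item Your reduction never constructs the gluing isomorphism $\varphi'$ on the complements. This is fixable by adding free summands.
\item The proposed factorization of $\Phi$ is false. If every $\Phi$ lay in $\GL_n(A[\frac1f])\GL_n(\hat A)$, every free triple would be isomorphic to the trivial one and every glued module would be free. ``Up to a matrix close to the identity'' changes nothing, since $1+fM_n(\hat A)\subset\GL_n(\hat A)$.
\end{itemize}
For a concrete counterexample, take $A=\Z[\sqrt{-5}]$ and $f=2$. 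Then $\hat A=\Z_2[\sqrt{-5}]$ is a discrete valuation ring with uniformiser $\pi=1+\sqrt{-5}$. The triple $(A[\frac12],\hat A,\pi)$ glues to $\{x\in A[\frac12]:\pi x\in\hat A\}$, which is isomorphic to the non-principal ideal $(2,1+\sqrt{-5})$ and so is not free.

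Your fallback invokes the Stacks Project for exactly the statement being proved. That puts you level with the paper's citation, but it is not an argument. Moreover, $M''/f^n\cong G''/f^n$ together with $M''[\frac1f]\cong F''$ does not by itself give $M''\otimes_A\hat A\cong G''$.

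The missing idea is that $\mathrm{Tor}^A_1(\hat A,N)=0$ and $\hat A\otimes_AN=N$ for every $f$-power-torsion module $N$. Both follow from $\hat A\otimes^{\mathrm L}_AA/f^n\simeq A/f^n$.
\begin{itemize}
\item Apply this to $0\to M\to M[\frac1f]\to M[\frac1f]/M\to 0$ for the $f$-torsion-free fibre product $M=F\times_{G[1/f]}G$. It shows $\hat A\otimes_AM$ is $f$-torsion-free, so the map $\hat A\otimes_AM\to G$ is injective.
\item The cokernel of that map is finitely generated and vanishes after inverting $f$. It equals $f$ times itself, because $M\to G/fG$ is onto (this uses that $F$ is finitely generated). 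Hence the cokernel is zero.
\item Finite presentation of $M$ follows from Nakayama over $\hat A$ plus $\hat A\otimes_AN=N$, applied first to $M$ and then to the kernel of $A^r\twoheadrightarrow M$.
\item Flatness of $M$ follows from the same Tor vanishing together with flatness of $M[\frac1f]$ and of $M/fM\cong G/fG$.
\end{itemize}
Flat and finitely presented then gives finite projective. These steps are the actual content of Beauville--Laszlo and are absent from your proposal. The factorization route should be discarded.
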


\begin{lemma}\label{exactness for BL}
    Let $A$ be a commutative ring and $f\in R$ be a non-zero-divisor.
    A sequence
        \begin{equation*}
            0\to M_1 \to M_2 \to M_3 \to 0
        \end{equation*}
        of finite projective $A$-modules is exact if and only if its pullbacks
        \begin{equation*}
            0\to M_1\otimes_A A\left[\frac{1}{f}\right] \to M_2\otimes_A A\left[\frac{1}{f}\right] \to M_3\otimes_A A\left[\frac{1}{f}\right] \to 0
        \end{equation*}
        and
        \begin{equation*}
            0\to M_1\otimes_A \hat A \to M_2\otimes_A \hat A \to M_3\otimes_A \hat A \to 0
        \end{equation*}
        are exact.
\end{lemma}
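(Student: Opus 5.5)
The "only if" direction is immediate, because $A[\frac{1}{f}]$ and $\hat A$ are $A$-modules and base change preserves split exact sequences. Any short exact sequence of finite projective $A$-modules splits, since $M_3$ is projective, so tensoring with any $A$-algebra keeps it exact. The real content is the "if" direction. I would prove it in the style of \Cref{lem: exact criterion}, reducing to residue fields at primes of $A$. The key geometric input is that $\Spec A[\frac1f]\sqcup\Spec \hat A\to \Spec A$ is surjective.

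\emph{Step 1: reduction to fibers.} Given $b\circ a=0$ and the two base-changed sequences exact, I would invoke \cite[Lemma 00O0]{stacks-project}, as in the proof of \Cref{lem: exact criterion}. This reduces exactness of $0\to M_1\to M_2\to M_3\to 0$ to exactness of
\[0\to M_1\otimes_A\kappa(\mathfrak p)\to M_2\otimes_A\kappa(\mathfrak p)\to M_3\otimes_A\kappa(\mathfrak p)\to 0\]
for every prime $\mathfrak p\subset A$. Here $\kappa(\mathfrak p)$ is the residue field, and the complex condition is inherited.

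\emph{Step 2: covering the primes.} If $f\notin\mathfrak p$, then $\mathfrak p$ comes from a prime $\mathfrak p'$ of $A[\frac1f]$ with $\kappa(\mathfrak p')=\kappa(\mathfrak p)$. The fiber sequence is then the fiber at $\mathfrak p'$ of the sequence over $A[\frac1f]$, which is an exact sequence of finite projectives and hence split. So its fiber is exact.

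If $f\in\mathfrak p$, I would use that $\hat A/f\hat A\cong A/fA$; this holds since $\hat A/f^n\hat A\cong A/f^n$, which is standard for the $f$-adic completion of a ring with $f$ a non-zero-divisor. Hence $\mathfrak p$, which contains $f$, corresponds to a prime $\mathfrak q$ of $\hat A$ containing $f$ with $\hat A/\mathfrak q\cong A/\mathfrak p$, so $\kappa(\mathfrak q)=\kappa(\mathfrak p)$. The fiber at $\mathfrak p$ is then the fiber at $\mathfrak q$ of the exact, hence split, sequence over $\hat A$, and is therefore exact. Alternatively, one could argue purely with base change along $A\to A/fA$ through $\hat A$. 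In both cases field extension is faithfully flat, but here the residue fields agree anyway.

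\emph{Main obstacle.} The only point needing care is the identification $\hat A/f\hat A=A/fA$ without noetherian hypotheses. I would verify it via the short exact sequences $0\to A/f^{n-1}\xrightarrow{f}A/f^n\to A/f\to 0$. These are exact because $f$ is a non-zero-divisor. Passing to the limit, where Mittag-Leffler holds since the transition maps are surjective, shows that $\hat A\xrightarrow{f}\hat A\to A/f\to 0$ is exact. Once this is in place, everything else is formal.
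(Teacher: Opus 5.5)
Your proposal is correct and follows essentially the paper's route. Both arguments reduce, via fiberwise exactness (\Cref{lem: exact criterion}, i.e.\ Stacks 00O0), to the surjectivity of $\Spec A[\frac{1}{f}]\sqcup\Spec\hat A\to\Spec A$. The only difference is that you verify surjectivity over $V(f)$ by hand using $\hat A/f\hat A\cong A/fA$, whereas the paper gets it by citing Lemme~1 of \cite{Beauville-Laszlo} applied to the residue fields $\kappa(\mathfrak p)$.
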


\begin{proof}
    By \Cref{lem: exact criterion}, it is enough to show that the natural map $\Spec A\left[\frac{1}{f}\right] \sqcup \Spec \hat A \to \Spec A$ is surjective (on the underlying topological spaces). This follows from \cite[Lemme 1]{Beauville-Laszlo} by taking $M$ to be the residue field of $A$ at every prime ideal of $A$.
\end{proof}

The following follows immediately from \Cref{BL lemma,exactness for BL}.
\begin{corollary}
    Let $A$ be an $E$-algebra and $f\in R$ be a non-zero-divisor. Let $\hat A=\lim_n A/f^n$. Let $G$ be a linear algebraic group over $E$.
     There is an equivalence of categories between $\set{\text{$G$-bundles on }\Spec A}$ and the category of triples $(\mathcal F,\mathcal G,\varphi)$, where
    \begin{itemize}
        \item $\mathcal F$ is a $G$-bundle on $\Spec A[\frac{1}{f}]$
        \item $\mathcal G$ is a $G$-bundle on $\Spec \hat A$
        \item $\phi:\mathcal F|_{\Spec\hat A[\frac{1}{f}]} \isomto \mathcal G|_{\Spec\hat A[\frac{1}{f}]}$ is an isomorphism.
    \end{itemize}
\end{corollary}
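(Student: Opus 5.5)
The plan is to pass through the Tannakian description of $G$-bundles, as the paper indicates at the start of the subsection on the exactness criterion. A $G$-bundle on $\Spec A$ (respectively on $\Spec A[\frac1f]$, $\Spec \hat A$, $\Spec \hat A[\frac1f]$) is an exact tensor functor from $\Rep_E(G)$ to the exact tensor category of finite projective modules over the corresponding ring. Exactness here means that short exact sequences of representations go to short exact sequences of modules. This interpretation is valid for a linear algebraic group $G$ over a field $E$ and an $E$-algebra base.

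\textbf{Step 1: the equivalence of module categories is a tensor equivalence.} \Cref{BL lemma} gives an equivalence
\[
\Phi:\ \{\text{finite projective }A\text{-modules}\} \iso \mathcal T,
\qquad M\mapsto \bigl(M[\tfrac1f],\ M\otimes_A\hat A,\ \mathrm{can}\bigr),
\]
where $\mathcal T$ is the category of gluing triples. $\mathcal T$ has a natural tensor product $(F,G,\varphi)\otimes(F',G',\varphi')=(F\otimes F', G\otimes G', \varphi\otimes\varphi')$. $\Phi$ is visibly a tensor functor, since base change commutes with tensor products, and it preserves units. Hence $\Phi$ is a tensor equivalence.

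\textbf{Step 2: exactness on both sides.} Declare a sequence in $\mathcal T$ exact when both of its components (over $A[\frac1f]$ and over $\hat A$) are exact. \Cref{exactness for BL} says precisely that a sequence $M_1\to M_2\to M_3$ of finite projective $A$-modules with zero composite is exact if and only if its image under $\Phi$ is exact. So $\Phi$ is an equivalence of exact tensor categories. The composite-zero hypothesis is automatic for sequences that are images of exact sequences in $\Rep(G)$.

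\textbf{Step 3: identify both sides with $G$-bundles.} Composing with $\Phi$, exact tensor functors $\Rep(G)\to\{\text{f.p. }A\text{-modules}\}$ correspond to exact tensor functors $\Rep(G)\to\mathcal T$. An exact tensor functor $\omega:\Rep(G)\to\mathcal T$ is the same datum as:
\begin{itemize}
\item an exact tensor functor $\omega_F$ to finite projective $A[\frac1f]$-modules,
\item an exact tensor functor $\omega_G$ to finite projective $\hat A$-modules,
\item a tensor natural isomorphism $\omega_F\otimes\hat A \iso \omega_G[\frac1f]$.
\end{itemize}
By Tannakian duality over the three rings, these are respectively a $G$-bundle $\mathcal F$ on $\Spec A[\frac1f]$, a $G$-bundle $\mathcal G$ on $\Spec\hat A$, and an isomorphism $\mathcal F|_{\Spec\hat A[\frac1f]}\iso\mathcal G|_{\Spec\hat A[\frac1f]}$. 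For the last identification, note that tensor isomorphisms between fibre functors are isomorphisms of the $G$-torsors. Morphisms match in the same way.

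The main obstacle is the bookkeeping in Step 3. The key checks are that exactness of $\omega$ decomposes componentwise, which is exactly our definition of exactness on $\mathcal T$, and that base change to $\hat A[\frac1f]$ of an exact tensor functor is again exact. The latter holds because finite projective modules are flat, so base change preserves short exact sequences of them.
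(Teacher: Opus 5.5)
Your proposal is correct and follows the paper's intended argument. The paper only says the corollary follows immediately from the Beauville--Laszlo lemma and the exactness criterion for the pair $A[\frac1f]$, $\hat A$, via the Tannakian interpretation of $G$-bundles as exact tensor functors. Your write-up fills in the bookkeeping the paper omits: that the gluing equivalence is compatible with tensor products, that exactness on triples is checked componentwise, and that Tannakian duality is compatible with base change to $\hat A[\frac1f]$.
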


Let us recall the definitions of $B_{dR}^+(R)$ and $B_{dR}(R)$.
Let $R$ be a perfectoid $E$-algebra with a pseudo-uniformizer $\varpi$.
Then we have Fontaine's map $$\theta:W_{\O_E}(R^{\circ\flat})\twoheadrightarrow R^\circ.$$ Its kernel is generated by an element $\xi$ of the form $\pi+[\varpi^\flat]\alpha$ for some $\alpha\in W_{\O_E}(R^{\circ\flat})$ and pseudouniformizer $\varpi^\flat$ of $R^\flat$.
We define
\[B_{dR}^+(R):=\varprojlim_i W_{\O_E}(R^{\circ\flat})\left[\frac{1}{[\varpi^\flat]}\right]/(\xi^i)\]
and 
\[B_{dR}(R):=B_{dR}^+(R)\left[\frac{1}{\xi}\right].\]

One can show that $B_{dR}^+(R)$ is $\xi$ torsion-free, $\xi$-adically complete, and $B_{dR}^+(R)/(\xi) \cong R$.

Let $G$ be a connected reductive group over $E$.
\begin{definition}
    The \emph{$B_{dR}^+$-affine Grassmannian} $\Gr_G$ is the v-sheaf on $\Perf$ given by sending an affinoid perfectoid space $S=\Spa(R,R^+)$ to the equivalence classes of triples $\set{(S^\sharp, \E,\iota)}$, where
    \begin{itemize}
        \item $S^\sharp=\Spa(R^\sharp,R^{\sharp+})$ is an untilt of $S$ over $\Spa E$
        \item $\E$ is a $G$-bundle on $\Spec B_{dR}^+(R^\sharp)$
        \item $\iota:\E|_{\Spec B_{dR}(R^\sharp)} \isomto \E_0|_{\Spec B_{dR}(R^\sharp)}$ is an isomorphism to the trivial $G$-bundle.
    \end{itemize}
\end{definition}

One way to prove that this is a v-sheaf is to first show that $$S\mapsto \set{\text{vector bundles on }\Spec B^+_{dR}(R^\sharp)}$$ is a v-stack \cite[Corollary 17.1.9]{Berkeley} and then use the Tannakian formalism together with \Cref{lem: exactness on B_dR} below.

One can show using the Beauville-Laszlo lemma that this is isomorphic to the v-sheaf sending an affinoid perfectoid space $S=\Spa(R,R^+)$ to the equivalence classes of triples $\set{(S^\sharp, \E,\iota)}$, where
\begin{itemize}
    \item $S^\sharp=\Spa(R^\sharp,R^{\sharp+})$ is an untilt of $S$ over $\Spa E$
    \item $\E$ is a $G$-bundle on $X_S^{sch}$
    \item $\iota:\E|_{X_S^{sch}\setminus \Spec R^\sharp} \isomto \E_0|_{X_S^{sch}\setminus \Spec R^\sharp}$ is an isomorphism to the trivial $G$-bundle.
\end{itemize}

In particular, we have a morphism $\Gr_G \to \Bun_G$, called the \emph{Beauville-Laszlo morphism}. This is surjective as a morphism of pro-\'etale stacks \cite[Proposition III.3.1]{FS_geometrisation}.

Let us state a handy lemma.
\begin{lemma}\label{exact_criterion}
    Let $R$ be a commutative ring and $\xi\in R$. Let
    \begin{equation}\label{sequence}
        0\to A\xrightarrow{f}B\xrightarrow{g}C
    \end{equation}
    be morphisms of $\xi$-complete $R$-modules. Assume that $g\circ f=0$ and $\xi$ is neither a zero divisor in $B$ nor $C$.
    If
    \begin{equation}\label{sequence2}
        0\to A/\xi\to B/\xi\to C/\xi
    \end{equation}
    is exact, then so is \eqref{sequence}.
    If $B/\xi\to C/\xi$ is surjective, then $B\to C$ is surjective.
\end{lemma}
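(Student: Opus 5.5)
We argue by successive approximation modulo powers of $\xi$. The key inputs are that $\xi$-completeness lets us pass to limits, and that $\xi$-torsion-freeness of $B$ and $C$ lets us "divide by $\xi$". Throughout we only use that $\xi$ is a non-zero-divisor on $B$ and $C$; we do not need this for $A$.

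\emph{Injectivity of $f$.} Let $a\in A$ with $f(a)=0$. Its image in $A/\xi$ maps to $0$ in $B/\xi$, so by exactness of \eqref{sequence2} we get $a=\xi a_1$. Then $\xi f(a_1)=0$, and since $\xi$ is a non-zero-divisor on $B$, $f(a_1)=0$. Iterating gives $a\in\bigcap_n \xi^n A$. This intersection is $0$ because $A$ is $\xi$-complete, which we take to include separatedness: $A\to\lim A/\xi^n$ is an isomorphism.

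\emph{Exactness at $B$.} Let $b\in B$ with $g(b)=0$. By exactness of \eqref{sequence2} there is $a_0\in A$ with $b-f(a_0)=\xi b_1$. Applying $g$ and using $g\circ f=0$ gives $\xi g(b_1)=0$, so $g(b_1)=0$ because $\xi$ is a non-zero-divisor on $C$. Repeating the argument for $b_1$, we inductively construct $a_n$ and $b_n$ with
\[ b=f(a_0+\xi a_1+\dots+\xi^n a_n)+\xi^{n+1}b_{n+1}. \]
By completeness of $A$, the series $a=\sum \xi^n a_n$ converges in $A$. Then $b-f(a)\in\bigcap_n \xi^n B=0$. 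This step uses that $f$ is continuous for the $\xi$-adic topologies, which holds because $f$ is $R$-linear, and that $B$ is separated.

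\emph{Surjectivity.} Suppose $B/\xi\to C/\xi$ is surjective. Given $c\in C$, we choose $b_0$ with $c-g(b_0)=\xi c_1$, and then iterate on $c_1$. The series $b=\sum\xi^n b_n$ converges in $B$, and $c-g(b)\in\bigcap_n\xi^n C=0$. No torsion-freeness is needed for this part.

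The main subtlety lies in the convergence arguments. One point is the meaning of "$\xi$-complete": we use the classical sense, complete and separated. Another is making sure that the partial sums really converge to an element whose image agrees with the limit, which follows from $R$-linearity of $f$ and $g$.
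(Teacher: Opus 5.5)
Your proof is correct and is essentially the paper's. The surjectivity argument is identical; for exactness, the paper runs the same chase at finite level, proving by induction (using that $\xi$ is a non-zero-divisor on $B$ and $C$) that $0\to A/\xi^n\to B/\xi^n\to C/\xi^n$ is exact for every $n$, and then concludes by left exactness of $\varprojlim_n$ together with completeness, whereas you assemble the convergent series $\sum_n \xi^n a_n$ directly, which is only a difference in packaging.
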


\begin{proof}
    Assume \eqref{sequence2} holds.
    It is straightforward to check by induction that for every $n\ge 1$,
    \begin{equation*}
        0\to A/\xi^n\to B/\xi^n\to C/\xi^n
    \end{equation*}
    is exact.
    It is also clear that 
    \begin{equation*}
        0\to \varprojlim A/\xi^n\to\varprojlim B/\xi^n\to \varprojlim C/\xi^n
    \end{equation*}
    is exact.
    The exactness of \eqref{sequence} now follows by the $\xi$-completeness of $A,B,C$.

    Assume $B/\xi\to C/\xi$ is surjective.
    Let $c\in C$.
    Then there exists $b_0\in B$ such that $c=g(b_0)+\xi c_1$ for some $c_1\in C$.
    There exists $b_1\in B$ such that $c_1=g(b_1)+\xi c_2$ for some $c_2\in C$, and hence $c= g(b_0+\xi b_1) + \xi^2 c_2$.
    We continue this process.
    Then $b_0+\xi b_1 + \xi^2 b_2+\dots$ defines an element in $B$ whose image in $C$ is $c$ by $\xi$-completeness of $B$ and $C$.
    Thus $B\to C$ is surjective. 
\end{proof}

\begin{lemma}\label{lem: exactness on B_dR}
    Let $h:S'=\Spa(R',R'^+)\to S=\Spa(R,R^+)$ be a v-cover of affinoid perfectoid spaces over $\Spa E$.
    \begin{enumerate}
        \item \label{part 1} A sequence
            \begin{equation}\label{P_i exact}
                0\to P_1 \xrightarrow{f} P_2 \xrightarrow{g} P_3 \to 0
            \end{equation}
            of finite projective $R$-modules with $g\circ f=0$ is exact if and only if its pullback
            \begin{equation}\label{pullback of P_i exact}
                0\to P_1\otimes_R R' \to P_2\otimes_R R' \to P_3\otimes_R R' \to 0
            \end{equation}
            is exact.
        \item \label{part 2}A sequence
            \begin{equation}\label{M_i exact}
                0\to M_1 \xrightarrow{f} M_2 \xrightarrow{g} M_3 \to 0
            \end{equation}
            of finite projective $B^+_{dR}(R)$-modules with $g\circ f=0$ is exact if and only if its pullback
            \begin{equation}\label{pullback of M_i exact}
                0\to M_1\otimes_{B^+_{dR}(R)}B^+_{dR}(R') \to M_2\otimes_{B^+_{dR}(R)}B^+_{dR}(R') \to M_3\otimes_{B^+_{dR}(R)}B^+_{dR}(R') \to 0
            \end{equation}
            is exact.
    \end{enumerate}
\end{lemma}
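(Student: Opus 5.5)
In both parts the ``only if'' direction is immediate: base change preserves split exact sequences, and a short exact sequence of finite projective modules is split. So only the ``if'' direction needs an argument.

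\textbf{Part \eqref{part 1}.} We reduce to \Cref{lem: exact criterion}, applied to $X=\Spec R$ with the single map $X_1=\Spec R'\to \Spec R$. It then suffices to show that $\Spec R'\to\Spec R$ is surjective.

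We prove surjectivity as follows. Let $\mathfrak p\subset R$ be a prime. If $R'\otimes_R\kappa(\mathfrak p)$ were zero, then some element of $R\setminus\mathfrak p$ would become zero in $R'$. So it suffices to show that $R\to R'$ is injective, and in fact that every prime of $R$ is hit. Faithful flatness is not available for v-covers, so we use the adic structure instead.
\begin{itemize}
\item Every prime $\mathfrak p$ of the Tate ring $R$ contains no unit, and in particular not the pseudouniformizer. Hence $\mathfrak p$ is contained in a maximal ideal $\mathfrak m$, which is closed.
\item Every maximal ideal $\mathfrak m$ is the support of some continuous valuation $x\in\Spa(R,R^+)$. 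This is a standard fact for Tate rings.
\item Since $S'\to S$ is surjective on points, $x$ lifts to some $x'\in S'$.
\end{itemize}
This shows that every maximal ideal lies in the image of $\Spec R'$, but not yet every prime. To handle arbitrary primes, we avoid primes altogether and use the criterion of \cite[Lemma 00O0]{stacks-project} at maximal ideals only. Exactness of a complex of finite projective modules whose terms are split can be checked on fibres at maximal ideals: the locus where the ranks behave correctly is open and closed, and it contains all closed points, so it is everything. Concretely, we apply the proof of \Cref{lem: exact criterion} but only at closed points $x\in\Spec R$. The relevant fibres come from $\kappa(\mathfrak m)\to\kappa(\mathrm{supp}\,x')$. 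We expect this bookkeeping to be the main subtlety of part \eqref{part 1}.

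\textbf{Part \eqref{part 2}.} First we reduce to part \eqref{part 1}. Note that $\xi$ maps to a generator of $\ker\theta$ for $R'$, and $B^+_{dR}(R')/\xi\cong R'$, so reduction modulo $\xi$ is compatible with base change. Reducing the sequence \eqref{pullback of M_i exact} modulo $\xi$ gives \eqref{pullback of P_i exact} for $P_i=M_i/\xi$. This reduced sequence is still exact because \eqref{pullback of M_i exact} splits. By part \eqref{part 1}, the sequence $0\to M_1/\xi\to M_2/\xi\to M_3/\xi\to 0$ is exact.

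Next we lift exactness back to the $M_i$. Each $M_i$ is finite projective over the $\xi$-complete, $\xi$-torsion-free ring $B^+_{dR}(R)$, so each $M_i$ is itself $\xi$-complete and $\xi$-torsion-free. \Cref{exact_criterion} then gives that \eqref{M_i exact} is left exact and that $g$ is surjective. This completes the proof.
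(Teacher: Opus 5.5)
Your proof is correct. The ``only if'' direction and Part (2) are the paper's argument: reduce modulo $\xi$, invoke Part (1), then lift back with \Cref{exact_criterion}. The difference is in Part (1).

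The paper never works on $\Spec R$. It passes to the vector bundles $\tilde P_i=P_i\otimes_R\O_S$ on the adic space $S$, uses Kedlaya's vanishing $H^1(S,\tilde P_i)=0$ to reduce to exactness of the sheaf sequence, and then applies \Cref{lem: exact criterion} verbatim to the locally ringed spaces $S'\to S$. There, surjectivity of $|S'|\to|S|$ is part of being a v-cover.

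You stay on $\Spec R$ and observe that \Cref{lem: exact criterion} only needs the closed points to be hit. This works because a complex of finite projective modules that is exact on all closed fibres is exact. Closed points are hit because every maximal ideal $\mathfrak m$ of the complete Tate ring $R$ is closed, hence the support of a point of $\Spa(R,R^+)$ (Huber's nonemptiness criterion applied to $R/\mathfrak m$), and that point lifts to $S'$.

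What your route buys:
\begin{itemize}
\item It circumvents the paper's remark that $\Spec R'\to\Spec R$ need not be surjective.
\item It replaces Kedlaya's acyclicity theorem by a more elementary fact about Huber rings.
\item It even gives Part (2) without \Cref{exact_criterion}. Indeed, $\xi$ lies in the Jacobson radical of the $\xi$-adically complete ring $B^+_{dR}(R)$, so its maximal ideals are the $\theta^{-1}(\mathfrak m)$. These are hit by $(\theta')^{-1}(\mathfrak p')$, where $\theta'\colon B^+_{dR}(R')\to R'$ and $\mathfrak p'\subset R'$ is a prime over $\mathfrak m$.
\end{itemize}
The paper's route, in turn, uses \Cref{lem: exact criterion} exactly as stated and stays inside the vector-bundle formalism used elsewhere in the paper.

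Two fixes to your write-up.
\begin{itemize}
\item Delete the sentence claiming that $R'\otimes_R\kappa(\mathfrak p)=0$ forces some element of $R\setminus\mathfrak p$ to vanish in $R'$, so that injectivity of $R\to R'$ would suffice. It is false: $\Z\to\Z[1/2]$ is injective and kills no odd integer, yet misses $(2)$. It plays no role in your final argument.
\item Replace ``the locus where the ranks behave correctly is open and closed'' by the actual Nakayama argument. First, $\mathrm{coker}(g)$ is finitely generated with zero closed fibres, hence zero. Then $P_2\cong\ker g\oplus P_3$, so $P_1\to\ker g$ is a map of finite projective modules that is an isomorphism on every closed fibre. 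It is therefore surjective by Nakayama, hence split. Its kernel is finite projective with vanishing closed fibres, hence zero, so $P_1\to\ker g$ is an isomorphism.
\end{itemize}
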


This does not follow immediately from \Cref{lem: exact criterion}, because neither $\Spec(R')\to \Spec R$ nor $\Spec B^+_{dR}(R') \to \Spec B^+_{dR}(R)$ seem to be surjective.

\begin{proof}
    For part \eqref{part 1}, the `only if' direction is clear.
    Let us now prove the `if' direction.
    Note that $\tilde{P_i}:= P_i \otimes_R \O_S$ are vector bundles on $S$.
    By \cite[Theorem 1.3.4]{Kedlaya-AWS}, $H^1(S,\tilde{P_i})=0$.
    Thus, to prove the exactness of \eqref{P_i exact}, it suffices to show that the sequence of vector bundles on $S$
    \[0 \to \tilde{P_1} \to \tilde{P_2} \to \tilde{P_3} \to 0\]
    is exact. Since $S'\to S$ is a v-cover, $|S'|\twoheadrightarrow |S|$. By \Cref{lem: exact criterion}, it suffices to show that 
    \[0 \to h^*\tilde{P_1} \to h^*\tilde{P_2} \to h^*\tilde{P_3} \to 0\]
    is exact.
    Note that $h^*\tilde{P_i}= \tilde{P_i\otimes_R R'}:= (P_i\otimes_R R')\otimes_{R'} \O_{S'}$, so the desired result follows from the exactness of \eqref{pullback of P_i exact}.

    For part \eqref{part 2}, the `only if' direction is clear.
    Let us now prove the `if' direction.
    Note that $M_i$ is $\xi$-adically complete: There is a $B^+_{dR}(R)$-module $N_i$ such that $M_i\oplus N_i \cong B^+_{dR}(R)^{n_i}$ for some $n_i$. Since $B^+_{dR}(R)$ is $\xi$-adically complete, $B^+_{dR}(R)^{n_i} \isomto \lim_{m} B^+_{dR}(R)^{n_i}/\xi^m$ is an isomorphism, which implies that $M_i\isomto \lim_{m\ge 1} M_i/\xi^m$.
    By \Cref{exact_criterion}, to show the exactness of \eqref{M_i exact}, it suffices to show that 
    \[0\to M_1/\xi \to M_2/\xi \to M_3/\xi \to 0\]
    is exact.
    Since \eqref{pullback of M_i exact} consists of finite projective $B^+_{dR}(R')$-modules, its base change along $B^+_{dR}(R') \to B^+_{dR}(R')/\xi$ is still exact, i.e. 
    \[0 \to M_1/\xi \otimes_{R} R'\to M_2/\xi \otimes_{R} R'\to M_3/\xi \otimes_{R} R'\to 0\]
    is exact.
    Now the result follows from part \eqref{part 1}.
\end{proof}

\section{Moduli stack of Higgs bundles}
In this section, we define the moduli stack of Higgs bundles and establish some basic properties of it.

Let $E$ be a non-Archimedean local field with residue field $\F_q$, where $q$ is a power of $p$. Let $G$ be a connected reductive group over $E$. Let $\g(E)$ be its Lie algebra. Fix $S\in \Perf_{\F_q}$ and a line bundle $\mathcal L$ on the Fargues-Fontaine curve $X_S$.
Inspired by \cite[definition 4.2.1]{Ngo_2010}, we make the following definition:
\begin{definition}
    For $T\in \Perf_S$, a $\L$-twisted $G$-Higgs bundle on $X_T$ is a pair $(\E,\phi)$, where
    \begin{itemize}
        \item $\E$ is a $G$-bundle on $X_T$,
        \item $\phi\in H^0(X_T, \Ad(\E)\otimes_{\O_{X_T}}\L_T)$.
    \end{itemize}
\end{definition}
Here, $\L_T$ is the pullback of $\L$ to $X_T$ and $\Ad(\E)$ is the adjoint bundle.\footnote{If one view a $G$-bundle as an exact tensor functor $\mathrm{Rep}(G)\to (\text{vector bundles on }X_T)$, then this is the image of $\g(E)$ with the adjoint action. If one view a $G$-bundle as a sheaf on $X_T$, then $\Ad(\E)= \E \times^G (\g(E)\otimes_E \O_{X_T}).$}
We will usually just call them $G$-Higgs bundles or Higgs bundles.
We will let $\Ad(\E)_\L$ denote $\Ad(\E)\otimes_{\O_{X_T}}\L_T.$

\begin{definition}
    Let $\mathrm{Higgs}_{G,\mathcal L}$ be the prestack on $\Perf_S$ taking $T\in \Perf_S$ to the groupoid of Higgs bundles on $X_T$, where an isomorphism $(\E_1,\phi_1) \iso (\E_2,\phi_2)$ is an isomorphism $\E_1\iso \E_2$ mapping $\phi_1$ to $\phi_2$.
\end{definition}

When there is no ambiguity, we will write $\mathrm{Higgs}_{G,\mathcal L}$ as $\Higgs$. 

\begin{example}
    \begin{itemize}
        \item For $G=\mathbb G_m$, we have $\Higgs=\Bun_{\mathbb G_m}\times \mathcal H^0(\L)$.
        \item For $G=\GL_n$, there is a more concrete description of $G$-Higgs bundles. We have an equivalence of groupoids
        \begin{align*}
            \set{\text{rank $n$ vector bundles on $X_S$}} &\simeq \set{\GL_n\text{-bundles on }X_S}\\
            \mathcal V &\mapsto \Isom_{\O_{X_S}}(\O_{X_S}^n, \mathcal V) \\
            \E \times^{\GL_n}\O_X^n    &\mapsfrom \E.
        \end{align*}
        If $\V$ corresponds to $\E$, then $\Ad(\E)_\L = \underline{\Hom}(\V,\V\otimes \L)$, where the right hand side is the sheaf on $X_S$ sending $U$ to $\Hom_{\O_U}(\V|_U,(\V\otimes \L)|_U)$. Thus, we can identify a $\GL_n$-Higgs bundles with a pair $(V,\phi:V\to V\otimes_{\O_{X_S}} \L)$, where $V$ is a rank $n$ vector bundle on $X_S$ and $\phi$ is $\O_{X_S}$-linear.
    \end{itemize}
\end{example}

\begin{lemma}
    The prestack $\Higgs$ is a v-stack.
\end{lemma}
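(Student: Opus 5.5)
The plan is to realise $\Higgs$ as a relative object over $\Bun_G$, namely as the total space of a sheaf of sections over it, and to reduce the lemma to two facts. The first is that $\Bun_G$ is a v-stack (recalled above from \cite{FS_geometrisation}). The second is that $T\mapsto H^0(X_T,\V_T)$ is a v-sheaf for a vector bundle $\V$ on $X_S$. Throughout we restrict $\Bun_G$ to $\Perf_S$. There is a forgetful morphism $\Higgs\to\Bun_G$, $(\E,\phi)\mapsto\E$. For $T\in\Perf_S$ and $\E\in\Bun_G(T)$, the fibre of this morphism is the set $H^0(X_T,\Ad(\E)_\L)$, and it is functorial in $T'\to T$ via pullback of sections.

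For effectiveness of descent, let $T'\to T$ be a v-cover and write $T''=T'\times_T T'$. Suppose we are given a Higgs bundle $(\E',\phi')$ on $X_{T'}$ together with a descent isomorphism on $X_{T''}$ satisfying the cocycle condition and carrying $pr_1^*\phi'$ to $pr_2^*\phi'$.
\begin{enumerate}
\item Since $\Bun_G$ is a v-stack, $\E'$ descends to a $G$-bundle $\E$ on $X_T$ with $\E|_{X_{T'}}\cong\E'$, compatibly with the descent datum.
\item Formation of $\Ad(\E)_\L$ commutes with pullback along $X_{T'}\to X_T$, since both the associated bundle construction and $\L_T$ are compatible with pullback. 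So $\phi'$ becomes a section of $\Ad(\E)_\L$ pulled back to $X_{T'}$ whose two pullbacks to $X_{T''}$ agree.
\item It then suffices to know that $T\mapsto H^0(X_T,\V_T)$ is a v-sheaf for $\V=\Ad(\E)_\L$, where $\V$ is a vector bundle on $X_T$. Granting this, $\phi'$ descends uniquely to $\phi\in H^0(X_T,\Ad(\E)_\L)$, and $(\E,\phi)$ is the desired descended object.
\end{enumerate}

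Descent of morphisms is handled the same way. Given two Higgs bundles $(\E_1,\phi_1)$ and $(\E_2,\phi_2)$ on $X_T$, isomorphisms $\E_1\iso\E_2$ form a v-sheaf on $\Perf_T$, again because $\Bun_G$ is a v-stack. The condition that such an isomorphism sends $\phi_1$ to $\phi_2$ is an equality of sections of $\Ad(\E_2)_\L$. By the separatedness half of the sheaf property in step 3, this equality can be checked after pullback to $T'$. Hence isomorphisms of Higgs bundles also form a v-sheaf.

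The main obstacle is step 3, the v-sheaf property of $T\mapsto H^0(X_T,\V_T)$. My first approach is to deduce it from the v-stack property of vector bundles on $X_S$ \cite[Proposition II.2.1]{FS_geometrisation}. Sections of $\V$ are the same as morphisms $\O_{X_T}\to\V$, and morphisms between objects of a v-stack form a v-sheaf. So descent of morphisms in that stack gives exactly the sheaf property for $H^0$.

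If one prefers a more concrete argument, reduce to $T$ affinoid, where $X_T\to X_T^{sch}$ is available. Then reduce, using \Cref{thm:GAGA} and \cite[Proposition II.2.5]{FS_geometrisation}, to the corresponding statement for the sheaf $\mathcal H^0(\V)$. For the latter, I would recall that the diamond $(X_T)^\diamond=(\Spd E\times T)/\phi^\Z$ and cite that $\O$ is a v-sheaf on sousperfectoid spaces (the appendix setting). Either route is only a citation, so the remaining argument is formal.
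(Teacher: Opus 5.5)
Your proposal is correct and is essentially the paper's argument. The paper also deduces the lemma from $\Bun_G$ being a v-stack together with the v-sheaf property of the Banach--Colmez functor $T\mapsto H^0(X_T,\Ad(\E)\otimes\L_T)$, which it cites from \cite[Proposition II.2.1]{FS_geometrisation}; your explicit descent of objects and morphisms just unpacks its ``follows easily''.

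Since the $H^0$ statement is exactly what the paper takes from that proposition, you can drop two things. First, the justification that ``morphisms in a v-stack form a v-sheaf'': for a stack in groupoids only isomorphisms are automatic, though embedding $s$ as $\begin{pmatrix}1&0\\ s&1\end{pmatrix}\in\Aut(\O\oplus\V)$ repairs this. Second, the fallback route: its appeal to the appendix is inaccurate, because the appendix only establishes \'etale, not v-, descent for $\O$ on sousperfectoid spaces.
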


\begin{proof}
    This follows easily from the fact that $\Bun_G$ is a v-stack and for any $G$-bundle $\E$, the functor $T\mapsto H^0(X_T, \Ad(\E)\otimes_{\O_{X_T}}\L_T)$, being the Banach-Colmez space associated with $\Ad(\E)\otimes_{\O_{X_S}}\L$, is a v-sheaf by \cite[Proposition II.2.1]{FS_geometrisation}.
\end{proof}

\begin{proposition}\label{Higgs_to_Bun}
    The natural map $\Higgs\to \Bun_G$ is representable in locally spatial diamonds and is partially proper.
\end{proposition}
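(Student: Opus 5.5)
To prove \Cref{Higgs_to_Bun}, I would reduce to the structure of Banach--Colmez spaces. Representability and partial properness are both properties of the fibers over test objects. So I fix $T\in\Perf_{\overline\F_q}$ (over $S$) and a map $T\to\Bun_G$ corresponding to a $G$-bundle $\E$ on $X_T$. The first step is to identify the fiber product $\Higgs\times_{\Bun_G} T$ with the functor on $\Perf_T$
\[T'\mapsto H^0(X_{T'},\Ad(\E)_{\L}|_{X_{T'}}),\]
that is, with the Banach--Colmez space $\mathcal H^0(\Ad(\E)_\L)$ attached to the vector bundle $\V:=\Ad(\E)\otimes\L_T$ on $X_T$. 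This is a formal computation. An object of the fiber product over $T'$ is a Higgs bundle $(\E',\phi)$ together with an isomorphism $\E'\iso\E|_{X_{T'}}$. Transporting $\phi$ along that isomorphism rigidifies everything, so the groupoid is discrete and equal to the set of sections of $\V|_{X_{T'}}$. I would use \Cref{fiber product} to pass freely between the 1- and 2-fiber products.

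The second step is to invoke the known geometry of $\mathcal H^0(\V)$ for a vector bundle $\V$ on $X_T$. By \cite[Proposition II.2.5]{FS_geometrisation} (or the analogous statement there), $\mathcal H^0(\V)\to T$ is representable in locally spatial diamonds, partially proper, and locally of finite $\dim.\mathrm{trg}$. If that statement is only available in a special case, I would reduce to it. The question is v-local on $T$ (or pro-\'etale local, via the permanence results for locally spatial diamonds and partial properness in Scholze's \emph{\'Etale cohomology of diamonds}), so I may assume $T$ is affinoid perfectoid. Then I use the argument of Fargues--Scholze. Twist so that $\V\otimes\O(-n)$ has only negative Harder--Narasimhan slopes fiberwise, and choose a surjection $\O(-m)^N\twoheadrightarrow \V^\vee(\dots)$, or dually an injection $\V\injects \O(n)^N$ with cokernel $\mathcal W$. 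This exhibits $\mathcal H^0(\V)$ as the kernel of $\mathcal H^0(\O(n))^N\to\mathcal H^0(\mathcal W)$. The space $\mathcal H^0(\O(n))$ is explicitly a perfectoid open ball $\Spd \overline\F_q\powerseries{x_1^{1/p^\infty},\dots,x_n^{1/p^\infty}}\times T$ (for $E$ of characteristic $p$ as well as $0$), which is a locally spatial diamond, partially proper over $T$. A kernel is a fiber product with the zero section. Partial properness is preserved because the zero section $T\to\mathcal H^0(\mathcal W)$ is a closed immersion, being the pullback of a proper separated map, and the space is separated.

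The third step is to conclude. Representability in locally spatial diamonds and partial properness can be checked after base change to every $T\to\Bun_G$, and I would verify that these notions are v-local on the target, so that it suffices to test on affinoid perfectoid $T$.

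I expect the main obstacle to be the second step in full generality over an arbitrary base $T$, where $\V$ need not have constant HN polygon. Separatedness of $\mathcal H^0(\mathcal W)\to T$ is needed for the zero section to be a closed immersion. I would obtain it from the valuative criterion: a section over $\Spa(K,K^+)$ is determined by its restriction to $\Spa(K,\O_K)$, because $X_{\Spa(K,K^+)}$ depends only on $K$. The same observation yields partial properness directly.
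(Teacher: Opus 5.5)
Your main argument is correct, but it is organised differently from the paper's.

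The paper base changes along only a single perfectoid $T\twoheadrightarrow\Bun_G$, extracted from the proof of \cite[Proposition III.1.3]{FS_geometrisation}. It identifies that one fiber with $\mathcal H^0(\Ad(\E)\otimes\L_T)$ and then descends:
\begin{itemize}
\item quasi-separatedness of $f$ via \cite[Proposition 10.11]{EC_diamonds};
\item representability in locally spatial diamonds via \cite[Proposition 13.4(iv)]{EC_diamonds};
\item partial properness of the $0$-truncated quasi-separated map $f$ through the valuative criterion, using $H^0(X_{(R,R^+)},\V)\cong H^0(X_{(R,R^\circ)},\V)$.
\end{itemize}
You instead observe that every test map $T\to\Bun_G$ is a $G$-bundle on $X_T$, so every fiber is a Banach--Colmez space. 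You then apply the structure theorem for Banach--Colmez spaces to all test objects at once. This avoids the chart (and hence any appeal to smallness of $\Bun_G$), the descent statements, and the valuative criterion for stacks. It also uses no more input than the paper does, since the paper's last step already needs the Banach--Colmez result over arbitrary affinoid perfectoid bases. If you take the test objects in the definition to be locally spatial diamonds rather than perfectoid spaces, passing between the two needs only that $f$ be quasi-separated. You get that because each fiber is separated over its base.

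The general statement you need is \cite[Proposition II.2.16]{FS_geometrisation}. It holds for any perfectoid base and any vector bundle, with no condition on the Harder--Narasimhan polygon; Proposition II.2.5 there only treats the basic cases. So your fallback is unnecessary, and as written it would not work, for three reasons.
\begin{itemize}
\item To make the zero section of $\mathcal H^0(\mathcal W)$ a closed immersion, you need $\mathcal H^0(\mathcal W)\to T$ to be separated. That is the same kind of claim you are trying to prove.
\item The valuative criterion for separatedness of v-sheaves presupposes quasi-separatedness, which you have not established for $\mathcal H^0(\mathcal W)$.
\item As an adic space, $X_{\Spa(K,K^+)}$ does depend on $K^+$. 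Only the global sections of vector bundles are insensitive to it.
\end{itemize}
The repair is to embed $\mathcal W$ in turn as a subbundle of some $\O(n')^{m'}$. Then $\mathcal H^0(\V)$ is the fiber over $0$ of a map between products of perfectoid open discs over $T$. It is therefore closed in $\mathcal H^0(\O(n))^N$, hence a locally spatial diamond, partially proper over $T$.
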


\begin{proof}
    Let us denote this map by $f$.
    The proof of \cite[Proposition III.1.3]{FS_geometrisation} shows that there is a perfectoid space $T$ such that $T\twoheadrightarrow \Bun_G$ as pro-\'etale stacks.
    This map corresponds to a $G$-bundle $\E$ on $X_T$.
    The base change of $f$ along this map is 
    $$\Higgs\times_{\Bun_G} T =\mathcal H^0(\Ad(\E)\otimes \L_T) \to T.$$
    As $\mathcal H^0(\Ad(\E)\otimes \L_T)$ is a locally spatial diamond, this map is representable in locally spatial diamonds.
    Moreover, by \cite[Proposition II.2.6]{FS_geometrisation}, this map is quasi-separated.
    It follows from \cite[Proposition 10.11]{EC_diamonds} that $f$ is quasi-separated.
    By \cite[Proposition 13.4(iv)]{EC_diamonds}, $f$ is representable in locally spatial diamonds.

    It is clear that $f$ is $0$-truncated and we have just shown that $f$ is quasi-separated. We can deduce that $f$ is partially proper by applying the following fact to the adjoint bundle: if $\Spa(R,R^+)$ is an affinoid perfectoid space over $S$ and $\mathcal V$ is a vector bundle on $X_{(R,R^+)}$, then we have an isomorphism $H^0(X_{(R,R^+)},\mathcal V) \isomto H^0(X_{(R,R^\circ)},\mathcal V)$. This fact is immediate from the partial properness of the map $\mathcal H^0(\mathcal V) \to S$, proved in \cite[Proposition II.2.16]{FS_geometrisation}.
\end{proof}

\begin{corollary}
    $\Higgs$ is an Artin v-stack if\footnote{We make the assumption that $S\in\Perf_{\overline F_q}$ because in \cite{FS_geometrisation}, $\Bun_G$ is defined on $\Perf_{\overline F_q}$.} $S\in\Perf_{\overline F_q}$.
\end{corollary}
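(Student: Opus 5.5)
We deduce the corollary from \Cref{Higgs_to_Bun} together with the fact, proved in \cite[Theorem IV.1.19]{FS_geometrisation}, that $\Bun_G$ is a (cohomologically smooth) Artin v-stack when $S\in\Perf_{\overline\F_q}$. Recall that a v-stack $Y$ is an Artin v-stack if its diagonal is representable in locally spatial diamonds and there is a locally spatial diamond $U$ with a surjective map $U\to Y$ which is separated, representable in locally spatial diamonds, and cohomologically smooth. We check both conditions for $\Higgs$. Note that $\Higgs$ lives over $S$ while $\Bun_G$ lives over $\Perf_{\overline\F_q}$. We therefore first replace $\Bun_G$ by $\Bun_G\times S$, which is again an Artin v-stack since $S$ is a perfectoid space. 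The map $f:\Higgs\to\Bun_G\times S$ is still representable in locally spatial diamonds, because its base change along any test object is computed in the proof of \Cref{Higgs_to_Bun}.

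\textbf{Charts.} Choose a chart $V\to \Bun_G\times S$ with $V$ a locally spatial diamond and the map separated, surjective, cohomologically smooth and representable in locally spatial diamonds. Form $U:=V\times_{\Bun_G\times S}\Higgs$. Since $f$ is representable in locally spatial diamonds, $U\to V$ is a morphism whose source is a locally spatial diamond; concretely, $U$ is the Banach--Colmez space $\mathcal H^0(\Ad(\E)\otimes\L)$ over $V$. The projection $U\to\Higgs$ is a base change of $V\to\Bun_G\times S$, so it is again surjective, separated, cohomologically smooth and representable in locally spatial diamonds, since all of these properties are stable under base change. This gives a chart for $\Higgs$.

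\textbf{Diagonal.} Factor the diagonal as
\[\Higgs\to\Higgs\times_{\Bun_G\times S}\Higgs\to\Higgs\times\Higgs.\]
The second map is a base change of the diagonal of $\Bun_G\times S$, hence representable in locally spatial diamonds. The first map is the relative diagonal of $f$. Since $f$ is representable in locally spatial diamonds and, by \Cref{Higgs_to_Bun}, quasi-separated (indeed partially proper), its relative diagonal is a quasicompact injection of locally spatial diamonds. Hence it is representable in locally spatial diamonds by \cite[Proposition 13.4]{EC_diamonds}. Composites of such maps are again representable in locally spatial diamonds, which gives the diagonal condition.

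\textbf{Main obstacle.} The only delicate point is matching the conventions: which precise definition of Artin v-stack is used in \cite{FS_geometrisation}, and the base-change from $\overline\F_q$ to $S$. I would resolve this by working over $S$ throughout with the version of the definition in \cite[Definition IV.1.1]{FS_geometrisation}. Under that definition the stability properties invoked above hold by \cite[Proposition IV.1.8]{FS_geometrisation}, in particular the statement that a map representable in locally spatial diamonds into an Artin v-stack has Artin source.
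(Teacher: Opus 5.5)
Your proposal is correct and matches the paper's proof, which simply combines \Cref{Higgs_to_Bun} (that $\Higgs\to\Bun_G$ is representable in locally spatial diamonds) with \cite[Proposition IV.1.8(iii)]{FS_geometrisation}, the same stability statement you invoke at the end. Your explicit construction of the chart $U\to\Higgs$ and your check of the diagonal just unpack that proposition in this case, and they are fine as written.
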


\begin{proof}
    This follows from \Cref{Higgs_to_Bun} and \cite[Proposition IV.1.8(iii)]{FS_geometrisation}.
\end{proof}

\begin{proposition}\cite[Proposition 2.20]{relative-local-Langlands}
    If $S\in\Perf_{\overline F_q}$ and $\L=\O_{X_S}$, then $\Higgs \times_{\Bun_G} \Bun_G^1 \cong [\underline{\mathfrak g(E)}/\underline{G(E)} ].$
\end{proposition}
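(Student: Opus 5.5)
We use the standard description of the semistable locus, as in \cite[Proposition III.2.2 / III.4.5]{FS_geometrisation}: $\Bun_G^1$ is the open substack of $G$-bundles that are fibrewise (on every geometric point of $T$) isomorphic to the trivial $G$-bundle, and $\Bun_G^1\cong [*/\underline{G(E)}]$. Concretely, for $T\in\Perf_S$, every $G$-bundle $\E$ on $X_T$ which is trivial at all geometric points is pro-\'etale locally on $T$ trivial. Moreover, $\underline{\Aut}(\E_0)=\underline{G(E)}$, because $H^0(X_T,\O_{X_T})=\underline{E}(T)=C^0(|T|,E)$. Granting this, $\Higgs\times_{\Bun_G}\Bun_G^1$ classifies pairs $(\E,\phi)$ with $\E$ pro-\'etale locally trivial and $\phi\in H^0(X_T,\Ad(\E))$ (here $\L=\O_{X_S}$).

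The plan is to write down a map of prestacks $[\underline{\g(E)}/_{pre}\,\underline{G(E)}]\to \Higgs\times_{\Bun_G}\Bun_G^1$ and show that it is fully faithful and locally essentially surjective. By \Cref{lem:quotient stack}, its stackification is then an equivalence, since the target is a v-stack and hence a pro-\'etale stack. On $T$-points the map sends $X\in\underline{\g(E)}(T)=C^0(|T|,\g(E))$ to $(\E_0,X)$. This uses
\[H^0(X_T,\Ad(\E_0))=H^0(X_T,\g(E)\otimes_E\O_{X_T})=\g(E)\otimes_E H^0(X_T,\O_{X_T})=\underline{\g(E)}(T),\]
which holds because $\g(E)$ is finite-dimensional and $H^0(X_T,\O)=\underline E(T)$ (\cite[Proposition II.2.5]{FS_geometrisation}). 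The map sends $g\in \underline{G(E)}(T)$ to the automorphism of $\E_0$ given by left multiplication by $g$, which carries $X$ to $\Ad(g)X$. Compatibility of these identifications with the adjoint action is formal.

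\textbf{Full faithfulness.} An isomorphism $(\E_0,X)\iso(\E_0,Y)$ is an element $g\in\Aut(\E_0)=G(H^0(X_T,\O))=\underline{G(E)}(T)$ with $\Ad(g)X=Y$. The equality $\Aut(\E_0)=G(H^0(X_T,\O))$ holds because $G$ is affine, so $G$-valued maps $X_T\to G$ factor through global sections. These are exactly the morphisms in the quotient groupoid.

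\textbf{Local essential surjectivity.} Given $(\E,\phi)$ over $T$, pass to a pro-\'etale cover $T'\to T$ trivialising $\E$; then $\phi$ becomes an element of $\underline{\g(E)}(T')$ as above.

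The main obstacle is the input that bundles in $\Bun_G^1$ are pro-\'etale locally trivial. For non-quasi-split $G$, or for $S$ not over $\overline\F_q$, one needs care: the statement requires $S\in\Perf_{\overline\F_q}$, and I would cite \cite[Theorem III.2.4, Proposition III.4.5]{FS_geometrisation} directly rather than reprove it. The cited \cite[Proposition 2.20]{relative-local-Langlands} presumably does exactly this. Everything else is the formal identification of sections of associated bundles via \Cref{alt_defn_for_quotient_stack}.
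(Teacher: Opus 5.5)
Your argument is correct, but it is not the paper's route. The paper gives no direct argument. It observes that for $\L=\O_{X_S}$ a Higgs field is a section of $\E\times^G\g=\Ad(\E)$, so $\Higgs$ is literally the stack $\Bun_G^X$ of the cited reference for the affine $G$-variety $X=\g$ with the adjoint action. It then quotes Proposition 2.20 there.

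You instead prove this special case by hand. From Fargues--Scholze you take $\Bun_G^1\cong[*/\underline{G(E)}]$ and $\mathcal H^0(\O)=\underline E$. From these you deduce $H^0(X_T,\Ad(\E_0))=\underline{\g(E)}(T)$ and $\Aut(\E_0)=G(H^0(X_T,\O))=\underline{G(E)}(T)$. You then show that the prestack quotient maps fully faithfully and pro-\'etale locally essentially surjectively onto the fibre product.

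The paper's reduction is one line and covers any affine $G$-variety, but all the content lives in the reference. Yours is self-contained and shows exactly where $\L=\O_{X_S}$ is used: for $\L=\O(d)$ with $d\neq 0$ the fibre over $\E_0$ becomes $\g(E)\otimes_E\mathcal H^0(\O(d))$, which is not $\underline{\g(E)}$.

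Two minor remarks. Your use of the linear structure of $\g$ is inessential: for an affine $G$-variety $Y$ one has $Y(H^0(X_T,\O))=\underline{Y(E)}(T)$, which is presumably how the reference argues. And your caveat about non-quasi-split $G$ is unnecessary, since $\Bun_G^b\cong[*/\underline{G_b(E)}]$ holds for every basic $b$ and any reductive $G$, with $G_1=G$.
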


\begin{proof}
    This is true because $\Higgs$ is the same as the stack $\Bun^X_G$ in \cite{relative-local-Langlands}, where $X=\g$ with the adjoint action of $G$.
\end{proof}

\section{\texorpdfstring{$\GL_n$}{GL(n)} case}
In this section, we take $G=\GL_n$.
We will usually view $\Higgs$ as the stack that sends $T$ to the groupoid of pairs $(V,\phi:V\to V\otimes_{\O_{X_T}} \L_T)$, where $V$ is a vector bundle on $X_T$ and $\phi$ is $\O_{X_T}$-linear.

The aim of this section is to study the Hitchin fibration $\Higgs \to \mathcal A_G$ (cf. \Cref{Hitchin fibration for GL_n}), which essentially maps a pair $(V,\phi)$ to the characteristic polynomial of $\phi$. We prove an analogue of the BNR theorem, which relates fibres of the Hitchin fibration to line bundles on the spectral curves. 
This is an interesting result, because it is an instance of abelianisation: we reduce the study of (non-abelian) $\GL_n$ objects to (abelian) $\GL_1$ objects.
We also prove a GAGA type result for the spectral curve, which gives an equivalence between the category of vector bundles on the spectral curve and that on a schematic version of the spectral curve.

\subsection{Hitchin fibration}
\begin{definition}\label{A_in_GL_n_case}\label{Hitchin fibration for GL_n}
    We define the \emph{Hitchin base} $\mathcal A_{G,\L}:=\prod_{i=1}^n \mathcal H^0(\L^{\otimes i})$, which is a locally spatial diamond.
    When there is no ambiguity, we will write $\mathcal A_{G,\L}$ as $\mathcal A_G$.

    We define the {Hitchin fibration} 
    \[\Higgs \to \mathcal A_G\]
    to be the map which on $T$-valued points sends $(V,\phi:V\to V\otimes \L_T)$ to $((-1)^i \Tr \Lambda^i \phi)_i$.

    We call the image of $(V,\phi)\in \Higgs(T)$ under this map its \emph{characteristic polynomial}. We will also call $Z^n-\Tr(\phi) Z^{n-1}+ \Tr (\Lambda^2 \phi) Z^{n-2} +\dots + (-1)^n\Tr(\Lambda^n \phi) \in\prod_{i=1}^n H^0(X_T,\L^{\otimes i})Z^i$ the characteristic polynomial of $(V,\phi)$.
\end{definition}
Here, $\Tr \Lambda^i \phi$ is defined as follows: We apply $\Lambda^i$ to $\phi$ to get a map
\[\Lambda^i\phi : \Lambda^i V \to \Lambda^i(V\otimes \L)\cong \Lambda^i V\otimes \L^{\otimes i},\]
which corresponds to an element in $H^0(X_T, (\Lambda^i V)^\vee \otimes \Lambda^i V \otimes \L_T^{\otimes i})$. We can apply the evaluation map to get an element in $H^0(X_T,\L_T^{\otimes i})$, which we call $\Tr \Lambda^i \phi$.

\subsection{Spectral curve}
There is a way to describe the fibre of the Hitchin fibration in terms of line bundles on the spectral curve, which we will define now. This is desirable because it allows us to relate $\GL_n$ objects with $\GL_1$ objects.

Now, fix $S\in \Perf_{\F_q}$, $\L$ a line bundle on $X_S$, and $(a_i)\in\prod_{i=1}^n \L^{\otimes i}(X_S)$.
To construct the spectral curve, we shall construct adic spaces $\Spa(B_U,B_U^+)$ over $U$ for open subsets $U\subset X_S$ and then glue these spaces.

Let $U\subset X_S$ be an affinoid sousperfectoid space such that $\L|_U$ is isomorphic to $\O_U$.
Let $A=\O_U(U), A^+=\O_U^+(U)$.
If $\iota:\L|_U \isomto \O_U$ is a trivialisation, then $$B_{U,\iota}:=A[Z]/(Z^n+\iota(a_1)Z^{n-1}+\dots+\iota(a_n))$$
equipped with the induced topology\footnote{If $A$ is a topological ring, then any finitely generated $A$-module has a canonical topology, given by picking a surjection $A^n\twoheadrightarrow M$ and equipping $M$ with the quotient topology. This is independent of the choice of the surjection. See \cite[Lemma 8.3.34]{foundations-almost-ring}.} is a sheafy Huber ring by \cite[Corollary 4.7]{HK-sousperfectoid}.
If $\iota':\L|_U \isomto \O_U$ is another trivialisation. Then $\iota' = a \iota$ for some $a\in A^\times$, so 
\begin{align*}
    B_{U,\iota}&\to B_{U,\iota'}\\
    Z&\mapsto a^{-1}Z
\end{align*}
is an isomorphism.
Define\footnote{Of course $B_U \cong B_{U,\iota}$ for every $\iota$, but $B_U$ is more canonical in the sense that it is independent of the choice of $\iota$.} $$B_U:= \colim_{\iota:\L|_U \isomto \O_U} B_{U,\iota}$$
and $$B_U^+ = \text{integral closure of $A^+$ in $B_U$}.$$
\begin{lemma}
    $B_U^+$ is a ring of integral elements of $B_U$, i.e. $B^+_U$ is an open and integrally closed subring of $(B_U)^\circ$.
\end{lemma}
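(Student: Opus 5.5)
We need to show that $B_U^+$ is open in $B_U$, that it is contained in $(B_U)^\circ$, and that it is integrally closed in $B_U$. The last property holds by construction: an integral closure is integrally closed in the ambient ring. So the work is in the first two properties. Throughout, fix a trivialisation $\iota$ and identify $B_U$ with $B_{U,\iota}=A[Z]/(P)$, where $P=Z^n+\iota(a_1)Z^{n-1}+\dots+\iota(a_n)$. This is a free $A$-module with basis $1,Z,\dots,Z^{n-1}$, and its canonical topology is the product topology on $A^n$.

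\textbf{Containment $B_U^+\subset (B_U)^\circ$.} Let $b\in B_U$ satisfy a monic equation $b^m+c_1b^{m-1}+\dots+c_m=0$ with $c_i\in A^+\subset A^\circ$.
\begin{itemize}
\item Write the powers $b^k$ in the basis $1,Z,\dots,Z^{n-1}$ with coefficients in $A$.
\item Set $M:=A^+\oplus A^+Z\oplus\dots\oplus A^+Z^{n-1}$, an open bounded $A^+$-submodule, since $A^+$ is open and bounded in $A$.
\item Choose $k$ so that $b^j\in \varpi^{-k}M$ for $0\le j<m$.
\end{itemize}
The equation expresses $b^{m}$ as an $A^+$-combination of lower powers, so every $b^N$ lies in $\sum_{j<m}A^+b^j\subset\varpi^{-k}M$. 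The set $\varpi^{-k}M$ is bounded, so $b$ is power-bounded.

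\textbf{Openness of $B_U^+$.} Each element $\sum_{i<n} x_iZ^i$ with $x_i\in A^+$ is a sum of products of elements of $A^+$ with powers of $Z$. It therefore suffices to find an open neighbourhood of $0$ contained in $B_U^+$. The obstacle is that $Z$ itself need not be integral over $A^+$, since the $\iota(a_i)$ may lie only in $A$, not $A^+$. The fix is rescaling: choose a pseudo-uniformizer $\varpi\in A^+$ and $N$ large with $\varpi^{Ni}\iota(a_i)\in A^+$ for all $i$, which is possible because each $\iota(a_i)$ is a finite element of a Huber ring with $A^+$ open. Then $\varpi^N Z$ satisfies the monic polynomial
\[Y^n+\varpi^N\iota(a_1)Y^{n-1}+\dots+\varpi^{Nn}\iota(a_n),\]
whose coefficients lie in $A^+$. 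Hence $\varpi^NZ\in B_U^+$, and so $\varpi^{Ni}Z^i\in B_U^+$ for every $i$. Consequently $\varpi^{N(n-1)}M\subset B_U^+$, because $B_U^+$ is a ring containing $A^+$. The set $\varpi^{N(n-1)}M$ is open in the product topology, so $B_U^+$ contains an open additive subgroup and is therefore open.

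Independence from the choice of $\iota$ is automatic. Changing $\iota$ to $\iota'=a\iota$ gives the ring isomorphism $Z\mapsto a^{-1}Z$, which is $A$-linear. It preserves both the canonical topology and integrality over $A^+$, so nothing in the argument depends on the trivialisation.

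The main step to get right is the rescaling argument for openness, together with the correct identification of the canonical topology on $B_U$ with the product topology on $A^n$.
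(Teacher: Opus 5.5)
Your proof is correct. For openness it is the paper's argument: rescale $Z$ to $\varpi^N Z$, which is integral over $A^+$. Then $B_U^+$ contains an open set, namely $A^+[\varpi^N Z]$ in the paper and $\varpi^{N(n-1)}M$ for you.

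For the containment $B_U^+\subset (B_U)^\circ$ you take a different, more hands-on route. The paper uses that $A\to B_U$ is adic, so $A^+$ maps into $(B_U)^\circ$. It then invokes the general fact that $(B_U)^\circ$ is integrally closed in $B_U$. You instead trap all powers of an integral element $b$ in $\sum_{j<m}A^+b^j\subset\varpi^{-k}M$ and check that this set is bounded.

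Your version is self-contained and in effect reproves both cited facts in the explicit free-module setting. However, it relies on $A^+$ being bounded in $A$, which you assert without comment. That is not automatic for a general Huber ring. It does hold here because sousperfectoid Tate rings are uniform, so $A^+\subset A^\circ$ is bounded.

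If you want to avoid using uniformity, replace $A^+$ in your bounded module by the subring generated by a ring of definition $A_0$ and the finitely many coefficients $c_1,\dots,c_m$. This subring is open and bounded. That is exactly the standard proof that the power-bounded elements form an integrally closed subring, which the paper's route uses implicitly.
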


\begin{proof}
    We identify $B_U$ with some $B_{U,\iota}$ by fixing a trivialisation $\iota$.
    The map $A\hookrightarrow B_U$ is an adic morphism by \cite[Lemma 5.1.2]{Berkeley}, so it sends powerbounded elements to powerbounded elements.
    Thus, $A^+\subset (B_U)^\circ$.
    Thus
    \begin{align*}
        B_U^+ &\subset \text{integral closure of $(B_U)^\circ$ in $B_U$}\\
        &= (B_U)^\circ.
    \end{align*}
    It remains to show that $B_U^+$ is open in $B_U$.
    Let $\varpi$ be a pseudouniformizer of $A$.
    Then $\varpi^N Z$ is integral over $A^+$ for some $N\ge 1$, so $B_U^+ \supset A^+[\varpi^N Z]$.
    It is direct from the definition of the topology on $B_U$ that $A^+[\varpi^N Z]$ is open in $B_U$, so $B_U^+$ is also open.
\end{proof}

Hence, we have an adic space $\Spa(B_U,B_U^+)$ with a canonical map $$\pi_U: \Spa(B_U,B_U^+) \to \Spa(A,A^+)=U.$$
This is compatible with passing to open subspace of $U$:
\begin{lemma}\label{lem: open}
    Let $U'\subset U$ be an affinoid sousperfectoid open subspace of $U$.
    Then the canonical $U$-morphism $\Spa(B_{U'},B_{U'}^+) \to \pi_U^{-1}(U')$ is an isomorphism.
    In other words, we have a Cartesian diagram
    \[\begin{tikzcd}
        {\Spa(B_{U'},B_{U'}^+)} & {\Spa(B_{U},B_{U}^+)} \\
        {U'} & U.
        \arrow[from=1-1, to=1-2]
        \arrow["{\pi_{U'}}"', from=1-1, to=2-1]
        \arrow["{\pi_U}", from=1-2, to=2-2]
        \arrow[from=2-1, to=2-2]
    \end{tikzcd}\]
\end{lemma}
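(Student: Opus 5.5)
We reduce to a statement about rational subsets and finite étale-free base change of Huber pairs. First fix a trivialisation $\iota:\L|_U\isomto\O_U$; its restriction $\iota|_{U'}$ is a trivialisation of $\L|_{U'}$, so we may identify $B_U=A[Z]/(P)$ and $B_{U'}=A'[Z]/(P')$, where $A'=\O_{U'}(U')$, $A'^+=\O^+_{U'}(U')$ and $P'$ is the image of $P=Z^n+\iota(a_1)Z^{n-1}+\dots+\iota(a_n)$ under $A\to A'$. Then $B_{U'}=B_U\otimes_A A'$ as rings, and the colimit over trivialisations is compatible with this identification, so the canonical map is the one induced by $B_U\to B_{U'}$ together with $B_U^+\to B_{U'}^+$ (integral closure of $A^+$ maps into the integral closure of $A'^+$).

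The key steps, in order, are as follows. Since $U'\subset U$ is an affinoid open in an affinoid sousperfectoid space, one reduces to the case where $U'$ is a rational subset $U(\tfrac{f_1,\dots,f_m}{g})$: in general $U'$ is a finite union of rational subsets of $U$, the construction $\Spa(B_{U'},B_{U'}^+)$ is built from the structure sheaf values, and both sides glue (the target $\pi_U^{-1}(U')$ is covered by the preimages of the rational pieces, and the source is covered by the corresponding $\Spa(B_{W},B_W^+)$ for rational $W\subset U'$ by the rational case applied to $U'$ itself). For $U'=U(\tfrac{f}{g})$ rational, $\pi_U^{-1}(U')$ is the rational subset $V=\Spa(B_U,B_U^+)(\tfrac{\pi_U^*f}{\pi_U^*g})$, so $\O(V)=B_U\langle \tfrac{f}{g}\rangle$ and $\O^+(V)$ is the completed integral closure of $B_U^+[\tfrac fg]$. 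Since $B_U$ is finite free over $A$, we have $B_U\langle\tfrac fg\rangle=B_U\hat\otimes_A A\langle \tfrac fg\rangle= B_U\otimes_A A'$ (completed tensor with a finite free module needs no completion), which is exactly $B_{U'}$ with the correct topology (the canonical topology of a finite $A'$-module agrees with the completed tensor product topology). This identifies the rings.

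It remains to compare the rings of integral elements: we must show that $\O^+(V)$, i.e. the smallest ring of integral elements of $B_{U'}$ containing the image of $B_U^+$ and $\tfrac fg$, equals the integral closure of $A'^+$ in $B_{U'}$. One inclusion is clear since $A'^+$ is the completed integral closure of $A^+[\tfrac fg]$ and everything involved is integral over it. For the other, since $\Spa$ only depends on the pair up to the set of valuations, it suffices to check that the two Huber pairs define the same subset of $\Spa(B_{U'},B_{U'}^\circ)$: a continuous valuation $v$ on $B_{U'}$ is $\le1$ on the integral closure of $A'^+$ iff it is $\le1$ on $A'^+$ (integral closure does not change the condition), iff its restriction to $A'$ lies in $U'$, iff $v\le 1$ on $A^+$ and $v(f)\le v(g)\neq0$, iff $v$ lies in $V$. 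Hence the map is a bijection on points compatible with rational subsets and structure sheaves, giving the isomorphism. I expect the main obstacle to be the reduction from a general affinoid open to rational subsets, namely ensuring the gluing is legitimate, which uses sheafiness of $B_U$ via \cite[Corollary 4.7]{HK-sousperfectoid}; the topological identification $B_U\langle\tfrac fg\rangle\cong B_U\otimes_AA'$ is the other point requiring care.
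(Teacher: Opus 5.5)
Your argument is correct, but it takes a different route from the paper's.

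\textbf{The paper's route.} The paper never passes to rational subsets and never computes a rational localisation. After fixing $\iota$, it compares functors of points on complete Huber pairs $(R,R^+)$ over $(A,A^+)$:
\begin{itemize}
\item A morphism $(B_{U'},B_{U'}^+)\to(R,R^+)$ is the same as a morphism $(A',A'^+)\to(R,R^+)$ together with a root $r\in R$ of $P$. Continuity is automatic because $B_{U'}$ is finite free over $A'$, and the condition on $B_{U'}^+$ is automatic because $R^+$ is integrally closed.
\item A $U$-morphism $\Spa(R,R^+)\to\Spa(B_U,B_U^+)$ lying over $U'$ is a root of $P$ together with the (unique, possibly nonexistent) factorisation through the open immersion $U'\subset U$.
\end{itemize}
This handles an arbitrary affinoid open $U'$ in one stroke. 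It needs only the universal property of $\Spa$ of a Huber pair and of open immersions.

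\textbf{Your route.} You reduce to rational $U'=U(f/g)$ and identify $\O(\pi_U^{-1}(U'))=B_U\langle f/g\rangle$ with $B_U\otimes_A A'$ using finite freeness. You then match the rings of integral elements. This costs the extra gluing step and some bookkeeping with rational localisations. In return it gives an explicit description of the Huber pair of $\pi_U^{-1}(U')$, which the paper's argument does not produce.

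\textbf{Two small corrections in the last step.}
\begin{itemize}
\item The two Huber pairs should be compared as subsets of the set of all continuous valuations on $B_{U'}$, not of $\Spa(B_{U'},B_{U'}^\circ)$. The latter is the smallest of these spectra, so it is contained in both of the sets you want to compare.
\item Equality of the two rings then follows from Huber's description $B^+=\{b: v(b)\le 1 \text{ for all } v\in\Spa(B,B^+)\}$.
\end{itemize}
You can also avoid valuations entirely. The preimage of $\O^+(V)$ in $A'$ is open, integrally closed, and contained in $A'^\circ$, because $A'$ is a topological direct summand of $B_{U'}$. It contains $A^+$ and $f/g$, so it contains $A'^+$, the smallest such ring. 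Since $\O^+(V)$ is integrally closed, it then contains the integral closure of $A'^+$ in $B_{U'}$.
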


\begin{proof}
    We shall verify that they have the same functor of points.
    Fix a trivialisation $\iota:\L_U\isomto \O_U$ to make the identifications $B_U\cong B_{U,\iota}$ and $B_{U'}\cong B_{U',\iota|_{U'}}$.
    Let $U'=\Spa(A',A'^+)$. Let $(R,R^+)$ be a complete Huber pair over $(A,A^+)$.
    Recall that $B_{U'}^+$ is the integral closure of $A'^+$ in $B_{U'}=\frac{A'[Z]}{(Z^n+\iota(a_1)Z^{n-1}+\dots + \iota(a_n))}$.
    Thus
    \begin{align*}
        &(\Spa(B_{U'},B_{U'}^+))(R,R^+) \\
        &=\Hom_{(A,A^+)}\left((\frac{A'[Z]}{(Z^n+\iota(a_1)Z^{n-1}+\dots + \iota(a_n))}, B_{U'}^+ ) , (R,R^+)    \right) \\
        &=\Hom_{(A,A^+)}((A',A'^+), (R,R^+))\times \set{r\in R: r^n+\iota(a_1)r^{n-1}+\dots + \iota(a_n)=0}
    \end{align*}
    This is $\set{r\in R: r^n+\iota(a_1)r^{n-1}+\dots + \iota(a_n)=0}$ if the structure map $\Spa(R,R^+) \to \Spa(A,A^+)$ factors through $U'$ and empty otherwise.

    Similarly, if the structure map $\Spa(R,R^+) \to \Spa(A,A^+)$ factors through $U'$, then 
    \begin{align*}
        &\pi_U^{-1}(U')(R,R^+)\\
        &= \Hom_{U}(\Spa(R,R^+), \Spa(B_U,B_U^+))\\
        &= \Hom_{(A,A^+)}\left((\frac{A[Z]}{(Z^n+\iota(a_1)Z^{n-1}+\dots + \iota(a_n))},B_U^+ ) , (R,R^+)    \right) \\
        &= \set{r\in R: r^n+\iota(a_1)r^{n-1}+\dots + \iota(a_n)=0}.
    \end{align*}
    If the structure map $\Spa(R,R^+) \to \Spa(A,A^+)$ does not factor through $U'$, then $\pi_U^{-1}(U')(R,R^+) = \varnothing$.
    This finishes the proof.
\end{proof}

\begin{definition}
    Define the \emph{spectral curve} $C_{a,S}$ to be the adic space obtained by gluing $\Spa(B_U,B_U^+)$ as $U$ runs through all the affinoid sousperfectoid open subsets of $X_S$ such that $\L|_U\cong \O_U$.
\end{definition}
This gluing is valid by \Cref{lem: open}. By gluing $\pi_U$, we get a morphism $\pi:C_{a,S} \to X_S$.

Let $U=\Spa(A,A^+)$ be an affinoid sousperfectoid open subset of $X_S$ such that $\L|_U\cong \O_U$.
Then $(\pi^*\L)|_{\pi^{-1}(U)}$ is the vector bundle associated with the $B_U$-module\footnote{We rename $Z$ by $Z_{\iota}$ so as to distinguish the different copies of $Z$.} 
$$\L(U)\otimes B_U = \L(U)\otimes_A \colim_{\iota} A[Z_{\iota}]/(Z_{\iota}^n+\iota(a_1)Z_{\iota}^{n-1}+\dots+\iota(a_n)).$$
The element $$\iota^{-1}(1)\otimes Z_{\iota}$$ is independent of the choice of $\iota$, because if $\iota'=a\iota$ is another trivialisation of $\L|_U$, then $\iota^{-1}(1)\otimes Z_{\iota} = \iota^{-1}(1)\otimes a^{-1}Z_{\iota'} = \iota'^{-1}(1)\otimes Z_{\iota'}.$
Thus, we can glue them as $U$ varies to obtain an element \[\tau\in H^0(C_{a,S},\pi^*\L).\]
We call this the \emph{tautological section} of $\pi^*\L$.

Informally, locally we have $U=\Spa(A,A^+)$,
 $\pi^{-1}(U)\cong \Spa(A[Z]/(Z^n+\iota(a_1)Z^{n-1}+\dots+\iota(a_n)), \text{integral closure of }A^+)$,
  $\tau=Z$.

\begin{lemma}\label{lem: poly to vb}
    Let $A$ be a commutative ring, $M$ be a finite free $A$-module, $\phi\in\End_A(M)$ with characteristic polynomial $f(Z)=(Z-\lambda_1)\dots (Z-\lambda_n)$, where $\lambda_i\in A$ and $\lambda_i-\lambda_j\in A^\times$ for all $i\ne j$.
    If we regard $M$ as the $A[Z]/(f)$-module with $Z$ acting as $\phi$, then $M$ is a finite projective $A[Z]/(f)$-module of rank $1$.
\end{lemma}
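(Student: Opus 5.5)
We use the Chinese remainder theorem. Since the $\lambda_i-\lambda_j$ are units for $i\ne j$, the ideals $(Z-\lambda_i)$ in $A[Z]$ are pairwise comaximal. Hence
\[A[Z]/(f)\isomto \prod_{i=1}^n A[Z]/(Z-\lambda_i)\cong A^n,\]
with the $i$-th factor given by $Z\mapsto \lambda_i$. Concretely, there are orthogonal idempotents $e_i\in A[Z]/(f)$ summing to $1$, namely the Lagrange interpolation polynomials
\[e_i=\prod_{j\ne i}\frac{Z-\lambda_j}{\lambda_i-\lambda_j}.\]
These are well defined because the denominators are units. The CRT decomposition gives $e_ie_j=0$ for $i\ne j$, $e_i^2=e_i$, and $(Z-\lambda_i)e_i=0$ in $A[Z]/(f)$.

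Let $M$ carry the $A[Z]/(f)$-module structure via $Z\mapsto\phi$. This is well defined because $f(\phi)=0$ by Cayley--Hamilton over the commutative ring $A$. Then $M=\bigoplus_i M_i$ with $M_i:=e_iM$, and $\phi$ acts on $M_i$ as multiplication by $\lambda_i$. A finite $A[Z]/(f)$-module is projective of rank $1$ exactly when each $M_i$ is a finite projective $A$-module of rank $1$, where $M_i$ is regarded as a module over the $i$-th factor $A$ (rank $1$ meaning constant rank $1$ on $\Spec$ of each factor, i.e.\ on $\Spec A[Z]/(f)=\bigsqcup_i\Spec A$). So it suffices to show that each $M_i$ is locally free of rank $1$ over $A$.

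Each $M_i$ is a direct summand of the finite free module $M$, so it is finite projective over $A$. It remains to check that its rank is $1$ at every prime $\mathfrak p$ of $A$. We base change to $k=\kappa(\mathfrak p)$. Since the decomposition $M=\bigoplus M_i$ is compatible with base change, we get $M\otimes k=\bigoplus_i M_i\otimes k$. On $M_i\otimes k$ the endomorphism $\phi\otimes k$ acts as the scalar $\bar\lambda_i$, and the $\bar\lambda_i$ are pairwise distinct in $k$ because their differences are units. Therefore the characteristic polynomial of $\phi\otimes k$ is $\prod_i (Z-\bar\lambda_i)^{d_i}$, where $d_i=\dim_k M_i\otimes k$. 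On the other hand, the characteristic polynomial commutes with base change, so it also equals $\prod_i(Z-\bar\lambda_i)$. By unique factorisation in $k[Z]$ and distinctness of the roots, every $d_i=1$.

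The one point needing care, and the main obstacle, is the compatibility between the characteristic polynomial over $A$ and the decomposition after base change to each residue field. Both facts are standard, since the determinant commutes with base change and is multiplicative on block-diagonal decompositions. Once they are in place, the argument above shows that each $M_i$ has rank $1$ everywhere, and hence that $M$ is finite projective of rank $1$ over $A[Z]/(f)$.
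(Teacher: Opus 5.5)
Your proof is correct and follows the paper's argument: the CRT/Lagrange idempotents split $M=\bigoplus_i e_iM$ into finite projective $A$-modules, and the rank of each summand is then determined fibrewise over fields. The only cosmetic difference is the final count. The paper shows each $(e_iM)\otimes K$ is a nonzero $\lambda_i$-eigenspace and then uses that the dimensions sum to $n$. You instead compare $\prod_i(Z-\bar\lambda_i)^{d_i}$ with $\prod_i(Z-\bar\lambda_i)$ by unique factorisation, and you also make explicit the Cayley--Hamilton step that the paper leaves implicit.
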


\begin{proof}
    As $\lambda_i-\lambda_j\in A^\times$ for all $i\ne j$,
    \[A[Z]/(f) \cong A[Z]/(Z-\lambda_1) \times \dots \times A[Z]/(Z-\lambda_n)\cong A^n.\]
    Let $e_i$ be the idempotent in $A[Z]/(f)$ corresponding to the $i$-th factor, i.e. $e_i=\prod_{j\ne i} \frac{Z-\lambda_j}{\lambda_i-\lambda_j}$. Note that $1=e_1+\dots+e_n$ and $e_i e_j = \delta_{ij}$.
    Then 
    \begin{equation}\label{eq:M_sum}
        M=e_1M\oplus \dots\oplus e_n M
    \end{equation}
    as $A[Z]/(f)=A^n$-module.
    It suffices to show that each $e_iM$ is a finite projective $A$-module of rank $1$.

    Each $e_iM$ is a direct summand of the free $A$-module $M$, so each $e_iM$ is a finite projective $A$-module.
    By  \eqref{eq:M_sum}, it suffices to show $M_i\neq 0$ for all $i$.
    Let $K$ be a field over $A$. 
    If we view $\phi$ as a matrix over $K$, then it has characteristic polynomial $f$ with eigenspace decomposition 
    \[M\otimes_A K = ((e_1M)\otimes_A K)\oplus \dots \oplus ((e_n M)\otimes_A K).\]
    Note that $(e_i M)\otimes_A K$ is the $\lambda_i$-eigenspace, so it is non-zero.
    It follows that each $e_i M$ is non-zero and the claim follows.
\end{proof}

\begin{lemma}\label{lem:push vb}
    Let $A\to B$ be a ring homomorphism such that $B$ is free of rank $n$ as an $A$-module. Let $M$ be a finite projective $B$-module of rank $1$. Then $M$ is a finite projective $A$-module of rank $n$.
\end{lemma}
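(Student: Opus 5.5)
We first show that $M$ is finite projective over $A$, and then compute its rank.

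For projectivity, note that $M$ is a direct summand of some $B^m$, say $M\oplus N\cong B^m$. Since $B\cong A^n$ as an $A$-module, $B^m\cong A^{nm}$ is a finite free $A$-module. Hence $M$ is a direct summand of a finite free $A$-module, so $M$ is finite projective over $A$.

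For the rank, it suffices to compute it at each prime $\mathfrak p\subset A$. After base change along $A\to A_{\mathfrak p}$ the hypotheses are preserved: $B_{\mathfrak p}:=B\otimes_A A_{\mathfrak p}$ is free of rank $n$ over $A_{\mathfrak p}$, and $M_{\mathfrak p}$ is finite projective of rank $1$ over $B_{\mathfrak p}$. We may therefore assume $A$ is local, and we must show $M$ is free of rank $n$ over $A$. Since $B$ is finite over the local ring $A$, it is semilocal, with maximal ideals exactly the primes lying over $\mathfrak m_A$. A finite projective module of constant rank $1$ over a semilocal ring is free. So $M\cong B\cong A^n$ as $A$-modules, which gives rank $n$.

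The only step that is not formal is the claim that a rank-$1$ projective module over a semilocal ring is free, and this is where I expect the main obstacle. The standard argument runs as follows. Over $B/\mathrm{rad}(B)$, which is a finite product of fields, the module $M/\mathrm{rad}(B)M$ is free of rank $1$. Choose a lift $m\in M$ of a generator. By Nakayama's lemma, $B\to M$, $b\mapsto bm$, is surjective. Because $M$ is projective, this surjection splits, so its kernel $K$ is a direct summand of $B$. Localizing at each maximal ideal of $B$, we get a surjection of free rank-$1$ modules $B_{\mathfrak n}\to M_{\mathfrak n}$, which must be an isomorphism, so $K_{\mathfrak n}=0$ for every maximal $\mathfrak n$. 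Hence $K=0$ and $M\cong B$.

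Alternatively, one can avoid the semilocal argument entirely. Work over $A_{\mathfrak p}\to\kappa(\mathfrak p)$ and compute
\[\dim_{\kappa(\mathfrak p)} M\otimes_A\kappa(\mathfrak p)=\dim_{\kappa(\mathfrak p)} M\otimes_B (B\otimes_A\kappa(\mathfrak p)).\]
Here $\bar B:=B\otimes_A\kappa(\mathfrak p)$ is an $n$-dimensional $\kappa(\mathfrak p)$-algebra, hence Artinian, and $\bar M:=M\otimes_B \bar B$ is projective of rank $1$ over $\bar B$. An Artinian ring is a finite product of local rings, and over each local factor the projective module $\bar M$ is free of rank $1$. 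Therefore $\bar M\cong\bar B$, and its dimension is $n$. This gives the rank at every point, and I would use this version as the cleaner route.
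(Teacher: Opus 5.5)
Your proof is correct, and it takes a genuinely different route from the paper's.

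The paper argues geometrically. Since $h\colon\Spec B\to\Spec A$ is flat of finite presentation, it is open. The paper covers $\Spec B$ by opens $U_i$ that trivialise $\tilde M$, and then asserts $(h_*\tilde M)|_{h(U_i)}\cong (h_*\O_{U_i})|_{h(U_i)}\cong\O_{h(U_i)}^n$. That step tacitly identifies $h^{-1}(h(U_i))$ with $U_i$, and as written it fails. Take $A=k$ a field, $B=k\times k$, and $U_1$ one of the two points of $\Spec B$. Then the left side is the $2$-dimensional space $M$, while $h_*\O_{U_1}=k$ is $1$-dimensional.

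What is actually needed is a trivialisation of $M$ on an $h$-saturated open $h^{-1}(V)$, i.e.\ simultaneously at all points of a fibre of $h$. This is exactly what your semilocal argument provides: $M_{\mathfrak p}\cong B_{\mathfrak p}$ as $B_{\mathfrak p}$-modules. Since $M$ and $B$ are finitely presented, this spreads out to $M_f\cong B_f$ for some $f\in A\setminus\mathfrak p$, which gives the correct form of the paper's open cover.

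Your direct-summand observation also gives finite projectivity over $A$ at no cost, after which the rank can be read off pointwise. The fibre-dimension variant, via the Artinian ring $B\otimes_A\kappa(\mathfrak p)$, is the most economical version. It reduces everything to the standard fact that finite projective modules over local rings are free.
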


\begin{proof}
    Let $h:\Spec B \to \Spec A$ be the corresponding map on affine schemes. This is flat and of finite presentation, so it is open.
    Let $\tilde M$ be the quasi-coherent sheaf on $\Spec B$ corresponding to $M$. We can pick an open cover $\{U_i\}$ of $\Spec B$ such that $\tilde M|_{U_i} \cong \O_{U_i}$ for all $i$.
    Then $\{h(U_i)\}$ is an open cover of $\Spec A$ and $(h_*\tilde M)|_{h(U_i)} \cong (h_*\O_{U_i})|_{h(U_i)} \cong \O_{h(U_i)}^n$.
\end{proof}

We have the analogue of the BNR theorem \cite[Proposition 3.6]{BNR} in our setting:
\begin{proposition}\label{prop:CRT}
    For $1\le i\le n$, let $a_i\in \L^{\otimes i}(X_S)$. Let $f(Z)=Z^n+a_1Z^{n-1}+\dots +a_n\in \oplus_{i=1}^n \L^{\otimes i}(X_S)Z^i$.
    Assume that for all $x\in X_S$, there is an open neighborhood $U$ of $x$ such that $\L|_U$ is trivial and $f(Z)$ factorizes as $(Z-\lambda_1)\dots (Z-\lambda_n)$, where $\lambda_i\in \L(U)$ and $\lambda_i-\lambda_j\in \L(U)^\times$ for\footnote{Here is the definition of $\L|_U(U)^\times$: Pick an isomorphism $\iota:\L|_U\xrightarrow{\sim} \O_U$; $\L(U)^\times$ is the subset of $\L(U)$ which corresponds to $\O(U)^\times$ under the induced isomorphism $\L(U)\cong \O_U(U)$. This is independent of the choice of $\iota$.} all $i\neq j$.
    Then we have an equivalence of categories
    \begin{align*}
        \{\text{line bundles on }C_{a,S}\} &\to \{\text{rank }n \text{ Higgs bundles on }X_S \text{ with characteristic polynomial }f\}\\
        \mathcal F &\mapsto (\pi_*\mathcal F, \pi_*(\tau\otimes\mathcal F)),
    \end{align*}
    where $\pi:C_{a,S}\to X_S$ is the canonical map and $\pi_*(\tau\otimes\mathcal F)$ is defined as follows: $\tau$  corresponds to a map $\O\to \pi^*\L$; we can tensor it with $\mathcal F$ to get $\mathcal F\to (\pi^*\L)\otimes \mathcal F$; then we apply $\pi_*$ to get the desired map $\pi_*\mathcal F\to \pi_*((\pi^*\L)\otimes \mathcal F)=\L \otimes \pi_*\mathcal F$.
\end{proposition}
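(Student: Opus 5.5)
The plan is to reduce everything to the local affinoid situation, where $\pi$ is finite free and the algebra is that of \Cref{lem: poly to vb} and \Cref{lem:push vb}, and then glue.

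First I will cover $X_S$ by affinoid sousperfectoid opens $U=\Spa(A,A^+)$ on which $\L|_U$ is trivial and $f$ splits with unit differences. Such $U$ exist by hypothesis, and shrinking is harmless by \Cref{lem: open}. On such a $U$, fix $\iota$. Then $B_U\cong A[Z]/(f)\cong A^n$ via the idempotents $e_i$, so $\pi^{-1}(U)\cong\bigsqcup_{i=1}^n U$, and $\pi$ restricted over $U$ is a trivial $n$-sheeted cover. Consequently $\pi$ is finite. For a line bundle $\mathcal F$ on $C_{a,S}$, the pushforward $\pi_*\mathcal F|_U$ is the direct sum of the $n$ line bundles $\mathcal F|_{U_i}$, so it is a rank $n$ vector bundle. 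Under this identification $\tau$ acts on the $i$-th summand by $\lambda_i$. Hence $\pi_*(\tau\otimes\mathcal F)$ is locally diagonal with entries $\lambda_i$, and its characteristic polynomial is $\prod(Z-\lambda_i)=f$. Since the characteristic polynomial is a global section of $\bigoplus\L^{\otimes i}$ computed locally, it equals $f$ globally. So the functor is well defined.

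For the inverse, take a Higgs bundle $(V,\phi)$ with characteristic polynomial $f$. By Cayley--Hamilton, which holds for endomorphisms of finite projective modules over any ring (check after localizing to free modules), $f(\phi)=0$ locally. So $V(U)$ becomes an $A[Z]/(f)=B_U$-module with $Z$ acting by $\iota\circ\phi$. To apply \Cref{lem: poly to vb} I need $V(U)$ free, so I will refine the cover until $V|_U$ is trivial. The lemma then shows that $V(U)$ is finite projective of rank $1$ over $B_U$. These local modules are compatible with restriction, by \Cref{lem: open} together with the fact that the $B_U$-structure is defined canonically, independently of $\iota$, just as $\tau$ was. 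They therefore glue to a line bundle $\mathcal F_V$ on $C_{a,S}$ with $\pi_*\mathcal F_V=V$ and $\tau$ acting as $\phi$.

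Finally I will check that the two constructions are quasi-inverse and fully faithful. Morphisms of Higgs bundles $V\to V'$ commuting with $\phi$ are exactly the $B_U$-linear maps locally, that is, $\O_{C}$-linear maps of the corresponding sheaves. The main obstacle I expect is the comparison of sheaf theories on adic spaces. I must verify that a vector bundle on $\pi^{-1}(U)\cong\bigsqcup U$ is the same as a finite projective $B_U$-module, and that $\pi_*$ of the associated sheaf is the sheaf associated with the underlying $A$-module. For this I would use the disjoint-union description to reduce to vector bundles on the sousperfectoid affinoid $U$ itself, where the equivalence with finite projective $A$-modules is known (Kedlaya--Liu / \cite{HK-sousperfectoid}). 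Gluing over the cover then gives the global equivalence.
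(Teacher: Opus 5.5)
Your proposal is correct and follows the paper's proof. Both work on affinoid sousperfectoid opens $U$ where $\L$ (and $V$) is trivial, use \Cref{lem: poly to vb} to turn $(V,\phi)$ into a rank-one $B_U$-module, and glue. The differences are minor: for the forward direction you use the splitting $B_U\cong A^n$ (so $\pi^{-1}(U)\cong\bigsqcup U$ and $\tau$ acts diagonally by the $\lambda_i$), where the paper uses \Cref{lem:push vb} and a reduction to free $M$. You also spell out the Cayley--Hamilton step and the comparison between vector bundles and finite projective modules, both of which the paper leaves implicit.
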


\begin{proof}
    For $x\in X_S$, let us call an open neighborhood $x\in U\subset X_S$ \emph{good} if $U$ is affinoid sousperfectoid and $\L|_U\cong \O_U$. Note that on such a $U$, we can identify $f$ with a polynomial with coefficients in $\O_U(U)$.

    Let $\mathcal F$ be a line bundle on $C_{a,S}$.
    Let us first show that $\pi_*\mathcal F$ is a rank $n$ vector bundle on $X_S$.
    Let $x\in X$ and $U=\Spa(A,A^+)\subset X$ be a good neighborhood.
    We know that $\pi^{-1}(U)\cong\Spa(A[Z]/(f), \tilde{A^+})$, where $\tilde{A^+}$ is the integral closure of $A^+$ in $A[Z]/(f)$.
    Then $\mathcal F|_{\pi^{-1}(U)}$ corresponds to a finite projective $A[Z]/(f)$-module of rank $1$, which, by \Cref{lem:push vb}, is a finite projective $A$-module of rank $n$. Thus, $(\pi_*\mathcal F)|_U$ is a rank $n$ vector bundle. It follows that the same holds for $\pi_*\mathcal F$.

    To see that $\varphi:=\pi_*(\tau\otimes \mathcal F)$ has characteristic polynomial $f$, i.e. $a_i=(-1)^i \Tr\Lambda^i\phi$ for all $1\le i \le n$, note that the formation of $\Tr\Lambda^i\varphi$ is compatible with restriction to open subsets. We can therefore check them locally, so we are reduced to showing the following:
    if $M$ is a finite projective $A[Z]/(f)$-module of rank $1$, then $Z\in \End_A(M)$, viewed as an $A$-linear endomorphism of the finite projective rank $n$ $A$-module $M$, has characteristic polynomial $f$. We can localise at $A$ to assume that $M$ is free of rank $n$ as an $A$-module. Then this is standard.

    Conversely, let $(\mathcal V,\phi)$ be a rank $n$ Higgs bundles on $X_S$ with characteristic polynomial $f$.
    Let $x\in X_S$ and $U=\Spa(A,A^+)\subset X_S$ be a good neighborhood such that $\mathcal V|_U\cong \O_U^n$ and $f(Z)$ factorises as in the statement of the proposition.
    We can view the finite free $A$-module $\mathcal V(U)$ as a $B_U= \colim_{\iota} A[Z_{\iota}]/(Z_{\iota}^n+\iota(a_1)Z_{\iota}^{n-1}+\dots+\iota(a_n))$-module with $Z_\iota$ acting as the endomorphism
    \begin{equation*}
        \mathcal V(U) \xrightarrow{\phi} (\mathcal V\otimes \L)(U) \xrightarrow{1\otimes\iota} \mathcal V(U).
    \end{equation*}
    This is well-defined, because if $\iota'=a\iota$ is another trivialisation of $\iota$, then $a^{-1}Z_{\iota'}$ ($=Z_{\iota}$ in $B_U$) acts as $a^{-1}$ times $\mathcal V(U) \xrightarrow{\phi} (\mathcal V\otimes \L)(U) \xrightarrow{1\otimes\iota'} \mathcal V(U)$, which is the same as the way that $Z_{\iota}$ acts.
    By \Cref{lem: poly to vb}, $\mathcal V(U)$ is a finite projective $B_U$-module of rank $1$.
    Thus, it gives rise to a line bundle $W$ on $\pi^{-1}(U)$.
    We can glue $W$ as $U$ varies to get a line bundle on $C_{a,S}$.

    It is not hard to verify that these two constructions are inverses of each other.
\end{proof}

\begin{remark}
    In the classical case, where $X$ is a smooth projective curve with genus at least $2$ over an algebraically closed field $k$ and $\L=\Omega_{X/k}$, then one can show that the assumption in the proposition is never satisfied by considering the degree of the discriminant divisor. However, the assumption in the proposition is possible in our setup. If $\L=\O_{X_S}$, then the assumption is roughly saying that the characteristic polynomial has distinct roots, which seems reasonable.
\end{remark}

As an immediate corollary of \Cref{prop:CRT}, we get the following relation between the Hitchin fibers and line bundles on the spectral curve.

\begin{theorem}\label{BNR}
    In the situation of \Cref{prop:CRT}, we have a canonical isomorphism of v-stacks
    \[\mathrm{Higgs}_{n,a} \cong \mathrm{Pic}_{C_a}.\]
    Here, $\mathrm{Higgs}_{n,a}$ is the v-stack on $\Perf_S$ sending $T$ to the groupoid of rank $n$ Higgs bundles on $X_S$ with characteristic polynomial $f$, i.e. the Hitchin fiber $\mathrm{Higgs}_{\GL_n}\times_{\mathcal A_{\GL_n}}S$; $\mathrm{Pic}_{C_a}$ is the v-stack on $\Perf_S$ sending $T$ to the groupoid of line bundles on $C_{a,T}$.
\end{theorem}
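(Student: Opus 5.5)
The plan is to apply \Cref{prop:CRT} to each $T\in\Perf_S$ and to check that the resulting equivalences of groupoids fit together into a morphism of prestacks. Only then do we conclude it is an isomorphism of v-stacks. Concretely, for $T\to S$ we pull back $\L$ and the $a_i$ to $\L_T$ and $a_{i,T}\in \L_T^{\otimes i}(X_T)$.

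\textbf{Step 1: base change of the spectral curve.} I first want the factorization hypothesis of \Cref{prop:CRT} to hold over $X_T$. If $U\subset X_S$ is a good open on which $f=\prod(Z-\lambda_i)$ with $\lambda_i-\lambda_j$ units, then its preimage in $X_T$ can be covered by good opens, and on these the pulled-back $\lambda_i$ still give a factorization with unit differences.

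I also want a canonical isomorphism $C_{a,T}\cong C_{a,S}\times_{X_S}X_T$, compatible with $\pi$ and with the tautological section $\tau$. This I would check by the functor-of-points computation used in \Cref{lem: open}. Over a good affinoid $U$ and an affinoid $V\subset X_T$ mapping to $U$, both sides represent the set of roots $r$ of $r^n+\iota(a_1)r^{n-1}+\dots+\iota(a_n)$, for $\iota$ the pulled-back trivialisation. The same computation gives transitivity for $T'\to T\to S$. With this in hand, \Cref{prop:CRT} yields equivalences $\Phi_T:\mathrm{Pic}_{C_a}(T)\to \mathrm{Higgs}_{n,a}(T)$.

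\textbf{Step 2: compatibility with pullback.} Next I would construct natural isomorphisms $g^*\Phi_T(\mathcal F)\cong\Phi_{T'}(g^*\mathcal F)$ for $g:T'\to T$. This amounts to base change $c^*\pi_*\mathcal F\cong \pi'_*c'^*\mathcal F$ for the finite map $\pi$, compatibly with the Higgs fields. Locally this is just $M\otimes_A A'\cong M\otimes_{B_U}(B_U\otimes_A A')$ with $Z$ acting on both sides. Here $B_U\otimes_A A'=B_{U'}$ because $B_U$ is finite free over $A$, so no completion issue arises. Because the isomorphisms are canonical, the cocycle conditions are automatic.

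Together, Steps 1 and 2 give a morphism of prestacks $\mathrm{Pic}_{C_a}\to\mathrm{Higgs}_{n,a}$ which is an equivalence on every $T$, hence an isomorphism. The fact that $\mathrm{Higgs}_{n,a}$ is a v-stack follows because it is a fiber product of the v-stack $\Higgs$ with sheaves. Therefore $\mathrm{Pic}_{C_a}$ is a v-stack too, and the isomorphism is one of v-stacks.

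\textbf{Main obstacle.} I expect the hardest step to be Step 1, the base change identification of $C_{a,T}$, and in particular handling the $+$-rings. The integral closure of $A^+$ in $B_U$ must be shown to pull back correctly, at least up to the harmless ambiguity that does not affect $\Hom$ into $(R,R^+)$. I would argue this as in \Cref{lem: open}: a map out of $(B_U,B_U^+)$ into $(R,R^+)$ is determined by the image $r$ of $Z$. Since $r$ is integral over $R^+$, and $R^+$ is integrally closed, the $+$-condition is automatic.
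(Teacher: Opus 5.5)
Your proposal is correct and follows the paper's route. The paper treats the theorem as an immediate corollary of \Cref{prop:CRT} applied over each $T$, and in the subsequent remark it justifies $C_{a,T}=C_{a,S}\times_{X_S}X_T$ by the same local computation you describe (both sides are locally $\Spa(B[Z]/(f),\tilde B^+)$); your Steps 1--2 make explicit the compatibility with pullback in $T$, and the deduction that $\mathrm{Pic}_{C_a}$ is a v-stack, which the paper leaves implicit.
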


\begin{remark}
    In the setting of \Cref{prop:CRT}, the spectral cover $C_{a,S}$ is a degree $n$ covering space of $X_S$, i.e. for all $x\in X_S$, there is an open neighborhood $U$ of $x$ such that $\pi^{-1}(U)\cong \coprod_{i=1}^n U$ over $U$.
    Moreover, for all $T\in\Perf_S$, $$C_{a,T}=C_{a,S}\times _{X_S} X_T.$$ Indeed, there is a natural map $C_{a,T} \to C_{a,S}\times _{X_S} X_T$. Locally, $X_S$ is given by $\Spa(A,A^+)$, $X_T$ is given by $\Spa(B,B^+)$, $C_{a,S}$ is given by $\Spa(A[Z]/(f),\tilde A^+)$, and both $C_{a,T}$ and $C_{a,S}\times _{X_S} X_T$ are locally given by $\Spa(B[Z]/(f),\tilde B^+)$, where $\;\tilde{}\;$ denotes integral closure. The claim follows. This motivates the setup in the next subsection.
\end{remark}

\subsection{Covering space of the Fargues-Fontaine curve}
Fix $S\in \Perf_{\mathbb F_q}, n\ge 1$.
Let $\pi:C_S\to X_S$ be a degree $n$ covering of $X_S$, i.e. $C_S$ is an adic space and for all $x\in X_S$, there is an open neighborhood $U$ of $x$ such that $\pi^{-1}(U)\cong \coprod_{i=1}^n U$ over $U$.

Let us start with an easy lemma.
\begin{lemma}\label{lem:pvb}
    If $\E$ is a vector bundle of rank $m$ on $C_S$, then $\pi_*\E$ is a vector bundle of rank $mn$ on $X_S.$ Also, $\pi_*:Ab(C_S)\to Ab(X_S)$ is an exact functor, where $Ab(-)$ stands for the category of abelian sheaves. In particular, for all $\mathcal F\in Ab(C_S)$,
    \begin{equation}\label{eq:cohom}
        H^i(C_S,\mathcal F)=H^i(X_S,\pi_*\mathcal F)
    \end{equation} 
    for all $i\ge 0$.
\end{lemma}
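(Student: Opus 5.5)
All three assertions are local on $X_S$, so the plan is to reduce to the trivial covering $\coprod_{i=1}^n U\to U$ and then glue.

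\emph{Vector bundle claim.} Let $\E$ be a vector bundle of rank $m$ on $C_S$. Fix $x\in X_S$ and an open $U\ni x$ with $\pi^{-1}(U)\cong\coprod_{i=1}^n U_i$, where each $U_i\cong U$ over $U$. By construction, $(\pi_*\E)|_U$ is the sheaf $V\mapsto \E(\pi^{-1}(V))=\prod_{i=1}^n \E|_{U_i}(V)$. Hence $(\pi_*\E)|_U\cong\bigoplus_{i=1}^n s_i^*\E$, where $s_i:U\isomto U_i\subset C_S$ are the sheets. Each $s_i^*\E$ is a rank $m$ vector bundle on $U$, so $(\pi_*\E)|_U$ is a vector bundle of rank $mn$. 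Since such $U$ cover $X_S$, $\pi_*\E$ is a vector bundle of rank $mn$.

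\emph{Exactness of $\pi_*$.} The functor $\pi_*$ is always left exact, so only surjectivity needs checking. Let $\mathcal F\to\mathcal G$ be a surjection in $Ab(C_S)$, and take a section of $\pi_*\mathcal G$ over an open $V\subset X_S$. After shrinking $V$ to lie in some trivialising $U$ as above, we have $\pi^{-1}(V)=\coprod_i V_i$ with $V_i\cong V$. The section is then a tuple $(g_i)$ with $g_i\in\mathcal G(V_i)$. Each $g_i$ lifts locally on $V_i$, i.e.\ on an open cover of $V_i$. Transporting these covers along $V_i\cong V$ and taking a common refinement $\{W_k\}$ of $V$ gives an open cover on which every $g_i|_{W_k\cap V_i}$ lifts. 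Thus $(g_i)$ lifts locally on $V$, and $\pi_*\mathcal F\to\pi_*\mathcal G$ is surjective as a map of sheaves. The point to get right is that sections of $\pi_*$ on small opens really are these finite products. This holds because $\pi^{-1}(V)$ is a finite disjoint union of copies of $V$, and the sheaf condition turns a disjoint union into a product.

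\emph{Cohomology.} The plan is the Leray spectral sequence $H^p(X_S,R^q\pi_*\mathcal F)\Rightarrow H^{p+q}(C_S,\mathcal F)$, which reduces \eqref{eq:cohom} to showing $R^q\pi_*\mathcal F=0$ for $q>0$. Recall that $R^q\pi_*\mathcal F$ is the sheafification of $V\mapsto H^q(\pi^{-1}(V),\mathcal F)$. For $V$ inside a trivialising open, $H^q(\pi^{-1}(V),\mathcal F)=\prod_i H^q(V_i,\mathcal F)$. Every class in $H^q(V_i,\mathcal F)$ is killed on some open cover of $V_i$, and the finitely many such covers transport to $V$ and admit a common refinement. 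So $R^q\pi_*\mathcal F$ has zero stalks and vanishes.

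An alternative route avoids sheafification: $\pi_*$ is exact, and it preserves injectives because it has the exact left adjoint $\pi^{-1}$. An injective resolution of $\mathcal F$ therefore pushes forward to an injective resolution of $\pi_*\mathcal F$, and $\Gamma(C_S,-)=\Gamma(X_S,\pi_*-)$ gives \eqref{eq:cohom} directly. I expect no genuine obstacle; the only care needed is the bookkeeping with refinements in the local lifting arguments.
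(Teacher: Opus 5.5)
Your proposal is correct and follows the same route as the paper: everything reduces to the local picture $\pi^{-1}(U)\cong\coprod_{i=1}^n U$, and your common-refinement argument for surjectivity is the section-level form of the paper's stalk identity $(\pi_*\mathcal F)_x=\bigoplus_{y\in\pi^{-1}(x)}\mathcal F_y$. Your Leray argument, or alternatively the observation that $\pi_*$ preserves injectives, correctly spells out the step the paper compresses into ``in particular'', namely that exactness of $\pi_*$ forces $R^q\pi_*=0$ for $q>0$.
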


\begin{proof}
    The first statement follows immediately from the fact that $\pi$ is a covering of degree $n$.
    For the second statement, note that for all $\mathcal F\in Ab(C_S)$ and $x\in X_S$, we have $(\pi_*\mathcal F)_x=\oplus_{y\in \pi^{-1}(x)}\mathcal F_y$ because $\pi$ is a covering.
\end{proof}

Let $\O_{C_S}(1):=\pi^*\O_{X_S}(1)$. Let us prove an ampleness result for $\O_{C_S}(1)$.
\begin{proposition}\label{ample}
    Assume $S$ is affinoid perfectoid. Let $\mathcal E$ be a vector bundle on $C_S$. Then there exists $N\ge 1$ such that for all $i\ge N$, $\E(i):=\E\otimes_{\O_{C_S}}\O_{C_S}(1)^{\otimes i}$ is globally generated\footnote{An $\O_{C_S}$-module $\mathcal F$ is called globally generated if there is an $m\ge 1$ and a surjection $\O_{C_S}^m \twoheadrightarrow \mathcal F$.} and $H^1(C_S,\E(i))=0$. Moreover, $H^j(C_S,\E)=0$ for all $j\ge 2$.
\end{proposition}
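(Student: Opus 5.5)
Since $\pi$ is a finite covering, the natural plan is to push everything down to $X_S$ and use the known ampleness of $\O_{X_S}(1)$ there. By \Cref{lem:pvb}, $\pi_*\E$ is a vector bundle of rank $mn$ on $X_S$, and $H^j(C_S,\mathcal F)=H^j(X_S,\pi_*\mathcal F)$ for every abelian sheaf $\mathcal F$. The projection formula gives $\pi_*(\E(i))=\pi_*(\E\otimes\pi^*\O_{X_S}(i))\cong(\pi_*\E)(i)$. It can be checked locally on $X_S$, where $\pi^{-1}(U)\cong\coprod U$ and $\pi_*$ becomes a direct sum. So the cohomological statements reduce to statements about the vector bundle $\V:=\pi_*\E$ on $X_S$.

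For $X_S$ with $S$ affinoid perfectoid, I would use the standard facts from \cite{FS_geometrisation}, transported via GAGA (\Cref{thm:GAGA}) to $X_S^{sch}$. First, $X_S^{sch}$ is $\Proj$ of a graded ring and $\O(1)$ is ample on it; by \cite[Proposition II.2.5/II.2.7]{FS_geometrisation}, every vector bundle $\V$ has $\V(i)$ globally generated with $H^1(X_S,\V(i))=0$ for $i\gg0$. Second, $H^j(X_S,\V)=0$ for $j\ge2$. Applying these to $\V=\pi_*\E$ gives $H^1(C_S,\E(i))=0$ for $i\ge N$, and $H^j(C_S,\E)=0$ for $j\ge2$.

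The step I expect to need care is global generation on $C_S$, since it is not directly a pushforward statement. Take $i\ge N$ and a surjection $\O_{X_S}^m\twoheadrightarrow(\pi_*\E)(i)=\pi_*(\E(i))$. By adjunction this gives $\O_{C_S}^m=\pi^*\O_{X_S}^m\to\pi^*\pi_*(\E(i))\to\E(i)$, where the second map is the counit. I would check surjectivity of the counit locally. Over $U$ with $\pi^{-1}(U)=\coprod_k U_k$, we have $\pi^*\pi_*\mathcal G|_{U_k}=\bigoplus_l\mathcal G|_{U_l}$ (identified via $U_l\cong U$), and the counit is the projection onto the $k$-th summand, which is surjective. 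Since $\pi^*$ preserves surjections, the composite $\O_{C_S}^m\to\E(i)$ is surjective.

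If the precise vanishing and generation statements are not available in \cite{FS_geometrisation} in the needed form, the fallback is to prove them on $X_S$ directly. Write $\V$ as a quotient of $\bigoplus\O(-d)$ using the fact that the Beauville–Laszlo/modification description makes every bundle a modification of a trivial one. Then use $H^1(X_S,\O(d))=0$ for $d>0$ and the long exact sequence. I regard this as the main technical obstacle, but it is a statement purely about $X_S$ and is standard.
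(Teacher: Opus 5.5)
Your proposal is correct and matches the paper's proof essentially step for step. Like the paper, you push forward via \Cref{lem:pvb} and the projection formula, apply the Fargues--Scholze ampleness and cohomological-dimension results to $\pi_*\E$ on $X_S$, and get global generation on $C_S$ by composing $\pi^*$ of a surjection $\O_{X_S}^m\twoheadrightarrow\pi_*(\E(i))$ with the surjective counit $\pi^*\pi_*(\E(i))\to\E(i)$. The paper cites \cite[Theorem II.2.6]{FS_geometrisation} directly on the adic curve $X_S$, so neither your GAGA detour nor your fallback argument is needed.
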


\begin{proof}
    By \Cref{lem:pvb}, $\pi_*\E$ is a vector bundle on $X_S$. 
    By equation \eqref{eq:cohom} and \cite[beginning of section II.2]{FS_geometrisation}, $H^j(C_S,\E)=0$ for all $j\ge 2$.
    By \cite[Theorem II.2.6]{FS_geometrisation}, there is an $N\ge 1$ such that for all $i\ge N$, $(\pi_*\E)(i):=(\pi_*\E)\otimes_{\O_{X_S}}\O_{X_S}(1)^{\otimes i}$ is globally generated and $H^1(X_S,(\pi_*\E)(i))=0$.
    Note that 
    \[(\pi_*\E)(i)=\pi_*(\E\otimes_{\O_{C_S}}\pi^*\O_{X_S}(i))=\pi_*(\E(i))\]
    by the projection formula.
    By equation \eqref{eq:cohom}, $H^1(C_S,\E(i))=0$ for all $i\ge N$.
    Finally, as $(\pi_*\E)(i)$ is globally generated, there is an $m\ge 1$ and a surjection
    \[\O_{X_S}^m \twoheadrightarrow (\pi_*\E)(i)=\pi_*(E(i)).\]
    Applying $\pi^*$ gives a surjection 
    \[g:\O_{C_S}^m \twoheadrightarrow \pi^*\pi_*(E(i)).\]
    Now, observe that the natural map 
    \[h:\pi^*\pi_*(E(i))\to E(i)\]
    is surjective, as can be checked on stalks.
    Hence we get a surjection $h\circ g:\O_{X_S}^m \twoheadrightarrow E(i)$, showing that $E(i)$ is globally generated.
\end{proof}

We shall also need a general topological lemma. Let us write qc as a shorthand for `quasi-compact'. Recall that a topological space is called quasi-separated if the intersection of any two qc open subsets is qc. We refer the reader to \cite[section 2]{EC_diamonds} for the definitions of spectral and locally spectral spaces. Recall that every adic space is a locally spectral space. Also, a locally spectral space is spectral if and only if it is quasi-compact and quasi-separated (qcqs).
\begin{lemma}\label{lem:qcqs}
    Let $Z$ be a locally spectral space. 
    Assume that $Z$ admits a finite open cover $\{U_i\}_{i=1}^m$ such that $U_i$ and $U_i\cap U_j$ are qc for all $i,j$.
    Then $Z$ is qcqs.
\end{lemma}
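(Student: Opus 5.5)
Quasi-compactness is immediate: $Z=\bigcup_{i=1}^m U_i$ is a finite union of qc open subsets, hence qc. The real content is quasi-separatedness. I would show that for any two qc opens $V,W\subset Z$, the intersection $V\cap W$ is qc.

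The first step is to reduce to the pieces $U_i$. Write
\[
V\cap W=\bigcup_{i,j}(V\cap U_i)\cap(W\cap U_j).
\]
It therefore suffices to show two things: each $V\cap U_i$ is qc, and each intersection $(V\cap U_i)\cap(W\cap U_j)$ is qc. Then $V\cap W$ is a finite union of qc sets, hence qc.

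For the first point, I would use that $Z$ is locally spectral. Since $V$ is qc, it is a finite union of spectral opens (qc opens of spectral open subspaces of $Z$). Similarly, each $U_i$ is qc, so it is a finite union of spectral opens. So we are led to understand the intersection of a spectral open $P$ and a spectral open $Q$. This is where the hypothesis must be used, and it is the main obstacle: in a general locally spectral space, two spectral opens can have non-qc intersection. I do not see how to control $P\cap Q$ for arbitrary spectral $P,Q$ from the hypothesis alone.

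So I would restructure the argument to work inside the $U_i$ rather than inside arbitrary spectral opens. The key is to first prove that each $U_i$ is spectral. It is qc by hypothesis, so it remains to see that it is quasi-separated. That is not literally given, but it is the natural reading of the hypothesis (and in the intended application, each $U_i$ is affinoid, hence spectral). Granting it, each $U_i$ and each $U_i\cap U_j$ is spectral, being a qc open of a spectral space. The argument would then proceed as follows.

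1. Every qc open $V\subset Z$ meets $U_i$ in a set $V\cap U_i$ which is qc. To see this, cover $V$ by finitely many qc opens $V_k$, each contained in some $U_{j(k)}$. Such a cover exists because the $U_j$ cover $Z$ and $U_j$ is locally spectral, so $V\cap U_j$ is a union of qc opens and finitely many of them cover the qc set $V$. Then
\[
V_k\cap U_i=V_k\cap(U_{j(k)}\cap U_i),
\]
and this is the intersection of two qc opens inside the spectral space $U_{j(k)}$, hence qc. Thus $V\cap U_i=\bigcup_k (V_k\cap U_i)$ is a finite union of qc sets, hence qc.

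2. For qc opens $V,W\subset Z$, the set $(V\cap U_i)\cap(W\cap U_i)$ is the intersection of two qc opens in the spectral space $U_i$, hence qc.

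Since $V\cap W=\bigcup_i (V\cap U_i)\cap(W\cap U_i)$, step 2 shows that $V\cap W$ is a finite union of qc sets, hence qc. Together with quasi-compactness of $Z$, this would give that $Z$ is qcqs.

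The step I expect to be the main obstacle is justifying that each $U_i$ is quasi-separated. The plan as written assumes it, and I would either check it directly from the intended setup (affinoid $U_i$) or find a way to derive it from the stated hypothesis.
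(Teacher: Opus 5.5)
Your argument is the paper's argument. You cover a qc open $V$ by finitely many qc opens $V_k$, each inside some $U_{j(k)}$, and deduce that $V\cap U_i=\bigcup_k V_k\cap(U_{j(k)}\cap U_i)$ is qc. You then intersect inside $U_i$ and take the finite union over $i$.

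The point you flag is a real gap, and the paper's proof has the same one: it appeals to ``the quasi-separatedness of $U_j$'' even though that is not among the hypotheses. Do not try to derive it from the stated hypotheses, because the lemma as written is false without it. Take $m=1$ and $U_1=Z$, where $Z$ is two copies of $\Spec k[x_1,x_2,\dots]$ glued along the complement of the origin.
\begin{itemize}
\item $Z$ is locally spectral and qc, and $U_1\cap U_1=Z$ is qc, so the hypotheses hold.
\item The two affine charts are qc opens meeting in $\Spec k[x_1,x_2,\dots]$ minus the origin. That open is not qc, because $(x_1,x_2,\dots)$ is not the radical of a finitely generated ideal. So $Z$ is not quasi-separated.
\end{itemize}

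The correct statement must require each $U_i$ to be quasi-separated, i.e.\ spectral. Under that hypothesis your proof, and the paper's, is complete.

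This does not harm the application to $C_S$. Choose the $U_i\subset X_S$ affinoid with $\pi^{-1}(U_i)\cong\coprod_{k=1}^n U_i$; finitely many suffice since $X_S$ is qc. Then each $\pi^{-1}(U_i)$ is spectral. Also $\pi^{-1}(U_i)\cap\pi^{-1}(U_j)\cong\coprod_{k=1}^n(U_i\cap U_j)$ is qc, because $X_S$ is quasi-separated.
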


\begin{proof}
    Quasi-compactness of $Z$ is clear.
    The proof for quasi-separatedness is almost identical to that in \cite{quasiseparated}.
    Let $V$ be a qc open subset of $Z$. Let us show that $V\cap U_1$ is qc.
    Since $Z$ is locally spectral and $V$ is qc, we can write
    \[V=\bigcup_{i=1}^m \bigcup_{j=1}^M V_{ij}\]
    for some qc open $V_{ij}\subset U_i$ and $M\in \mathbb N$.
    Then 
    \begin{align*}
        V\cap U_1 &=\bigcup_{i=1}^m \bigcup_{j=1}^M (V_{ij}\cap U_1)\\
        &= \bigcup_{i=1}^m \bigcup_{j=1}^M (V_{ij}\cap U_1\cap U_j).
    \end{align*}
    By assumption, each $U_1\cap U_j$ is qc. 
    Since $V_{ij}$ and $U_1\cap U_j$ are both qc open subsets of $U_j$, we know that $(V_{ij}\cap U_1\cap U_j)$ is also qc by the quasi-separatedness of $U_j$.
    Hence $V\cap U_1$ is also qc.

    Now, suppose $W$ is also a qc open subset of $Z$.
    Then $W\cap U_1$ is qc by the above.
    By quasi-separatedness of $U_1$, $V\cap W\cap U_1$ is also qc. By symmetry, the same holds with $U_1$ replaced by any of the $U_i$. 
    Hence, $V\cap W=\cup_{i=1}^m(V\cap W\cap U_i)$, is also qc.
\end{proof}

We have the following GAGA result:
\begin{theorem}
    Suppose $S$ is affinoid perfectoid.
    Let $$C_S^{sch}:=\Proj\left(\bigoplus_{m\ge 0}H^0(C_S,\O_{C_S}(m))\right).$$
    Then we have a natural map of locally ringed spaces 
    \[f:C_S\to C_S^{sch}\]
    and pullback along this map induces an equivalence of categories 
    \begin{equation*}
        f^*:\{\text{vector bundles on }C_S^{sch}\}\xrightarrow{\simeq}  \{\text{vector bundles on }C_S\}.
    \end{equation*}
    In addition, for every vector bundle $\E$ on $C_S^{sch}$, we have
    \[H^i(C_S^{sch},\E)=H^i(C_S,f^*(\E))\]
    for all $i\ge 0$.
\end{theorem}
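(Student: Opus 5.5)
I will follow the proof of GAGA for $X_S$ in \cite[Proposition II.2.7]{FS_geometrisation}, which is the standard Serre-type argument. Its only inputs are those established in \Cref{ample}: vanishing of $H^j$ for $j\ge 2$, and Serre-type global generation and $H^1$-vanishing for high twists of $\O_{C_S}(1)$. Set $P:=\bigoplus_{m\ge 0}H^0(C_S,\O_{C_S}(m))$ and $C_S^{sch}=\Proj P$.

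\textbf{Step 1: construct $f$.} First I check that $C_S$ is qcqs. Since $S$ is affinoid, $X_S$ is qcqs, so it has a finite cover by affinoid opens $U$ over which $\pi^{-1}(U)\cong\coprod_{i=1}^n U$. Hence $C_S$ has a finite cover by affinoids, and each pairwise intersection is a finite disjoint union of intersections of affinoids in $X_S$, which are qc. By \Cref{lem:qcqs}, $C_S$ is qcqs. By \Cref{ample}, $\O_{C_S}(d)$ is globally generated for some $d\ge 1$, so the sections of $P_d$ have no common zero. For each $s\in P_{kd}$ I map the nonvanishing locus $C_S[1/s]$ to $D_+(s)\subset\Proj P$ via $P_{(s)}\to\O(C_S[1/s])$. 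These maps glue to a morphism of locally ringed spaces $f:C_S\to C_S^{sch}$, exactly as for $X_S$, with $f^*\O(1)=\O_{C_S}(1)$.

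\textbf{Step 2: cohomology comparison.} For a vector bundle $\E$ on $C_S$, let $M(\E)=\bigoplus_m H^0(C_S,\E(m))$, a graded $P$-module, and set $\tilde\E=\widetilde{M(\E)}$ on $\Proj P$. I define the inverse functor as $\E\mapsto\tilde\E$. To show that $\tilde\E$ is a vector bundle and that $f^*\tilde\E\cong\E$, I use \Cref{ample} to choose a presentation $\O_{C_S}(-a)^{r}\to\O_{C_S}(-b)^{s}\to\E\to 0$ whose kernels also have vanishing $H^1$ after large twists. Exactness of $M(-)$ in high degrees then reduces the comparison to the line bundles $\O(k)$. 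For these, $M(\O(k))=P(k)$ in large degree, so the claim is tautological. Compatibility of cohomology then follows: for $i=0$ this is the statement $H^0(\Proj P,\tilde\E)=M(\E)_0$. I plan to deduce it via Čech complexes on the finite cover $\{D_+(s_j)\}$ versus $\{C_S[1/s_j]\}$, since the degree-zero localizations match: $M(\E)_{(s)}=\E(C_S[1/s])$, using quasi-compactness from Step 1. Higher $H^i$ agree by the same Čech comparison together with vanishing in degree $\ge 2$ on both sides.

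\textbf{Step 3: equivalence.} Full faithfulness follows from the $H^0$ comparison applied to $\mathcal{H}om$ bundles. Essential surjectivity is Step 2. Alternatively, I could try to push everything down to $X_S$ via $\pi_*$, using \Cref{thm:GAGA}: a vector bundle on $C_S$ is the same as a $\pi_*\O_{C_S}$-module vector bundle on $X_S$, and $\pi_*\O_{C_S}$ algebraizes to a finite locally free $\O_{X_S^{sch}}$-algebra $\mathcal{A}$. One would then identify $C_S^{sch}\cong\Spec_{X_S^{sch}}\mathcal{A}$. This route is cleaner but requires checking that identification of $\Proj$'s. I expect the main obstacle to be the identity $M(\E)_{(s)}=\E(C_S[1/s])$ (or the $\Proj$ identification in the alternative route), i.e.\ that localizing global twisted sections computes sections on the nonvanishing locus. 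I would first try to transfer this along $\pi_*$ from the corresponding fact for $X_S$ used in \cite{FS_geometrisation}, since $\pi_*$ commutes with twisting by the projection formula.
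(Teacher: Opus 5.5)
There is a genuine gap in Step 2. Your Čech comparison rests on the identity $M(\E)_{(s)}=\E(C_S[1/s])$, and this identity is false for adic spaces. Quasi-compactness of $C_S$ does not rescue it: the analytic nonvanishing locus $C_S[1/s]$ is not quasi-compact, and its sections may have essential singularities along $V(s)$.

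A counterexample already occurs for $n=1$, $S=\Spa C$, $\E=\O$. Let $t\in H^0(X_C,\O(1))$ vanish exactly at $\infty$, and let $t_1\in H^0(X_C,\O(1))$ not vanish there. Then $u=t_1/t$ is bounded on every quasi-compact open of $X_C\setminus\{\infty\}$, so $\sum_{m\ge0}\pi^{m^2}u^m$ defines a function on $X_C\setminus\{\infty\}$. It has an essential singularity at $\infty$, hence does not lie in $M(\O)_{(t)}=\bigcup_k t^{-k}H^0(X_C,\O(k))$. This is the same phenomenon as entire functions versus polynomials on the analytic affine line inside analytic $\mathbb P^1$, and analytic $\mathbb P^1$ satisfies every formal input you use.

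So there is nothing to transfer from $X_S$. The comparison of the covers $\{D_+(s_j)\}$ and $\{C_S[1/s_j]\}$ fails twice over: the local terms differ, and the non-quasi-compact opens $C_S[1/s_j]$ need not be acyclic, so their Čech complex does not compute $H^*(C_S,\E)$. There is also a second, smaller gap: your presentation argument only shows that $\widetilde{M(\E)}$ is finitely presented, not that it is locally free.

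The paper re-derives nothing. Its proof is your Step 1 (spectrality of $C_S$ via the cover $\{\pi^{-1}(U_i)\}$, quasi-compactness of $\pi$, and \Cref{lem:qcqs}) followed by \cite[Proposition II.2.7]{FS_geometrisation}. That result is a general GAGA statement for any locally ringed space with spectral underlying space and a line bundle satisfying the conclusions of \Cref{ample}. Citing it after Step 1 would finish your proof.

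If you want a self-contained argument, fix the Čech comparison as follows. For $s$ of degree $d$, replace $\E|_{C_S[1/s]}$ by the sheaf $\E[1/s]:=\colim(\E\xrightarrow{s}\E(d)\xrightarrow{s}\E(2d)\to\cdots)$ on $C_S$.
\begin{itemize}
\item Since $C_S$ is spectral, cohomology commutes with filtered colimits. Hence $\Gamma(C_S,\E[1/s])=M(\E)_{(s)}$, and $H^q(C_S,\E[1/s])=\colim_k H^q(C_S,\E(kd))=0$ for $q>0$ by \Cref{ample}.
\item The augmented complex $0\to\E\to\prod_i\E[1/s_i]\to\prod_{i<j}\E[1/s_is_j]\to\cdots$ is exact, because locally some $s_i$ generates $\O(d)$.
\item The global sections of its non-augmented part form exactly the Čech complex of $\widetilde{M(\E)}$ for the cover $\{D_+(s_i)\}$ of $\Proj P$. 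So they compute both $H^*(C_S,\E)$ and $H^*(\Proj P,\widetilde{M(\E)})$.
\end{itemize}
This is where spectrality is really used.

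For local freeness, choose $b$ divisible by $d$ and consider the sequence $0\to\mathcal K\to\O(-b)^r\to\E\to0$. Its class in $H^1(C_S,\mathcal{H}om(\E,\mathcal K))$ maps to zero after multiplying by a high power of $s$, again by \Cref{ample}. This makes $M(\E)_{(s)}$ a direct summand of $P_{(s)}^r$.
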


\begin{proof}
    For affinoid perfectoid $S$, $X_S$ is qcqs. Thus, there is a finite open cover $\{U_i\}_{i=1}^m$ such that $U_i\cap U_j$ is qc for all $i,j$.
    Since $\pi:C_S\to X_S$ is a covering of degree $n$, $\pi$ is quasi-compact, i.e. the preimage of any qc open subset is qc.
    It follows that $\{\pi^{-1}(U_i)\}_{i=1}^m$ is an open cover of $C_S$ satisfying the condition of \Cref{lem:qcqs}. Thus, $C_S$ is qcqs, and hence is a spectral space.
    The theorem now follows from \Cref{ample} and the general GAGA theorem in \cite[Proposition II.2.7]{FS_geometrisation}.
\end{proof}

\begin{remark}
    It is conceivable that for $S=\Spa K$, where $K$ is an algebraically closed perfectoid field over $\mathbb F_q$, one can establish an analogue of \cite[Proposition II.2.9]{FS_geometrisation} about the properties of $C_{S}^{sch}$. Once this is established, one should be able to get an analogue of \cite[Proposition II.2.10]{FS_geometrisation} on the classification of line bundles on $C_{S}^{sch}$ and hence a concrete description of the Hitchin fiber via \Cref{prop:CRT}.
\end{remark}

\section{Hitchin fibration and action of Picard stack}\label{sec: more}
The aim of this section is to study the Hitchin fibration for general $G$ and study the action of a certain Picard stack on $\Higgs$ (cf. \Cref{sec: picard stacks}.)
Starting from this section, we will mostly use the schematic version of the Fargues-Fontaine curve instead of the adic space version.

Let $E$ be a non-Archimedean local field with residue field $\mathbb F_q$, where $q$ is a power of $p$. 
Let $G$ be a split connected reductive group over $E$, $B$ be a Borel subgroup over $E$, $T$ be a split maximal torus in $B$. 
Assume $char E\nmid |W|$, where $W$ is the Weyl group of $G$.
We let $\g$ be the Lie algebra of $G$, thought of as a scheme over $E$.\footnote{I.e. $\g=\Spec Sym_E (Lie(G)^\vee)$, where $Lie(G)=\ker(G(E[\epsilon]/(\epsilon^2))\to G(E))$.}
Let $\mathfrak t$ be the Lie algebra of $T$.

Fix an affinoid perfectoid space $S$ over $\mathbb F_q$ and a line bundle $\mathcal L$ on $X_S^{sch}$.
As a rule of thumb, in this section, every stack is either a stack on $\Perf_S$ with the v-topology or a stack on $\Sch_{X_S^{sch}}$ with the fppf topology.

\subsection{Hitchin fibration}
Let $\c:= \g \sslash G$ be the GIT quotient of $\g$ by the adjoint action of $G$, i.e.
\[\c:=\g\sslash G:= \Spec \O(\g)^G\]
where $\O(\g)$ is the ring of global sections of the structure sheaf of $\g$.
Let us recall the meaning of $\O(\g)^G$: 
if we view elements of $\O(\g)$ as morphisms $\g\to \mathbb A^1$, then $\O(\g)^G$ are the morphisms such that the two compositions
\[\xymatrix{G\times \g\ar[r]<1.5pt>^-{\pi_2}\ar[r]<-1.5pt>_-{\text{action}} & \g}\to \mathbb A^1\]
agree. In other words, 
\begin{equation}\label{eq:invariant}
    \O(\g)^G = eq\left( \xymatrix{\O(\g)\ar[r]<1.5pt>^-{1\otimes -}\ar[r]<-1.5pt>_-{\text{action}^*} & \O(G)\otimes_E \O(\g)} \right).
\end{equation}

Let us recall the Chevalley restriction theorem.
\begin{proposition}\label{Chevalley-res}\cite[Theorem 2.1]{Ngo_2010}
    The inclusion $\mathfrak t \to \g$ induces an isomorphism $$\O(\g)^G \iso \O(\mathfrak t)^W.$$ Moreover, $\O(\g)^G$ is isomorphic to the polynomial algebra $E[T_1,\dots,T_r]$ over $E$, where $r$ is the rank of $G$, and we can pick the isomorphism such that $T_1,\dots,T_r$ correspond to homogeneous elements of $\O(\g)=Sym_E(Lie(G)^\vee)$. The degrees of these elements are independent of their choices.
\end{proposition}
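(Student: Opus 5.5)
The Chevalley restriction theorem is a purely algebraic statement about the split reductive group $G$ over the field $E$, with $\mathrm{char}\, E\nmid |W|$. It does not involve the Fargues-Fontaine curve at all. So the plan is the classical one, adapted to allow positive characteristic.

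\emph{Step 1: the restriction map lands in $W$-invariants and is injective.}
Restricting along $\mathfrak t\hookrightarrow \g$ gives a map $\O(\g)^G\to \O(\mathfrak t)$. Its image lies in $\O(\mathfrak t)^W$, because $W=N_G(T)/T$ acts on $\mathfrak t$ through the adjoint action of $N_G(T)\subset G$.

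For injectivity we use density of semisimple conjugates of $\mathfrak t$. The morphism $G\times \mathfrak t\to \g$, $(g,t)\mapsto \Ad(g)t$, is dominant: its differential at $(1,t)$ for regular $t$ is surjective, since $[\g,t]+\mathfrak t=\g$ when $t$ is regular. A $G$-invariant function vanishing on $\mathfrak t$ therefore vanishes on the dense image, hence is zero. Regular elements of $\mathfrak t$ exist over the infinite field $E$ provided each root $d\alpha$ is nonzero on $\mathfrak t$. I would check this under the hypothesis on $\mathrm{char}\, E$; a root with $d\alpha=0$ forces the characteristic to be a small prime dividing $|W|$.

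\emph{Step 2: the $W$-invariants form a polynomial ring.}
Because $|W|$ is invertible in $E$ and $W$ acts on $\mathfrak t$ as a reflection group, the Chevalley–Shephard–Todd theorem applies. It gives that $\O(\mathfrak t)^W=\Sym(\mathfrak t^\vee)^W$ is a polynomial algebra $E[u_1,\dots,u_r]$ on homogeneous generators. Their degrees are determined by the Hilbert series, $\prod (1-t^{d_i})^{-1}$, and hence do not depend on the choice of generators. To see that $W$ is generated by reflections on $\mathfrak t$, note that the simple reflections $s_\alpha$ act by $x\mapsto x-\alpha(x)h_\alpha$ with $\alpha(h_\alpha)=2\neq 0$; for $p=2$ the condition $p\nmid|W|$ fails whenever roots exist.

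\emph{Step 3: surjectivity, the main obstacle.}
Every $W$-invariant on $\mathfrak t$ must extend to a $G$-invariant on $\g$. I would try the following routes in order.
\begin{itemize}
\item Use a representation-theoretic spanning set: show that $\O(\mathfrak t)^W$ is spanned by restrictions of the functions $x\mapsto \Tr(\rho(x)^k)$ for representations $\rho$ of $G$. In characteristic $0$ this follows from the fact that the $W$-orbit sums $\sum_{w}(w\lambda)^k$ of weights span. In characteristic $p\nmid|W|$, $k$-th powers could fail to span in degrees $\ge p$, which is exactly where this route breaks down.
\item The more robust route is to construct the extension of a $W$-invariant $u$ directly on the regular semisimple locus $\g^{rs}$ and then extend over the complement. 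Writing $\g^{rs}\cong G\times^{N_G(T)}\mathfrak t^{rs}$, the function $u$ extends uniquely to a $G$-invariant regular function on $\g^{rs}$. It then extends to all of $\g$ by normality of $\g$, provided the complement of $\g^{rs}$ contributes no poles. The only possible pole is along the discriminant divisor, and one shows there is none by checking at generic points of that divisor via $\mathfrak{sl}_2$-type computations. These use the invertibility of $2$ and of the relevant characteristic-dependent constants.
\end{itemize}
If these characteristic-$p$ details become delicate, I would cite Springer–Steinberg (or Demazure's invariants paper), where precisely this hypothesis $p\nmid |W|$ is used. That is presumably why the paper cites Ngô for the statement.

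Combining Steps 1–3, restriction gives an isomorphism $\O(\g)^G\cong\O(\mathfrak t)^W\cong E[T_1,\dots,T_r]$. The $T_i$ can be chosen homogeneous in $\O(\g)$ because the restriction map is graded. The degrees are well defined by the Hilbert series argument of Step 2.
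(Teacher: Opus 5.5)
Your Steps 1 and 2 are sound. Injectivity only needs $d\alpha\neq 0$ for every root, which is automatic here since $d\alpha(h_\alpha)=2$ and $\mathrm{char}\,E\neq 2$ as soon as roots exist. Polynomiality and independence of the degrees follow from Chevalley--Shephard--Todd in the non-modular case together with the Hilbert series. The paper gives no argument for this proposition and only cites Ng\^o, so insofar as you fall back on Springer--Steinberg or Demazure you do exactly what the paper does.

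As a self-contained proof, however, Step 3 has a gap, and surjectivity is the real content of the theorem. Route 1 fails in characteristic $p$, as you say. In route 2, the sentence ``one shows there is none by checking at generic points of that divisor via $\mathfrak{sl}_2$-type computations'' is the whole difficulty, and it is not a routine check. The generic points of the components of $\g\setminus\g^{rs}$ are not semisimple. Indeed, $\Ad(G)(\ker d\alpha)$ has codimension $3$ in $\g$, and the component attached to $\alpha$ is the closure of the regular elements $\Ad_g(s+e_\alpha)$ with $s$ generic in $\ker d\alpha$ and $0\neq e_\alpha\in\g_\alpha$. So you cannot detect poles by evaluating on $\mathfrak t$. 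You would need an \'etale-local comparison of $\g$ with $G\times^{L_\alpha}\mathfrak l_\alpha$ near such points, where $L_\alpha$ is the rank-one Levi of $\pm\alpha$, plus the rank-one Chevalley theorem for $\mathfrak l_\alpha$; none of this is supplied. Separately, $\g^{rs}\cong G\times^{N_G(T)}\mathfrak t^{rs}$ also needs a short argument in characteristic $p$ (e.g.\ $C_G(t)^\circ=T$ for regular $t$ plus an \'etaleness check), but that part is standard.

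A cleaner way to close the gap, with no analysis of the divisor, is an integrality argument via the Grothendieck--Springer map
$\pi:\tilde{\g}=G\times^B\mathfrak b\to\g$, $(g,x)\mapsto\Ad_g(x)$, which is proper and surjective.
\begin{itemize}
\item For $f\in\O(\mathfrak t)^W$, set $\tilde f(g,x)=f(\bar x)$, where $\bar x$ is the image of $x$ in $\mathfrak b/\mathfrak n\cong\mathfrak t$. This is a regular function on $\tilde{\g}$, since $B$ acts trivially on $\mathfrak b/\mathfrak n$.
\item If $\Ad_g(x)\in\g^{rs}$, then $\bar x$ is regular, because the eigenvalues of $\mathrm{ad}\,x$ are those of $\mathrm{ad}\,\bar x$. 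Since $\Ad(U)\bar x=\bar x+\mathfrak n$, this gives $\tilde f=\pi^*F$ over $\g^{rs}$, with $F$ your $G$-invariant extension of $f$ to $\g^{rs}$.
\item Because $\pi$ is proper, $\O(\tilde{\g})$ is a finite $\O(\g)$-module, so $\tilde f$ satisfies a monic equation over $\O(\g)$. Because $\pi$ is dominant, $F$ satisfies the same equation.
\item $\O(\g)$ is a polynomial ring, hence normal, so $F\in\O(\g)^G$ with $F|_{\mathfrak t}=f$.
\end{itemize}
This uses only $d\alpha\neq 0$ for all roots, which you have already checked.
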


The proposition implies that $\c\cong \mathbb A^r_E$ over $E$. 
The inclusion $\O(\g)^G\subset \O(\g)$ induces a map $$\g\to\c. $$ 
This map is invariant under the adjoint action of $G$ on $\g$ by equation \eqref{eq:invariant}. 
We base change both sides along $X_S^{sch}\to \Spec E$ to get
$$\g_{X_S^{sch}}\to\c_{X_S^{sch}}. $$ 
We can view both sides as vector bundles on $X_S^{sch}$ and tensor both sides with $\mathcal L$ to get a $G$-equivariant map
\begin{equation}\label{eq1}
    \g_\L\to \mathfrak c_\L
\end{equation}
where $\mathfrak g_\L:= \mathfrak g_{X_S^{sch}} \otimes_{\O_{X_S^{sch}}}\L$ with the adjoint $G$-action and $\mathfrak c_\L:= \mathfrak c_{X_S^{sch}} \otimes_{\O_{X_S^{sch}}}\L$ with the trivial $G$-action.

\begin{definition}
    We call $\mathcal A_{G,\L}:= \mathcal H^0(\mathfrak c_\L)$ the Hitchin base.
\end{definition}
Note that this is a Banach-Colmez space. In particular, it is a locally spatial diamond. When there is no ambiguity, we will write $\mathcal A_{G,\L}$ as $\mathcal A_G$.

\begin{remark}\label{concrete A_G}
    We can be more concrete about the definition of $\mathcal A_G$.
    Pick homogeneous $f_1,\dots,f_r\in \O(\g)^G$ such that the map $E[T_1,\dots,T_r] \to \O(\g)^G$, $T_i\to f_i$ is an isomorphism.
    Let $\deg(f_i)=d_i$.
    Then we get an isomorphism of schemes
    \begin{align*}
        \c \iso \,&\mathbb A_E^r\\
        f_i\mapsfrom \,&T_i.
    \end{align*}
    This induces a $\mathbb G_m$-equivariant isomorphism of sheaves on $X_S^{sch}$
    \[\mathfrak c_{X_S^{sch}}\cong \O_{X_S^{sch}}^r,\]
    where $\mathbb G_m$ acts on the $i$-th component of $\O_{X_S^{sch}}^r$ via $t\mapsto t^{d_i}$.
    Twisting both sides by the $\mathbb G_m$-torsor $\tilde \L=\underline{Isom}_{\O^{sch}_{X_S}}(\O^{sch}_{X_S},\L)$, we obtain
    \[\mathfrak c_\L \cong \L^{\otimes d_1}\oplus\dots\oplus \L^{\otimes d_r}\]
    because on the $i$-th component, we have an isomorphism\footnote{Note that the action of $\mathbb G_m$ on $\O^{sch}_{X_S}$ is different on each component.} $\O^{sch}_{X_S}\times^{\mathbb G_m}\tilde\L \iso \L^{\otimes d_i}$ given by sheafifiying $(z,f)\mapsto f(z)\otimes f(1)\otimes\dots\otimes f(1)$.
    It follows that 
    \[\mathcal A_G\cong \prod_{i=1}^r \mathcal H^0(\L^{\otimes d_i}).\]
    For example, for $G=\GL_n$, if we pick $f_i=(-1)^i \Tr(\Lambda^i(\;))$ for $1\le i\le n$, then we get back \Cref{A_in_GL_n_case}. 
\end{remark}

\begin{remark}
    From the analogue of \Cref{twist_of_line_bundle} for the fppf site on schemes, we can deduce that if $\E$ is a $G$-bundle on $X_S^{sch}$, then
\begin{align*}
    \E\times^G \mathfrak g_\L &\cong \E\times^G (\tilde\L \times^{\mathbb G_m} \mathfrak g_{X_S^{sch}}) \\
    &= \tilde\L \times^{\mathbb G_m} (\E\times^G \mathfrak g_{X_S^{sch}}) \\
    &= \tilde\L \times^{\mathbb G_m} \Ad(\E) \\
    &\cong\L\otimes_{\O_{X_S^{sch}}} \Ad(\E)
\end{align*}
where we use the analogue of \Cref{twist_of_line_bundle} for the first and last isomorphisms.
\end{remark}

\begin{definition}\label{defn: Hitchin fibration}
    We define the \emph{Hitchin fibration} to be the map
    \begin{equation}\label{Hitchin_fibration}
        \Higgs\to \mathcal A_G
    \end{equation}
    which on $T$-valued points (for affinoid perfectoid $T$) is given by sending $(\E,\phi)\in \Higgs(T)$ to its image under 
    \[\Ad(\E)_\L(X_T^{sch})=(\E\times^G \g_\L)(X_T^{sch})\to (\E\times^G \mathfrak c_\L)(X_T^{sch})=
    \mathfrak c_\L(X_T^{sch}),\]
    where the middle map is induced from equation \eqref{eq1}. We call the image of $(\E,\phi)$ under \eqref{Hitchin_fibration} its \emph{characteristic polynomial}.
\end{definition}

\begin{remark}\label{rmk: map FF}
        Note that by \Cref{alt_defn_for_quotient_stack}, we have $$\Higgs= \Map(FF,[\mathfrak g_\L/G]).$$
        Here, $\Map(FF,[\mathfrak g_\L/G])$ denotes \footnote{Note that by 2-Yoneda lemma, this is also the prestack sending $T$ to the groupoid of morphisms $X_T^{sch}\to[\g_\L/G]$ of stacks on $\Sch_{X_S^{sch},fppf}$. This justifies the notation $\Map(FF,-)$.} the prestack on $\Perf_{S}$ sending $T$ to the groupoid $[\mathfrak g_\L/G](X_T^{sch})$, where $G$ acts on $\mathfrak g_\L$ via the adjoint action.
        Similarly, we have $$\mathcal A_G:= \Map(FF,\mathfrak c_\L).$$
        The Hitchin fibration is obtained by applying $\Map(FF,\;)$ to
        \begin{equation}
            [\g_\L/G]\to \mathfrak c_\L.
        \end{equation}
\end{remark}

\begin{example}
    Assume $G=\GL_n$. Let us show that we recover \Cref{Hitchin fibration for GL_n}.
    To identify $\mathcal A_G$ with $\prod_{i=1}^{n}\mathcal H^0(\L^{\otimes i})$ as in \Cref{concrete A_G}, we pick $f_i=(-1)^i \Tr(\Lambda^i(\;))$ for $1\le i\le n$ as a basis of $\O(\g)^G$. 

    Let $\mathcal V$ be a rank $n$ vector bundle on $X_S^{sch}$. Its corresponding $\GL_n$-bundle is $\E:=\Isom_{\O_{X_S^{sch}}}(\O_{X_S^{sch}}^n, \mathcal V)$.
    Let $\phi\in H^0(X_S^{sch},\Ad(\E)_\L) = \Hom_{\O_{X_S^{sch}}}(\V,\V\otimes \L).$
    We want to show that the image of $(\E,\phi)$ (via \Cref{defn: Hitchin fibration}) and the image of $(\V,\phi)$ (via \Cref{Hitchin fibration for GL_n}) in $\c_\L$ agree.
    It suffices to check this on an open cover of $X_S^{sch}$.
    Let $U=\Spec B$ be an open affine subset of $X_S^{sch}$ such that $\L$, $\E$, $\V$ are all trivial.
    Then $\phi|_U$ corresponds to an element $M\in \g(U)=\mathfrak{gl}_{n\times n}(B)$.
    The restriction of the map in \Cref{defn: Hitchin fibration} to $U$ is given by
    \begin{align*}
        \g(A) &\to \mathfrak c(A) = (\Spec E[f_1,\dots,f_n])(A) \cong A^n\\
        M &\mapsto (f_i(M))_{1\le i \le n} = ((-1)^i \Tr(\Lambda^i M))_{1\le i \le n}.
    \end{align*}
    This agrees with \Cref{Hitchin fibration for GL_n}.
\end{example}

\subsection{$\g^{reg}$}
\begin{lemma}
    The functor given by
    \begin{align*}
        &I:\set{\text{schemes over $E$}}^{op}\to \mathrm{Set}\\
        &U\mapsto \set{(g,v)\in G(U)\times \g(U): \Ad_g(v)=v}
    \end{align*}
    is representable by an affine scheme of finite type over $E$.
\end{lemma}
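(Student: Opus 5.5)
The plan is to exhibit $I$ directly as a fiber product of affine schemes of finite type over $E$, namely as the equalizer of two morphisms $G\times_E \g\to \g$. Recall that $G$ is a split connected reductive group over $E$, so $G=\Spec \O(G)$ is affine of finite type over $E$. By the convention above, $\g=\Spec \Sym_E(Lie(G)^\vee)$ is an affine space over $E$ of dimension $\dim G$. Hence $G\times_E\g=\Spec(\O(G)\otimes_E\O(\g))$ is affine of finite type. We have two morphisms
\[
a,\ \pi_2:\ G\times_E\g\to\g,
\]
where $a$ is the adjoint action $(g,v)\mapsto \Ad_g(v)$ and $\pi_2$ is the second projection. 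The adjoint action is a morphism of schemes because it is the derivative of conjugation: functorially in an $E$-algebra $R$, $G(R)$ acts on $\ker(G(R[\epsilon]/\epsilon^2)\to G(R))=\g(R)$ by conjugation, and this is linear, hence comes from a representation $G\to \GL(Lie(G))$. This is standard and I would simply cite it.

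By definition, $I(U)$ is the set of $U$-points $(g,v)$ of $G\times_E\g$ with $a(g,v)=\pi_2(g,v)$. Thus $I$ is the fiber product
\[
I=(G\times_E\g)\times_{(a,\pi_2),\,\g\times_E\g,\,\Delta}\g ,
\]
where $\Delta$ is the diagonal. Since $\g$ is affine, hence separated, $\Delta$ is a closed immersion. Therefore $I$ is represented by a closed subscheme of $G\times_E\g$, and so it is affine of finite type over $E$.

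Concretely, I will write $I=\Spec\bigl((\O(G)\otimes_E\O(\g))/J\bigr)$, where $J$ is the ideal generated by the elements $a^*(x)-\pi_2^*(x)=a^*(x)-1\otimes x$ for $x$ running over a basis of $Lie(G)^\vee\subset\O(\g)$. This mirrors the equalizer description \eqref{eq:invariant}. One then checks representability directly. An $R$-point of $\Spec$ of the quotient is a ring map $\O(G)\otimes\O(\g)\to R$ killing $J$. Such a map is the same as a pair $(g,v)$ with $x(\Ad_g v)=x(v)$ for all linear coordinates $x$, which is exactly the condition $\Ad_g v=v$. Finite generation of the quotient is immediate because $J$ is finitely generated. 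I expect no real obstacle. The only point needing care is the scheme-theoretic nature of the adjoint action, which is dispatched by the standard remark above.
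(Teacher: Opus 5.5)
Your proof is correct, but it takes a different route from the paper's.

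\textbf{The paper's route.} It fixes a closed embedding $G\hookrightarrow\GL_n$ and introduces the analogous functor $I'$ for $\GL_n$ and $\mathfrak{gl}_n$. It observes that $I=I'\times_{\GL_n\times\mathfrak{gl}_n}(G\times\g)$, and then presents $I'$ explicitly as the spectrum of a quotient of a coordinate ring with $\det(X_{ij})$ inverted, by the entries of $XYX^{-1}-Y$.

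\textbf{Your route.} You work intrinsically. You identify $I$ as the equalizer of $\Ad$ and $\pi_2$ from $G\times_E\g$ to $\g$, i.e.\ the pullback of the diagonal of the separated scheme $\g$. This makes $I$ a closed subscheme of an affine scheme of finite type.

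\textbf{Comparison.}
\begin{itemize}
\item Your route needs no faithful representation and no coordinates. It also proves the more general fact that stabilizer schemes of actions on separated schemes are closed.
\item Its only input is that $\Ad\colon G\times\g\to\g$ is a morphism of schemes, which you reasonably cite as standard.
\item The paper's route makes that input visible, since matrix conjugation is obviously polynomial.
\item In exchange, the paper implicitly needs that $\g\to\mathfrak{gl}_n$ is a $G$-equivariant monomorphism compatible with the adjoint actions. This is what makes the fiber-product identity $I=I'\times_{\GL_n\times\mathfrak{gl}_n}(G\times\g)$ hold.
\end{itemize}
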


\begin{proof}
    Fix an embedding $G\hookrightarrow \GL_n$ of group schemes.
    Let \begin{align*}
        &I':\set{\text{schemes over $E$}}^{op}\to \mathrm{Set}\\
        &U\mapsto \set{(g,v)\in \GL_n(U)\times \mathfrak{gl}_n(U): \Ad_g(v)=v}.
    \end{align*}
    Then $I=I'\times_{\GL_n\times\mathfrak{gl}_n}(G\times \g)$, so it suffices to prove the lemma for $I$ replaced by $I'$.
    Write
    $ \begin{pmatrix}
X_{11} & \cdots & X_{1n} \\
\vdots &  & \vdots \\
X_{n1} & \cdots & X_{nn} \\
\end{pmatrix}  
\begin{pmatrix}
    Y_1 \\
    \vdots \\
    Y_n \\
    \end{pmatrix}  
    \begin{pmatrix}
        X_{11} & \cdots & X_{1n} \\
        \vdots &  & \vdots \\
        X_{n1} & \cdots & X_{nn} \\
        \end{pmatrix}^{-1}
        -
        \begin{pmatrix}
            Y_1 \\
            \vdots \\
            Y_n \\
            \end{pmatrix} 
        =
        \begin{pmatrix}
            f_1 \\
            \vdots \\
            f_n \\
            \end{pmatrix} 
$.
Then $I'$ is represented by 
$$\Spec E[X_{11},\dots,X_{nn},Y_1,\cdots,Y_n][1/det(X_{ij})]/(f_1,\dots,f_n).$$
\end{proof}

We shall identify $I$ with the scheme that represents it. 
Note that there is a projection map $I \to \g$, $(g,v)\mapsto v$. Also, $I$ is a group scheme over $\g$.

\begin{lemma}\cite{semi-continuity}
    Let $H$ be a group scheme locally of finite type over a scheme $T$. Then the function $f:T\to \Z$, $t\mapsto dim(H_t)$ is upper semi-continuous, i.e. for all $n\in\Z$, $\set{t\in T: \dim(H_t)\le n}$ is open.
\end{lemma}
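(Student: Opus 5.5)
The plan is to reduce the statement to Chevalley's semicontinuity theorem for the local fiber dimension of a morphism locally of finite type, evaluated along the identity section. Let $p:H\to T$ be the structure map and $e:T\to H$ the identity section. For $x\in H$ with $t=p(x)$, write $\dim_x(H_t)$ for the dimension of the fiber $H_t$ at the point $x$. By EGA IV 13.1.3, since $p$ is locally of finite type, the function $x\mapsto \dim_x(H_{p(x)})$ is upper semicontinuous on $H$. Because $e$ is continuous, the composite $t\mapsto \dim_{e(t)}(H_t)$ is upper semicontinuous on $T$, and for each $n$ the set $\{t:\dim_{e(t)}(H_t)\le n\}$ is the preimage under $e$ of an open set. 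It therefore suffices to show
\[
\dim(H_t)=\dim_{e(t)}(H_t) \quad\text{for every } t\in T,
\]
that is, a group scheme locally of finite type over a field has the same dimension at every point as at the identity.

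To prove this homogeneity, fix a field $k=\kappa(t)$ and $K=H_t$. Let $\bar k$ be an algebraic closure. I will first check that passing from $K$ to $K_{\bar k}$ changes neither $\dim K$ nor $\dim_e K$. For schemes locally of finite type over a field, dimension is invariant under field extension, and this holds both globally and at a point lying over a given point (EGA IV 4.1.4 / 4.2.x). Over $\bar k$, every closed point $y$ of $K_{\bar k}$ is a $\bar k$-point. Left translation by $y$ is then an automorphism of the scheme $K_{\bar k}$ sending $e$ to $y$, so $\dim_y K_{\bar k}=\dim_e K_{\bar k}$ for all closed points $y$.

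Finally, for a scheme locally of finite type over a field, $\dim K_{\bar k}$ is the supremum of the local dimensions $\dim_y$ over closed points $y$. This holds because every irreducible component contains a closed point, and the local dimension at a closed point is the maximum dimension of the components through it. Hence $\dim K_{\bar k}=\dim_e K_{\bar k}$, and descending gives $\dim K=\dim_e K$, which completes the argument.

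The main obstacle I expect is the homogeneity step. Care is needed because $K$ need not be reduced or smooth, for instance in characteristic $p$. This is why I deliberately use translations by $\bar k$-points on the underlying topological space and dimensions, rather than tangent spaces, and rely on the base-change invariance of $\dim_x$ for pointwise dimensions. Alternatively, the whole statement can simply be cited as SGA3 Exp.~$\mathrm{VI}_B$, Cor.~4.3, which is presumably what the reference \cite{semi-continuity} provides.
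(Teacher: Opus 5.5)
Your proposal is correct and takes the same approach as the paper. The paper likewise writes $t\mapsto\dim(H_t)$ as the composite of the unit section $e$ with the upper semicontinuous function $x\mapsto\dim_x(H_{p(x)})$ (Stacks Project Tag 02FZ), using $\dim(H_t)=\dim_{e(t)}(H_t)$. The only difference is that the paper cites this homogeneity for group schemes over a field (Stacks Project Tag 045X), whereas you prove it by base change to $\bar k$ and translation by $\bar k$-points; that argument is sound.
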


\begin{proof}
    This is the proof in \cite{semi-continuity}. We include it here as \cite{semi-continuity} is subject to change.

    Let $s:H\to T$ be the structure map.
    Let $T\xrightarrow{e} H$ be the unit map.
    By \cite[lemma 045X]{stacks-project}, $f$ is the composite of the maps $e$ and 
    \begin{align}
        H&\to \Z \label{semi}\\
        h&\mapsto \dim_h(H_{s(h)}).\nonumber
    \end{align}
    As $e$ is continuous and \eqref{semi} is upper semi-continuous by \cite[lemma 02FZ]{stacks-project}, their composite is also upper semi-continuous.
\end{proof}

Applying this lemma to $I\to \g$, we know that $\{v\in \g:\dim I_v\le r\}$ is open in $\g$.
\begin{definition}
    Define $\g^{reg}$ to be the open subscheme of $\g$ whose underlying subset is $\{v\in \g:\dim I_v\le r\}$.
\end{definition}
In fact, for every $v\in \g$, we have $\dim I_v= r$.

Note that $G\times \mathbb G_m$ acts on $I$, where $G$ acts via $h\cdot (g,v) := (hgh^{-1},\Ad_h(v))$ and $\mathbb G_m$ acts via $t\cdot (g,v) := (g,tv)$. 

Let us recall the construction of the Kostant section, as in \cite[section 2.3]{Chen_Zhu}.
For each simple root $\alpha_i$, pick $f_i\in \g_{-\alpha_i}(E)\setminus\set{0}$.
Let $f=\sum f_i$.
Complete $f$ into an $\mathfrak{sl}_2$-triple $e,f,h$.
Let $\g^e$ be the centraliser of $e$ in $\g$. 
\begin{proposition}[Kostant]\cite[Theorem 2.1]{Ngo_2006}
    $f+\g^e \subset\g^{reg}$ and the restriction of $\g \to \c=g//G$ to $f+\g^e$ gives an isomorphism
    \begin{equation}\label{eq: res}
        f+\g^e \iso \c.
    \end{equation}
\end{proposition}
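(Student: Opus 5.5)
The plan is to prove both assertions using a contracting $\mathbb G_m$-action on the slice $f+\g^e$, so that everything reduces to a statement at the single point $f$.

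\textbf{The action.} Let $\rho:\mathbb G_m\to T$ be the cocharacter with $d\rho(1)=h$, so that $\Ad_{\rho(t)}$ acts by $t^k$ on the $k$-eigenspace of $\operatorname{ad}h$. Define
\[t\ast v:=t^{2}\Ad_{\rho(t)^{-1}}(v).\]
Since $[h,f]=-2f$, we get $t\ast f=f$. By $\mathfrak{sl}_2$-representation theory, $\g^e$ is spanned by highest weight vectors, so it lies in the nonnegative $\operatorname{ad}h$-eigenspaces. Hence $\g^e$ is stable under $\ast$ and every weight of $\ast$ on $\g^e$ is positive, namely $2+k$ with $k\ge 0$. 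So $\ast$ contracts $f+\g^e$ onto the fixed point $f$ as $t\to 0$.

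The map $\chi:\g\to\c$ is $G$-invariant, and the invariants $f_i$ are homogeneous of degree $d_i$. Therefore $\chi(t\ast v)=t^{2}\cdot\chi(v)$, where $t^2$ acts on the $i$-th coordinate of $\c\cong\mathbb A^r$ by $t^{2d_i}$. Thus $\chi|_{f+\g^e}$ is $\mathbb G_m$-equivariant for actions with strictly positive weights on both sides. On coordinate rings it is a graded $E$-algebra map between two positively graded polynomial rings. Here $\dim\g^e=r$, since $e$ is regular nilpotent: its centraliser has the dimension of the number of $\mathfrak{sl}_2$-summands, which is $r$.

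\textbf{The point $f$.} I will show that $f$ is regular and that $d\chi_f$ restricted to $\g^e$ is an isomorphism onto $T_{\chi(f)}\c$.

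1. Since $f$ is regular nilpotent, $\dim I_f=\dim\g^f=r$. This uses that the $\g^f$ decomposes into $r$ lowest weight lines.
2. $\chi$ is smooth at regular points, with $\ker d\chi_f=[\g,f]$. This is Kostant's differential criterion; I expect it to be the main obstacle. I would prove it via the Chevalley restriction theorem (\Cref{Chevalley-res}) and a Jacobian computation on $\mathfrak t$ at regular semisimple points, then pass to regular elements by semicontinuity. This is the step where the hypothesis on $\mathrm{char}\,E$ enters.
3. By $\mathfrak{sl}_2$-theory, $\g=[\g,f]\oplus\g^e$.

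Together, steps 2 and 3 show that $d\chi_f|_{\g^e}$ is injective, and since both sides have dimension $r$, it is an isomorphism.

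\textbf{Globalising.} The graded ring map $\O(\c)\to\O(f+\g^e)$ induces an isomorphism on cotangent spaces at the cone points. By the graded Nakayama lemma it is therefore surjective, and it is an isomorphism because both rings are polynomial rings in $r$ variables, so a surjection between them is injective by dimension. This gives \eqref{eq: res}.

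For regularity: $\g^{reg}$ is open and stable under both $\Ad_G$ and scaling, hence stable under $\ast$. The set $(f+\g^e)\cap\g^{reg}$ is therefore an open $\ast$-stable subset containing the attracting point $f$. Any nonempty closed $\ast$-stable subset of $f+\g^e$ contains $f$ in the closure of each orbit, so the complement, which avoids $f$, must be empty. Hence $f+\g^e\subset\g^{reg}$.
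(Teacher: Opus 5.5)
The paper gives no proof of this statement; it simply quotes Kostant's theorem from Ng\^o. Your outline follows Kostant's classical route (contracting $\mathbb G_m$-action, transversality at $f$, graded Nakayama). The globalising and regularity steps are fine. The crux, step 2, is not established by the method you propose. Semicontinuity only tells you that $\{x\in\g:\operatorname{rank} d\chi_x=r\}$ is open. Knowing that this set contains the dense open regular semisimple locus says nothing about a point in its closure such as $f$. In fact nothing in that argument distinguishes $f$ from $0$, which is equally a limit of regular semisimple elements. Yet $d\chi_0$ has rank $\dim\mathfrak z(\g)<r$ unless $G$ is a torus. So $\ker d\chi_f=[\g,f]$, equivalently $\g^e\cap\ker d\chi_f=0$, remains unproved, and this is the whole content of the theorem at $f$.

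You can close the gap with ingredients you already have, by replacing semicontinuity with a degree count on the slice.
\begin{itemize}
\item Let $k_1,\dots,k_r\ge0$ be the $\operatorname{ad}h$-weights on $\g^e$. In homogeneous coordinates, the Jacobian determinant $J$ of $\chi|_{f+\g^e}$ is weighted-homogeneous of weight $\sum_i 2d_i-\sum_j(2+k_j)$.
\item Since $\g\cong\bigoplus_j V(k_j)$ as an $\mathfrak{sl}_2$-module, $\sum_j k_j=\dim\g-r=2N$, where $N$ is the number of positive roots.
\item Your Jacobian computation on $\mathfrak t$ (that Jacobian is a nonzero multiple of $\prod_{\alpha>0}d\alpha$) gives $\sum_i(d_i-1)=N$.
\item Hence $J$ has weight $0$. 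Since all weights on $f+\g^e$ are positive, $J$ is a constant.
\item $J$ is nonzero: $\chi|_{f+\g^e}$ is dominant, because by your step 3 the map $G\times(f+\g^e)\to\g$ is submersive at $(1,f)$, and we are in characteristic $0$.
\end{itemize}
Thus $J(f)\neq0$, and the rest of your argument goes through. Alternatively, use flatness of $\chi$ together with Kostant's theorem that $\chi^{-1}(0)$ is reduced, with smooth locus the regular nilpotent orbit.

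Two smaller points. First, as written, $t\ast v=t^2\Ad_{\rho(t)^{-1}}(v)$ gives $t\ast f=t^4f$ and weights $2-k$; you want $t^2\Ad_{\rho(t)}(v)$. Second, your argument is intrinsically characteristic $0$, and the hypothesis $\mathrm{char}\,E\nmid|W|$ does not rescue the $\mathfrak{sl}_2$-steps. Take $\mathfrak{sl}_4$ with $\mathrm{char}\,E=5$ (note $5\nmid 24=|W|$), $f$ the subdiagonal Jordan block and $h=\mathrm{diag}(3,1,-1,-3)$. Then $e$ is superdiagonal with entries $3,4,3$. On $f+a_1e+a_2e^2+a_3e^3$, the invariant $\Tr\Lambda^2$ restricts to $-10a_1=0$. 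So step 3 fails there, and $f+\g^e\to\c$ is not even an isomorphism.
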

The \emph{Kostant section} $\Kos:\c \to \g$ is the composition of the inverse of \eqref{eq: res} with the inclusion $f+\g^e \hookrightarrow \g$.
Then $\Kos$ is a section of the natural map $\g \to \c$.
Note that the Kostant section actually lands in $\g^{reg}$.

We define $\g^{reg}_\L:=\tilde \L\times^{\mathbb G_m} (\g^{reg}\times_{\Spec E}X_S^{sch})$ and $\tilde \L=\Isom(\O_X,\L)$. A priori, this is a fppf sheaf on $\Sch_{X_S^{sch}}$, but this is representable by a scheme by fppf descent of quasi-affine schemes \cite[0247]{stacks-project}, so we will also regard it as a scheme over $X_S^{sch}$.

\begin{lemma}\label{v are conjugate}
    Let $U$ be a scheme over $X_S^{sch}$. Assume $v_1,v_2\in \g^{reg}_\L(U)$ have the same image in $\c_\L(U)$. Then there is a fppf cover $W\to U$ such that $v_1|_W, v_2|_W$ are $G(W)$-conjugate.
\end{lemma}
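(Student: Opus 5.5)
First I will reduce to the untwisted situation by trivialising $\L$ locally. There is a Zariski cover of $X_S^{sch}$ on which $\L$ is trivial, and pulling it back gives an open cover of $U$. It is therefore enough to work locally on $U$ and assume $\tilde\L$ is trivial. A trivialisation $\L\cong\O$ identifies $\g^{reg}_\L$ with $\g^{reg}\times_{\Spec E}X_S^{sch}$ and $\c_\L$ with $\c\times_{\Spec E}X_S^{sch}$. These identifications are compatible with the map $\g_\L\to\c_\L$, because that map is obtained by twisting the $\mathbb G_m$-equivariant map $\g\to\c$. We are then reduced to the following claim: for an $E$-scheme $U$ and $v_1,v_2\in\g^{reg}(U)$ with the same image in $\c(U)$, the two sections are fppf-locally $G$-conjugate.

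Next, I will compare each $v_i$ with the Kostant section. Let $a\in\c(U)$ be the common image and put $v_0:=\Kos(a)\in\g^{reg}(U)$. It suffices to show that each $v_i$ is fppf-locally conjugate to $v_0$, since composing the two local conjugations over a common refinement then conjugates $v_1|_W$ to $v_2|_W$. To do this, consider the morphism
\[ \mu: G\times\c \to \g^{reg}\times_{\c}\g^{reg}... \]
More simply, consider $\nu:G\times\c\to\g^{reg}$, $(g,a)\mapsto\Ad_g(\Kos(a))$, which is a morphism over $\c$. The key input is the standard fact, due to Kostant and stated for instance in \cite[Proposition 2.3.1]{Ngo_2010} and \cite{Chen_Zhu}, that $\nu$ is smooth and surjective. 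Hence, for $v\in\g^{reg}(U)$ with image $a$, the fibre product $W:=U\times_{v,\g^{reg},\nu}(G\times\c)$ is smooth and surjective over $U$, and therefore an fppf cover (in fact a smooth one, which admits étale-local sections). On $W$ there is a tautological $g\in G(W)$ with $\Ad_g(\Kos(a'))=v|_W$, where $a'$ is the $\c$-component. Since $\nu$ lies over $\c$, we have $a'=a|_W$. So $v|_W$ is conjugate to $\Kos(a)|_W$.

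Finally, I will undo the trivialisation. On each piece of the Zariski cover we obtain an fppf cover over which $v_1$ and $v_2$ are conjugate. The disjoint union of these covers is an fppf cover $W\to U$, and conjugacy holds on $W$ because conjugacy by $G(W)$ is a pointwise condition on each component.

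The main obstacle is the smoothness and surjectivity of $\nu$. Over a field this is Kostant's theorem that regular elements with the same characteristic polynomial form one orbit, and it needs the assumption $\mathrm{char}\,E\nmid|W|$, which is automatic here since $E$ has characteristic $0$ or the hypothesis is imposed. Surjectivity on geometric points follows from that result. Smoothness I would deduce from the fibrewise criterion: $G\times\c$ and $\g^{reg}$ are smooth over $\c$ (the latter since $\g^{reg}\to\c$ is smooth), and on each fibre over $\bar a\in\c$, the map $G\to\g^{reg}_{\bar a}$ is the orbit map onto a single homogeneous orbit, which is smooth in characteristic zero or by separability of the orbit map. Flatness then comes from the fibrewise flatness criterion. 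I would cite \cite{Ngo_2010} for this rather than reprove it.
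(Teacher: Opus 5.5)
Your argument is correct, and its core is the same as the paper's. Both conjugate each $v_i$, fppf-locally, to $\Kos(a)$ using the smooth surjective map $G\times\c\to\g^{reg}$, $(g,c)\mapsto\Ad_g(\Kos(c))$, and both use that this map lies over $\c$ to force the $\c$-coordinate to be $a$.

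The difference is how $\L$ is handled. The paper asserts that this map is $\mathbb G_m$-equivariant and twists it by $\tilde\L$ to get $G\times\c_\L\to\g^{reg}_\L$ over $\c_\L$. That matches its later use of a twisted Kostant section $\c_\L\to\g^{reg}_\L$, e.g.\ in \Cref{Higgs^reg is a gerbe}. You instead trivialise $\L$ Zariski-locally on $U$ and only ever use the untwisted Kostant section. This is legitimate because the statement is local on $U$ and does not involve $\Kos$.

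Your route is actually the more robust one. $\Kos$ is not equivariant for the naive scaling action of $\mathbb G_m$ on $\g$ and $\c$. It is equivariant only for $t\mapsto t\,\Ad_{\rho^\vee(t)}$, and $\rho^\vee$ is in general only a cocharacter of the adjoint group (already for $\mathrm{SL}_2$). So the paper's twisting step tacitly needs extra input, such as a square root of $\L$, whereas your proof needs none.
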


\begin{proof}
    Let $c$ be the common image of $v_1,v_2$ in $\c_\L(U)$.
    Fix a Kostant section $\Kos:\c \to \g$.
    The morphism $G\times \c \to \g^{reg}$, given on $Z$-valued points (for $Z\in \Sch_E$) by $(g,c)\mapsto \Ad_g(\Kos(c))$, is a smooth and surjective morphism of schemes over $\c$, as stated in \cite[proof of Proposition 3.4]{Ngo_2006}.
    Note that this map is $\mathbb G_m$-equivariant.
    Thus, $G\times \c_\L \to \g^{reg}_\L$ is a smooth and surjective morphism of schemes over $\c_\L.$
    It follows that 
    \begin{equation}\label{G c to g reg}
        G\times \c_\L \to \g^{reg}_\L
    \end{equation}
    is surjective as morphism of fppf sheaves on $\Sch_{X_S^{sch}}$.

    Thus, there is a fppf cover $W\to U$ and $(g_1,c_1)\in (G\times \c_\L)(W)$ such that $\Ad_{g_1}(\Kos(c_1)) = v_1$. As \eqref{G c to g reg} is a morphism over $\c_\L$, $c_1=c|_W$. Hence $\Ad_{g_1}(\Kos(c)) = v_1$.
    Similarly, up to replacing $W$ by a fppf cover, there is $g_2\in G(W)$ such that $\Ad_{g_2}(\Kos(c)) = v_2.$
    Thus, $v_1|_W, v_2|_W$ are $G(W)$-conjugate.
\end{proof}

\subsection{Regular centraliser}\label{sec:centraliser}

Let $I|_{\g^{reg}}:= I \times_{\g} \g^{reg}$.
It turns out that the group scheme $I|_{\g^{reg}} \to \g^{reg}$ descends to a group scheme over $\c$:
\begin{lemma}\cite[Lemme 2.1.1]{Ngo_2010}
    There is a unique smooth commutative group scheme $J$ over $\c$ equipped with a group scheme isomorphism over $\g^{reg}$
    \begin{equation}\label{eq: isom J, I}
        J\times_{\c}\g^{reg} \iso I|_{\g^{reg}}.
    \end{equation}
    that is compatible with the $G\times\mathbb G_m$-action.
    This extends uniquely to a homomorphism over $\g$ 
    \begin{equation}\label{J to I}
        J\times_{\c}\g \to I.
    \end{equation}
    that is compatible with the $G\times\mathbb G_m$-action.
\end{lemma}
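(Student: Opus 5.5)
We obtain $J$ by descending $I|_{\g^{reg}}$ along the smooth surjection $\chi\colon\g^{reg}\to\c$. We use the Kostant section only to produce a candidate, and check that it descends. Concretely, set $J:=\Kos^*I|_{\g^{reg}}$, the centraliser of $\Kos(c)$ over $c\in\c$. This is a closed subgroup scheme of $G\times\c$.

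\textbf{Step 1: the fibres $I_v$ are commutative and smooth of dimension $r$ for regular $v$.} Over an algebraically closed field, the centraliser of a regular element is commutative (Kostant). Smoothness of $I|_{\g^{reg}}\to\g^{reg}$ follows from flatness together with the equality of fibre dimensions $\dim I_v=r=\dim\ker(\operatorname{ad}v)$; here the hypothesis $\mathrm{char}\,E\nmid|W|$ is harmless since $\mathrm{char}\,E=0$ or the field is large enough.

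\textbf{Step 2: descent datum.} The map $G\times\c\to\g^{reg}$, $(g,c)\mapsto\Ad_g\Kos(c)$, is smooth and surjective, as in the proof of \Cref{v are conjugate}. Hence it suffices to give an isomorphism between the two pullbacks of $I|_{\g^{reg}}$ to $\g^{reg}\times_\c\g^{reg}$ satisfying the cocycle condition. For points $v_1,v_2$ with the same image in $\c$, \Cref{v are conjugate} gives, fppf locally, some $g$ with $\Ad_g v_1=v_2$, and conjugation by $g$ maps $I_{v_1}\iso I_{v_2}$. This isomorphism is independent of the choice of $g$: any two choices differ by an element of $I_{v_1}$, which acts trivially on $I_{v_1}$ by conjugation because $I_{v_1}$ is commutative by Step 1. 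Independence makes the local isomorphisms glue and forces the cocycle condition. Hence $I|_{\g^{reg}}$ descends to a group scheme $J$ over $\c$, which is smooth and commutative by Step 1 and fpqc descent. The isomorphism \eqref{eq: isom J, I} is tautological. $G$-compatibility holds by construction, and $\mathbb G_m$-compatibility holds because scaling commutes with conjugation and $\chi$ is $\mathbb G_m$-equivariant for the weighted action on $\c$. Uniqueness follows from the same descent, since $\chi$ is an fppf cover.

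\textbf{Step 3: extension to $\g$.} We need a homomorphism $J\times_\c\g\to I\subset G\times\g$. The complement of $\g^{reg}$ in $\g$ has codimension $\ge 2$ (it has codimension $3$), and $\g$ is normal. The source $J\times_\c\g$ is smooth over the normal scheme $\g$, hence normal, and the preimage of $\g\setminus\g^{reg}$ in it still has codimension $\ge2$. The target $G\times\g$ is affine, so the morphism $J\times_\c\g^{reg}\to G\times\g$ extends uniquely by Hartogs. The extension factors through the closed subscheme $I$ because the source is reduced and $\g^{reg}$ is dense in it. It is a homomorphism, and it is $G\times\mathbb G_m$-equivariant, because both identities hold on a dense open subset of a reduced scheme mapping to a separated target.

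The main obstacle is Step 3, specifically the codimension-$2$ statement for $\g\setminus\g^{reg}$ in the present characteristic together with normality of $J\times_\c\g$. If a direct argument is unavailable, I would cite \cite[Lemme 2.1.1]{Ngo_2010} for this input, since the statement is exactly Ngô's.
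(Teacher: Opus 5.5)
The paper gives no argument of its own here; it simply cites Ngô's Lemme~2.1.1. Your route is Ngô's argument: descend $I|_{\g^{reg}}$ along $\chi\colon\g^{reg}\to\c$ using commutativity of regular centralisers, then extend over $\g\setminus\g^{reg}$ by Hartogs. The structure of Steps~2 and~3 is sound given the standard inputs you name. However, Step~1 has a real hole that Step~2 inherits. You deduce smoothness of $I|_{\g^{reg}}\to\g^{reg}$ ``from flatness'', but nothing in your proof supplies flatness. Smooth, equidimensional fibres over a reduced base do not imply it: the closed subgroup of $(\Z/2)_{\mathbb A^1}$ consisting of the unit section and the non-trivial point over $0$ is a counterexample. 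This gap matters in two places. First, smoothness of $J$ is part of the conclusion, and you obtain it only by descending smoothness of $I|_{\g^{reg}}$. Second, the independence-of-$g$ argument in Step~2 is applied to $T$-points of $\g^{reg}\times_{\c}\g^{reg}$. So it needs $I|_{\g^{reg}}$ to be commutative as a group scheme, not merely on geometric fibres, and passing from the latter to the former requires $I\times_{\g^{reg}}I$ to be reduced, which again comes from smoothness.

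The fix uses only the smooth surjection $\pi\colon G\times\c\to\g^{reg}$, $(g,c)\mapsto\Ad_g\Kos(c)$, which you already invoke. The base change of $\pi$ along $\Kos\colon\c\to\g^{reg}$ is $\Kos^*I$, because applying $\chi$ to $\Ad_g\Kos(c)=\Kos(c')$ forces $c=c'$. Hence $\Kos^*I\to\c$ is smooth. Moreover, $h\mapsto g^{-1}hg$ identifies $\pi^*(I|_{\g^{reg}})$ with $G\times\Kos^*I$ over $G\times\c$, so $I|_{\g^{reg}}\to\g^{reg}$ is smooth by fpqc descent. Then $I\times_{\g^{reg}}I$ is smooth over the reduced scheme $\g^{reg}$, hence reduced. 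Fibrewise commutativity therefore upgrades to commutativity of the group scheme, which is what Step~2 needs. This also recovers the paper's remark $J\cong I\times_{\g,\Kos}\c$, by pulling back $\chi^*J\cong I|_{\g^{reg}}$ along $\Kos$.

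Separately, your comment that $\mathrm{char}\,E\nmid|W|$ is ``harmless since $\mathrm{char}\,E=0$ or the field is large enough'' misplaces the hypothesis. For $E=\F_q(\!(t)\!)$ it is a genuine restriction on $p$, and field size is irrelevant. It is exactly what makes available the positive-characteristic versions of the inputs you use, as in Ngô's own setting: commutativity of regular centralisers, the Kostant section and smoothness of $\pi$, and the codimension bound for $\g\setminus\g^{reg}$.
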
 

\begin{remark}\cite[after Lemme 2.1.1]{Ngo_2010}\label{rmk: alt defn of J}
    It we pick a Kostant section $Kos:\c \to \g$, then $J\cong I\times_{\g,Kos}\c$.
\end{remark}

As the projection map $I\to \g$ is $G\times\mathbb G_m$-equivariant, we can base change both sides to $X_S^{sch}$ and twist the map by the $\mathbb G_m$-torsor $\tilde{\L}=\underline{Isom}(\O,\L)$ to get a $G$-equivariant map
$$I_{\L}\to \g_{\L}$$
of fppf sheaves on $\Sch_{X_S^{sch}}$, where $I_\L:=I_{X_S^{sch}}\otimes_{\O_{X_S^{sch}}}\L$.
Similarly, we can twist $J\to \mathfrak c$ to get
$$J_{\L}\to \mathfrak c_{\L}.$$
Note that for all $U\in\Sch_{X_S^{sch}}$, we have a canonical isomorphism
\begin{equation}\label{eq:I_L}
     I_\L(U)\cong\{(g,v)\in G(U)\times \g_\L(U):\Ad_g(v)=v\}.
\end{equation}
To see this, note that the right hand side defines a sheaf and there is an obvious map from $I_\L$ to it, which is an isomorphism locally (and hence globally).
Similarly, if we fix a Kostant section $\Kos$, then for all $U\in \Sch_{X_S^{sch}}$, we have a canonical isomorphism
\begin{equation}\label{eq:J_L}
     J_\L(U)\cong\{(g,c)\in G(U)\times \c_\L(U):\Ad_g(\Kos(c))=\Kos(c)\}.
\end{equation}

\subsection{Action of Picard prestack on Higgs bundles}\label{sec:action of Picard}
Our aim in this subsection is to define the action of a certain Picard prestack $\mathcal P$ on $\Higgs$.

We can twist \eqref{J to I} by $\L$ to get a group homomorphism over $\g_\L$
\begin{equation}\label{input data}
    J_\L\times_{\mathfrak c_\L}\g_\L\to I_\L
\end{equation}
that is compatible with the $G$-action.
This induces a map
\begin{equation}\label{eq: quot J to quot I}
    [(J_\L\times_{\mathfrak c_\L}\g_\L)/G]\cong J_\L\times_{\mathfrak c_\L}[\g_\L/G] \to [I_\L/G].
\end{equation}

Note that $[I_\L/G]=\mathcal I_{[\g_\L/G]}$, the inertia stack of $[\g_\L/G]$.
Indeed, it is clear from the definition and equation \eqref{eq:I_L} that the inertia prestack of the quotient prestack\footnote{We mean the prestack $S\mapsto [\g_\L(U)/G(U)]$.} $[\g_\L/_{pre}\;G]$ is $[I_\L/_{pre}\;G]$.
Recall that on prestacks, the operations of taking stackification and taking inertia prestack commutes \cite[Lemma 06NS]{stacks-project}. It follows that $[I_\L/G]$ is the inertia stack of $[\g_\L/G]$.

Thus, we can write \eqref{eq: quot J to quot I} as
\begin{equation}\label{eq: final}
    J_\L\times_{\mathfrak c_\L}[\g_\L/G] \to \mathcal I_{[\g_\L/G]}
\end{equation}
which is a morphism over $[\g_\L/G]$ satisfying the condition of \Cref{inertia gives action over a base} because \eqref{input data} is a group homomorphism over $\g_\L$.
By \Cref{inertia gives action over a base}, we get an action \begin{equation}\label{action-primitive}
    [\mathfrak c_\L/J_\L]\times_{\mathfrak c_\L}[\g_\L/G] \to [\g_\L/G]
\end{equation}
over $\mathfrak c_\L$.

\begin{example}
    Take $G=\mathbb G_m$.
    Let $\Tot(\L)\to X_S^{sch}$ be the geometric vector bundles corresponding to $\L$.
    Then $\g=\mathfrak c =\mathbb A^1_E$, $\g_\L=\Tot(\L)$ with trivial $G$-action, and $I_{\L}=J_{\L}=\mathbb G_m\times_E \Tot(\L)$.
    By definition, \eqref{action-primitive} is the morphism $[\L/G] \times_{\L} [\L/G] \to [\L/G]$ given by taking the stackification of the map
    \begin{align*}
        [\L/_{pre}\; G] \times_{\L} [\L/_{pre}\; G] &\to [\L/_{pre}\; G]\\
        (v,v)&\mapsto v &&\text{on objects}\\
        (g_1,g_2)&\mapsto g_1 g_2 &&\text{on morphisms.}
    \end{align*}
    Thus, if we identify $[\L/G]$ as the stack taking a scheme $X$ to the groupoid of pairs $(\mathcal F, v)$, where $\mathcal F$ is a line bundle on $X$ and $v\in H^0(X,\L)$ is a global section of the pullback of $\L$ to $X$, then \eqref{action-primitive} is the morphism 
    \begin{align*}
        [\L/G] \times_{\L} [\L/G] &\to [\L/G]\\
        ((\mathcal F_1,v), (\mathcal F_2,v)) &\mapsto (\mathcal F_1 \otimes \mathcal F_2,v).
    \end{align*}
\end{example}

Let us return to the general case.
\begin{definition}
    Define $\mathcal P:=\Map(FF,[\mathfrak c_\L/J_\L])$, i.e. the prestack
    \begin{align*}
        \Perf_S^{op} &\to \Grpd\\
        T&\mapsto \set{(a,Q):a\in\c_\L(X_T^{sch}),\; Q\text{ is a } J_\L\times_{\mathfrak c_\L, a}X_T^{sch} \text{-torsors on }X_T^{sch}}.
    \end{align*}
    We also define $J_a := J_\L\times_{\mathfrak c_\L,a}X_T^{sch}$.
\end{definition}

\begin{remark}\label{rmk: P is v stack}
    If $S=\Spa C$ for an algebraically closed perfectoid field $C$ over $\mathbb F_q$, then for $T\to S$, we can by \Cref{equiv_G_bundles} identify $J_a$-bundles on $X_T^{sch}$ with exact $\O_{\mathfrak c_\L}$-linear symmetric monoidal functors $\Rep_{\mathfrak c_\L}(J_\L) \to \mathrm{VB}(X_T^{sch}) \simeq \mathrm{VB}(X_T)$. Since $T\mapsto \mathrm{VB}(X_T)$ is a v-stack, $\P$ is also a v-stack by \Cref{lem: exact 1}.
\end{remark}

Taking $\Map(FF,-)$ of \eqref{action-primitive}, we get an action
\begin{equation}\label{action-P on HIggs}
    \mathcal P\times_{\mathcal A_G} \Higgs \to \Higgs
\end{equation}
of $\mathcal P$ on $\Higgs$ over $\mathcal A_G$.
Let us denote the image of $(Q,x)$ by $Q*x$.

\begin{lemma}\label{lem:J isom}
    Let $(\E,\phi)\in\Higgs(T)$, which corresponds to a map $X_T^{sch} \to [\g_\L/G]$. Let $a$ be the composite map $X_T^{sch} \to [\g_\L/G] \to \c$.
    Let $Z\to X_T^{sch}$ be a scheme over $X_T^{sch}$. If the image of $Z\to X_T^{sch} \to [\g_\L/G]$ lies in $[\g^{reg}_\L/G]$, then $J_a|_Z \isomto \underline{Aut}(\E,\phi)|_Z$.
\end{lemma}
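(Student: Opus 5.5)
The plan is to reduce to the local statement \eqref{eq: isom J, I} over $\g^{reg}_\L$, namely $J_\L\times_{\c_\L}\g^{reg}_\L\iso I_\L|_{\g^{reg}_\L}$, by working fppf-locally on $Z$ where $\E$ is trivial. Both sides of the claimed map are sheaves on $\Sch_{Z}$ with the fppf topology. The map itself comes from \eqref{eq: final}: for any $Z'\to Z$ and $j\in J_a(Z')$, the morphism \eqref{input data} produces an automorphism of the object $(\E,\phi)|_{Z'}$ of $[\g_\L/G](Z')$, and this is a group homomorphism $J_a|_Z\to\underline{\Aut}(\E,\phi)|_Z$. By the fppf sheaf property (\cite[04TQ]{stacks-project}), it suffices to show this is an isomorphism after passing to an fppf cover of $Z$.

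First, I choose an fppf cover $Z'\to Z$ over which $\E$ is trivial. Then $(\E,\phi)|_{Z'}$ is identified, via \Cref{lem:quotient stack} and \Cref{alt_defn_for_quotient_stack}, with the image of a section $v\in\g_\L(Z')$ in $[\g_\L/_{pre}\,G](Z')$. Under this identification, $\underline{\Aut}(\E,\phi)|_{Z'}$ becomes the sheaf $W\mapsto\{g\in G(W):\Ad_g(v|_W)=v|_W\}$, which by \eqref{eq:I_L} is $I_\L\times_{\g_\L,v}Z'$. The regularity hypothesis on $Z\to[\g_\L/G]$ says exactly that $v$ factors through the open subscheme $\g^{reg}_\L$. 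On the other side, $J_a|_{Z'}=J_\L\times_{\c_\L,a}Z'$, and $a|_{Z'}$ equals the image of $v$ under $\g_\L\to\c_\L$.

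Next, I take the base change of \eqref{eq: isom J, I}, twisted by $\tilde\L$, along $v:Z'\to\g^{reg}_\L$. This gives an isomorphism
\[J_\L\times_{\c_\L,a}Z'\cong (J_\L\times_{\c_\L}\g^{reg}_\L)\times_{\g^{reg}_\L,v}Z'\iso I_\L\times_{\g_\L,v}Z'.\]
The twist by $\tilde\L$ preserves the isomorphism because \eqref{eq: isom J, I} is $\mathbb G_m$-equivariant, so the twisted map is an isomorphism locally where $\L$ is trivial, hence globally. I then check that this isomorphism agrees with the map induced from \eqref{input data} as it enters \eqref{eq: final} and the action of \Cref{inertia gives action over a base}. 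This holds because \eqref{J to I} restricts to \eqref{eq: isom J, I} over $\g^{reg}$.

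The main obstacle is the bookkeeping rather than any substantive input. I need to verify that the identification of $\underline{\Aut}(\E,\phi)|_{Z'}$ with the centraliser sheaf is compatible with the inertia-stack identification $[I_\L/G]=\mathcal I_{[\g_\L/G]}$ and with stackification. I also need to check that these local isomorphisms glue, that is, that they are independent of the chosen trivialisation of $\E$. Changing the trivialisation by $h\in G(Z')$ replaces $v$ by $\Ad_h v$ and acts on both sides by conjugation. Because \eqref{eq: isom J, I} is $G$-equivariant and $J$ carries the trivial $G$-action, the isomorphisms are compatible, so they descend to the desired isomorphism over $Z$.
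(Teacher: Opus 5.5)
Your proposal is correct and is essentially the paper's argument. The paper base-changes the stack-level isomorphism $J_\L\times_{\c_\L}[\g^{reg}_\L/G]\iso\mathcal I_{[\g^{reg}_\L/G]}$, which is induced from \eqref{eq: isom J, I}, along $Z\to[\g^{reg}_\L/G]$; your fppf-local trivialisation of $\E$ followed by base change of \eqref{eq: isom J, I} along $v:Z'\to\g^{reg}_\L$ is that same step unwound. Your final gluing check is not needed: the map $J_a|_Z\to\underline{\Aut}(\E,\phi)|_Z$ is already globally defined via \eqref{eq: final}, so showing it is an isomorphism fppf-locally suffices.
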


\begin{proof}
    The base change of \eqref{eq: final} along $[\g^{reg}_\L/G]\to [\g_\L/G]$, i.e. the map
    \begin{equation}\label{eq:eq a}
        J_\L \times_{\mathfrak c_\L} [\g^{reg}_\L/G] \iso  \mathcal I_{[\g^{reg}_\L/G]},
    \end{equation}
    is an isomorphism, because it is induced from isomorphism \eqref{eq: isom J, I} (in the same way that \eqref{eq: final} is induced from \eqref{J to I}).
    Base changing \eqref{eq:eq a} along the map $Z\to X_T^{sch} \to [\g^{reg}_\L/G]$ gives an isomorphism
    \[J_a|_Z \isomto Z\times_{[\g^{reg}_\L/G]} \mathcal I_{[\g^{reg}_\L/G]}\cong \underline{Aut}(\E,\phi)|_Z.\]
\end{proof}

We have the following analogue of \cite[Proposition 2.2.1]{Ngo_2010}.
\begin{proposition}\label{Higgs^reg is a gerbe}
    Let $\Higgs^{reg}:=\Map(FF,[\g^{reg}_\L/G])$.
    Then $$\Higgs^{reg}\cong \mathcal P$$ as prestacks. This isomorphism is canonical if we choose a Kostant section.
\end{proposition}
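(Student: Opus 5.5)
The plan is to prove the isomorphism at the level of the local models and then apply $\Map(FF,-)$. Concretely, I would construct an isomorphism of fppf stacks on $\Sch_{X_S^{sch}}$
\[\Psi:[\c_\L/J_\L]\iso[\g^{reg}_\L/G]\]
over $\c_\L$. Since $\Higgs^{reg}=\Map(FF,[\g^{reg}_\L/G])$ and $\mathcal P=\Map(FF,[\c_\L/J_\L])$ by definition, applying $\Map(FF,-)$ to $\Psi$ gives the claimed isomorphism of prestacks on $\Perf_S$. This is compatible with the maps to $\mathcal A_G$.

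To build $\Psi$, fix a Kostant section $\Kos:\c\to\g^{reg}$. It is $\mathbb G_m$-equivariant up to the usual twist; in Ngô's setup one arranges this using the grading from $h$. Twisting by $\tilde\L$ then gives $\Kos_\L:\c_\L\to\g^{reg}_\L$. Composing with $\g^{reg}_\L\to[\g^{reg}_\L/G]$ gives a morphism $\kappa:\c_\L\to[\g^{reg}_\L/G]$ over $\c_\L$. By \eqref{eq:J_L}, $J_\L$ acts on $\Kos_\L(c)$ through $G$, and this identifies $J_\L\times_{\c_\L}\c_\L$ with the automorphism sheaf of $\kappa$; equivalently, this is \eqref{eq:eq a} pulled back along $\kappa$. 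So $\kappa$ is $J_\L$-equivariant for the trivial action on $\c_\L$, and it descends to a morphism $\Psi:[\c_\L/J_\L]\to[\g^{reg}_\L/G]$. Equivalently, $\Psi$ is the stackification of the prestack map sending $c\mapsto \Kos_\L(c)$ on objects and $j\mapsto j$, viewed in $G$, on morphisms. One can also describe $\Psi$ as the action map \eqref{action-primitive} restricted to the section $\kappa$.

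I would then check that $\Psi$ is an isomorphism by showing it is fully faithful and locally essentially surjective; a map of stacks with these two properties is an equivalence. Both checks work over an arbitrary $U\in\Sch_{X_S^{sch}}$.

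\textbf{Essential surjectivity.} Any object of $[\g^{reg}_\L/G](U)$ is, fppf locally, the image of some $v\in\g^{reg}_\L(U)$. Let $c$ be the image of $v$ in $\c_\L(U)$. Then $v$ and $\Kos_\L(c)$ have the same image in $\c_\L(U)$, so by \Cref{v are conjugate} they become $G$-conjugate after an fppf cover. Hence $v$ is locally isomorphic to $\Psi(c)$.

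\textbf{Full faithfulness.} Morphisms in $[\c_\L/J_\L]$ between $c$ and $c'$ are empty unless $c=c'$, and otherwise form $J_\L(U)$ over $c$. Morphisms in $[\g^{reg}_\L/G]$ between $\Kos_\L(c)$ and $\Kos_\L(c')$ are those $g$ with $\Ad_g\Kos_\L(c)=\Kos_\L(c')$. Such a $g$ forces $c=c'$, because the map to $\c_\L$ is $G$-invariant, and then $g$ lies in the centraliser $I_\L$ at $\Kos_\L(c)$, which is $J_\L$ at $c$ by \eqref{eq: isom J, I} and \eqref{eq:J_L}. So $\Psi$ is bijective on morphism sheaves.

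The main obstacle I expect is bookkeeping rather than anything deep. First, the Kostant section must really twist by $\tilde\L$. If $\Kos$ is not literally $\mathbb G_m$-equivariant, I would use a square root of $\L$ or Ngô's modified $\mathbb G_m$-action. If that is unavailable, I would instead define $\kappa$ fppf locally on trivialisations of $\L$ and glue, using that the different choices differ by the $G$-action, which is why the isomorphism is canonical only once $\Kos$ is fixed. Second, I need to make sure $\Map(FF,-)$ sends an equivalence of stacks to an equivalence of prestacks; this is immediate, since it is just evaluation on $X_T^{sch}$.
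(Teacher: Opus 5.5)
This is essentially the paper's argument. The paper shows that $[\g^{reg}_\L/G]$ is a neutral gerbe over $\c_\L$: the object comes from the twisted Kostant section, local connectedness from \Cref{v are conjugate}, and $\underline{\Aut}(x)=J_\L$ from \eqref{eq:J_L}. It then invokes the standard equivalence $[\c_\L/J_\L]\iso[\g^{reg}_\L/G]$ for neutral gerbes, which is exactly your fully faithful plus locally essentially surjective check, before applying $\Map(FF,-)$.

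One correction concerns your last fallback. Locally chosen Kostant sections are conjugate only up to $J_\L$, so they need not glue, and in general they do not. For $G=\mathrm{SL}_2$, a regular nilpotent $\L$-twisted Higgs field $\phi$ on a rank-$2$ bundle $V$ forces $\O\cong\det V\cong(\ker\phi)^{\otimes 2}\otimes\L$. So for $\L=\O(1)$ on $X_C^{sch}$ there is no regular Higgs bundle over $a=0$ at all. Hence the twisted section $\c_\L\to\g^{reg}_\L$ genuinely needs an extra input, such as a square root of $\L$. The paper's proof also tacitly assumes this.
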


\begin{proof}
    We first prove that $[\g^{reg}_\L/G]$ is a neutral gerbe over $\c_\L$  as stacks on $(\Sch_{X_S^{sch}})_{fppf}$.
    Fix a Kostant section $\Kos:\c\to \g^{reg}$. Then the map $\c_\L\to \g^{reg}_\L \to [\g^{reg}_\L/G]$ provides an object in $x=[\g^{reg}_\L/G](\c_\L)$.
    Let $U\in\Sch_{X_S^{sch}}$.
    Suppose $(\E_1,\phi_1),(\E_2,\phi_2)\in [\g^{reg}_\L/G](U)$ have the same image in $\c_\L(U)$.
    We would like to show that locally in $U$, $(\E_1,\phi_1)$ and $(\E_2,\phi_2)$ are isomorphic.
    By passing to a fppf cover of $U$, we may assume $\E_1,\E_2$ are trivial $G$-bundles and $\L\cong \O$. Then $\phi_1,\phi_2\in \g(U)$ have the same image in $\c(U)$. Since $\phi_1,\phi_2$ are regular, they are $G(U)$-conjugate after passing to a fppf cover $V$ of $U$ by \Cref{v are conjugate}. Thus, $(\E_1,\phi_1)|_V\cong (\E_2,\phi_2)|_V$. Hence, $[\g^{reg}_\L/G]$ is a neutral gerbe over $\c_\L$ by \cite[Lemma 06P1]{stacks-project}.

    Let us calculate $\underline{\Aut}(x)$. Let $U\to \c_\L$, which corresponds to $c\in \c_\L(U)$.
    Then we can identify $x|_U$ with $(\E_0,\Kos(c))\in [\g^{reg}_\L/G](U)$, where $\E_0$ is the trivial $G$-bundle on $U$ and $\Kos(c)$ is viewed as a global section of $\Ad(\E_0)_\L$.
    Thus, $\underline{\Aut}(x)(U)=\set{g\in G(U):\Ad_g(\Kos(c))=\Kos(c)}$.
    By \eqref{eq:J_L}, $\underline{\Aut}(x)=J_\L$.

    By a general property of neutral gerbe, we have an isomorphism $[\c_\L/J_{\L}] \isomto [\g^{reg}_\L/G]$ given by the stackification of the map 
    \begin{align}\label{c to g}
        \c_\L(U)/J_\L(U) &\to [\g^{reg}_\L/G](U)\\
        c&\mapsto (\E_0,\Kos(c))\\
        j&\mapsto j.
    \end{align}

    Applying $\Map(FF,-)$ to $[\c_\L/J_\L] \isomto [\g^{reg}_\L/G]$, we obtain $\mathcal P \isomto \Higgs^{reg}$.
\end{proof}

\begin{remark}
    By \Cref{rmk: P is v stack} and \Cref{Higgs^reg is a gerbe}, we know that if $S=\Spa C$, then $\Higgs^{reg}$ is a v-stack.
\end{remark}

We have two actions of $\mathcal P$ on $\Higgs^{reg}$ over $\mathcal A_G$:
\begin{enumerate}
    \item the one in \eqref{action-P on HIggs} restricted\footnote{The action preserves the regularity condition, because the map $J\times_{\mathfrak c}\g \to I$ restricts to a map $J\times_{\mathfrak c}\g^{reg} \to I|_{\g^{reg}}$.} to $\Higgs^{reg}$
    \item the one obtained by identifying $\Higgs^{reg}\cong \mathcal P$ and using the contracted product on $\mathcal P$. 
\end{enumerate}
These two actions are equivalent.

\begin{lemma}\label{2 actions on Higgs}
    The two actions above are equivalent.
    More precisely, the two $1$-morphisms $\mathcal P \times_{\mathcal A} \Higgs^{reg} \to \Higgs^{reg}$ are isomorphic in the $2$-category of prestacks on $\Perf_S$.
\end{lemma}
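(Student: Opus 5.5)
Both $1$-morphisms $\mathcal P\times_{\mathcal A}\Higgs^{reg}\to\Higgs^{reg}$ are obtained by applying $\Map(FF,-)$ to morphisms of stacks on $(\Sch_{X_S^{sch}})_{fppf}$ over $\c_\L$. The first is the restriction of \eqref{action-primitive} to $[\c_\L/J_\L]\times_{\c_\L}[\g^{reg}_\L/G]\to[\g^{reg}_\L/G]$. The second is the contracted product $[\c_\L/J_\L]\times_{\c_\L}[\c_\L/J_\L]\to[\c_\L/J_\L]$, transported along the equivalence $\Psi:[\c_\L/J_\L]\isomto[\g^{reg}_\L/G]$ of \Cref{Higgs^reg is a gerbe}. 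Since $\Map(FF,-)$ is a $2$-functor, a $2$-isomorphism between these two morphisms of stacks yields the required $2$-isomorphism of prestacks on $\Perf_S$.

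It therefore suffices to work at the level of stacks over $\c_\L$. Both morphisms are stackifications of morphisms of prestacks, namely of quotient prestacks, as in \Cref{lem:quotient stack}, \Cref{inertia gives action} and \eqref{c to g}. By the universal property of stackification, it is enough to produce a $2$-isomorphism between the compositions of both with the map from the prestack $[\c_\L/_{pre}\,J_\L]\times_{\c_\L}[\c_\L/_{pre}\,J_\L]$, using $\Psi$ on the second factor.

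\begin{itemize}
\item \emph{Objects.} On $U$-points, an object is a pair $(c,c)$ with $c\in\c_\L(U)$. The first morphism sends it to the action of the trivial torsor on $(\E_0,\Kos(c))$, which by the construction in \Cref{inertia gives action} is $(\E_0,\Kos(c))$ itself. The second sends it to $\Psi$ of the contracted product of two trivial torsors, which is canonically the trivial torsor, so again to $(\E_0,\Kos(c))$. Take the component of the $2$-isomorphism at $(c,c)$ to be the identity.
\item \emph{Morphisms.} A morphism is a pair $(j_1,j_2)$ with $j_i\in J_\L(U)$ lying over $c$. Under the first morphism it goes to $j_{1,x}\circ\Psi(j_2)$, where $j_{1,x}$ is the image of $j_1$ under $J_\L\to\underline{\Aut}(\E_0,\Kos(c))$ induced by \eqref{input data}. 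Under the second it goes to $\Psi(j_1j_2)$.
\end{itemize}

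The key point is that these two morphisms agree. This holds because, by \eqref{eq:J_L} and the identification $\underline{\Aut}(x)=J_\L$ used in \Cref{Higgs^reg is a gerbe}, both $\Psi$ on automorphisms and the map $j\mapsto j_x$ are the same map $j=(g,c)\mapsto g\in G(U)$. Concretely, the map \eqref{J to I}, restricted along $\Kos$, is the canonical identification $J\cong I\times_{\g,\Kos}\c$ of \Cref{rmk: alt defn of J}. I expect this compatibility between the isomorphism \eqref{eq: isom J, I} and the identification of $J$ via the Kostant section to be the main point needing care. I would verify it by unwinding Ngô's construction, or more simply by noting that both maps $J\to I\times_{\g,\Kos}\c$ are homomorphisms compatible with the descent datum, so they coincide by the uniqueness in \cite[Lemme 2.1.1]{Ngo_2010}. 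Given that, $j_{1,x}\circ\Psi(j_2)=g_1g_2=\Psi(j_1j_2)$.

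It follows that the identity $2$-morphism is natural, and the coherence with the associativity and unit $2$-morphisms is automatic because everything is strict at the prestack level. Stackifying, and then applying $\Map(FF,-)$, gives the desired isomorphism of $1$-morphisms.
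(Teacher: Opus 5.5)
Your proposal is correct and follows essentially the same route as the paper: reduce via $\Map(FF,-)$ to the two actions of $[\c_\L/J_\L]$ on $[\g^{reg}_\L/G]$, transport along the equivalence \eqref{c to g}, use the universal property of stackification to pass to the quotient prestacks, and check that both maps send $(c,c)\mapsto(\E_0,\Kos(c))$ on objects and $(j_1,j_2)\mapsto j_1j_2$ on morphisms. The compatibility you single out is what the paper covers by ``chasing through the definitions''. It is in fact tautological: the identification $J\cong I\times_{\g,\Kos}\c$ underlying \eqref{eq:J_L} is the pullback of \eqref{eq: isom J, I} along $\Kos$, so you do not need the somewhat loose appeal to uniqueness in Ng\^o's lemma.
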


\begin{proof}
    It suffices to show that the two actions of $[\c_\L/J_\L]$ on $[\g_\L^{reg}/G]$ over $\c_\L$ are isomorphic. 
    In other words, we want to show the two $1$-morphisms $[\c_\L/J_\L]\times_{\c_\L} [\g_\L^{reg}/G] \to [\g_\L^{reg}/G]$ are isomorphic.
    Equivalently, by identifying $[\c_\L/J]$ with $[\g_\L^{reg}/G]$ via the map \eqref{c to g}, we need to show 
    the two $1$-morphisms $$[\c_\L/J_\L]\times_{\c_\L} [\c_\L/J] \isomto [\c_\L/J_\L]\times_{\c_\L} [\g_\L^{reg}/G] \rightrightarrows [\g_\L^{reg}/G]$$ are isomorphic.
    By property of stackification \cite[Lemma 04W9]{stacks-project}, it suffices to see the two $1$-morphisms $[\c_\L/_{pre}\;J] \times_{\c_\L} [\c_\L/_{pre}\;J_\L] \to [\g_\L^{reg}/G]$ are isomorphic.
    This can be checked by chasing through the definitions: on objects, they both send $(c,c)$ to $(\E_0,\Kos(c))$; on morphisms, they both send $(j_1,j_2)$ to $j_1j_2$.
\end{proof}

\section{Affine Springer fiber}
In this section, we define and study the $B_{dR}$-affine Springer fibres. One can view them as the local analogues of Hitchin fibers.

Recall that \cite[section VI.1]{FS_geometrisation} if $S$ is a perfectoid space over $\mathbb F_q$, then $Div^1_X(S)$ is the set of closed Cartier divisors of degree $1$ of $X_S$. Locally in the analytic topology of $S$, any such Cartier divisor arises from an untilt of $S$ over $E$. If $S$ is affinoid and $S\to Div^1_X$ is a map whose corresponding Cartier divisor $D_S$ arises from an untilt of $S$ over $E$, then we define\footnote{Note that $B_{\Div^1_X}^+(S)$ depends on the map $S\to Div^1_X$ and not just $S$.}
\[B_{\Div^1_X}^+(S)\]
to be the global sections of the completion of $\O_{X_S}$ along $\mathcal I_S$, where $I_S\subset \O_{X_S}$ is the ideal sheaf corresponding to $D_S$, and
\[B_{\Div^1_X}(S)\]
to be $B_{\Div^1_X}^+(S)[\frac{1}{I_S}]$
where $I_S$ is the ideal of $B_{\Div^1_X}^+(S)$ determined by $\mathcal I_S$.
If we pick an untilt $S^\sharp=\Spa(R, R^+)$ of $S$ which gives rise to $D_S$, then we can identify $B_{\Div^1_X}^+(S)$ with $B_{dR}^+(R)$ and $B_{\Div^1_X}(S)$ with $B_{dR}(R)$.

In the remainder of this paper, let $\mathbb D_T$ denote $\Spec B^+_{Div^1}(T)$ and $\mathbb D_T^*$ denote $\Spec B_{Div^1}(T)$. Note that this depends on the map $T\to Div^1_X$, not just on $T$. Also let $\E_0$ denote the trivial $G$-bundle.

Motivated by \cite[Definition 3.2.2]{Ngo_2010}, we make the following definition.
\begin{definition}\label{defn: asf 1}
    Let $S$ be a perfectoid space over $\F_q$, $D_S\in Div^1_{X}(S)$, $\L$ be a line bundle on $\mathbb D_S=\Spec B_{Div^1}(S)$, and $\gamma\in \g_\L(\mathbb D_T)$.
    Define the \emph{($B_{dR}$-)affine Springer fibre} $\Sp^\gamma$ as the v-sheaf $\Perf_{S}^{op}\to \mathrm{Set}$ given by sending (an affinoid perfectoid $T\to S$ for which the pullback $D_T$ of $D_S$ arises from an untilt of $T$ over $E$)\footnote{Recall that $\Div^1_X=\Spd E/\varphi^\Z$, where the quotient is taken in the category of sheaves on $\Perf_{\F_q}$ for the topology of open covers \cite[after definition II.1.19]{FS_geometrisation}.
    It follows that \{affinoid perfectoid $T\to S$ for which the pullback $D_T$ of $D_S$ arises from an untilt of $T$ over $E$\} contains every totally disconnected perfectoid space over $S$.} to the isomorphism classes of triples
    $$(\E,\phi,\iota),$$
    where 
    \begin{itemize}
        \item $\E$ is a $G$-bundle on $\mathbb D_T$,
        \item $\phi\in H^0(\mathbb D_T, \Ad(\E)_\L)$,
        \item $\iota:\E|_{\mathbb D_T^*}\isomto \E_0|_{\mathbb D_T^*}$ is an isomorphism whose induced map $\Ad(\E)_\L|_{\mathbb D_T^*}\isomto \Ad(\E_0)_\L|_{\mathbb D_T^*}$ sends $\phi|_{\mathbb D_T^*}$ to $\gamma$.
    \end{itemize}
\end{definition}

\begin{proposition}\label{Sp is a v-sheaf}
    $\Sp^\gamma$ is indeed a v-sheaf (on the basis \{affinoid perfectoid $T\to S$ for which the pullback $D_T$ of $D_S$ arises from an untilt of $T$ over $E$\} of $\Perf_S$, and hence extends uniquely to a v-sheaf on the entire $\Perf_S$).
\end{proposition}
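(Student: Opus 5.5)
The strategy is to reduce to the known fact that $\Gr_G$ is a v-sheaf, which rests on two inputs recalled earlier: $T\mapsto\{\text{vector bundles on }\Spec B^+_{dR}(R^\sharp)\}$ is a v-stack \cite[Corollary 17.1.9]{Berkeley}, and the Tannakian formalism combined with \Cref{lem: exactness on B_dR}.

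First, I will show that the prestack $\mathcal G^+$ sending $T$ (in the given basis) to the groupoid of $G$-bundles on $\mathbb D_T$ is a v-stack. I view a $G$-bundle as an exact tensor functor $\Rep_E(G)\to \mathrm{VB}(\mathbb D_T)$. Descent for vector bundles gives descent for tensor functors. Exactness of the descended functor follows from part \eqref{part 2} of \Cref{lem: exactness on B_dR}; for a general v-cover $T'\to T$ I first refine to a v-cover by an affinoid perfectoid. The same argument applies to $T\mapsto$ $G$-bundles on $\mathbb D_T^*$, i.e.\ on $\Spec B_{dR}(R^\sharp)$. For this I use Beauville--Laszlo (\Cref{BL lemma}) to present objects on $\mathbb D^*_T$ as triples, or alternatively I simply invoke v-descent for finite projective $B_{dR}$-modules as used in \cite{Berkeley}. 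I also need that $\mathbb D_{T'}=\mathbb D_T\times$ (compatible base change), i.e.\ that the untilt and $B^+_{Div^1}$ are functorial in $T$. This holds because $D_{T'}$ is the pullback of $D_T$.

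Second, I handle the extra data. Since $\iota$ trivializes $\E$ over $\mathbb D_T^*$, the pair $(\E,\iota)$ has no nontrivial automorphisms: such an automorphism is trivial on $\mathbb D^*_T$, and $B^+_{dR}\to B_{dR}$ is injective with $\E$ locally free. Hence the isomorphism classes of triples form a set-valued presheaf, and it suffices to show that they glue. The section $\phi$ lies in $H^0(\mathbb D_T,\Ad(\E)_\L)$, which is the module of sections of a vector bundle. Moreover, $\phi$ is uniquely determined by $\gamma$ and $\iota$, because $\Ad(\E)_\L(\mathbb D_T)\hookrightarrow \Ad(\E)_\L(\mathbb D_T^*)$ is injective. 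So $\Sp^\gamma$ is the subfunctor of $\Gr_G$ (restricted over $S\to Div^1$) cut out by the condition that $\iota^{-1}(\gamma)$, a priori a section over $\mathbb D_T^*$, extends to $\mathbb D_T$. Here I use the identification $\gamma\mapsto$ its pullback along $\mathbb D_T\to\mathbb D_S$. Consequently, the v-sheaf property reduces to the following claim: for a v-cover $T'\to T$ and a finite projective $B^+_{dR}(R)$-module $M$, an element $m\in M[1/\xi]$ whose image lies in $M\otimes B^+_{dR}(R')$ already lies in $M$. Together with the sheaf property of $B^+_{dR}$ and $B_{dR}$ themselves, this gives separatedness and gluing.

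The main obstacle is this last claim, i.e.\ that $\Sp^\gamma\subset \Gr_G$ is v-locally closed in the appropriate sense. I plan to handle it by writing $m=\xi^{-k}m_0$ with $m_0\in M$ and inducting on $k$. The image of $m_0$ in $M/\xi\otimes_R R'$ vanishes, and $R\to R'$ is injective (this follows from the v-sheaf property of $\O$ on affinoid perfectoids), so $m_0\in \xi M$; reducing $k$ finishes the induction. Finally, having the sheaf property on the stated basis, I extend uniquely to all of $\Perf_S$. This uses that the basis contains all totally disconnected spaces, which form a basis of the v-topology.
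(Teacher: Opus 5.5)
Your argument is correct, but it takes a different route from the paper's.

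\textbf{The paper's route.} The paper proves full v-descent for sections: for a vector bundle $\mathcal V$ on $\mathbb D_T$, the sequence $H^0(\mathbb D_T,\mathcal V)\to H^0(\mathbb D_{T'},\mathcal V)\rightrightarrows H^0(\mathbb D_{T''},\mathcal V)$ is an equaliser. It reduces this to the ring-level equaliser for $B^+_{dR}$. Then, applying \Cref{exact_criterion} to the difference of the two maps, it reduces further to the v-sheaf property $R\to R'\rightrightarrows R''$ of $\O$. Finally it glues $\phi$ and combines this with the v-sheaf property of $\Gr_G$.

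\textbf{Your route.} You use \Cref{lem: injective} to see that $\phi$ is determined by $(\E,\iota)$. So the affine Springer fibre is a subfunctor of $\Gr_G$, and you only have to show that the integrality condition on $\iota^{-1}(\gamma)$ is v-local. Your pole-order induction needs only injectivity of $M/\xi\to (M/\xi)\otimes_R R'$ for a finite projective module. That is v-separatedness of $\O$, not the full sheaf property. This is more economical, and it is essentially the mechanism the paper itself uses in \Cref{Sp is small} to get the closed-immersion statement.

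\textbf{What the paper's route buys.} It yields a reusable statement: v-descent for $B^+_{dR}$-points of affine schemes over $\mathbb D_T$. The paper invokes this later for $L^+J_{a_v}$ and $LJ_{a_v}$ in \Cref{LJ is small}, where there is no ambient v-sheaf like $\Gr_G$ to embed into.

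\textbf{A remark on your first step.} You do not need descent of \emph{objects} over $\mathbb D_T^*$. Also, Beauville--Laszlo does not present bundles on $\mathbb D_T^*$ as triples, and v-descent for finite projective $B_{dR}$-modules is not an input you should lean on without justification. For $\Gr_G$, only the isomorphism $\iota$ has to descend. That follows from the $B_{dR}$-equaliser, which you get from the $B^+_{dR}$ one by inverting $\xi$. In any case you can simply cite that $\Gr_G$ is a v-sheaf, as the paper does.
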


\begin{proof}
    The lemma follows from the fact that the $B^+_{dR}$-affine Grassmannian $\Gr_G$ is a v-sheaf and the following:
    If $T'\to T$ is a v-cover of affinoid perfectoid spaces over $S$ for which $D_T$ arises from an untilt and $\mathcal V$ is a vector bundle on $\Spec B_{\Div^1_X}^+(T)$, then 
    \begin{equation*}
        H^0(\Spec B^+_{\Div^1_X}(T),\mathcal V) \to H^0(\Spec B^+_{\Div^1_X}(T'),\mathcal V) \rightrightarrows H^0(\Spec B^+_{\Div^1_X}(T''),\mathcal V)
    \end{equation*}
    is an equaliser diagram, where $T''=T'\times_T T'$.
    This is proved in a similar way to \cite[end of the proof of proposition 2.1]{Youcis_Cesnavicius}.
    Let us give more detail for their proof.
    Let $Q=\Spec\Sym \mathcal V^{\vee}$, so $\mathcal V=\Hom_{\Spec B_{\Div^1_X}^+(T)}(-,Q)$.
    We need to show
    \begin{equation*}
        \Hom_{B_{\Div^1_X}^+(T)}(\O_Q(Q),B^+_{\Div^1_X}(T))    \to  \Hom_{B_{\Div^1_X}^+(T)}(\O_Q(Q),B^+_{\Div^1_X}(T'))     \rightrightarrows \Hom_{B_{\Div^1_X}^+(T)}(\O_Q(Q),B^+_{\Div^1_X}(T''))    
    \end{equation*}
    is an equaliser diagram. By the definition of limit, it suffices to show 
    \begin{equation}
        B^+_{\Div^1_X}(T)   \to  B^+_{\Div^1_X}(T')     \rightrightarrows B^+_{\Div^1_X}(T'')   
    \end{equation}
    is an equaliser diagram in the category of $B^+_{\Div^1_X}(T)$-algebras.
    Fix an untilt $T^\sharp=\Spa(R,R^+)$ that gives rise to $D_T$. This induces untilts $T'^\sharp=\Spa(R',R'^+)$ and $T''^\sharp=\Spa(R'',R''^+)$ of $T'$ and $T''$.
    We need to see 
    \begin{equation*}
        B^+_{dR}(R)   \to  B^+_{dR}(R')     \rightrightarrows B^+_{dR}(R'')   
    \end{equation*}
    is an equaliser diagram.
    The kernel of $W_{\O_E}(R^{\circ,\flat})\twoheadrightarrow R^\circ$ is generated by a primitive element $\xi$ of degree 1. The images of $\xi$ in $B^+_{dR}(R')$ and $B^+_{dR}(R'')$ are also primitive elements of degree 1.
    By \Cref{exact_criterion} (with $g$ taken to be the difference of the two maps $B^+_{dR}(R')     \rightrightarrows B^+_{dR}(R'') $), it suffices to show 
    \[R\to R' \rightrightarrows R''\]
    is an equaliser diagram.
    This is true because $\Spa(R',R'^{+})\to \Spa(R,R^{+})$
    is a v-cover and $\O$ is a v-sheaf.
\end{proof}

\begin{lemma}\label{lem: injective}
    Let $R$ be a perfectoid $E$-algebra and $\mathcal V$ be a vector bundle on $\Spec B^+_{dR}(R)$.
    Then the restriction map $H^0(\Spec B^+_{dR}(R),\mathcal V) \to H^0(\Spec B_{dR}(R),\mathcal V)$ is injective.
\end{lemma}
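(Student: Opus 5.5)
We reduce to the case of a free module and then to the statement that $\xi$ is a non-zero-divisor in $B^+_{dR}(R)$. Since $\Spec B^+_{dR}(R)$ is affine, $\mathcal V$ corresponds to a finite projective $B^+_{dR}(R)$-module $M$. Restriction to $\Spec B_{dR}(R)$ is the localisation $M\to M[\frac{1}{\xi}]=M\otimes_{B^+_{dR}(R)}B^+_{dR}(R)[\frac{1}{\xi}]$, because $B_{dR}(R)=B^+_{dR}(R)[\frac1\xi]$. So the claim becomes: the natural map $M\to M[\frac1\xi]$ is injective.

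First, choose a module $N$ with $M\oplus N\cong B^+_{dR}(R)^m$, as in the proof of \Cref{lem: exactness on B_dR}. Localisation commutes with direct sums, so injectivity for $B^+_{dR}(R)^m$ restricts to injectivity on the summand $M$. This reduces us to $m=1$, i.e. to showing that $B^+_{dR}(R)\to B^+_{dR}(R)[\frac1\xi]$ is injective.

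The kernel of this localisation map consists of the elements killed by some power $\xi^k$. It therefore vanishes exactly when $\xi$ is a non-zero-divisor in $B^+_{dR}(R)$, that is, when $B^+_{dR}(R)$ is $\xi$-torsion-free. This is recorded right after the definition of $B^+_{dR}(R)$ in the background section, and we simply invoke it.

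If one wants to verify that fact directly, I would argue as follows. Take $x=\lim x_i$ with $x_i\in W_{\O_E}(R^{\circ\flat})[\frac{1}{[\varpi^\flat]}]/\xi^i$ and suppose $\xi x=0$. Then $\xi x_{i+1}\in(\xi^{i+1})$, so $x_{i+1}\in(\xi^i)$, using that $\xi$ is a non-zero-divisor in $W_{\O_E}(R^{\circ\flat})[\frac{1}{[\varpi^\flat]}]$. This gives $x_i=0$ for all $i$. The one non-formal input is that $\xi$ is a non-zero-divisor in $W_{\O_E}(R^{\circ\flat})[\frac1{[\varpi^\flat]}]$, which follows from $\xi$ being primitive (distinguished) of degree 1. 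This is the only step I regard as a potential obstacle, and it is standard. I would also note that the choice of untilt does not matter, since the statement is intrinsic to the ring $B^+_{dR}(R)$.
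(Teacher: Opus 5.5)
Your argument is correct and is essentially the paper's proof. Both reduce the claim to the injectivity of $B^+_{dR}(R)\to B_{dR}(R)=B^+_{dR}(R)[\frac{1}{\xi}]$, i.e.\ to the $\xi$-torsion-freeness of $B^+_{dR}(R)$ stated in the background section; the paper phrases this via $B^+_{dR}(R)$-algebra maps out of $\Sym\mathcal V^\vee$, whereas you pass through a free complement $M\oplus N\cong B^+_{dR}(R)^m$.
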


\begin{proof}
    Let $W=\Spec(\Sym \mathcal V^\vee)$ be the total space of $\mathcal V$.
    We need to show that 
    \begin{equation*}
        \Hom_{B_{dR}^+(R)}(\O_W(W),B^+_{dR}(R))    \to  \Hom_{B_{dR}^+(R)}(\O_W(W),B_{dR}(R)) 
    \end{equation*}
    is injective.
    This is true because $B^+_{dR}(R) \to B_{dR}(R)$ is injective.
\end{proof}

\begin{proposition}\label{Sp is small}
    The natural map \[\Sp^\gamma\to \Gr_{G,S/\Div^1}\]
    is a closed immersion. Also, $\Sp^\gamma$ is a small v-sheaf.
\end{proposition}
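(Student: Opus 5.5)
The map $\Sp^\gamma\to \Gr_{G,S/\Div^1}$ sends $(\E,\phi,\iota)$ to $(\E,\iota)$. I would first show it is injective, then that its image is closed, and finally deduce smallness.

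\textbf{Injectivity.} By \Cref{lem: injective}, $\phi$ is determined by its restriction $\phi|_{\mathbb D_T^*}$. Via $\iota$, this restriction must equal $\gamma$. So $\phi$ is determined by $(\E,\iota)$, and the map is a monomorphism of v-sheaves. More precisely, $\Sp^\gamma(T)$ is the subset of $\Gr_{G,S/\Div^1}(T)$ consisting of those $(\E,\iota)$ for which the section $\iota^{-1}(\gamma)\in H^0(\mathbb D_T^*,\Ad(\E)_\L)$ extends to a section over $\mathbb D_T$.

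\textbf{Closedness.} Fix a map $T\to \Gr_{G,S/\Div^1}$ from an affinoid perfectoid $T$ on which $D_T$ comes from an untilt $\Spa(R,R^+)$. The fiber product is the subfunctor of $T$ on those $T'\to T$ for which $\iota^{-1}(\gamma)$ extends over $\mathbb D_{T'}$. To see this is closed, I would reduce to a concrete lattice condition. After a v-cover (or analytic localisation) of $T$, trivialise $\E$ and $\L$. Then $\Ad(\E)_\L$ becomes a finite free $B^+_{dR}(R)$-module $M$, and $\iota^{-1}(\gamma)$ becomes an element $m\in M[1/\xi]$, so $\xi^N m\in M$ for some $N$. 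The element $m$ lies in $M$ exactly when the image of $\xi^N m$ in $M/\xi^N\cong (B^+_{dR}(R)/\xi^N)^{d}$ vanishes.

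Filtering $B^+_{dR}/\xi^N$ by powers of $\xi$, this vanishing becomes successive vanishing of finitely many elements of $R$ (coordinates in $\xi^iB^+_{dR}/\xi^{i+1}\cong R$). The vanishing locus of a function on $\Spa(R,R^+)$ is a Zariski-closed subset, and this is representable by a closed perfectoid subspace. I also need compatibility with base change $R\to R'$, namely $M\otimes B^+_{dR}(R')$ and $B^+_{dR}(R')/\xi^N= B^+_{dR}(R)/\xi^N\otimes_R R'$ step by step. I would argue these identifications by induction on $N$, using that $\xi$ is a non-zero-divisor, as in \Cref{exact_criterion}.

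Closed immersions descend along v-covers, so this local statement gives the global one. Since Zariski-closed immersions of affinoid perfectoids are closed immersions of v-sheaves, the natural map is a closed immersion.

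\textbf{Smallness.} $\Gr_{G,S/\Div^1}$ is a small v-sheaf (indeed ind-representable by spatial diamonds over $\Div^1$). A sub-v-sheaf that is closed, hence representable by closed subdiamonds over each spatial piece, is again small.

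The main obstacle is the closedness step: making the successive vanishing description precise and verifying that it commutes with arbitrary base change of the untilt. The functions in question live on $R$ only after choosing the trivialisations and filtration splittings, so the independence of these choices has to be checked.
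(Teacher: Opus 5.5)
Your proposal follows the paper's proof. Injectivity comes from \Cref{lem: injective}. After trivialising $\E$ and $\L$, you reduce to a scalar $\delta\in\xi^{-N}B^+_{dR}$ and cut out the locus $\delta\in B^+_{dR}$ by Zariski-closed conditions, one graded piece at a time. The differences are minor. The paper tests against strictly totally disconnected $Z$, where $\E$ and $\L$ are already trivial, so it never needs descent of closed immersions along v-covers. It also gets smallness directly from injectivity into the small v-sheaf $\Gr_{G,S/\Div^1}$, rather than from the closed immersion as you do.

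One thing in your closedness step should be done the paper's way rather than via ``filtration splittings''. The $i$-th graded coordinate of $\xi^N m$ is only canonically defined on the closed locus where the earlier coordinates vanish. If you pick lifts to produce $N$ functions on $R$ at once, the resulting zero locus depends on the lifts and is in general wrong. Concretely, changing the lift of the $0$-th coordinate by $\xi w$ shifts the next coordinate by $\theta(w)$, which need not vanish on $V(x_0)$.

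So recurse, as the paper does:
\begin{enumerate}
\item Cut out $Z_1=V(x_0)$. It is again affinoid perfectoid (strictly totally disconnected if $Z$ is), and $\E$ and $\L$ remain trivial on it.
\item Divide the pulled-back element by $\xi$, which is a non-zero-divisor.
\item Take its image in $\O(Z_1)$ and repeat.
\end{enumerate}
Base-change compatibility is then automatic, and no independence-of-choices check is needed. Relatedly, for $N\ge 2$ in mixed characteristic $B^+_{dR}(R)/\xi^N$ is not an $R$-module. The identification you want is $B^+_{dR}(R')/\xi^N\cong B^+_{dR}(R)/\xi^N\otimes_{B^+_{dR}(R)}B^+_{dR}(R')$. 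Only the graded pieces $\xi^iB^+_{dR}/\xi^{i+1}$ base change via $\otimes_R R'$.
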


\begin{proof}
    By \Cref{lem: injective}, the natural map 
\[\Sp^\gamma\to \Gr_{G,S/\Div^1}\]
is an injection of v-sheaves. 
Since $\Gr_{G,S/\Div^1}$ is a small v-sheaf, the same is true for $\Sp^\gamma$.

The proof that $\Sp^\gamma\to \Gr_{G,S/\Div^1}$ is a closed immersion is similar to that of \cite[Lemma 19.1.4]{Berkeley}. Let $Z=\Spa(A,A^+)$ be a strictly totally disconnected perfectoid space. Let $Z\to \Gr_{G,S/\Div^1}$ be a map, which corresponds to a pair $(\E,\iota:\E|_{\mathbb D_Z^*}\to G|_{\mathbb D_Z^*})\in \Gr_{G,S/\Div^1}(Z)$.
Let $\mathcal F=\Sp^\gamma\times_{\Gr_{G,S/\Div^1}}Z$, so we have a Cartesian diagram
\[\begin{tikzcd}
	{\mathcal F} & Z \\
	{\Sp^\gamma} & {\Gr_{G,S/\Div^1}.}
	\arrow[from=1-1, to=1-2]
	\arrow[from=1-1, to=2-1]
	\arrow[from=1-2, to=2-2]
	\arrow[from=2-1, to=2-2]
\end{tikzcd}\]
We need to show that $\mathcal F\to Z$ is representable by a closed immersion of perfectoid spaces.

Since $Z$ is strictly totally disconnected, every \'etale cover of it splits. By the same proof as \Cref{J-bundle etale locally trivial}, $\E$ and $\L$ are both trivial. Without loss of generality, $\E=G$ and $\L=\O$, so $(\E,\iota) = (G,\iota:G|_{\mathbb D_Z^*}\xrightarrow{g\cdot} G|_{\mathbb D_Z^*})$ for some $g\in G(\mathbb D_Z)$.
Let $T=\Spa(R,R^+)$ be an affinoid perfectoid space.
Then $$\mathcal F(T)=\set{f:T\to Z| f^*(\Ad_{g^{-1}}(\gamma)) \in Lie(G)\otimes_E B_{dR}^+(R^\sharp) }.$$
Upon choosing basis for $Lie(G)$, $Lie(G)\otimes_E B_{dR}^+(R^\sharp) \cong B_{dR}^+(R^\sharp)^m$ functorial in $R^\sharp$. Without loss of generality, $m=1$.
We have thus reduced to showing that for any $\delta\in B_{dR}(A^\sharp)$, the subfunctor
$$T\mapsto \set{f:T\to Z| f^*(\delta) \in B_{dR}^+(R^\sharp) }$$
of $Z$ is representable by a closed immersion.

We have $\delta \in \xi^{-i} B^+_{dR}(A^\sharp)$ for some $i\ge 1$. 
Recall that a closed subspace of a strictly totally disconnected perfectoid space is also strictly totally disconnected.
It suffices to show that the subfunctor
$$T\mapsto \set{f:T\to Z| f^*(\delta) \in \xi^{-i+1} B_{dR}^+(R^\sharp) }$$
of $Z$ is representable by a closed immersion $Z_i=\Spa(A_i,A_i^+)\subset Z$, because once we have shown this, we can replace $Z$ by $Z_i$, pullback $\delta$ to $B_{dR}(A_i^\sharp)$, and repeat the process with $i$ replaced by $i-1$ until we reach $i=1$.
Recall that $\xi^{-i}B_{dR}^+(R^\sharp)/\xi^{-i+1}B_{dR}^+(R^\sharp)\cong R^\sharp$.
We have finally reduced to showing that for $a\in A^\sharp$, the subfunctor
$$T=\Spa(R,R^+) \mapsto \set{f:T\to Z| f^*(a)=0 \in R^\sharp }$$
of $Z$ is representable by a closed immersion.
This can be shown as in the proof of \cite[Lemma 19.1.4]{Berkeley}.
\end{proof}

\begin{proposition}\label{sheafification description}
    Consider the presheaf $\Perf_{S}^{op}\to \mathrm{Set}$ given by sending (an affinoid perfectoid $T\to S$ for which the pullback $D_T$ of $D_S$ arises from an untilt of $T$ over $E$) to 
    \[\{g\in G(B_{\Div^1_X}(T))/G(B^+_{\Div^1_X}(T)): \Ad_{g^{-1}}(\gamma)\in \g_\L(B^+_{\Div^1_X}(T))\}. \]
    Its sheafification with respect to the analytic topology on $\Perf_S$ is $\Sp^\gamma$.
\end{proposition}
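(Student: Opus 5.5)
We construct a map from the presheaf, call it $\mathcal G^{pre}$, to $\Sp^\gamma$. We then show it is injective, and locally surjective for the analytic topology. Since $\Sp^\gamma$ is a v-sheaf by \Cref{Sp is a v-sheaf}, and in particular an analytic sheaf, this identifies $\Sp^\gamma$ with the analytic sheafification.

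\textbf{The map.} Given $g\in G(B_{\Div^1_X}(T))$ with $\Ad_{g^{-1}}(\gamma)\in \g_\L(B^+_{\Div^1_X}(T))$, send $gG(B^+_{\Div^1_X}(T))$ to the triple $(\E_0,\Ad_{g^{-1}}(\gamma),\iota_g)$. Here $\iota_g:\E_0|_{\mathbb D_T^*}\isomto\E_0|_{\mathbb D_T^*}$ is left multiplication by $g$, so it carries $\phi|_{\mathbb D_T^*}=\Ad_{g^{-1}}(\gamma)$ to $\gamma$. Replacing $g$ by $gh$ with $h\in G(B^+_{\Div^1_X}(T))$ yields an isomorphic triple, the isomorphism $\E_0\to\E_0$ being given by $h$. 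So the map is well defined on cosets and functorial in $T$.

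\textbf{Injectivity.} Suppose $(\E_0,\Ad_{g^{-1}}\gamma,\iota_g)\cong(\E_0,\Ad_{g'^{-1}}\gamma,\iota_{g'})$. The isomorphism is some $h\in G(B^+_{\Div^1_X}(T))$ with $g'h=g$ on $\mathbb D_T^*$. This gives $g'^{-1}g\in G(B^+)$. Injectivity of $G(B^+)\to G(B)$ holds by \Cref{lem: injective} applied to a faithful representation, and it ensures that the identity we derived is an identity of cosets, so the two cosets agree. This also shows that the presheaf is separated, so its sheafification injects into $\Sp^\gamma$.

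\textbf{Local surjectivity.} Let $(\E,\phi,\iota)\in\Sp^\gamma(T)$. If $\E$ is trivial on $\mathbb D_T$, choose a trivialisation $\E\cong\E_0$. Then $\iota$ becomes left multiplication by some $g\in G(B_{\Div^1_X}(T))$, and $\phi$ becomes an element of $\g_\L(B^+)$ mapping to $\Ad_{g^{-1}}\gamma$ in $\g_\L(B)$. By \Cref{lem: injective} again, $\Ad_{g^{-1}}\gamma\in\g_\L(B^+)$ and equals $\phi$, so the triple lies in the image. (The statement implicitly takes $\L$ trivialised, or $\L$ is trivial locally; we shrink $T$ for that as well.)

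The main obstacle is showing that $\E$ becomes trivial analytically locally on $T$. I would use the Beauville--Laszlo description of $\Gr_G$: the pair $(\E,\iota)$ is a point of $\Gr_{G,S/\Div^1}$. The standard fact \cite[Proposition 19.1.2 / Lemma 19.1.3]{Berkeley} that every $G$-torsor on $\Spec B^+_{dR}(R^\sharp)$ is trivial analytically locally on $\Spa(R,R^+)$ then applies. To prove it directly, I would argue as follows. $B^+_{dR}(R^\sharp)$ is $\xi$-adically complete with $B^+_{dR}/\xi=R^\sharp$, and $G$ is smooth, so a trivialisation of $\E$ mod $\xi$ lifts to a trivialisation of $\E$, by Henselian-type lifting (smoothness of $\E$ and completeness, as in \Cref{exact_criterion}). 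This reduces us to trivialising a $G$-torsor on $\Spec R^\sharp$. Such a torsor is étale-locally trivial, and the $G$-torsor on the affinoid perfectoid $T^\sharp$ is trivial analytically locally. That last point holds because étale $G$-torsors on a perfectoid space correspond to those on $\Spec R^\sharp$ \cite{Kedlaya-AWS}, which are locally trivial around each point via the henselian local rings $\O_{T^\sharp,x}$, whose residue fields are algebraically closed after passing to geometric points. If the analytic-local triviality fails in general, I would fall back on proving it for totally disconnected $T$, which form a basis. There, every étale cover splits, as used in the proof of \Cref{Sp is small}, and the triviality follows.
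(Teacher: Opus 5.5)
Your map, its well-definedness on cosets, the injectivity argument, and the reduction of surjectivity to trivialising $\E$ analytically locally on $T$ all match the paper. The gap is in the triviality step.

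The claim that every $G$-torsor on $\Spec B^+_{dR}(R^\sharp)$ is trivial analytically locally on $\Spa(R,R^+)$ is false. Your own argument shows why: after the (correct) Henselian lifting you are left with a $G$-torsor on $\Spec R^\sharp$, and such a torsor is only \'etale locally trivial.

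Test this on $T=\Spa(K,K^+)$ with $K$ a perfectoid field that is not algebraically closed. Some member of any analytic cover of $T$ is all of $T$, so analytic sheafification changes nothing at $T$. On the other hand, $G$-torsors on $B^+_{dR}(K^\sharp)$ are classified by $H^1(K^\sharp,G)$, which is nonzero in general (e.g.\ $G=\mathrm{PGL}_n$ when $\mathrm{Br}(K^\sharp)$ has nontrivial $n$-torsion).

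Passing to geometric points is an \'etale or pro-\'etale localisation. Correspondingly, \cite[Proposition 19.1.2]{Berkeley} is a statement about the \'etale sheafification of $LG/L^+G$, not the analytic one. Your fallback does not help either. Totally disconnected spaces form a basis for the pro-\'etale and v-topologies, not the analytic one, and \'etale covers split only on strictly totally disconnected spaces. As written, your argument proves at most that $\Sp^\gamma$ is the \'etale (or v-) sheafification of the presheaf, which is weaker than the statement.

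The missing idea is that you must use $\iota$. Only torsors that become trivial over $B_{dR}(R^\sharp)$ are analytically locally trivial, and reducing modulo $\xi$ discards exactly this information.

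At a point $\Spa(K,K^+)$, the needed input is Grothendieck--Serre for the complete discrete valuation ring $B^+_{dR}(K^\sharp)$. By Nisnevich's theorem, $H^1(B^+_{dR}(K^\sharp),G)\to H^1(B_{dR}(K^\sharp),G)$ is injective for reductive $G$, so the existence of $\iota$ forces $\E$ to be trivial.

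Over an arbitrary affinoid perfectoid $T$ you need the relative version: a $G$-torsor on $\mathbb D_T$ that is trivial on $\mathbb D_T^*$ is trivial analytically locally on $T$. This is a genuinely deeper input than Henselian lifting plus \'etale local triviality. It is what the paper imports from \cite[Theorem 3.1]{Youcis_Cesnavicius} to get surjectivity of the analytically sheafified map.
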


\begin{proof}
    Let us denote the presheaf in the statement as $\mathcal F$.
    We have a map of presheaves $\mathcal F \to \Sp^\gamma$ given by 
    \begin{align*}
        \mathcal F(T)&\to \Sp^\gamma(T)\\
        g&\mapsto (\E_0, \Ad_{g^{-1}}(\gamma), \E_0|_{\mathbb D_T^*} \xrightarrow{g\cdot}  \E_0|_{\mathbb D_T^*}  ).
    \end{align*}
    This map is injective, so its sheafification is also injective.
    The surjectivity of the sheafified map follows from \cite[Theorem 3.1]{Youcis_Cesnavicius}.
\end{proof}

\begin{remark}
    Suppose that $C$ is an algebraically closed perfectoid field. Then every line bundle on $\mathbb D_C$ is trivial by the same arguement as the proof of \Cref{J-bundle etale locally trivial}. By \Cref{sheafification description}, $$\Sp^\gamma(\Spa C) \cong \set{g\in G(C(\!(\xi )\!))/G(C\powerseries{\xi}): \Ad_{g^{-1}}(\gamma) \in \g(C\powerseries{\xi})},$$ as in the case for the classical affine Springer fiber. This expression also suggests that $\Sp^\gamma$ may be related to orbital integrals.
\end{remark}

\section{Product formula}
In this section, we shall show that modulo the actions of certain Picard stacks, there is a natural injective map of \'etale-stacks from the product of affine Springer fibers to the Hitchin fibre that induces an equivalence of categories on every geometric point. This is a geometric manifestation of the fact that global orbital integral factorises as the product of local orbital integrals. This also partly justifies the claim that affine Springer fibers are local analogues of Hitchin fibers.

Let $E$ be a non-Archimedean local field with residue field $\mathbb F_q$, where $q$ is a power of $p$. 
Let $G$ be a split connected reductive group over $E$, $B$ be a Borel subgroup over $E$, $T$ be a split maximal torus in $B$. 
Assume $char E\nmid |W|$, where $W$ is the Weyl group of $G$.
Fix an algebraically closed perfectoid field $C$ over $\mathbb F_q$. Let $\L$ be a line bundle on $X_C^{sch}$.
Fix a Kostant section $\Kos:\c \to \g$.

\begin{lemma}\label{equiv_G_bundles}\cite[Lemma A.25, Example A.28, Proposition A.30]{equiv-G-bundles}
    Let $S$ be a separated regular noetherian scheme of dimension $\le 1$ and $G\to S$ be an affine\footnote{We mean the morphism $G\to S$ is affine. The schemes $G$ and $S$ need not be affine.} flat group scheme of finite type. Then for any prestack $\mathcal X$ over $S$, the groupoids of cohomological $G$-bundles over $\mathcal X$ and of Tannakian $G$-bundles over $\mathcal X$ are equivalent.

    Here,
    \begin{itemize}
        \item A cohomological $G$-bundle on $\mathcal X$ is an fpqc-sheaf on $\mathrm{Aff}/\mathcal X$ endowed with a $G\times_S\mathcal X$-action over $\mathcal X$ which is locally for the fpqc-topology on $\mathcal X$ isomorphic to $G_\mathcal X$.
        \item A Tannakian $G$-bundle on $\mathcal X$ is an exact $\mathcal O_S$-linear symmetric monidal functor $\Rep_S(G) \to \mathrm{VB}(\mathcal X )$, where $\Rep_S(G)$ is the category of vector bundles $\mathcal V$ on $T$ with a morphism of group schemes $G\to \underline{Aut}_{\O_S}(\mathcal V)$ and $\mathrm{VB}(\mathcal X)$ is the category of vector bundles on $\mathcal X$.
    \end{itemize}
\end{lemma}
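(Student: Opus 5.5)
This lemma is the Tannakian reconstruction theorem for torsors under flat affine group schemes over a regular base of dimension $\le 1$. The plan is to first reduce to the case where $\mathcal X$ is an affine scheme, and then prove the equivalence there.

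\emph{Reduction.} Both groupoids are fpqc stacks in $\mathcal X$, so both are computed as limits over $\mathrm{Aff}/\mathcal X$. For cohomological bundles this is the definition. For Tannakian bundles it uses that $\mathrm{VB}(-)$ is an fpqc stack and that exactness of a sequence of vector bundles can be checked fpqc locally, as in \Cref{lem: exact criterion}. Hence it suffices to build, for affine $\mathcal X=\Spec A$ over $S$, an equivalence compatible with pullback.

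\emph{The functor from cohomological to Tannakian.} Send a torsor $P$ to $V\mapsto P\times^G V$. This functor is $\O_S$-linear and symmetric monoidal. It is exact, because fpqc locally $P$ is trivial and the functor becomes $V\mapsto V\otimes\O$; exactness descends.

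\emph{The inverse functor.} The key input is a resolution property: since $S$ is separated, regular, noetherian of dimension $\le1$ and $G$ is flat, affine and of finite type, the algebra $\O(G)$ is a filtered union of $G$-stable subcomodules that are vector bundles on $S$. Over a Dedekind base, finitely generated submodules of a flat module are torsion-free, hence locally free. Thus $\O(G)$ is an ind-object of $\Rep_S(G)$. Given an exact tensor functor $F$, set $P:=\Spec \colim F(V_i)$, where $\O(G)=\colim V_i$. The algebra structure comes from monoidality, and the $G$-action comes from the right regular action, which commutes with the left action used to form the $V_i$.

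To show that $P$ is an fpqc $G$-torsor, verify two things. First, $P\to\mathcal X$ is faithfully flat, because $F$ extends to an exact functor on ind-objects and $\O(G)$ is faithfully flat over $S$. Second, $P\times G\to P\times P$ is an isomorphism; apply $F$ to the comodule isomorphism $\O(G)\otimes\O(G)\cong\O(G)\otimes\O(G)$ given by $(x,g)\mapsto(x,xg)$. Finally, check that the two constructions are mutually inverse. One direction is the natural isomorphism $P\times^G V\cong F(V)$, obtained from $V\hookrightarrow \O(G)\otimes V_0$, where $V_0$ is the underlying module of $V$. The other is the standard identification $P\cong \Spec(P\times^G\O(G))$.

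\emph{Main obstacle.} The hardest step is showing that $F$ preserves enough structure when $S$ is not a field. Exactness of $F$ is assumed only on short exact sequences of vector bundles, but the argument needs it on ind-objects and on kernels of comodule maps, which need not be locally free in general. Dimension $\le1$ and regularity are exactly what repair this: saturated subcomodules of vector bundles are again vector bundles, so every relevant kernel and image stays inside $\Rep_S(G)$. I would first try to prove this saturation statement carefully, and then deduce faithful flatness of $P$.

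Since this is precisely the content of the cited \cite[Lemma A.25, Example A.28, Proposition A.30]{equiv-G-bundles}, in the paper I would simply invoke that reference rather than reproduce the argument.
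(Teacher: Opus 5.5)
Your proposal is correct and matches the paper, which gives no argument of its own and simply invokes \cite[Lemma A.25, Example A.28, Proposition A.30]{equiv-G-bundles}, as you say you would. Your sketch is the standard argument behind that citation: reduce to affine $\mathcal X$, use the resolution property of $\O(G)$ over a regular base of dimension $\le 1$, and reconstruct the torsor as $\Spec F(\O(G))$. Your compressed faithful-flatness step also holds: $\O_S\to\O(G)$ is universally injective, so applying $F$ to the exact sequence of flat comodules $0\to\O_S\to\O(G)\to\O(G)/\O_S\to 0$ shows that $A\to F(\O(G))$ is universally injective.
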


\begin{definition}
    Let $a\in \mathcal A_G(\Spa C)$. Let $v$ be a closed point of $X_C^{sch}$ and $a_v:= a|_{\Spec B_{\Div^1_X}^+(\Spa C)}$.
    \begin{enumerate}
        \item Define $\Gr_{G,a,v}$ to be $\Gr_{G,\Spa C/\Div^1}^{\Kos(a_v)}$. Here, the map $\Spa C\to \Div^1_X$ is the map corresponding to $v$, $a_v$ is the restriction of $a:X_C^{sch}\to \mathfrak c_\L$ to $\Spec B_{Div^1}^+(\Spa C)$.
        In other words, $\Gr_{G,a,v}$ is the v-sheaf $\Perf_{\Spa C}^{op}\to \Set$ given by sending (an affinoid perfectoid $T\to S$ for which the pullback $D_T$ of $D_S$ arises from an untilt of $T$ over $E$) to the isomorphism classes of triples
        $$(\E,\phi,\iota),$$
        where 
        \begin{itemize}
            \item $\E$ is a $G$-bundle on $\mathbb D_T$,
            \item $\phi\in H^0(\mathbb D_T, \Ad(\E)_\L)$,
            \item $\iota:\E|_{\mathbb D_T^*}\iso \E_0|_{\mathbb D_T^*}$ is an isomorphism whose induced map $\Ad(\E)_\L|_{\mathbb D_T^*}\isomto \Ad(\E_0)_\L|_{\mathbb D_T^*}$ sends $\phi|_{\mathbb D_T^*}$ to $\Kos(a_v)$.
        \end{itemize}
        \item Let $\Gr_{G,a,v}^{reg}$ be the sub-presheaf of $\Gr_{G,a,v}$ consisting of those $(\E,\phi,\iota)$ with $\phi\in H^0(\mathbb D_T, \E\times^G \g^{reg}_\L)$.
        \item Define $\mathcal P_{a,v}^{loc} =\Gr_{J_{\L}\times_{\Div^1,a}\Spa C}$ to be the presheaf over $\Perf_{\Spa C}$ sending $T=\Spa(R,R^+)$ to \\$$\set{J_{\L}\times_{\c_\L, a_v}\mathbb D_T \text{-torsors $Q$ on }\mathbb D_T \text{ with a trivialisation of }Q|_{\mathbb D_T^*}}/\cong.$$ We also define $J_{a_v}:= J_{\L}\times_{\c_\L, a_v}\mathbb D_T$.
    \end{enumerate}
\end{definition}
Recall that $\mathbb D_T$ denotes $\Spec B^+_{Div^1}(T)$ and $\mathbb D_T^*$ denotes $\Spec B_{Div^1}(T)$, and this depends on the closed point $v$.
In case of ambiguity, we will instead denote them by $$\mathbb D_{T,v}$$ and $$\mathbb D^*_{T,v}.$$

If we do not take isomorphism classes in the definition of $\mathcal P_{a,v}^{loc}$, we will get a prestack that is equivalent to $\mathcal P_{a,v}^{loc}$ by an argument similar to \Cref{lem: injective}.
We can by \Cref{equiv_G_bundles} identify $J_{a_v}$-bundles on $\mathbb D_T$ with exact $\O_{\mathfrak c_\L}$-linear symmetric monoidal functors $\Rep_{\mathfrak c_\L}(J_\L) \to \mathrm{VB}(\mathbb D_T)$. Since $T\mapsto \mathrm{VB}(\mathbb D_T)$ is a v-stack by \cite[Corollary 17.1.9]{Berkeley}, $\mathcal P_{a,v}^{loc}$ is a v-sheaf by \Cref{lem: exactness on B_dR}.

Let $\phi_0:=\Kos(a_v)$.

    Note that for all $(\E,\phi,\iota)\in \Gr_{G,v,a}(T)$, the image of $(\E,\phi)$ under the Hitchin fibration is $a_v$, because by \Cref{lem: injective}, the image is determined by its restriction to $\mathbb D_T^*$ and we have an isomorphism $\iota:\E|_{\mathbb D_T^*} \isomto \E_0|_{\mathbb D_T^*}$ which sends $\phi|_{\mathbb D_T^*}$ to $\Kos(a_v)|_{\mathbb D_T^*}$.

Similar to \Cref{sec:action of Picard}, we can define an action 
\begin{equation}\label{P^loc acts on Gr}
    \P_{a_v}^{loc} \times_{\Spa C} \Gr_{G,a,v} \to \Gr_{G,a,v}
\end{equation}
of $\P_{a,v}^{loc}$ on $\Gr_{G,a,v}$ over $\Spa C$.
Indeed, if $(Q,t:Q|_{\mathbb D_T^*} \isomto J_{a_v}|_{\mathbb D_T^*} ) \in \P_{a_v}^{loc}(T)$ and $(\E,\phi,\iota)\in \Gr_{G,a,v}(T)$, then as in \Cref{sec:action of Picard}, we get $Q*(\E,\phi)$. The trivialisations $t$ and $\iota$ induce a trivialisation $Q*(\E,\phi)|_{\mathbb D_T^*}\isomto J_{a_v}*(\E_0,\phi_0)|_{\mathbb D_T^*} \cong (\E_0,\phi_0).$

\begin{lemma}\label{J-bundle etale locally trivial}
    Let $Q$ be a $J_{a_v}$-bundle on $\mathbb D_T$, where $T=\Spa(R,R^+)$ is an affinoid perfectoid space over $C$. Then there exists an \'etale cover $T'\to T$ such that $Q|_{\mathbb D_{T'}}$ is a trivial $J_{a_v}|_{\mathbb D_{T'}}$-bundle.
\end{lemma}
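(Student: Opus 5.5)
The plan is the usual henselian-lifting argument: trivialise $Q$ modulo $\xi$ after an étale cover of $T$, then lift the trivialisation using smoothness of $J_{a_v}$ and $\xi$-adic completeness of $B^+_{dR}$.

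Write $T^\sharp=\Spa(R^\sharp,R^{\sharp+})$ for the untilt giving $\mathbb D_T=\Spec B^+_{dR}(R^\sharp)$, and let $\xi$ generate the kernel of $B^+_{dR}(R^\sharp)\to R^\sharp$. By \Cref{sec:centraliser}, $J$ is a smooth commutative group scheme over $\c$, so $J_{a_v}$ is a smooth affine group scheme over $\mathbb D_T$. Hence $Q$, a priori an fppf torsor, is representable by a smooth affine scheme over $\mathbb D_T$. We work in two steps.

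First, reduce modulo $\xi$. The restriction $\bar Q:=Q\times_{\mathbb D_T}\Spec R^\sharp$ is a smooth affine $\bar J$-torsor over $\Spec R^\sharp$, where $\bar J=J_{a_v}\otimes R^\sharp$. Since $\bar Q\to\Spec R^\sharp$ is smooth and surjective, it admits sections étale locally on $\Spec R^\sharp$. So there is an étale cover $\Spec R^\sharp\to\Spec R'_0$ and a section of $\bar Q$ over it. We then need to pass from a scheme-theoretic étale cover of $\Spec R^\sharp$ to an étale cover of the perfectoid space $T^\sharp$, equivalently of $T$ by tilting. The plan is to use that a finite étale (or standard étale) $R^\sharp$-algebra gives an étale map of affinoid perfectoid spaces. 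We reduce to the standard étale case by Zariski-localising: a Zariski cover of $\Spec R^\sharp$ by $D(f_i)$ refines to a rational cover of $T^\sharp$, since the $f_i$ generate the unit ideal, and standard étale algebras over $R^\sharp[1/f]$ yield étale maps of perfectoid spaces, \cite[Lemma 7.5]{Perfectoid_spaces}-style. Alternatively, we can bypass this: after a pro-étale cover $T$ becomes strictly totally disconnected, and by a limit argument, since $Q$ is of finite presentation, a section defined there descends to an étale cover. I expect this comparison between algebraic and analytic étale covers to be the main obstacle.

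Second, lift the section. Replacing $T$ by $T'$ with $\bar Q(R'^\sharp)\neq\varnothing$, we have $B^+_{dR}(R'^\sharp)=B^+_{dR}(R^\sharp)\otimes\ldots$ compatible with the base change of $Q$. The ring $B:=B^+_{dR}(R'^\sharp)$ is $\xi$-adically complete, hence henselian along $(\xi)$. Since $Q_B\to\Spec B$ is smooth, the smooth lifting property applied successively to $B/\xi^{n}\to B/\xi^{n+1}$ (square-zero thickenings, with $Q$ affine so lifts exist) gives compatible sections over every $B/\xi^n$. Because $Q$ is affine, these sections assemble into a section over $\varprojlim B/\xi^n=B$. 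A section of a torsor trivialises it, so $Q|_{\mathbb D_{T'}}\cong J_{a_v}|_{\mathbb D_{T'}}$.
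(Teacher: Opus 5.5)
Your proposal is correct and is essentially the paper's proof. Both trivialise $Q\times_{\mathbb D_T}\Spec R^\sharp$ after an \'etale cover of $T$ using smoothness of $J_{a_v}$, then lift the section through the square-zero thickenings $\Spec B^+_{dR}(R^\sharp)/\xi^n$ and pass to the limit using that $Q$ is affine and $B^+_{dR}(R^\sharp)$ is $\xi$-adically complete. The one difference is that you try to justify the passage from scheme-theoretic \'etale covers of $\Spec R^\sharp$ to \'etale covers of $T$, which the paper simply asserts. Of your two sketches, the Zariski-plus-standard-\'etale route is the sound one. In the pro-\'etale alternative, the ring of a strictly totally disconnected limit is only the completion of the colimit, so descending a section to a finite stage needs an approximation argument, not just finite presentation.
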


\begin{proof}
    The usual proof for affine Grassmannian works here. Let us record it here to show that it works in this setup.

    We have a map $B_{dR}^+(R^\sharp)\to R^{\sharp}$, i.e. a map $f:\Spec R^{\sharp}\to \mathbb D_T$.
    The base change $f^*Q$ is a $J_{a_v}$-bundle on $\Spec R^{\sharp}$.
    As $J_{a_v}\to \mathbb D_T$ is smooth, by passing to an \'etale cover of $\Spa(R,R^+)$, we may assume $f^*Q$ is the trivial $J_{a_v}$-bundle.
    We claim that $Q$ is also trivial.

    Since $J_{a_v}$ is affine, $Q$ is representable by an affine scheme $\mathfrak Q$ over $\mathbb D_T$ by descent of affine morphisms.
    As $f^*Q$ is trivial, there is an element $s_1\in \Mor_{\mathbb D_T}(\Spec R^{\sharp},\mathfrak Q)$.
    As $\mathfrak Q\to \mathbb D_T$ is smooth (because smoothness can be checked fpqc locally), there is a morphism $s_2$ such that the following commutes:
    \[\begin{tikzcd}
        {\Spec R^\sharp} & {\mathfrak Q} \\
        {\Spec B^+_{dR}(R^\sharp)/\xi^2} & {\mathbb D_T}
        \arrow["{s_1}", from=1-1, to=1-2]
        \arrow[from=1-1, to=2-1]
        \arrow[from=1-2, to=2-2]
        \arrow["{s_2}"{description}, dotted, from=2-1, to=1-2]
        \arrow[from=2-1, to=2-2]
    \end{tikzcd}\]
    Similarly, there is a morphism $s_3$ such that the following commutes:
    \[\begin{tikzcd}
        {\Spec B^+_{dR}(R^\sharp)/\xi^2} & {\mathfrak Q} \\
        {\Spec B^+_{dR}(R^\sharp)/\xi^3} & {\mathbb D_T}
        \arrow["{s_2}", from=1-1, to=1-2]
        \arrow[from=1-1, to=2-1]
        \arrow[from=1-2, to=2-2]
        \arrow["{s_3}"{description}, dotted, from=2-1, to=1-2]
        \arrow[from=2-1, to=2-2]
    \end{tikzcd}\]
    We can continue this process and get compatible sequence of maps $s_1, s_2,\dots$
    i.e. compatible sequence of maps $H^0(\mathfrak Q,\O_{\mathfrak Q})\to B^+_{dR}(R^\sharp)/\xi^n$ over $B^+_{dR}(R^\sharp)$.
    This gives a map $H^0(\mathfrak Q,\O_{\mathfrak Q})\to \lim_n B^+_{dR}(R^\sharp)/\xi^n= B^+_{dR}(R^\sharp)$ over $B^+_{dR}(R^\sharp)$, i.e. 
    an element of $\Mor_{\mathbb D_T}(\mathbb D_T,\mathfrak Q)$, so $Q$ is trivial.
\end{proof}

\begin{lemma}\label{elts of Gr locally}
    Let $(\E,\phi,\iota)\in \Gr_{G,v,a}^{reg}(T)$. Then there exists an \'etale cover $T'\to T$ such that\footnote{Recall that elements in $\Gr_{G,v,a}^{reg}(T')$ are \textit{isomorphism classes}, so equality means equality of isomorphism classes, not of the representatives.}
     $$(\E,\phi,\iota) = (\E_0,\phi_0, \E_0|_{\mathbb D_{T'}^*}\xrightarrow{j\cdot}\E_0|_{\mathbb D_{T'}^*})$$ in $\Gr_{G,v,a}^{reg}(T')$ for some $j\in J_{a_v}(\mathbb D_{T'}^*)$. Moreover, $j$ is unique up to right multiplication by elements of $J_{a_v}(\mathbb D_{T'})$.
\end{lemma}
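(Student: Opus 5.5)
Our plan is to reduce to the trivial bundle étale locally, then use the gerbe structure of $[\g^{reg}_\L/G]$ over $\c_\L$ (as in the proof of \Cref{Higgs^reg is a gerbe}) to conjugate $\phi$ to $\phi_0$, and finally read off $j$ from $\iota$.

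\textbf{Step 1 (trivialise).} By the argument of \Cref{J-bundle etale locally trivial}, applied to the smooth affine group $G$ and to the line bundle $\L$, we pass to an étale cover $T'\to T$ on which $\E\cong\E_0$ and $\L|_{\mathbb D_{T'}}\cong\O$. Then $\phi$ becomes an element of $\g^{reg}(B^+_{dR})$. The Hitchin image of $\phi$ is $a_v$ (remark preceding \Cref{J-bundle etale locally trivial}), so $\phi$ and $\phi_0=\Kos(a_v)$ lie over the same point of $\c_\L(\mathbb D_{T'})$.

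\textbf{Step 2 (conjugate $\phi$ to $\phi_0$).} We want $h\in G(\mathbb D_{T''})$, for some further étale cover $T''\to T'$, with $\Ad_h(\phi_0)=\phi$. Consider the transporter scheme $\mathfrak T=\{h:\Ad_h(\phi_0)=\phi\}$ over $\mathbb D_{T'}$. By \Cref{v are conjugate} it is fppf-locally nonempty. Using the isomorphism $J\times_\c\g^{reg}\cong I|_{\g^{reg}}$, it is a torsor under $I_{\phi_0}\cong J_{a_v}$, hence smooth and affine. We then repeat the lifting argument of \Cref{J-bundle etale locally trivial}. First pull back to $\Spec R^\sharp$, where a smooth torsor is trivial after an étale cover of $T'$, since étale covers of $\Spec R^\sharp$ come from étale covers of the perfectoid space. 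Then lift successively through the thickenings $B^+_{dR}/\xi^n$ by smoothness, and pass to the limit by $\xi$-completeness. This yields $h$, and $(\E_0,\phi,\iota)\cong(\E_0,\phi_0,\iota\circ h)$ via $h$.

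\textbf{Step 3 (identify $j$).} Put $j:=\iota\circ h\in G(\mathbb D^*_{T''})$. The defining condition of $\Gr_{G,a,v}$ says that $\Ad_j$ sends $\phi_0|_{\mathbb D^*}$ to $\Kos(a_v)=\phi_0$. Hence $j$ lies in the centraliser of $\phi_0$ over $\mathbb D^*$, which equals $J_{a_v}(\mathbb D^*_{T''})$ by \eqref{eq:J_L}.

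\textbf{Step 4 (uniqueness).} Suppose $(\E_0,\phi_0,j)\cong(\E_0,\phi_0,j')$. An isomorphism is some $k\in G(\mathbb D_{T''})$ with $\Ad_k\phi_0=\phi_0$ and $j'k=j$. The first condition gives $k\in J_{a_v}(\mathbb D_{T''})$, by \eqref{eq:J_L} or \Cref{lem:J isom}. Conversely, right multiplication by any such $k$ gives an isomorphism.

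The main obstacle is Step 2. We must upgrade the fppf-local conjugacy of \Cref{v are conjugate} to étale-local conjugacy over $T$, which requires knowing that the transporter is a smooth torsor and making the henselian-type lifting over $B^+_{dR}$ work.
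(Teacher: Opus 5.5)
Your proposal is correct and follows the paper's argument. The paper takes $Q_{\E,\phi}=\Isom((\E_0,\phi_0),(\E,\phi))$, shows via \Cref{v are conjugate} that it is a $J_{a_v}$-torsor on $\mathbb D_T$, and applies \Cref{J-bundle etale locally trivial} to get a section \'etale locally on $T$; this is exactly your transporter $\mathfrak T$ and your Step 2 lifting, except that you first trivialise $\E$ (unnecessary, since the Isom sheaf is already a torsor) and you write out the identification of $j$ and its uniqueness, which the paper calls clear.
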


\begin{proof}
    Let $Q_{\E,\phi}:= \Isom((\E_0,\phi_0), (\E,\phi))$. 
    Let us show that $Q_{\E,\phi}$ is a $J_{a_v}$-bundle on $\mathbb D_T$.
    There is a right $J_{a_v}$-action on $Q_{\E,\phi}$ given by $f*j:=f(j\cdot\;)$.
    There is a fppf cover $U$ of $\mathbb D_T$ such that $\E|_U\cong\E_0|_U$.
    Let us fix such an isomorphism and identify $\E|_U\cong\E_0|_U$.
    Then for all $T\to U$, we can identify $Q_{\E,\phi}(T)$ with $$\{g\in G(T): \Ad_g(\phi_0)=\phi\}.$$
    We know $\phi|_U,\phi_0|_U$ have the same image under the map $\g^{reg}_\L(U) \to \c_\L(U)$.
    By regularity and \Cref{v are conjugate}, there is a fppf cover $V$ of $U$ and $g\in G(V)$ such that $\Ad_g(\phi_0|_V) = \phi|_V$.
    Then we have a $J_{a_v}$-equivariant isomorphism $J_{a_v}|_V\to Q_{\E,\phi}|_V$ given by $j\mapsto gj$.
    Thus, $Q_{\E,\phi}$ is a $J_{a_v}$-bundle on $\mathbb D_T$.

    By \Cref{J-bundle etale locally trivial}, there exists an \'etale cover $T'\to T$ such that $Q_{\E,\phi}|_{\mathbb D_T'}$ is a trivial $J_{a_v}$-bundle.
    In particular, it has a global section, i.e. $(\E_0,\phi_0)\cong (\E,\phi)$. The lemma is now clear. 
\end{proof}

\begin{proposition}\label{LJ/L+J}
    Let $LJ_{a_v}/L^+J_{a_v}:\Perf_{\Spa C}^{op}\to \Set$ be the v-sheafification of the functor sending an affinoid perfectoid $T$ to $J_{a_v}(\mathbb D_T)/J_{a_v}(\mathbb D_T^*)$.
    We have an isomorphism
    \begin{align*}
        LJ_{a_v}/L^+J_{a_v} &\isomto \P_{a,v}^{loc}\\
        j\in J_{a_v}(\mathbb D_T)&\mapsto (J_{a_v}|_{\mathbb D_T}, J_{a_v}|_{\mathbb D_{T}^*}\xrightarrow{j\cdot}J_{a_v}|_{\mathbb D_{T}^*}).
    \end{align*}
    We also have an isomorphism
    \begin{align}
        LJ_{a_v}/L^+J_{a_v} &\isomto \Gr_{G,a,v}^{reg}\label{LJ to Gr}\\
        j\in J_{a_v}(\mathbb D_T)&\mapsto (\E_0|_{\mathbb D_T},\phi_0|_{\mathbb D_T}, \E_0|_{\mathbb D_{T}^*}\xrightarrow{j\cdot}\E_0|_{\mathbb D_{T}^*}).\nonumber
    \end{align}
    In particular, $\Gr_{G,a,v}^{reg}$ is a v-sheaf.
\end{proposition}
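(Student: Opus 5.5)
The plan is to first write down the two maps on the level of presheaves on affinoid perfectoid $T$ (for which $D_T$ comes from an untilt), and check that each is well defined and injective. Then I would show that both maps are \'etale-locally surjective. Finally I would pass to v-sheafifications, using that the targets are, or turn out to be, v-sheaves. One remark on the statement: the quotient must be read as $J_{a_v}(\mathbb D_T^*)/J_{a_v}(\mathbb D_T)$, with $j\in J_{a_v}(\mathbb D_T^*)$ taken up to right multiplication by $J_{a_v}(\mathbb D_T)$. This is the convention of \Cref{elts of Gr locally}.

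\textbf{Well-definedness and injectivity.} For the map to $\P_{a,v}^{loc}$, right multiplication of $j$ by $j'\in J_{a_v}(\mathbb D_T)$ changes the pair by the automorphism $j'$ of the trivial torsor, so the isomorphism class is unchanged. Conversely, an isomorphism between $(J_{a_v},j_1\cdot)$ and $(J_{a_v},j_2\cdot)$ is an element of $J_{a_v}(\mathbb D_T)$ intertwining the trivialisations, hence $j_1\in j_2J_{a_v}(\mathbb D_T)$.

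For \eqref{LJ to Gr}, I use \eqref{eq:J_L}: an element $j\in J_{a_v}(\mathbb D_T^*)$ is a $g\in G(\mathbb D_T^*)$ with $\Ad_g(\phi_0)=\phi_0$. Hence $j\cdot$ sends $\phi_0|_{\mathbb D_T^*}$ to $\phi_0=\Kos(a_v)$, so the triple lies in $\Gr_{G,a,v}$. It lies in $\Gr^{reg}_{G,a,v}$ because the Kostant section lands in $\g^{reg}$. Independence of the representative works as before. For injectivity, an isomorphism of triples is some $g\in G(\mathbb D_T)$ with $\Ad_g\phi_0=\phi_0$ that is compatible with the trivialisations. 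By \eqref{eq:J_L} such a $g$ lies in $J_{a_v}(\mathbb D_T)$, which gives $j_1=j_2g$. Injectivity of the presheaf maps is inherited by the v-sheafification, since sheafification is exact.

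\textbf{Local surjectivity.} Take $(Q,t)\in\P^{loc}_{a,v}(T)$. By \Cref{J-bundle etale locally trivial} there is an \'etale cover $T'\to T$ on which $Q$ is trivial. After choosing a trivialisation, $t$ becomes multiplication by some $j\in J_{a_v}(\mathbb D_{T'}^*)$. For $\Gr^{reg}_{G,a,v}$, local surjectivity is exactly \Cref{elts of Gr locally}. Now $\P^{loc}_{a,v}$ is a v-sheaf, as noted after its definition. So the sheafified map $LJ_{a_v}/L^+J_{a_v}\to\P^{loc}_{a,v}$ is an injective, locally surjective map of v-sheaves, hence an isomorphism. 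This gives the first isomorphism.

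\textbf{Main obstacle: $\Gr^{reg}_{G,a,v}$ is a v-sheaf.} Regularity is the condition that $\phi$ lands in the open subscheme $\g^{reg}_\L$. It is not clear that this condition descends along $\mathbb D_{T'}\to\mathbb D_T$, because that map need not be surjective on spectra. I would get around this through $\P^{loc}_{a,v}$.

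Define a map $\Psi\colon\P^{loc}_{a,v}\to\Gr_{G,a,v}$ by $(Q,t)\mapsto(Q*(\E_0,\phi_0),\,\text{induced trivialisation})$, as in \eqref{P^loc acts on Gr}. This is a map of v-sheaves, since the target is a v-sheaf by \Cref{Sp is a v-sheaf}. Composed with the first isomorphism, $\Psi$ agrees with \eqref{LJ to Gr} on trivial torsors, because the action is twisting by $j$. Therefore $\Psi$ is injective and its image lies in $\Gr^{reg}_{G,a,v}$. Injectivity holds because the map agrees locally with the injective presheaf map, and both source and target are v-sheaves. The image lies in $\Gr^{reg}$ because regularity can be checked \'etale locally on $T$: an \'etale cover $T'\to T$ gives a faithfully flat $\mathbb D_{T'}\to\mathbb D_T$.

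By \Cref{elts of Gr locally}, every section of $\Gr^{reg}_{G,a,v}$ lies \'etale locally in the image of $\Psi$. Since $\Psi$ is injective, these local preimages glue, using the v-sheaf property of $\P^{loc}_{a,v}$, to a global preimage. Hence $\Psi$ identifies $\P^{loc}_{a,v}$ with $\Gr^{reg}_{G,a,v}$. It follows that $\Gr^{reg}_{G,a,v}$ is a v-sheaf, and \eqref{LJ to Gr} is an isomorphism, being the composite of the first isomorphism with $\Psi$.
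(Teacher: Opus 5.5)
Your argument is sound in outline and is essentially the paper's. It uses the same ingredients: injectivity of the presheaf maps, \'etale-local surjectivity from \Cref{J-bundle etale locally trivial} and \Cref{elts of Gr locally}, and gluing of local preimages using that the source is a v-sheaf and the target is separated. You are also right that the quotient should be $J_{a_v}(\mathbb D_T^*)/J_{a_v}(\mathbb D_T)$. The one difference is how the second isomorphism is reached. The paper maps the v-sheafification straight to $\Gr^{reg}_{G,a,v}$ without saying why the sheafified map lands in the regular locus. You instead route through $\Psi\colon\mathcal P^{loc}_{a,v}\to\Gr_{G,a,v}$, which is in effect the inverse of the map $F$ of \Cref{Gr^reg equiv to P^loc}. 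This makes that point explicit, at the price of the routine compatibility of $\Psi$ with the two presheaf maps.

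The justification you give for the image of $\Psi$ lying in $\Gr^{reg}_{G,a,v}$ does not hold up, though. You assert that an \'etale cover $T'\to T$ yields a faithfully flat map $\mathbb D_{T'}\to\mathbb D_T$. \'Etale covers involve rational localizations, which are not known to be flat for perfectoid rings, let alone after passing to $B^+_{dR}$, so there is nothing to cite here. You do not need it. Since $Q$ is an fppf $J_{a_v}$-torsor on $\mathbb D_T$, $Q*(\E_0,\phi_0)$ becomes isomorphic to $(\E_0,\phi_0)$ over an fppf, hence surjective, cover of $\mathbb D_T$. It is therefore regular, because $\Kos$ lands in $\g^{reg}$; equivalently, the action of $[\c_\L/J_\L]$ preserves $[\g^{reg}_\L/G]$.

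In fact the obstacle you flag is not there at all. The non-regular locus is a closed subset of $\mathbb D_T$ whose formation commutes with base change. If it is nonempty, it contains a maximal ideal. That ideal contains $\xi$, since $B^+_{dR}(R^\sharp)$ is $\xi$-adically complete, so it comes from a maximal ideal $\mathfrak m$ of $R^\sharp$. As $\mathfrak m$ is closed, $R^\sharp/\mathfrak m$ is a nonzero complete Tate ring, so $\mathfrak m$ is the support of a point of $\Spa(R^\sharp,R^{\sharp+})$. That point lifts along any v-cover. Hence regularity descends along v-covers, and $\Gr^{reg}_{G,a,v}$ is directly a sub-v-sheaf of $\Gr_{G,a,v}$. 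The paper's direct argument then applies without the detour.
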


\begin{proof}
    The first isomorphism follows from \Cref{J-bundle etale locally trivial}.
    The injectivity of \eqref{LJ to Gr} is clear. By \Cref{elts of Gr locally}, this map is \'etale locally surjective as a map of presheaves. Since $\Gr_{G,a,v}^{reg}$ is a sub-presheaf of $\Gr_{G,a,v}$, we know that $\Gr_{G,a,v}^{reg}$ is a separated presheaf, i.e. for all v-cover $T'\to T$, the map $\Gr_{G,a,v}^{reg}(T) \to \Gr_{G,a,v}^{reg}(T')$ is injective.
    Combining these information, we can deduce that \eqref{LJ to Gr} is surjective as a map of presheaves. Hence it is an isomorphism.
\end{proof}

\begin{remark}
    By \Cref{J-bundle etale locally trivial}, we also have $LJ_{a_v}/_{\et}\;L^+J_{a_v} \isomto \P_{a,v}^{loc}$, where $LJ_{a_v}/_{\et}\;L^+J_{a_v}$ is the \'etale sheafification of $T\mapsto J_{a_v}(\mathbb D_T)/J_{a_v}(\mathbb D_T^*)$. It follows that $LJ_{a_v}/L^+J_{a_v} = LJ_{a_v}/_{\et}\;L^+J_{a_v}$.
\end{remark}

It follows from \Cref{LJ/L+J} that $\Gr_{G,a,v}^{reg} \cong LJ_{a_v}/L^+J_{a_v} \cong \mathcal P_{a,v}^{loc}$. 
We can give a more direct description of this isomorphism.

\begin{proposition}\label{Gr^reg equiv to P^loc}
    We have an isomorphism 
        $$F:\Gr_{G,a,v}^{reg}\isomto \mathcal P_{a_v}^{loc}$$
    which on $T$-valued points (for affinoid $T\in\Perf_{\Spa C}$) are given by
    \begin{equation*}
        (\E,\phi,\iota) \mapsto (Q_{\E,\phi},\mathrm{triv})
    \end{equation*}
    where $Q_{\E,\phi}:= \Isom((\E_0,\phi_0), (\E,\phi))$ and $\mathrm{triv}$ is the trivialisation on $Q_{\E,\phi}|_{\mathbb D_T^*}$ given by 
    \[\Isom((\E_0,\phi_0), (\E,\phi))|_{\mathbb D_T^*} \xrightarrow{\iota_*} \Isom((\E_0,\phi_0), (\E_0,\phi_0))|_{\mathbb D_T^*}\cong J_{a_v}|_{\mathbb D_T^*}. \]
    Moreover, $F$ agrees with the isomorphism induced by \Cref{LJ/L+J}, i.e. the following commutes
    \[\begin{tikzcd}
        & {LJ_{a_v}/L^+J_{a_v}} \\
        {\Gr_{G,a,v}^{reg}} && {\P_{a,v}^{loc}}
        \arrow[from=1-2, to=2-1]
        \arrow[from=1-2, to=2-3]
        \arrow["F"', from=2-1, to=2-3]
    \end{tikzcd}\]
\end{proposition}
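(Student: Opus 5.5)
The plan is to first check that $F$ is well defined on $T$-valued points, then prove the commutativity of the triangle. Since both slanted maps are isomorphisms by \Cref{LJ/L+J}, commutativity will force $F$ to be an isomorphism.

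For well-definedness, the proof of \Cref{elts of Gr locally} already shows that $Q_{\E,\phi}$ is a $J_{a_v}$-bundle on $\mathbb D_T$, for the right action $f*j=f\circ(j\cdot)$. On $\mathbb D_T^*$, $\iota$ carries $(\E,\phi)|_{\mathbb D_T^*}$ to $(\E_0,\phi_0)|_{\mathbb D_T^*}$, so composing with $\iota$ gives
\[\iota_*:\Isom((\E_0,\phi_0),(\E,\phi))|_{\mathbb D_T^*}\isomto \underline{\Aut}(\E_0,\phi_0)|_{\mathbb D_T^*}.\]
The latter is identified with $J_{a_v}|_{\mathbb D_T^*}$ via \eqref{eq:J_L}, since $\phi_0=\Kos(a_v)$; equivalently one may use \Cref{lem:J isom}, because $\phi_0$ is regular. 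The map $\iota_*$ is $J_{a_v}$-equivariant, since $\iota_*(f\circ j)=(\iota\circ f)\circ j$, so $\mathrm{triv}$ is a trivialisation of torsors. We must also check that the construction respects isomorphism classes. An isomorphism $\alpha:(\E,\phi,\iota)\isomto(\E',\phi',\iota')$ with $\iota'\circ\alpha=\iota$ induces $\alpha_*:Q_{\E,\phi}\isomto Q_{\E',\phi'}$, and this is compatible with $\mathrm{triv}$. Finally, the construction is compatible with pullback in $T$, so $F$ is a morphism of presheaves, and hence of v-sheaves after sheafifying.

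For commutativity, I would check it on the presheaf $T\mapsto J_{a_v}(\mathbb D_T^*)/J_{a_v}(\mathbb D_T)$. This suffices because both slanted maps are obtained by sheafifying maps from this presheaf, and $\P^{loc}_{a,v}$ is a (separated) v-sheaf. Take $j\in J_{a_v}(\mathbb D_T^*)$; its image in $\Gr^{reg}_{G,a,v}$ is $(\E_0,\phi_0,j\cdot)$. Then $Q_{\E_0,\phi_0}=\underline{\Aut}(\E_0,\phi_0)\cong J_{a_v}$, the identification being the same one \eqref{eq:J_L} used to define $\mathrm{triv}$. Under this identification, $\iota_*=j\cdot$ becomes left multiplication by $j$ on $J_{a_v}|_{\mathbb D_T^*}$. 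Thus $F$ sends the class of $j$ to $(J_{a_v},\,j\cdot)$, which is exactly the image of $j$ under the first isomorphism of \Cref{LJ/L+J}.

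The main obstacle is bookkeeping of conventions. One must make sure the identification $\underline{\Aut}(\E_0,\phi_0)\cong J_{a_v}$ used in $\mathrm{triv}$ matches the one implicit in the $J_{a_v}$-action of \Cref{sec:action of Picard}. One must also check that left multiplication by $j$ is compatible with the right torsor structure; this holds because $J_{a_v}$ is commutative, so left and right multiplication agree and no inverse appears. Once this is settled, $F$ equals the composite of the inverse of \eqref{LJ to Gr} with the first isomorphism of \Cref{LJ/L+J}, and is therefore an isomorphism.
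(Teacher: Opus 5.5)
Your proposal is correct and takes the same route as the paper. The paper likewise establishes well-definedness via the proof of \Cref{elts of Gr locally}, checks that the triangle commutes (you do this on the presheaf $T\mapsto J_{a_v}(\mathbb D_T^*)/J_{a_v}(\mathbb D_T)$), and concludes that $F$ is an isomorphism because the two slanted maps are. You also carry out the bookkeeping the paper dismisses as ``straightforward'': equivariance of $\iota_*$, compatibility with isomorphisms of triples, and the identification of $\iota_*$ with multiplication by $j$, where no inverse appears because $J_{a_v}$ is commutative.
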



\begin{proof}
    Note that $Q_{\E,\phi}$ is indeed a $J_{a_v}$-bundle on $\mathbb D_T$ by the proof of \Cref{elts of Gr locally}.
    It is straightforward to verify the commutativity of the diagram. We deduce that $F$ is an isomorphism from the fact that the other two morphisms of the diagram are isomorphisms.
\end{proof}

We have three actions of $\mathcal P_{a_v}^{loc}$ on $\Gr_{G,a,v}^{reg}$ over $\Spa C$:
\begin{enumerate}
    \item the one in \eqref{P^loc acts on Gr} restricted\footnote{The action preserves the regularity condition, because the map $J\times_{\mathfrak c}\g \to I$ restricts to a map $J\times_{\mathfrak c}\g^{reg} \to I|_{\g^{reg}}$.} to $\Gr_{G,a,v}^{reg}$
    \item the one obtained by identifying $\Gr_{G,a,v}^{reg}\cong \mathcal P_{a_v}^{loc}$ and using the contracted product on $\mathcal P_{a_v}^{loc}$.
    \item the one obtained by identifying $\Gr_{G,a,v}^{reg}\cong LJ_{a_v}/L^+J_{a_v}$, $\mathcal P_{a_v}^{loc} \cong LJ_{a_v}/L^+J_{a_v}$ and using the multiplication map $LJ_{a_v}/L^+J_{a_v}$ given by $(j_1,j_2)\mapsto j_1j_2$.
\end{enumerate}
These three actions are equivalent.

\begin{lemma}\label{3 actions}
    The three actions above are equivalent over $\Spa C$.
    More precisely, the three $1$-morphisms $\mathcal P_{a_v}^{loc} \times_{\Spa C} \Gr_{G,a,v}^{reg} \to \Gr_{G,a,v}^{reg}$ are isomorphic in the $2$-category of v-stacks over $\Spa C$.
\end{lemma}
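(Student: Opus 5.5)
We reduce everything to the presheaf level, as in the proof of \Cref{2 actions on Higgs}. All three 1-morphisms $\mathcal P_{a_v}^{loc}\times_{\Spa C}\Gr_{G,a,v}^{reg}\to\Gr_{G,a,v}^{reg}$ have v-sheaves as source and target (\Cref{LJ/L+J}). By \Cref{LJ/L+J} and \Cref{J-bundle etale locally trivial}, the source is the (\'etale, equivalently v-) sheafification of the presheaf
\[T\mapsto J_{a_v}(\mathbb D_T^*)/J_{a_v}(\mathbb D_T)\times J_{a_v}(\mathbb D_T^*)/J_{a_v}(\mathbb D_T).\]
Since the target is a sheaf of sets, the 2-category of v-stacks is discrete here, so ``isomorphic'' just means ``equal''. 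By the universal property of sheafification (\cite[Lemma 04W9]{stacks-project}), it therefore suffices to check that the three maps agree after precomposition with the map from this presheaf. That is, we must check that for affinoid $T$ and $j_1,j_2\in J_{a_v}(\mathbb D_T^*)$ each action sends the pair to the class of $(\E_0,\phi_0,j_1j_2\cdot)$.

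Action (3) gives this by definition. For action (2), the contracted product of the trivial torsors $(J_{a_v},j_1\cdot)$ and $(J_{a_v},j_2\cdot)$ is $J_{a_v}\times^{J_{a_v}}J_{a_v}\cong J_{a_v}$ via multiplication, with trivialisation $j_1j_2\cdot$ on $\mathbb D_T^*$. This uses the commutativity of $J_{a_v}$. We then transport it through the commutative triangle of \Cref{Gr^reg equiv to P^loc}, which lets us compute using the map \eqref{LJ to Gr} instead of $F$.

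For action (1), we unwind the construction of \eqref{P^loc acts on Gr}, which comes from \Cref{inertia gives action over a base} as in \Cref{sec:action of Picard}. When $Q$ is trivial, $Q*(\E_0,\phi_0)$ is canonically $(\E_0,\phi_0)$. This is the explicit description in the remark after \Cref{inertia gives action}, with trivial cocycle. The induced trivialisation on $\mathbb D_T^*$ is the composite of the trivialisation $t=j_1\cdot$, acting through $J_{a_v}\to\underline{\Aut}(\E_0,\phi_0)$, with $\iota=j_2\cdot$. Here we use that $J_{a_v}$ acts on $(\E_0,\phi_0)$ through the inclusion $J_{a_v}\subset G$ given by \eqref{eq:J_L} for the Kostant section, since $\phi_0=\Kos(a_v)$. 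Hence the resulting trivialisation is $j_1j_2\cdot$ (or $j_2j_1$, which is the same by commutativity).

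The main obstacle is action (1). We must track exactly how the twisted object $Q*(\E,\phi)$ and its trivialisation are built from $t$ and $\iota$. In particular, we must confirm that the identification $J_{a_v}*(\E_0,\phi_0)|_{\mathbb D_T^*}\cong(\E_0,\phi_0)$ used there is the canonical one, so that no extra factor (such as an inverse coming from right versus left torsor conventions) appears. If a sign convention does produce $j_1^{-1}j_2$, we would check it against the explicit cocycle formula and adjust the identification; commutativity of $J$ ensures that any reordering is harmless.
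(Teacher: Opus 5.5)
Your proposal is correct and follows the paper's route. The paper's proof simply says that actions (1) and (2) each agree with action (3), which is exactly what you verify after reducing, via the universal property of sheafification, to pairs $j_1,j_2\in J_{a_v}(\mathbb D_T^*)/J_{a_v}(\mathbb D_T)$.

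Your closing hedge is unnecessary, and note that commutativity could not repair an inverse anyway. With the convention $g\mapsto g\cdot$ from the proof of \Cref{lem:quotient stack} and the formula $\theta\circ j_x$ from the proof of \Cref{inertia gives action}, the trivialisation $t=j_1\cdot$ corresponds to $j_1$ itself. So action (1) produces $j_2j_1\cdot=j_1j_2\cdot$ directly.
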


\begin{proof}
    It is straightforward to verify that the first two actions both agree with the third one, so they all agree.
\end{proof}

\begin{lemma}\label{LJ is small}
    Let $$LJ_{a_v}:\Perf_{\Spa C}^{op}\to \Set$$ be the functor sending an affinoid perfectoid $T$ (for which the pullback $D_T$ of $D_{\Spa C}$ arises from an untilt of $T$ over $E$) to $J_{a_v}(\mathbb D_T^*)$.
    Let $$L^+J_{a_v}:\Perf_{\Spa C}^{op}\to \Set$$ be the functor sending an affinoid perfectoid $T$ (for which the pullback $D_T$ of $D_{\Spa C}$ arises from an untilt of $T$ over $E$) to $J_{a_v}(\mathbb D_T)$.
    Then both $LJ_{a_v}$ and $L^+J_{a_v}$ define small v-sheaves on $\Perf_{\Spa C}$.
\end{lemma}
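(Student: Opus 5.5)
The plan is to reduce both claims to the corresponding statements for loop groups of an affine group scheme over $B^+_{dR}$, which are known in the literature (as in \cite[Section 19--20]{Berkeley} for $L^+G$ and $LG$). The key input is that $J_{a_v}$ is an affine group scheme of finite type over $\mathbb D_{\Spa C}$. It is the base change of the smooth commutative group scheme $J_\L\to\c_\L$ along $a_v$, and $J_\L$ is affine over $\c_\L$ by \Cref{rmk: alt defn of J}, since $J\cong I\times_{\g,\Kos}\c$ and $I$ is affine of finite type. Moreover, $J_{a_v}(\mathbb D_T)$ is computed by base change from $\mathbb D_{\Spa C}$, because $B^+_{dR}(C^\sharp)\to B^+_{dR}(R^\sharp)$ is the relevant structure map.

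First, I would choose a closed embedding of $J_{a_v}$ into affine space $\mathbb A^N_{\mathbb D_{\Spa C}}$. This means writing $\O(J_{a_v})=B^+_{dR}(C^\sharp)[x_1,\dots,x_N]/(h_1,\dots,h_m)$, which is possible since $J_{a_v}$ is affine of finite presentation over the noetherian ring $B^+_{dR}(C^\sharp)$ (a DVR). Then $L^+J_{a_v}(T)$ becomes the subset of $B^+_{dR}(R^\sharp)^N$ cut out by the equations $h_k=0$, and $LJ_{a_v}(T)$ becomes the corresponding subset of $B_{dR}(R^\sharp)^N$.

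The v-sheaf property then follows from the equaliser argument in the proof of \Cref{Sp is a v-sheaf}. There it is shown that $B^+_{dR}(R)\to B^+_{dR}(R')\rightrightarrows B^+_{dR}(R'')$ is an equaliser for a v-cover $T'\to T$. Inverting $\xi$, which is filtered colimit-exact, gives the same statement for $B_{dR}$. Because $J_{a_v}(-)=\Hom(\O(J_{a_v}),-)$ commutes with equalisers, both $L^+J_{a_v}$ and $LJ_{a_v}$ are v-sheaves on the basis of admissible affinoid $T$, and so they extend uniquely to all of $\Perf_{\Spa C}$.

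For smallness, I would compare with the known small v-sheaves. The functor $T\mapsto B^+_{dR}(R^\sharp)$ is $\varprojlim_n$ of $T\mapsto B^+_{dR}(R^\sharp)/\xi^n$. Each of the latter is an iterated extension of copies of $\mathcal O$, which is represented by the small v-sheaf $\Spd C\langle T\rangle$ or $\mathbb A^1$ over $\Spa C$. Hence $L^+\mathbb A^1$ is small, since a countable limit of small v-sheaves is small. Then $LJ$-type objects are handled by writing $B_{dR}=\varinjlim_i \xi^{-i}B^+_{dR}$, which makes $L\mathbb A^1$ a countable increasing union of copies of $L^+\mathbb A^1$, and countable colimits of small v-sheaves are small. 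Finally, $L^+J_{a_v}\subset (L^+\mathbb A^1)^N$ and $LJ_{a_v}\subset (L\mathbb A^1)^N$ are subsheaves defined as equalisers of maps to $(L^{(+)}\mathbb A^1)^m$. Subsheaves cut out as equalisers of small v-sheaves are small, since a fibre product of small v-sheaves is small.

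I expect the main obstacle to be the identification of $B^+_{dR}/\xi^n$ as a small v-sheaf in this relative setting, since the untilt is fixed by $v$ rather than varying. I would handle it by induction on $n$ using the exact sequences $0\to\xi^{n}B^+_{dR}/\xi^{n+1}\to B^+_{dR}/\xi^{n+1}\to B^+_{dR}/\xi^n\to 0$, with $\xi^nB^+_{dR}/\xi^{n+1}\cong\O^\sharp$. Alternatively, I would simply cite the corresponding smallness statements for $L^+\mathbb A^1$ and $L\mathbb A^1$ from \cite{Berkeley}, and then conclude using the closed embedding into affine space.
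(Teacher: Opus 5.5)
Your argument is correct in outline but reaches smallness by a different route from the paper. The v-sheaf part is the same equaliser argument as in \Cref{Sp is a v-sheaf}, and you usefully make explicit that inverting $\xi$ is exact, which the paper leaves implicit for $LJ_{a_v}$.

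\textbf{The paper's route.} It never decomposes $L^+J_{a_v}$. It takes from the proof of \cite[Proposition III.1.3]{FS_geometrisation} the criterion that it suffices to check $\colim_i L^{(+)}J_{a_v}(T_i)\isomto L^{(+)}J_{a_v}(T)$ for $\omega_1$-cofiltered limits $T=\varprojlim T_i$. It proves $\colim_i B^+_{dR}(R_i^\sharp)=B^+_{dR}(R^\sharp)$ by commuting the $\omega_1$-filtered colimit past $\lim_m$. It then concludes from finite presentation of $\O(J_{a_v})$.

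\textbf{Your route.} You embed $J_{a_v}$ into $\mathbb A^N$. You then use closure of small v-sheaves under fibre products, countable increasing unions and countable inverse limits, applied to $L^+\mathbb A^1=\varprojlim_n B^+_{dR}/\xi^n$ and $L\mathbb A^1=\bigcup_i \xi^{-i}L^+\mathbb A^1$. This is more modular and gives extra structure, e.g.\ $L^+J_{a_v}$ as a closed subfunctor of $(L^+\mathbb A^1)^N$.

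\textbf{Where the content actually sits.} It is not where you locate it. Smallness of each $B^+_{dR}/\xi^n$ is routine. By contrast, ``a countable limit of small v-sheaves is small'' does not follow formally from the definition. Proving it requires building compatible towers of v-covers. That uses the fact that fibre products of small v-sheaves are small, and that a countable cofiltered limit of affinoid perfectoid v-covers is again a v-cover. This step plays exactly the role of the paper's limit/colimit interchange, so you should prove it or cite it precisely rather than assert it.

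\textbf{A minor slip.} The functor $T\mapsto R^\sharp$ is represented by $(\mathbb A^1_{C^\sharp})^\diamond$ for the untilt $C^\sharp$ attached to $v$, not by $\Spd C\langle T\rangle$.
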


\begin{proof}
    That $LJ_{a_v}$ and $L^+J_{a_v}$ are both v-sheaves can be proved by the same argument as \Cref{Sp is a v-sheaf}. By the proof of \cite[Proposition III.1.3]{FS_geometrisation}, to show that $LJ_{a_v}$ and $L^+J_{a_v}$ are small, it is enough to show that if $T_i=\Spa(R_i,R_i^+)$, $i\in I$ is an $\omega_1$-cofiltered inverse system of affinoid perfectoid spaces with limit $T=\Spa(R,R^+)$, then $$\colim_i (LJ_{a_v})(T_i) \isomto (LJ_{a_v})(T),\;  \colim_i (L^+J_{a_v})(T_i) \isomto (L^+J_{a_v})(T).$$

    By \cite[Proposition 6.5]{EC_diamonds}, 
    \[\colim_i R_i\isomto R, \; \colim_i R_i^+\isomto R^+.\]
    By the $\omega_1$-cofilterness of $I$ and the construction of the Witt ring by Witt polynomials, 
    \[\colim_i W_{\O_E}(R_i^+) \isomto W_{\O_E}(R^+).\]
    Fix $i_0\in I$, $\varpi$ be a pseudouniformizer of $R_0$ and $\xi\in \ker(W_{\O_E}(R_0^+) \to R_0^{+\sharp})$ a primitive element of degree $1$.
    Then $(\xi)=\ker(W_{\O_E}(R^+) \to R^{+\sharp})$.
    Then 
    \begin{align*}
        B^+_{dR}(R^\sharp)&=\lim_{m\ge 1} W_{\O_E}(R^+)\left[\frac{1}{[\varpi]}\right]/(\xi^m)\\
        &= \lim_{m\ge 1}\colim_i \left(W_{\O_E}(R_i^+)\left[\frac{1}{[\varpi]}\right]/(\xi^m)\right)\\
        &= \colim_i \lim_{m\ge 1}\left(W_{\O_E}(R_i^+)\left[\frac{1}{[\varpi]}\right]/(\xi^m)\right)\\
        &= \colim_i B^+_{dR}(R_i^\sharp),
    \end{align*}
    where the second to last equality holds because in the category of rings, $\omega_1$-filtered colimits commute with sequential limits.
    Inverting $\xi$ shows that we also have $B_{dR}(R^\sharp)=\colim_i B_{dR}(R_i^\sharp).$

    Note that $J_\L \times_{\c_\L} \mathbb D_{\Spa C}$ is an affine scheme of finite presentation over $\mathbb D_{\Spa C}$.
    Let $A$ be the ring of global section of $J_\L \times_{\c_\L} \mathbb D_{\Spa C}$. Then 
    \begin{align*}
        (L^+J_{a_v})(T) &= J_{a_v}(\mathbb D_T) \\
        &= \Hom_{D_{\Spa C}}(\mathbb D_T,J_\L \times_{\c_\L} \mathbb D_{\Spa C})\\
        &= \Hom_{B^+_{dR}(C^\sharp)}(A, B^+_{dR}(R^\sharp) )\\
        &= \colim_i \Hom_{B^+_{dR}(C^\sharp)}(A, B^+_{dR}(R_i^\sharp) )\\
        &= \colim_i (L^+J_{a_v})(T_i)
    \end{align*}
    where the second to last equality holds because $A$ is of finite presentation over $B^+_{dR}(C^\sharp)$.
    Similarly, $(LJ_{a_v})(T) = \colim_i (LJ_{a_v})(T_i)$.
\end{proof}

\begin{lemma}\label{lem: v sheaf}
    If $\mathcal F$ is a small v-sheaf with a group action of a small group v-sheaf\footnote{I.e. a group object in the category of v-sheaves.} $\mathcal G$, then $[\mathcal F/\mathcal G]$ is a small v-stack.
\end{lemma}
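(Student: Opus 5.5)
We interpret $[\mathcal F/\mathcal G]$ as in \Cref{defn: quotient stack}, with $\mathcal C$ the site of perfectoid spaces (over the relevant base) with the v-topology. Its $T$-points are pairs $(\mathcal E, f)$, where $\mathcal E$ is a $\mathcal G|_T$-torsor for the v-topology and $f:\mathcal E\to \mathcal F|_T$ is $\mathcal G$-equivariant. The proof has two parts: show that $[\mathcal F/\mathcal G]$ is a v-stack, then show that it is small.

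\emph{Stack property.} I would cite \cite[04TQ, 04TR]{stacks-project}, exactly as in the proof of \Cref{lem:quotient stack}. Torsors and equivariant maps glue along v-descent data, because sheaves on $\mathcal C_{/T}$ glue. So $[\mathcal F/\mathcal G]$ is a v-stack, and by \Cref{lem:quotient stack} it is the v-stackification of $[\mathcal F/_{pre}\,\mathcal G]$. In particular, the morphism $\mathcal F\to[\mathcal F/\mathcal G]$, sending $x$ to the trivial torsor with $g\mapsto g^{-1}x$, is surjective as a map of v-stacks: every object is v-locally of this form, since the torsor trivialises v-locally.

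\emph{Smallness.} I would use the characterisation of small v-stacks in \cite[Definition 12.1/Proposition 12.3]{EC_diamonds}. A v-stack $\mathcal X$ is small if it admits a surjection from a small v-sheaf (equivalently, from a perfectoid space, since $\mathcal F$ itself is covered by one) and the diagonal $\mathcal X\to \mathcal X\times\mathcal X$ has small v-sheaf fibres, i.e. $\mathcal X\times_{\mathcal X\times \mathcal X}T$ is small for every perfectoid $T$. The surjection is $\mathcal F\to[\mathcal F/\mathcal G]$, composed with a surjection from a perfectoid space onto $\mathcal F$. For the diagonal, take two $T$-points $(\mathcal E_1,f_1)$ and $(\mathcal E_2,f_2)$. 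The fibre is the sheaf $\underline{\mathrm{Isom}}$ of equivariant isomorphisms $\mathcal E_1\to\mathcal E_2$ compatible with $f_1,f_2$. After a v-cover $T'\to T$ trivialising both torsors, this becomes the equaliser sheaf $\{g\in \mathcal G|_{T'} : g x_1 = x_2\}$, which is a fibre product of $\mathcal G\times T'$ and $T'$ over $\mathcal F\times T'$. Fibre products of small v-sheaves are small, so it is small over $T'$. It then descends to a small v-sheaf over $T$ by \cite[Proposition 12.3]{EC_diamonds}, or by the fact that a v-sheaf admitting a surjection from a small v-sheaf with small fibre product is small.

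I expect the main obstacle to be the set-theoretic bookkeeping in the descent step. The $\underline{\mathrm{Isom}}$ sheaf is only known to be small after a v-cover, and one must invoke the right statement from \cite{EC_diamonds} showing that smallness descends. To handle this, I would write the $\underline{\mathrm{Isom}}$ sheaf as the quotient of its pullback to $T'$ by the equivalence relation induced from $T'\times_T T'$, and both of these are small. The cleanest route may be to cite directly that a quotient of a small v-sheaf by a small equivalence relation is small \cite[Proposition 12.3]{EC_diamonds}.
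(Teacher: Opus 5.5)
Your proposal is correct and takes essentially the same route as the paper. Both arguments surject onto $[\mathcal F/\mathcal G]$ via $X\to\mathcal F\to[\mathcal F/\mathcal G]$ and show smallness of the relation by identifying it with a fibre product of $\mathcal G$ and perfectoid spaces over $\mathcal F$; the paper computes $X\times_{[\mathcal F/\mathcal G]}X\cong(\mathcal G\times X)\times_{\mathcal F}X$ directly from the Cartesian square, so it never trivialises torsors over a cover. The descent step you flag as the main obstacle is in fact immediate: the pullback of the $\underline{\mathrm{Isom}}$ sheaf to $T'$ is small and surjects onto the $\underline{\mathrm{Isom}}$ sheaf, and any v-sheaf receiving a surjection from a small v-sheaf is small.
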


\begin{proof}
    As $\mathcal F$ is small, there is a perfectoid space $X$ with a v-surjection $\alpha:X\twoheadrightarrow \mathcal F$.
    We have the following commutative diagram, where the two squares are Cartesian.
\[\begin{tikzcd}
	&& X \\
	{\mathcal G\times X} & {\mathcal G\times \mathcal F} & {\mathcal F} \\
	X & {\mathcal F} & {[\mathcal F/\mathcal G]}
	\arrow["\alpha", from=1-3, to=2-3]
	\arrow["{\id\times\alpha}", from=2-1, to=2-2]
	\arrow["{pr_2}"', from=2-1, to=3-1]
	\arrow["act", from=2-2, to=2-3]
	\arrow["{pr_2}"', from=2-2, to=3-2]
	\arrow[from=2-3, to=3-3]
	\arrow["\alpha"', from=3-1, to=3-2]
	\arrow[from=3-2, to=3-3]
\end{tikzcd}\]
It follows that $X\times_{[\mathcal F/\mathcal G]} X = (\mathcal G \times X)\times_{\mathcal F}X$.
We would like to show that this is small.

We have an injection of v-sheaves $(\mathcal G \times X)\times_{\mathcal F}X\hookrightarrow \mathcal G\times X\times X.$
As $\mathcal G\times X\times X$ is small, there is a perfectoid space $Y$ with a v-surjection $Y\twoheadrightarrow \mathcal G\times X\times X.$
We form the fibre product diagram
\[\begin{tikzcd}
	{\mathcal H} & Y \\
	{(\mathcal G \times X)\times_{\mathcal F}X} & {\mathcal G\times X\times X.}
	\arrow[hook, from=1-1, to=1-2]
	\arrow[two heads, from=1-1, to=2-1]
	\arrow[two heads, from=1-2, to=2-2]
	\arrow[hook, from=2-1, to=2-2]
\end{tikzcd}\]
As $\mathcal H$ is a sub-v-sheaf of $Y$ and $Y$ is a diamond, $\mathcal H$ is also a diamond by \cite[Proposition 11.10]{Berkeley}. In particular, $\mathcal H$ admits a v-surjection from a perfectoid space. Thus, the same holds for $(\mathcal G \times X)\times_{\mathcal F}X$.
\end{proof}

\begin{corollary}
    The v-stack $[\Gr_{G,a,v}/\mathcal P_{a,v}^{loc}]$ is small.
\end{corollary}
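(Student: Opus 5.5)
This is a direct application of \Cref{lem: v sheaf}, with $\mathcal F=\Gr_{G,a,v}$ and $\mathcal G=\mathcal P_{a,v}^{loc}$. The one wrinkle is that $\mathcal P_{a,v}^{loc}$ is a Picard stack, not a group v-sheaf. So the plan is to first rewrite the quotient as a quotient by an honest group v-sheaf, and then check smallness of the two inputs.

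\textbf{Replacing $\mathcal P_{a,v}^{loc}$ by a group.} By \Cref{LJ/L+J}, $\mathcal P_{a,v}^{loc}\cong LJ_{a_v}/L^+J_{a_v}$. By \Cref{3 actions}, the action \eqref{P^loc acts on Gr} corresponds to $LJ_{a_v}$ acting on $\Gr_{G,a,v}$ through this quotient. Concretely, $j\in J_{a_v}(\mathbb D_T^*)$ sends $(\E,\phi,\iota)$ to $(\E,\phi,j\iota)$, where $j$ acts on $(\E_0,\phi_0)|_{\mathbb D_T^*}$ via \eqref{input data}. I would then argue that
\[
[\Gr_{G,a,v}/\mathcal P_{a,v}^{loc}]\simeq [(\Gr_{G,a,v}\times LJ_{a_v}/L^+J_{a_v})/LJ_{a_v}]\simeq [\Gr_{G,a,v}'/L^+J_{a_v}],
\]
where $\Gr'_{G,a,v}$ is the fibre product of $\Gr_{G,a,v}$ with $LJ_{a_v}$ over the quotient. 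In practice it is simpler to use a different route. The quotient by $\mathcal P^{loc}$ should be the quotient of $\Gr_{G,a,v}$ by the group sheaf $LJ_{a_v}$, modulo the stabiliser identification. Equivalently, $[\Gr_{G,a,v}/\mathcal P^{loc}]$ receives a v-surjection from $[\Gr_{G,a,v}/LJ_{a_v}]$. The reason is that an object of $\mathcal P^{loc}$ is v-locally (indeed étale-locally, by \Cref{J-bundle etale locally trivial}) of the form $(J_{a_v}, j\cdot)$, and so comes from $LJ_{a_v}$. Under this comparison, the stacky directions, namely automorphisms given by $L^+J_{a_v}$, are absorbed into $LJ_{a_v}$.

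\textbf{Smallness.} $LJ_{a_v}$ is a small group v-sheaf by \Cref{LJ is small}. $\Gr_{G,a,v}$ is a small v-sheaf by \Cref{Sp is small}. Hence \Cref{lem: v sheaf} shows that $[\Gr_{G,a,v}/LJ_{a_v}]$ is a small v-stack. In particular there is a perfectoid space $X\twoheadrightarrow \Gr_{G,a,v}\twoheadrightarrow[\Gr_{G,a,v}/\mathcal P^{loc}]$.

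It then remains to check that $X\times_{[\Gr/\mathcal P^{loc}]}X$ is small. This fibre product maps injectively into $\mathcal P^{loc}\times X\times X$: a point consists of $(x,y)$ together with $Q\in\mathcal P^{loc}$ and an isomorphism $Q*x\cong y$, and since $\Gr_{G,a,v}$ is a sheaf, such an isomorphism is unique if it exists. Now $\mathcal P^{loc}\cong LJ_{a_v}/L^+J_{a_v}$ is a quotient of the small sheaf $LJ_{a_v}$ by the small sheaf $L^+J_{a_v}$, so it is small: it admits a surjection from $LJ_{a_v}$, which is small. We can then finish exactly as in the proof of \Cref{lem: v sheaf}, pulling back along a perfectoid cover of $\mathcal P^{loc}\times X\times X$ and invoking \cite[Proposition 11.10]{Berkeley}.

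\textbf{Main obstacle.} The expected difficulty is bookkeeping rather than anything deep. The quotient by a Picard stack is a priori a $2$-stack, so one must first make sure the diagonal computation is legitimate, i.e. that the quotient is a $1$-stack. This follows from \Cref{2-quot equiv to 1-cat}: $\Aut(1)=H^0(\mathbb D_T,J_{a_v})$ restricted to triples with trivialised generic fibre is trivial by \Cref{lem: injective}. So the relevant quotient is in fact a v-sheaf quotient, and the argument above applies verbatim.
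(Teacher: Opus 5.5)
Your proposal is correct, and with the detour removed it is exactly the paper's proof: $\Gr_{G,a,v}$ is small by \Cref{Sp is small}, $\mathcal P_{a,v}^{loc}\cong LJ_{a_v}/L^+J_{a_v}$ is small because it receives a surjection from the small sheaf $LJ_{a_v}$ of \Cref{LJ is small}, and \Cref{lem: v sheaf} then finishes (your diagonal computation reproduces its proof). The detour is unnecessary, because $\mathcal P_{a,v}^{loc}$ is defined as a presheaf of isomorphism classes and so is already a commutative group v-sheaf, with no $2$-stack issue; note also that your first displayed equivalence is false, since $L^+J_{a_v}$ acts trivially and $[(\Gr_{G,a,v}\times LJ_{a_v}/L^+J_{a_v})/LJ_{a_v}]\simeq \Gr_{G,a,v}\times[*/L^+J_{a_v}]$, although you never use it.
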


\begin{proof}
    By \Cref{Sp is small} and \Cref{LJ is small}, $\Gr_{G,a,v}$ and $\mathcal P_{a,v}^{loc}=LJ_{a_v}/L^+J_{a_v}$ are both small v-sheaves. The result now follows from \Cref{lem: v sheaf}.
\end{proof}

Note that each root $\alpha:T\to \mathbb G_m$ of $G$ induces a map $d\alpha:\mathfrak t\to \mathbb A^1_E$ on Lie algebras.
Their product gives a map $\prod_{\alpha\in \Phi} d\alpha:\mathfrak t\to \mathbb A^1_E$, which is invariant under the Weyl group $W$, where $\Phi$ is the set of roots of $G$.
Hence, it induces a morphism
$$disc:=\prod_{\alpha\in \Phi} d\alpha:\mathfrak c\to \mathbb A^1_E.$$
Define $\mathfrak c^{rs}$ to be the affine scheme given by the locus where $disc$ does not vanish.
Let $$\mathfrak c^{rs}_\L=\tilde \L \times^{\mathbb G_m} \mathfrak c^{rs}_{X_C^{sch}}.$$
The preimage of $\mathfrak c^{rs}$ in $\g\to \c$ is contained in $\g^{reg}$.
Thus, the preimage of $\mathfrak c^{rs}_{\L}$ in $\g_\L\to \c_\L$ is contained in $\g^{reg}_\L$.

For $a\in \mathcal A_G(\Spa C)$, we define $$\mathrm{Higgs}_a := \Higgs \times_{\mathcal A_G} \Spa C,$$ where the map $\Spa C \to \mathcal A_G$ is the one corresponding to $a$. We call it the \emph{Hitchin fibre}.
We also define $$\mathcal P_a := \mathcal P\times_{\mathcal A_G}\Spa C.$$

In analogy with \cite[Definition 4.4]{Ngo_2006}, we make the following definition.
\begin{definition}
    We say that $a\in \mathcal A_G(\Spa C)$ is \emph{generically regular semisimple} if $a$, as a map $X_C^{sch}\to \mathfrak c_\L$, sends the generic point $\eta$ of $X_C^{sch}$ to $\mathfrak c^{rs}_\L$. 
\end{definition}

\begin{lemma}\label{J|rs is torus}
    The group scheme $J_\L|_{\c_\L^{rs}}\to \c_\L^{rs}$ is a torus.
\end{lemma}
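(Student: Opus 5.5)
We first reduce to the untwisted statement: $J|_{\c^{rs}}\to\c^{rs}$ is a torus. Indeed, $J_\L|_{\c^{rs}_\L}$ is obtained from $J|_{\c^{rs}}\times_E X_C^{sch}$ by twisting with the $\mathbb G_m$-torsor $\tilde\L$, and the $\mathbb G_m$-action preserves $\c^{rs}$ because $disc$ is homogeneous. Being a torus is fppf local on the base, and $\tilde\L$ is Zariski locally trivial. So on an open $U\subset X_C^{sch}$ trivialising $\L$ we have $J_\L|_{\c^{rs}_\L}\times_{X_C^{sch}}U\cong (J|_{\c^{rs}})\times_E U$, and the claim for $J_\L$ follows from the claim for $J$.

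For $J$ itself we use the Chevalley restriction theorem \Cref{Chevalley-res}: $\c=\mathfrak t\sslash W$. Let $\mathfrak t^{rs}$ be the preimage of $\c^{rs}$, i.e.\ the locus where all $d\alpha$ are nonzero. Since $char E\nmid|W|$ and $W$ acts freely on $\mathfrak t^{rs}$, the map $\mathfrak t^{rs}\to\c^{rs}$ is a finite étale $W$-torsor. In particular it is an fppf (indeed étale) cover. We then show that $J\times_{\c^{rs}}\mathfrak t^{rs}\cong T\times\mathfrak t^{rs}$, which is a split torus; it follows that $J|_{\c^{rs}}$ is a torus.

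To compute the pullback, we use \eqref{eq: isom J, I} together with the fact that $\mathfrak t^{rs}\subset\g^{rs}\subset\g^{reg}$ via $\mathfrak t\hookrightarrow\g$. This gives
\[J\times_{\c}\mathfrak t^{rs}\cong I\times_{\g}\mathfrak t^{rs}.\]
So it suffices to show that the centraliser group scheme of the tautological section $x\in\mathfrak t^{rs}$ is $T\times\mathfrak t^{rs}$. Clearly $T\times\mathfrak t^{rs}\subset I|_{\mathfrak t^{rs}}$, since $T$ acts trivially on $\mathfrak t$.

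For equality we argue as follows.
\begin{enumerate}
\item Both group schemes are smooth over $\mathfrak t^{rs}$ of relative dimension $r$. For $I|_{\mathfrak t^{rs}}$ this holds because it is isomorphic to a base change of the smooth group scheme $J$.
\item On geometric fibres, the centraliser of a regular semisimple element $x\in\mathfrak t$ is exactly $T$. It is connected because $G$ is connected reductive and $x$ is regular semisimple. A standard argument via the root decomposition shows that $\mathrm{Lie}$ of the centraliser is $\mathfrak t$. That the normaliser part contributes nothing is the usual fact that $Z_G(x)$ is connected for semisimple $x$ in characteristic not dividing $|W|$ (Steinberg).
\item A monomorphism of smooth group schemes which is an isomorphism on all fibres is an isomorphism, by the fibrewise criterion for flat morphisms.
\end{enumerate}

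The main obstacle is the fibrewise identification $Z_G(x)=T$, specifically connectedness in possibly positive characteristic. I would first try to avoid it by using commutativity of $J$. The fibre $J_x$ is a commutative group containing $T$ and centralising $T$ (since $T\subset J_x=Z_G(x)$). It therefore lies in $Z_G(T)=T$, because the centraliser of a maximal torus in a connected reductive group is itself. This gives $J_x=T$ directly and bypasses Steinberg's theorem.
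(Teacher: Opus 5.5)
Your proof is correct. The reduction from $J_\L$ to $J$ is the same as the paper's: Zariski-locally on $X_C^{sch}$, the twist by $\tilde\L$ is a base change of $J|_{\c^{rs}}$. The routes differ only for the untwisted statement. The paper simply cites \cite[Proposition 2.4.2]{Ngo_2010} and \cite[Proposition 2.5.1]{Chen_Zhu}. You instead reprove it from ingredients the paper already quotes: the Chevalley restriction theorem, the isomorphism \eqref{eq: isom J, I} together with smoothness and commutativity of $J$, and $Z_G(T)=T$. You pull back along $\mathfrak t^{rs}\to\c^{rs}$ and compare $I|_{\mathfrak t^{rs}}$ with $T\times\mathfrak t^{rs}$ fibrewise. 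This is essentially the argument behind the cited results, restricted to $\c^{rs}$.

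What each route buys: the cited route also gives the finer Galois description of $J$. Over $\c^{rs}$, $J$ is the twist of $T$ by the $W$-torsor $\mathfrak t^{rs}\to\c^{rs}$, which you would only get by computing the descent datum. Your route is self-contained, and the hypothesis $\mathrm{char}\,E\nmid|W|$ enters only through facts the paper has already quoted.

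Two small points about your write-up.
\begin{enumerate}
\item The commutativity argument for the fibres is the right one: $Z_G(x)$ is commutative and contains $T$, hence lies in $Z_G(T)=T$. Lead with it, and drop the appeal to connectedness of $Z_G(x)$ in step (2). You have not justified connectedness in positive characteristic, and it is delicate there.
\item You assert that $W$ acts freely on $\mathfrak t^{rs}$ before proving it. It does follow from your fibre computation: a lift $n\in N_G(T)$ of $w\in W_x$ lies in $Z_G(x)=T$. But you do not need it at all. The map $\mathfrak t^{rs}\to\c^{rs}$ is finite, flat (by miracle flatness) and surjective, so it is already an fppf cover, and that suffices to conclude that $J|_{\c^{rs}}$ is a torus.
\end{enumerate}
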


\begin{proof}
    By \cite[Proposition 2.4.2]{Ngo_2010} and \cite[Proposition 2.5.1]{Chen_Zhu}, $J|_{\c^{rs}}\to \c^{rs}$ is a torus.
    Thus, the twist $J_\L|_{\c_\L^{rs}}\to \c_\L^{rs}$ is also a torus.
\end{proof}

\begin{lemma}\label{lem:torsor on punctured disc}
    Suppose that $a\in \mathcal A_G(\Spa C)$ is generically regular semisimple.
    \begin{enumerate}
        \item For every closed point $v\in |X_C^{sch}|$, $J_{a_v}|_{\mathbb D_{C,v}^*} =\mathbb D_{C,v}^*\times_{\c_\L}J_\L$ is a torus over $\mathbb D_{C,v}^*$.
        \item Let $T=\Spa(K,K^+)\in \Perf_{\Spa C}$, where $K$ is algebraically closed. Then every $J_{a_v}$-torsor on $\mathbb D_{T,v}^*$ is trivial.
    \end{enumerate}
\end{lemma}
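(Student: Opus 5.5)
For part (1), I will reduce to \Cref{J|rs is torus}: it suffices to show that the map $a_v|_{\mathbb D^*_{C,v}}:\mathbb D^*_{C,v}\to \c_\L$ factors through the open subscheme $\c^{rs}_\L$. Then $J_{a_v}|_{\mathbb D^*_{C,v}}$ is the base change of the torus $J_\L|_{\c^{rs}_\L}$, hence a torus.

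To prove the factorisation, I use that $X_C^{sch}$ is a Dedekind scheme (connected, regular, noetherian of dimension one; \cite[Proposition II.2.9]{FS_geometrisation}). The local ring at the closed point $v$ is a discrete valuation ring whose completion is $B^+_{dR}(C^\sharp)$, and $\mathbb D^*_{C,v}=\Spec B_{dR}(C^\sharp)$ is a single point, the generic point of the complete DVR. The map $\mathbb D_{C,v}\to X_C^{sch}$ is flat, so it sends this generic point to the generic point $\eta$ of $X_C^{sch}$. By hypothesis, $a(\eta)\in\c^{rs}_\L$. Since $\c^{rs}_\L$ is open, the single point of $\mathbb D^*_{C,v}$ maps into it, so the morphism factors through it. Concretely, after trivialising $\L$ near $v$, $disc(a)$ is a nonzero element of the function field, hence a unit in $B_{dR}(C^\sharp)$.

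For part (2), write $K^\sharp$ for the untilt, an algebraically closed field of characteristic $0$ (as $E$ has characteristic $0$ or we are in the mixed setting). Then $B^+_{dR}(K^\sharp)\cong K^\sharp\powerseries{\xi}$ and $B_{dR}(K^\sharp)\cong K^\sharp(\!(\xi)\!)$, a complete discretely valued field with algebraically closed residue field of characteristic $0$. By (1), base changed along $T\to\Spa C$, $J_{a_v}|_{\mathbb D^*_{T,v}}$ is a torus over the field $F:=K^\sharp(\!(\xi)\!)$. The claim is then that $H^1(F,\mathcal T)=0$ for any torus $\mathcal T$ over $F$, since torsors for the smooth group are classified by Galois cohomology.

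To prove this vanishing, I will use that $F$ has cohomological dimension $\le 1$. Its absolute Galois group is $\hat{\Z}$, because all extensions are tamely ramified (Kummer) when the residue characteristic is $0$ and the residue field is algebraically closed; equivalently, $F$ is a $C_1$-field by Lang's theorem. I will then invoke Steinberg's theorem, or Serre's result that $H^1(F,H)=0$ for connected linear $H$ over a perfect field of cohomological dimension $\le1$, to conclude that every $\mathcal T$-torsor is trivial. Alternatively, I can argue directly: $\mathcal T$ splits over a finite cyclic extension $F'=K^\sharp(\!(\xi^{1/m})\!)$. Hilbert 90 gives $H^1(F'/F,\mathcal T(F'))=H^1(\Z/m,X_*\otimes F'^\times)$, and this can be controlled, but citing Serre's theorem is cleaner.

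The main obstacle I expect is justifying the identification $B_{dR}(K^\sharp)\cong K^\sharp(\!(\xi)\!)$ together with the cohomological dimension claim. The first needs a section $K^\sharp\to B^+_{dR}(K^\sharp)$, which exists by Cohen structure theory or Hensel lifting, since $K^\sharp$ is algebraically closed of characteristic $0$. A secondary point is checking that fppf torsors for the smooth affine torus over $\Spec F$ agree with Galois cohomology classes; this holds because smooth torsors trivialise étale locally.
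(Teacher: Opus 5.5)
Your part (1) is correct, and a little cleaner than the paper's argument. The paper chooses an affine neighbourhood $U$ of $v$ with $U\setminus\{v\}\subset f^{-1}(\c^{rs}_\L)$ and inverts a local equation of $v$. You instead observe that $\mathbb D^*_{C,v}$ is the spectrum of a field, which maps to $\eta$ because $\O_{X_C^{sch},v}\to B^+_{dR}(C^\sharp)$ is injective.

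\textbf{The gap is in part (2).} You assert that $K^\sharp$ has characteristic $0$, but the paper allows $E$ to be any non-Archimedean local field, including $E=\F_q(\!(t)\!)$. It explicitly records $W_{\O_E}(R^+)=R^+[[t]]$ in that case, and the standing hypothesis $\mathrm{char}\,E\nmid|W|$ would otherwise be vacuous. For such $E$ the untilt is $K$ itself, so $F=B_{dR}(K^\sharp)\cong K(\!(\xi)\!)$ is an imperfect field of characteristic $p$. Three of your steps then fail:
\begin{enumerate}
\item The absolute Galois group of $F$ is not $\hat{\Z}$, since there are wildly ramified Artin--Schreier extensions.
\item A general torus over $F$ need not split over any $K(\!(\xi^{1/m})\!)$.
\item Steinberg's theorem in the form you quote requires $F$ to be \emph{perfect}; it concerns connected linear groups over a perfect field of cohomological dimension $\le 1$.
\end{enumerate}
Your coefficient-field justification of $B^+_{dR}(K^\sharp)\cong K^\sharp\powerseries{\xi}$ is also stated only in characteristic $0$, although the isomorphism still holds in characteristic $p$. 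As written, part (2) is established only for mixed-characteristic $E$.

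\textbf{How to close the gap.} Any one of the following works.
\begin{enumerate}
\item Do what the paper does. $F$ is $C_1$ by Lang's theorem in either characteristic. The Borel--Springer extension of Steinberg's theorem \cite[Section 8.6]{Borel-Springer} gives $H^1(F,H)=0$ for connected reductive $H$ over a field with $\mathrm{cd}(F)\le 1$ and $[F:F^p]\le p$. Here $[F:F^p]=p$, so it applies.
\item For a torus you can bypass perfectness altogether.
\begin{itemize}
\item Every finite separable extension of $F$ is again complete, discretely valued, with algebraically closed residue field, so it has trivial Brauer group.
\item For a Galois splitting field $L$, Hilbert 90 together with the Tate--Nakayama criterion makes $L^\times$ cohomologically trivial, hence also $X_*(\mathcal T)\otimes L^\times$.
\item Therefore $H^1(F,\mathcal T)=H^1(\mathrm{Gal}(L/F),X_*(\mathcal T)\otimes L^\times)=0$.
\end{itemize}
\item Use $\mathrm{char}\,E\nmid|W|$ directly.
\begin{itemize}
\item The torus is split by the fibre of the $W$-cover $\mathfrak t\times_{\c}\c^{rs}\to\c^{rs}$, whose field factors have degree dividing $|W|$.
\item So the splitting field is tamely ramified, hence of the form $K(\!(\xi^{1/m})\!)$.
\item Your cyclic computation then goes through, because the norm is surjective when the residue field is algebraically closed.
\end{itemize}
\end{enumerate}
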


\begin{proof}
    The element $a\in \mathcal A_G(\Spa C)$ corresponds to a map $f:X_C^{sch}\to \mathfrak c_\L$.
    Since $a$ is generically regular semisimple, $f(X_C^{sch})\cap \mathfrak c_\L^{rs} \neq \varnothing.$
    As $\mathfrak c_\L^{rs}$ is open in $\mathfrak c_\L$, $f^{-1}(\mathfrak c_\L^{rs})$ is a non-empty open subset in $X_C^{sch}$. Since $X_C^{sch}$ is Noetherian, $1$-dimensional, and irreducible, the complement of $f^{-1}(\mathfrak c_\L^{rs})$ in $X_C^{sch}$ is a finite set of closed points.

    For every closed point $v\in |X_C^{sch}|$,
    the map 
    \begin{equation}\label{D* to X}
        \mathbb D_{C,v}^*\to X_C^{sch}
    \end{equation}
    factors through the open subscheme $f^{-1}(\c_\L^{rs})\subset X_C^{sch}.$
    To see this, recall that $\mathbb D_{C,v} = \Spec \hat{\O_{X_C^{sch},v}}$.
    The map
    \begin{equation}\label{D to X}
        \mathbb D_{C,v} \to X_C^{sch}
    \end{equation}
    is given by $\Spec \hat{\O_{X_C^{sch},v}} \to \Spec \O_{X_C^{sch},v} \to X.$
    Pick an open neighborhood $U=\Spec B$ of $v\in X_C^{sch}$ such that $U\setminus\set{v} \subset f^{-1}(\c_\L^{rs})$ and an element $\xi\in B$ such that $\set{v}$ is the vanishing locus of $\xi$.
    Then \eqref{D to X} factors through $U$. The map $\mathbb D_{C,v} \to U$ corresponds to the ring homomorphism $B \to \hat{\O_{X_C^{sch},v}}$.
    Inverting $\xi$ gives $B[\frac{1}{\xi}] \to \hat{\O_{X_C^{sch},v}}[\frac{1}{\xi}]$.
    Taking $\Spec$ gives $\mathbb D^*_{C,v} \to U\setminus \set{v}.$
    Thus, \eqref{D* to X} factors as $\mathbb D_{C,v}^* \to U\setminus \set{v} \hookrightarrow  f^{-1}(\mathfrak c_\L^{rs}) \to X_C^{sch}$.
    
    It follows from this and \Cref{J|rs is torus} that $J_{a_v}$ is a torus over $\mathbb D_{C,v}$.
    Recall that $$\mathbb D_{T,v}^*=\Spec B_{dR}(K^\sharp)\cong \Spec K^\sharp((\xi)).$$ By \cite[Section 2 Theorem 2]{C1-field}, $K^\sharp((T))$ is a $C_1$ field.
    Since $K^\sharp((\xi))$ is a $C_1$-field and $J_{a_v}|_{\mathbb D_{T,v}^*}$ is a connected reductive group over $\Spec K^\sharp((T))$, $$H^1_{\et}(\mathbb D_{T,v}^*,J_{a_v}) = H^1(K^\sharp((\xi)),J_{a_v})=0$$ \footnote{The latter cohomology is Galois cohomology.}by \cite[Section 8.6]{Borel-Springer}. This is the desired result.
\end{proof}

\begin{remark}\label{abelianisation for Gr}
    Suppose that $a\in \mathcal A_G(\Spa C)$ is generically regular semisimple.
    The element $a$ corresponds to a map $f:X_C^{sch} \to \mathfrak c_\L$.
    For each closed point $v\in |X_C^{sch}|$ that lies in $f^{-1}(\c_\L^{rs})$, $J_{a_v} =\mathbb D_{C,v}\times_{\c_\L}J_\L$ is a torus over $\mathbb D_{C,v}$ by \Cref{J|rs is torus} and the proof of \Cref{lem:torsor on punctured disc}.
    Also, $\Gr_{G,a,v}^{reg} = \Gr_{G,a,v}$, because if $(\E,\phi,\iota)\in \Gr_{G,a,v}(T)$, then the image of $(\E,\phi)$ under the Hitchin fibration is $a_v$, which lies in $\mathfrak c_\L^{rs}(\mathbb D_{T,v})$, implying that $\phi\in H^0(\mathbb D_T,\E\times^G \g^{reg}_\L)$.
    Thus, by \Cref{Gr^reg equiv to P^loc}, 
    \begin{equation}\label{Gr isom to P}
        \Gr_{G,a,v} \cong \mathcal P_{a_v}^{loc}.
    \end{equation}
    In other words, in this case (which is the case for all but finitely many $v$), the affine Springer fiber is the affine Grassmannian of a torus.
    We can view this as an instance of abelianisation.
    Let us also note that \eqref{Gr isom to P} respects the $P_{a_v}^{loc}$-actions on both sides by \Cref{3 actions}. In particular, the quotient $[\Gr_{G,a,v}/\mathcal P_{a,v}^{loc}]$ is trivial.
\end{remark}

\begin{lemma}\label{elt in ring vanishes}
    Let $T$ be a perfectoid space and $t\in H^0(T,\O_T)$. Suppose that $f^*(t)=0$ for every algebraically closed perfectoid field $K$ and every map $f:\Spa(K,K^\circ)\to T$.
    Then $t=0$.
\end{lemma}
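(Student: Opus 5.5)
We reduce to the affinoid case and then argue with valuations. Vanishing of a global section can be checked on an open cover, so we may assume $T=\Spa(R,R^+)$ is affinoid perfectoid and $t\in R$. The aim is to show that $|t(x)|=0$ for every $x\in|T|$, and then to deduce $t=0$ from the structure of perfectoid rings.

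First, let $x\in T$ be any point, with completed residue field $k(x)$ and valuation ring $k(x)^+$. Then $k(x)$ is a perfectoid field. Choose a completed algebraic closure $K$ of $k(x)$; it is again perfectoid. Extend the valuation to a valuation ring $K^+\subset K^\circ$ lying over $k(x)^+$. This gives a map $\Spa(K,K^+)\to T$ hitting $x$. The hypothesis only allows the rank one valuation $K^\circ$, so instead we use the map $\Spa(K,K^\circ)\to \Spa(k(x),k(x)^\circ)\to T$. Its image is the maximal generization of $x$, which has the same residue field $k(x)$. The hypothesis gives $f^*(t)=0$ in $K$, and since $k(x)\to K$ is injective, the image of $t$ in $k(x)$ is $0$. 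Hence $|t(x)|=0$ for all $x\in|T|$, because $x$ and its rank one generization share the same residue field and hence the same support.

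Second, we deduce $t=0$. For every $n$ the rational subset $U_n=\{x:|t(x)|\le|\varpi|^n\}$ is all of $T$. By the standard description of rational localisations of perfectoid pairs, with $\O^+$ computed up to almost mathematics, $t\varpi^{-n}\in \O^+(T)=R^+$ for all $n$. Here we use that $\{x: |f(x)|\le 1\ \forall x\}$ is exactly $R^+$ when $R^+$ is integrally closed. Thus $t\in\bigcap_n \varpi^n R^+$. Since $R^+$ is $\varpi$-adically separated, being open and bounded in a complete Tate ring, it follows that $t=0$.

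The main obstacle is the first step, namely producing points of the restricted form $\Spa(K,K^\circ)$ with $K$ algebraically closed that see the residue field of every point. If the rank one generization argument causes trouble, we would instead use that $R$ is uniform. For perfectoid (hence uniform) $R$, the spectral seminorm $\|t\|=\sup_{x}|t(x)|$ computed over rank one points only is a norm defining the topology. So $\sup_x|t(x)|=0$ over rank one points already forces $t=0$, which bypasses the rational-subset argument.
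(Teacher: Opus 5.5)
Your proof is correct and follows the same route as the paper. The paper also reduces to an affinoid $\Spa(A,A^+)$ and uses the completed algebraic closure $C(x)$ of each residue field to obtain maps $\Spa(C(x),C(x)^\circ)\to T$; it then concludes from the injectivity of $A\to\prod_{x\in T}C(x)$ for uniform $A$, citing Berkovich. Your second step proves this injectivity directly from $R^+=\{f:|f(x)|\le 1\ \forall x\}$ and the $\varpi$-adic separatedness of $R^+$; uniformity enters exactly through the boundedness of $R^+$. Your fallback argument is the result the paper cites.
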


\begin{proof}
    Without loss of generality, we assume that $T=\Spa(A,A^+)$ is affinoid perfectoid.
    Let $x\in T$.
    Let $K(x)$ be the completed residue field at $x$. As $T$ is analytic, the absolute value on $K(x)$ is non-trivial.
    Since $K(x)$ is a complete valued field, its absolute value extends uniquely to its algebraic closure $\overline{K(x)}$ and then to the completion $\hat{\overline{K(x)}}=:C(x).$ By standard argument, $C(x)$ is algebraically closed. It follows that $C(x)$ is perfectoid.
    Since $A$ is uniform, the map
    \[A\to \prod_{x\in T} C(x)\]
    is injective by a result of Berkovich \cite[Theorem 5.2.1]{Berkeley}.
    In other words, the map
    \[H^0(T,\O_T)\to \prod_{x\in T} H^0(\Spa(C(x),C(x)^\circ),\O)\]
    is injective.
    The desired result follows.
\end{proof}

\begin{lemma}\label{elt in vb vanishes}
    Let $S$ be a perfectoid space over $\mathbb F_q$, $\E$ be a vector bundle on $X_S$.
    \begin{enumerate}
        \item\label{section vanishes} Let $t\in H^0(X_S,\E)$. Suppose that for every algebraically closed perfectoid field $K$ with open and integrally closed subring $K^+\subset K^\circ$ and every map $\Spa(K,K^+)\to S$, we have $t|_{X_{\Spa(K,K^+)}}=0$. Then $t=0$.
        \item\label{mor is identity} Let $\alpha:\E\to \E$ be a morphism. Suppose that for every algebraically closed perfectoid field $K$ with open and integrally closed subring $K^+\subset K^\circ$ and every map $\Spa(K,K^+)\to S$, we have $\alpha|_{X_{\Spa(K,K^+)}}=\id$. Then $\alpha=\id$.
    \end{enumerate}
\end{lemma}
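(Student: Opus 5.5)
Part \eqref{mor is identity} follows from part \eqref{section vanishes}: apply it to the section $\alpha-\id\in H^0(X_S,\underline{\End}(\E))=H^0(X_S,\E^\vee\otimes\E)$. Since pullback commutes with forming $\underline{\End}$ and with subtracting the identity, the hypothesis says exactly that $(\alpha-\id)|_{X_{\Spa(K,K^+)}}=0$ for every geometric point. So the real content is part \eqref{section vanishes}.

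For part \eqref{section vanishes}, the question is analytically local on $S$, because $X_S$ is glued from the $X_U$ for affinoid opens $U\subset S$ and sections of $\E$ form a sheaf. We may therefore assume $S=\Spa(R,R^+)$ is affinoid perfectoid. The condition $t=0$ is local on $X_S$, and $X_S$ is covered by the images of rational opens of $Y_S$. It therefore suffices to work on an affinoid sousperfectoid (in fact perfectoid after passing to a pro-étale cover, or directly using that such opens of $Y_S$ are of the form $\Spa(B,B^+)$ with $B$ uniform) open $V\subset Y_S$ on which $\E$ is trivial. Then $t|_V$ becomes a tuple of functions $t_1,\dots,t_m\in\O(V)$, and it suffices to show each $t_k=0$.

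To show $t_k=0$, use the injectivity $\O(V)\hookrightarrow\prod_{y\in V}C(y)$ from the proof of \Cref{elt in ring vanishes}; this uses uniformity of $\O(V)$, which holds because $V$ is sousperfectoid. Fix $y\in V$ with image $s\in S$ under the projection $|Y_S|\to|S|$, and set $K=C(s)$, the completed algebraic closure of the residue field of $s$, with $K^+$ a valuation ring lifting that of $s$ (e.g.\ $K^+=K^\circ$ suffices if we only need to hit all points over the maximal generization; otherwise take a valuation ring extending $k(s)^+$). The resulting map $\Spa(K,K^+)\to S$ hits $s$. The key point is that $|Y_{\Spa(K,K^+)}|\to|Y_S|$ hits every point of the fiber over $s$, in particular $y$. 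I would argue this as in part \eqref{item:top space} of \Cref{lem: exact 1}: the map $\Spa(K,K^+)\to\Spa(k(s),k(s)^+)$ is a v-cover, and $|Y_S|=|S\times\Spd E|$, so surjectivity of the base change of a v-cover gives the claim. By hypothesis $t|_{X_{\Spa(K,K^+)}}=0$, so $t_k$ vanishes at every preimage point, hence $t_k(y)=0$ in $C(y)$. As $y$ was arbitrary, $t_k=0$.

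The main obstacle is this last fiber-surjectivity step, together with the need to allow nontrivial $K^+$ so that non-rank-one points of $S$ are also covered. If this step becomes messy, I would instead appeal directly to \Cref{lem: exact criterion}-style reasoning. Take $S'=\coprod\Spa(K,K^+)$ over a set of representatives of points, which is a v-cover of $S$, so $|X_{S'}|\to|X_S|$ is surjective by \Cref{lem: exact 1}. A section vanishing after pullback along this surjection vanishes in all completed residue fields $C(x)$, $x\in X_S$, and then vanishes by the Berkovich injectivity on affinoid pieces.
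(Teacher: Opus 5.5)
Your reduction of (2) to (1), applying (1) to $\alpha-\id\in H^0(X_S,\underline{\End}(\E))$, is exactly what the paper does. The local part of your argument for (1) is also sound: you trivialise $\E$ on an affinoid $V\subset Y_S$, then use uniformity of $\O(V)$ and Berkovich injectivity.

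\textbf{The gap.} It is precisely the step you flag: that a point $y\in|Y_S|$ over $s\in|S|$ lies in the image of $|Y_{\Spa(K,K^+)}|$.

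Via the argument of \Cref{lem: exact 1}\eqref{item:top space}, the v-cover $\Spa(K,K^+)\to\Spa(\widehat{k(s)},\widehat{k(s)}^+)$ only shows that $|Y_{\Spa(K,K^+)}|\to|Y_{\Spa(\widehat{k(s)},\widehat{k(s)}^+)}|$ is surjective. What you actually need is that the whole fibre of $|S\times\Spd E|\to|S|$ over $s$ is hit by $|\Spa(\widehat{k(s)},\widehat{k(s)}^+)\times\Spd E|$. Since $\Spa(\widehat{k(s)},\widehat{k(s)}^+)\to S$ is not a v-cover, and $|\cdot|$ does not commute with fibre products, this is not formal, and your proposal does not address it.

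Your fallback fails for the same reason in another guise. The map $\coprod_s\Spa(K_s,K_s^+)\to S$ is surjective but not a v-cover. A v-cover must cover every quasicompact open of $S$ by the images of finitely many quasicompact opens, whereas each summand only hits the generizations of a single point. So \Cref{lem: exact 1} cannot be applied to it.

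\textbf{The missing input.} It is the standard fact that for small v-sheaves $A\to B\leftarrow C$ the map $|A\times_B C|\to|A|\times_{|B|}|C|$ is surjective. To see this, represent $a\in|A|$ and $c\in|C|$ with common image $b$ by maps from $\Spa(K_1,K_1^+)$ and $\Spa(K_2,K_2^+)$. Their images in $B$ define the same point, so both are dominated by some $\Spa(K_3,K_3^+)$ mapping surjectively and compatibly to them. This yields a point of $A\times_B C$ over $(a,c)$.

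Applied to $A=\Spa(K,K^+)$, $B=S$, $C=S\times\Spd E$, this closes your key step and also rescues the fallback. With it your proof goes through.

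\textbf{Comparison with the paper.} The paper avoids the point-set topology of $|X_S|$ altogether. It embeds $\E\hookrightarrow\O_{X_S}(n)^m$, then peels off one twist at a time using $0\to\O_{X_S}(k-1)\to\O_{X_S}(k)\to\O_{S^\sharp}\to0$ together with \Cref{elt in ring vanishes} on the untilt $S^\sharp$. It concludes with $\mathcal H^0(\O)=\underline{E}$, so vanishing is only ever tested on $S^\sharp$ and on $|S|$. Your repaired route is more local and does not use ampleness of $\O(1)$, at the cost of this extra input about diamonds.
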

By \Cref{thm:GAGA}, the same also holds with $X_S$ replaced by $X_S^{sch}$.

\begin{proof}
    We first prove (\ref{section vanishes}).
    Without loss of generality, assume that $S$ is affinoid perfectoid.
    Since $S$ is affinoid, there exist integers $n,m\ge 1$ such that $\O_{X_S}^m\twoheadrightarrow \E^\vee(n)$ by \cite[Theorem II.2.6]{FS_geometrisation}. Thus, $\E \injects \O_{X_S}^m(n)$. Hence, we may without loss of generality assume $\E=\O_{X_S}(n)$. Let $S^\sharp$ be an untilt of $S$ over $E$. By \cite[Proposition II.2.3]{FS_geometrisation}, for every integer $k$, there is a short exact sequence 
    \[0\to \O_{X_S}(k-1)\to \O_{X_S}(k) \to \O_{S^\sharp} \to 0.\]
    Taking global section gives us an exact sequence
    \[0 \to H^0(X_S,\O_{X_S}(k-1)) \xrightarrow{f_k} H^0(X_S,\O_{X_S}(k)) \xrightarrow{g_k} H^0(S^\sharp,\O_{S^\sharp})\]
    which is compatible with change of $S$.
    By \Cref{elt in ring vanishes} and the assumption on $t$, $g_n(t)=0$.
    Thus, $t$ lies in $H^0(X_S,\O_{X_S}(n-1))$.
    Repeating this process, we reduce to the case where $t\in H^0(X_S,\O_{X_S})$.

    By \cite[Proposition II.2.5(ii)]{FS_geometrisation}, $\mathcal H^0(\O)=\underline{E}$.
    Thus, $t$ corresponds to a continuous map $T:|S|\to E$ whose composite with every map $|\Spa(K,K^+)|\to |S|$ is the zero map. Since the images of the maps $|\Spa(K,K^+)|\to |S|$ cover $|S|$ as $(K,K^+)$ varies, $T$ must be the zero map.
    Therefore $t=0$.

    For (\ref{mor is identity}), note that $\alpha$ can be identified with a global section of the vector bundle $\underline{\Hom}_{\O}(\E,\E)$, so the result follows from (\ref{section vanishes}).
\end{proof}

We have the following analogue of \cite[Lemme 4.5]{Ngo_2006}.
\begin{lemma}
    Suppose $a\in \mathcal A_G(\Spa C)$ is generically regular semisimple. Let $S$ be an affinoid perfectoid space over $\Spa C$.
    Then the $2$-quotient $[\mathrm{Higgs}_a(S)/\mathcal P_a(S)]$ is equivalent to a $1$-groupoid.
\end{lemma}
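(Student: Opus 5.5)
By \Cref{2-quot equiv to 1-cat}, it suffices to show the following: for every $(\E,\phi)\in \mathrm{Higgs}_a(S)$, the map
\[\Aut_{\mathcal P_a(S)}(1)\to \Aut(\E,\phi)\]
induced by the action \eqref{action-P on HIggs} is injective. The unit of $\mathcal P_a(S)$ is the trivial $J_a$-torsor on $X_S^{sch}$, so $\Aut_{\mathcal P_a(S)}(1)=J_a(X_S^{sch})$. By construction of the action via \Cref{inertia gives action over a base}, the map sends $j\in J_a(X_S^{sch})$ to the automorphism of $(\E,\phi)$ obtained from the homomorphism \eqref{input data} $J_\L\times_{\c_\L}\g_\L\to I_\L$, pulled back along $X_S^{sch}\to[\g_\L/G]$. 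So the plan is: given $j\in J_a(X_S^{sch})$ acting as the identity on $(\E,\phi)$, show that $j=1$.

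\textbf{Step 1: generic injectivity.} Let $U\subset X_C^{sch}$ be the preimage of $\c^{rs}_\L$ under $a$. By the proof of \Cref{lem:torsor on punctured disc}, $U$ is the complement of finitely many closed points, and over $U$ the map $(\E,\phi)$ lands in $[\g^{reg}_\L/G]$. Let $U_S$ be the preimage of $U$ in $X_S^{sch}$. By \Cref{lem:J isom}, $J_a|_{U_S}\to\underline{\Aut}(\E,\phi)|_{U_S}$ is an isomorphism. Hence $j|_{U_S}=1$.

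\textbf{Step 2: extend from $U_S$ to $X_S^{sch}$.} By \Cref{J|rs is torus}, $J_a|_{U}$ is a torus. In general $J_a$ is a smooth affine commutative group scheme over $X_S^{sch}$. The natural tool is \Cref{elt in vb vanishes}: embed $J_a$ into a vector bundle (for instance via $J_\L\hookrightarrow G\times\c_\L\subset\GL_n\times\c_\L$, viewing $j-1$ as a section of a vector bundle of matrices on $X_S^{sch}$), and reduce to geometric points $\Spa(K,K^+)\to S$. On a fibre $X_{K}^{sch}$, which is a Dedekind scheme, the section $j-1$ vanishes on a dense open subset. Since $X_K^{sch}$ is integral and the section lies in a vector bundle, which is torsion-free, it vanishes identically. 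Then \Cref{elt in vb vanishes} gives $j=1$ on $X_S^{sch}$.

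\textbf{Main obstacle.} The delicate point is making Step 2 rigorous. Over a general affinoid perfectoid $S$, $X_S^{sch}$ need not be integral, and $U_S$ may not be schematically dense. This is why I would pass to geometric points $\Spa(K,K^+)$, where $X_K^{sch}$ is integral of dimension one. I would also need to check two compatibilities: that the base change of $U$ to $X_K^{sch}$ is still the complement of finitely many closed points, and that the identification of $j$ with a matrix-valued section is compatible with restriction. If the schematic version is awkward, I would instead argue with the adic curve $X_K$ via GAGA (\Cref{thm:GAGA}), using that a section of a vector bundle on $X_K$ vanishing on a nonempty open subset of each connected piece is zero.
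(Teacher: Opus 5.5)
Your proposal is correct and follows essentially the same route as the paper: you reduce via \Cref{2-quot equiv to 1-cat}, get injectivity generically from \Cref{lem:J isom}, use integrality of $X_K^{sch}$ at geometric points $\Spa(K,K^+)\to S$ to extend the identity, and globalize with \Cref{elt in vb vanishes}. The differences are cosmetic. The paper first treats $S=\Spa C$ at the generic point $\eta$, using only that $J_a$ is affine over an integral scheme, and then globalizes via the Tannakian description of $J_a$-torsors and part (2) of \Cref{elt in vb vanishes}. You instead work on the open $U_S$ for general $S$ and detect $j=1$ through the embedding $J_a\subset G\times X_S^{sch}\subset \GL_n\times X_S^{sch}$ coming from \eqref{eq:J_L}, together with part (1).
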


\begin{proof}
    By \Cref{2-quot equiv to 1-cat}, we need to show that for all $(\E,\phi)\in \mathrm{Higgs}_a(S)$, the natural map 
    \begin{equation*}\label{eq: act}
        F:\Aut(1_{J_a})\to \Aut((\E,\phi))
    \end{equation*}
    is injective, where $1_{J_a}$ is the trivial $J_a$-torsor on $X_S^{sch}$.

    We first prove it for $S=\Spa C$. We employ the same strategy as \cite[Lemme 4.5]{Ngo_2006} for this case. We shall focus on the omitted detail there.
    We have a commutative diagram
    \[\begin{tikzcd}
        {\Aut(1_{J_a})} & {\Aut((\E,\phi))} \\
        {\Aut(1_{J_a}|_\eta)} & {\Aut((\E,\phi)|_\eta)}
        \arrow[from=1-1, to=1-2]
        \arrow[from=1-1, to=2-1]
        \arrow[from=1-2, to=2-2]
        \arrow[from=2-1, to=2-2].
    \end{tikzcd}\]
    Note that $\Aut(1_{J_a})=H^0(X_S^{sch},J_a)$ and $\Aut(1_{J_a}|_\eta)=H^0(\eta,J_a|_\eta)$. By \Cref{lem:J isom} and the assumption that $a$ is generically regular semisimple, the bottom map is an isomorphism. The desired result will follow if we show that the left vertical map is injective.
    Since $I_\L\to \g_\L$ is an affine morphism, the same is true for its base change $\pi:J_a \to X_C^{sch}$. 
    We would like to show that the restriction map
    \begin{equation*}
        \Mor_{X_C^{sch}}(X_C^{sch},J_a) \to \Mor_{X_C^{sch}}(\eta,J_a)
    \end{equation*}
    is injective.
    Let $f,g\in \Mor_{X_C^{sch}}(X_C^{sch},J_a)$ with $f|_\eta=g|_\eta$.
    Let $U=\Spec B$ be a non-empty open subscheme of $X_C^{sch}$.
    We can view the restrictions of $f,g$ to $U$ as maps 
    $f|_U,g|_U:U\to \pi^{-1}(U)$. We know these two maps agree after restriction along $\Spec \Frac(B) \to \Spec B.$
    As $U$ and $f^{-1}(U)$ are both affine schemes, we know $f|_U=g|_U$.
    As $U$ is arbitrary, $f=g$.

    Let us return to the case of general affinoid perfectoid $S$ over $\Spa C$.
    Let $f,g\in \Aut(1_{J_a})$ with $F(f)=F(g)$. Then $F(f)|_{X_K^{sch}}=F(g)|_{X_K^{sch}}$ for all $\Spa(K,K^+)\to S$ with $K$ algebraically closed perfectoid field $K$ and open integrally closed subring $K^+\subset K^\circ.$ The same proof as above proves that $f|_{X_K^{sch}}=g|_{X_K^{sch}}$.

    By \Cref{equiv_G_bundles}, we can identify $J_a$-bundles on $X_S$ with exact $\O_{\mathfrak c_\L}$-linear symmetric monoidal functors $\Rep_{\mathfrak c_\L}(J_\L) \to \mathrm{VB}(X_S^{sch})$. Then the result follows from part (\ref{mor is identity}) of \Cref{elt in vb vanishes}.
\end{proof}

Due to this lemma, we will henceforth regard $[\mathrm{Higgs}_a(S)/\mathcal P_a(S)]$ as a 1-groupoid.

\begin{definition}
    Define $[\mathrm{Higgs}_a/\mathcal P_a]_\et$ to be the \'etale-stackification of $T\mapsto [\mathrm{Higgs}_a(T)/\mathcal P_a(T)]$.
    Define $[\Gr_{G,a,v}/\mathcal P_{a,v}^{loc}]_\et$ to be the \'etale-stackification of $S\mapsto [\Gr_{G,a,v}(T)/\mathcal P_{a,v}^{loc}(T)].$
\end{definition}

\begin{remark}\label{rmk: morphisms}
    Let $T\to \Spa C$ be an affinoid perfectoid.
    Using the fact that $\mathcal P_a$ and $\mathrm{Higgs}_a$ are both v-stacks, one can easily verify that for any $x,y \in \mathrm{Higgs}_a(T)$, the presheaf of isomorphisms from $x$ to $y$ in $[\mathrm{Higgs}_a(T)/\mathcal P_a(T)]$ is a v-sheaf. It follows from \cite[Lemma 02ZN]{stacks-project} that this is also the sheaf of isomorphisms from $x$ to $y$ in $[\mathrm{Higgs}_a/\mathcal P_a](T)$. In particular, the isomorphisms from $x$ to $y$ in $[\mathrm{Higgs}_a(T)/\mathcal P_a(T)]$ and in $[\mathrm{Higgs}_a/\mathcal P_a]_{\et}(T)$ agree.
\end{remark}

\begin{remark}
    We take \'etale stackification to ensure that for every geometric point\footnote{A geometric point is an adic space of the form $\Spa(K,K^+)$, where $K$ is an algebraically closed perfectoid field and $K^+$ is an open and bounded valuation subring of $K$.} $\Spa(K,K^+)$, the value at $\Spa(K,K^+)$ of a presheaf $\mathcal F$ agrees with the value at $\Spa(K,K^+)$ of the stackification of $\Spa(K,K^+)$. This is used in the proof below.
\end{remark}

We have the following analogue of \cite[Theoreme 4.6]{Ngo_2006}, which roughly says that modulo the actions of the Picard stacks, the Hitchin fiber factorises as a product of affine Springer fibers.
\begin{theorem}\label{product-formula}
    Suppose that $a\in \mathcal A_G(\Spa C)$ is generically regular semisimple. Then there is a canonical morphism
    \begin{equation*}
        F:\prod_{\text{closed }v\in |X_C^{sch}|} [\Gr_{G,a,v}/\mathcal P_{a,v}^{loc}]_\et \to [\mathrm{Higgs}_a/\mathcal P_a]_\et
    \end{equation*}
    of \'etale-stacks over $\Spa C$
    and all but finitely many terms on the left are trivial (i.e. isomorphic to $\Spa C$).
    This is injective, i.e. the map on the $\Spa(R,R^+)$-valued points is fully faithful for every $\Spa(R,R^+)\in \Perf_{\Spa C}.$
    Moreover, $F$ is an equivalence on the $\Spa(K,K^+)$-valued points for every geometric point $\Spa(K,K^+)\in \Perf_{\Spa C}.$
\end{theorem}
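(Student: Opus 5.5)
Following Ngô's proof of \cite[Théorème 4.6]{Ngo_2006}, I would build $F$ by Beauville–Laszlo gluing along the regular locus. Let $f:X_C^{sch}\to\c_\L$ correspond to $a$, and let $U:=f^{-1}(\c_\L^{rs})$. As in the proof of \Cref{lem:torsor on punctured disc}, $U$ is the complement of a finite set $\{v_1,\dots,v_k\}$ of closed points. For $v\notin\{v_i\}$, \Cref{abelianisation for Gr} shows that $[\Gr_{G,a,v}/\mathcal P^{loc}_{a,v}]$ is trivial, so the product is really the finite product over the $v_i$. This gives the finiteness claim.

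\emph{Construction of $F$.} It suffices to define $F$ on the prestack level and then étale-stackify. Fix an affinoid perfectoid $T\to\Spa C$ and points $(\E_i,\phi_i,\iota_i)\in\Gr_{G,a,v_i}(T)$. Over $U_T:=U\times X_T^{sch}$ (more precisely, $X_T^{sch}$ minus the divisors $v_{i,T}$) take the regular Higgs bundle $(\E_0,\Kos(a))$. Using $\iota_i$, glue $(\E_i,\phi_i)$ on $\mathbb D_{T,v_i}$ to $(\E_0,\Kos(a))$ along $\mathbb D^*_{T,v_i}$. The tool is the Beauville–Laszlo corollary, applied for an affine open containing $v_i$ with $f$ a local equation of $v_{i,T}$; over $T$ one must check that the completion along $v_{i,T}$ is $B^+_{Div^1}(T)$. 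Exactness is preserved by \Cref{exactness for BL} and the Tannakian formalism. The glued object lies in $\mathrm{Higgs}_a(T)$, because its image in $\mathcal A_G$ agrees with $a$ on $U_T$ and on each $\mathbb D_{T,v_i}$, and hence everywhere by \Cref{lem: injective} and \Cref{elt in vb vanishes}.

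\emph{Equivariance.} The gluing is equivariant for $\prod_i\mathcal P^{loc}_{a,v_i}\to\mathcal P_a$. Here a local torsor with trivialisation on $\mathbb D^*$ is glued to the trivial $J_a$-torsor on $U_T$ by the same Beauville–Laszlo argument. Compatibility of the actions follows from their common construction via \Cref{inertia gives action over a base} together with \Cref{3 actions}. So $F$ descends to the quotients and then to the étale stackifications.

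\emph{Full faithfulness.} By \Cref{rmk: morphisms}, morphisms in the target are computed in the prestack quotient. A morphism $F(x)\to F(y)$ is a pair $(Q,\alpha)$ with $Q\in\mathcal P_a(T)$ and $\alpha:Q*F(x)\isomto F(y)$. On $U_T$ both sides are regular, so \Cref{Higgs^reg is a gerbe} and \Cref{lem:J isom} show that $\alpha|_{U_T}$ is the same datum as a trivialisation of $Q|_{U_T}$, up to the $J_a$-action. Restricting to the formal discs then gives local pairs $(Q|_{\mathbb D_{v_i}},\text{triv})\in\mathcal P^{loc}_{a,v_i}$ together with local isomorphisms. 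Conversely, local data glue back. Injectivity on $2$-morphisms follows from the lemma that $[\mathrm{Higgs}_a(S)/\mathcal P_a(S)]$ is a $1$-groupoid.

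\emph{Essential surjectivity on geometric points, the main obstacle.} Let $T=\Spa(K,K^+)$ with $K$ algebraically closed, and let $(\E,\phi)\in\mathrm{Higgs}_a(T)$. The restriction $(\E,\phi)|_{U_T}$ is regular, so it corresponds via \Cref{Higgs^reg is a gerbe} to a $J_a$-torsor $Q$ on $U_T$. We want $Q$ to extend to $X_T^{sch}$ and, after acting by its inverse, to make $(\E,\phi)|_{U_T}\cong(\E_0,\Kos(a))$. Restricting to each $\mathbb D_{T,v_i}$ then gives an element of $\Gr_{G,a,v_i}(T)$, once the punctured trivialisations are fixed; \Cref{lem:torsor on punctured disc}(2) says that $J_a$-torsors on $\mathbb D^*_{T,v_i}$ are trivial. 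The delicate point is extending $Q$ across the $v_i$. My plan is to extend it by gluing with the trivial torsor on each $\mathbb D_{T,v_i}$, which uses the triviality on $\mathbb D^*$ just cited. Then, modulo $\mathcal P_a(T)$, the restriction $(\E,\phi)|_{U_T}$ becomes isomorphic to $(\E_0,\Kos(a))$. Beauville–Laszlo then identifies $(\E,\phi)$ with $F$ of its local restrictions.

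I expect the hardest step to be verifying Beauville–Laszlo for $X_T^{sch}$ along $v_{i,T}$ with completion $B^+_{dR}$ when $T$ is not a field. This matters both for defining $F$ in families and for full faithfulness; the geometric-point statement avoids only part of this issue.
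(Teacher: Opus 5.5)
Your proposal is correct and takes essentially the same approach as the paper, which also adapts Ngô's argument. Both use Beauville--Laszlo gluing of the local triples to $(\E_0,\Kos(a))$ over the regular semisimple locus, full faithfulness via \Cref{Higgs^reg is a gerbe} (the torsor must be trivial on $U_T$, so a morphism reduces to local data in $\prod_v \mathcal P^{loc}_{a,v}$), and essential surjectivity at geometric points by extending the $J_a$-torsor across the finitely many bad points using \Cref{lem:torsor on punctured disc}. The paper also uses, without further comment, the Beauville--Laszlo comparison for $X_T^{sch}$ along $v_T$ with completion $B^+_{Div^1}(T)$, which you single out as the delicate point.
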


\begin{proof}
    The element $a\in \mathcal A_G(\Spa C)$ corresponds to a map $f:X_C^{sch}\to \mathfrak c_\L$.
    We have already seen in the proof of \Cref{lem:torsor on punctured disc} that $U:=f^{-1}(\mathfrak c_\L^{rs})$ is a non-empty open subset in $X_C^{sch}$ whose complement in $X_C^{sch}$ is a finite set of closed points.
    For each closed point $v\in |X_C^{sch}|$ that lies in $U$, the quotient $[\Gr_{G,a,v}/\mathcal P_{a,v}^{loc}]$ is trivial by \Cref{abelianisation for Gr}.

    Using \Cref{Higgs^reg is a gerbe,Gr^reg equiv to P^loc} and \Cref{lem:torsor on punctured disc}, we can prove the remaining assertion in the same way as \cite[Theoreme 4.6]{Ngo_2006}. Let us sketch the argument.

    Let $T\to \Spa C$ be an affinoid perfectoid (for which the pullback $D_T$ of $D_{\Spa C}$ arises from an untilt of $T$ over $E$).
    Let $(\E_v,\phi_v,\iota_v)_v \in \prod_{\text{closed }v\in |X_C^{sch}|} \Gr_{G,a,v}(T)$.
    Let $U'$ be the pullback of $U$ along $X_T^{sch} \to X_C^{sch}$.
    By the Beauville-Laszlo lemma, we can glue $(\E_v)_{v\in |X_C^{sch}|\setminus |U|}$ with the trivial $G$-bundle on $U'$ via the trivialisations $(\iota_v)$ to get a $G$-bundle $\E$ on $X_T^{sch}$.
    Note that global sections of $\Ad(\E)_\L$ correspond to morphisms $\O \to \Ad(\E)_\L$, so by the fully faithfulness part of the Beauville-Laszlo lemma, we can glue $(\phi_v)_{v\in |X_C^{sch}|\setminus |U|}$ with $\phi_0|_{U'} \in H^0(U',\Ad(\E)_\L)$ to get a section $\phi \in H^0(X_T^{sch},\Ad(\E)_\L)$.
    Then $(\E,\phi)\in \mathrm{Higgs}_a(T)$.
    This defines a map $\prod_{\text{closed }v\in |X_C^{sch}|} \Gr_{G,a,v} \to \mathrm{Higgs}_a$, which is $\mathcal P_{a,v}^{loc}$ and $\mathcal P_a$ equivariant, so it induces a map \begin{equation*}
        F_0:\prod_{v\in |X_C^{sch}|\setminus |U|} [\Gr_{G,a,v}/_{pre}\;\mathcal P_{a,v}^{loc}] \to [\mathrm{Higgs}_a/_{pre}\;\mathcal P_a].
    \end{equation*}
    Taking \'etale stackification gives
    \begin{equation*}
        F:\prod_{v\in |X_C^{sch}|\setminus |U|} [\Gr_{G,a,v}/\mathcal P_{a,v}^{loc}]_\et \to [\mathrm{Higgs}_a/\mathcal P_a]_\et.
    \end{equation*}

    Since $F$ is the stackification of $F_0$ and $[\mathrm{Higgs}_a(T)/\mathcal P_a(T)] \to [\mathrm{Higgs}_a/\mathcal P_a]_\et(T)$ is fully faithful for every affinoid $T\in\Perf_{\Spa C}$ by \Cref{rmk: morphisms},
    to show that $F$ is injective, it is enough to prove that $F_0(T)$ is fully faithful for every $T$ in a basis of $\Perf_C$.
    Let $T\to \Spa C$ be an affinoid perfectoid (for which the pullback $D_T$ of $D_{\Spa C}$ arises from an untilt of $T$ over $E$).
    Let $m=(m_v),n=(n_v)  \in \prod_{v\in |X_C^{sch}|\setminus |U|} \Gr_{G,a,v}(T)$.
    Then a morphism from $F(m)$ to $F(n)$ in $[\mathrm{Higgs}_a(T)/\mathcal P_a(T)]$ is given by a pair $(P,\theta)$, where $P\in \mathcal P_a(T)$ and $\theta: P*F(m)\isomto F(n)$ is an isomorphism.
    We know that $F(m)|_{U'}=(\E_0|_U,\phi_0|_U)$ and $F(n)|_{U'}=(\E_0|_U,\phi_0|_U)$. 
    By \Cref{Higgs^reg is a gerbe} and \Cref{2 actions on Higgs}, we necessarily have $P|_{U'} \cong J_{a}|_{U'}$.
    We also know that for each $v\in |X_C^{sch}|\setminus |U|$, $F(m)|_{\mathbb D_{T,v}} = m_v$ and $F(n)|_{\mathbb D_{T,v}} = n_v$.
    By the Beauville-Laszlo lemma, the data of such a pair $(P,\theta)$ is thus the same as $(P_v)\in \prod_{v\in |X_C^{sch}|\setminus |U|} \mathcal P_{a,v}^{loc}(T)$. 

    Finally, let $T=\Spa(K,K^+)\in \Perf_{\Spa C}$ be a geometric point.
    To show that 
    \begin{equation*}
        F(T):\prod_{v\in |X_C^{sch}|\setminus |U|} [\Gr_{G,a,v}(T)/\mathcal P_{a,v}^{loc}(T)] \to [\mathrm{Higgs}_a(T)/\mathcal P_a(T)]
    \end{equation*}
    is an equivalence of categories, it remains to show that this is essentially surjective.
    Let $(\E,\phi) \in \mathrm{Higgs}_a(T)$.
    By \Cref{Higgs^reg is a gerbe} and \Cref{2 actions on Higgs}, there is a $J_a$-torsor $P'$ on $U'$ and an isomorphism $P'*(\E,\phi)|_{U'} \xrightarrow{\iota} (\E_0,\phi_0)|_{U'}$.
    By \Cref{lem:torsor on punctured disc}, for every $v\in |X_C^{sch}|\setminus |U|$,  $P'|_{\mathbb D_{T,v}^*}$ is a trivial $J_{a_v}$-torsor on $\mathbb D_{T,v}^*$.
    In particular, it can be extended to a $J_{a_v}$-torsor on $\mathbb D_{T,v}$.
    By the Beauville-Laszlo lemma, we can thus extend $P'$ to a $J_a$-torsor on $X_T^{sch}$, which we still denote as $P'$.
    By replacing $(\E,\phi)$ by $P'*(\E,\phi)$, we can assume we have an isomorphism $(\E,\phi)|_{U'} \xrightarrow{\iota} (\E_0,\phi_0)|_{U'}$.
    Then $((\E,\phi)|_{\mathbb D_{T,v}}, \iota|_{{\mathbb D^*_{T,v}}})_v \in \prod_{v\in |X_C^{sch}|\setminus |U|} \Gr_{G,a,v}(T)$ and its image under $F$ is isomorphic to $(\E,\phi)$.
\end{proof}

\section{Connections with number-theoretic objects}
\subsection{Adelic description of $\Higgs(C)$}
Fix $C$ an algebraically closed perfectoid field.
For simplicity, we take $\L=\O_{X_C^{sch}}$ (in the definition of Higgs bundles).
Throughout the section, we let $\eta$ be the generic point of $X^{sch}_C=X_{\Spa C}^{sch}$ and $$F=\mathcal O_{X_{C}^{sch},\eta}$$ be the function field of $X_{C}^{sch}$.
If $E=\Q_p$, we have $F=\Frac(B_{crys}^{\phi=1})$, where $B_{crys}$ is the crystalline period ring \cite[Theorem 13.5.3]{Berkeley}.
For each closed point $v$ of $X_{C}^{sch}$, we let $\O_v:=\hat{\O_{X_C^{sch},v}} = B^+_{\Div^1_X}(\Spa C)$ and $\mathbb D_v:=\Spec \O_v$ ($=\mathbb D_{C,v}$ in previous sections), where the implicit map $\Spa C\to \Div^1_X$ corresponds to the untilt corresponding to $v$.
Also, let $F_v:=\Frac(\O_v)=B_{\Div^1_X}(\Spa C)$ and $\mathbb D_v^*:=\Spec F_v$($=\mathbb D^*_{C,v}$ in previous sections).
Note that $F\subset F_v$.
Following \cite[Section 1]{Ngo_2006}, we shall give in this section an adelic description of $\Higgs(C)$.

\begin{proposition}\label{prop:extend}
    Every $G$-torsor on $\Spec F$ can be extended to $X_{\Spa C}^{sch}$.
    In particular, we have a surjection $B(G) \twoheadrightarrow H^1(F,G)$, where $B(G)$ is the Kottwitz set of $G$ and the cohomology group on the right is the Galois cohomology.
\end{proposition}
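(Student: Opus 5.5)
Given a $G$-torsor $P_\eta$ on $\Spec F$, I will spread it out to a dense open subset and then fill in the finitely many missing points one at a time with the Beauville--Laszlo gluing.

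\textbf{Spreading out.} Since $G$ is of finite presentation, $P_\eta$ comes from a $G$-torsor $P_U$ on some dense open $U\subset X_C^{sch}$ by a standard limit argument. We have $F=\colim_U \O(U)$, and torsors together with their trivialising fppf (even étale, as $G$ is smooth) covers descend along such colimits. Because $X_C^{sch}$ is a noetherian, regular, one-dimensional integral scheme, its complement $X_C^{sch}\setminus U$ is a finite set of closed points $v_1,\dots,v_k$.

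\textbf{Filling in each point $v=v_i$.} The restriction $P_U|_{\mathbb D_v^*}$ is a $G$-torsor over $F_v=B_{dR}(C^\sharp)\cong C^\sharp((\xi))$. This field is $C_1$ (the fact already used in \Cref{lem:torsor on punctured disc}, via \cite[Section 2 Theorem 2]{C1-field}). By Steinberg's theorem (\cite[Section 8.6]{Borel-Springer}), $H^1(F_v,G)=0$ for the connected reductive group $G$, and the char condition is fine since $F_v$ has characteristic $0$ here. Hence $P_U|_{\mathbb D_v^*}$ is trivial. Choose a trivialisation $\iota_v$, and glue $P_U$ with the trivial torsor $\E_0$ on $\mathbb D_v$ along $\iota_v$.

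The gluing uses the $G$-bundle version of \Cref{BL lemma}, namely the corollary after \Cref{exactness for BL}. It applies on an affine open $\Spec A\ni v$ with $v=V(f)$, where $\hat A=\O_v$. The complement of $v$ in $\Spec A$ is $\Spec A[1/f]$, after shrinking so that $v$ is the only missing point in $\Spec A$. The resulting torsor on $\Spec A$ agrees with $P_U$ on $\Spec A[1/f]$, so it glues with $P_U$ to a torsor on $U\cup\{v\}$. Doing this for each $v_i$ in turn yields a $G$-torsor on $X_C^{sch}$ extending $P_\eta$.

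\textbf{The surjection $B(G)\twoheadrightarrow H^1(F,G)$.} By \Cref{thm:GAGA} and its $G$-bundle remark, $G$-bundles on $X_C^{sch}$ are the same as $G$-bundles on $X_C$. By Fargues' classification, their isomorphism classes are in bijection with $B(G)$. The map $B(G)\to H^1(F,G)$ restricts a bundle to $\eta$, where $H^1(F,G)$ is identified with torsors over $\Spec F$ via Galois descent. It is surjective by the extension statement just proved.

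\textbf{Main obstacle.} The step I expect to be most delicate is the vanishing $H^1(F_v,G)=0$. It needs $F_v\cong C^\sharp((\xi))$ (a noncanonical isomorphism, which is fine since $C^\sharp$ is algebraically closed of characteristic $0$), the $C_1$ property, and Steinberg's theorem for connected reductive groups over perfect fields of cohomological dimension $\le1$. A secondary check is that the Beauville--Laszlo corollary requires $f$ to be a nonzerodivisor with $\hat A=\O_v$, which holds because $\Spec A$ is a Dedekind domain.
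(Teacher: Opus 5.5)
Your proof is correct and follows the same route as the paper, which defers to Ngô's proof of Lemme 1.1 in \cite{Ngo_2006}: spread the torsor out to a dense open $U$, use that its restriction to each $F_v\cong C^\sharp((\xi))$ is trivial because this field is $C_1$, glue with the trivial torsor on $\mathbb D_v$ by Beauville--Laszlo, and obtain the surjection by restricting $G$-bundles on $X_C$ (classified by $B(G)$) to $\eta$. One small correction: the paper allows $E$ to be of equal characteristic, and then $F_v$ has characteristic $p$ and is imperfect, so Steinberg's theorem does not apply as you stated it and your remark that $F_v$ has characteristic $0$ is false in that case; you need instead the Borel--Springer extension of Serre's Conjecture I to fields with $[K:K^p]\le p$, which is exactly what \cite[Section 8.6]{Borel-Springer} provides, so the conclusion still holds.
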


\begin{proof}
    For every untilt $C^\sharp$ of $C$, $B_{dR}(C^\sharp) \cong C^{\sharp}((t))$ is a $C_1$-field by \cite[Section 2 Theorem 2]{C1-field}, so every $G$-bundle on it is trivial by \cite[Section 8.6]{Borel-Springer}.
    The proof in \cite[Lemma 1.1]{Ngo_2006} then gives the result.
\end{proof}

\begin{remark}
    In the case where $E$ is a finite extension of $\Q_p$, Fargues conjectured in \cite[Conjecture 3.10]{Fargues-G-torsors} that $F$ is a $C_1$-field. If this is true, then $H^1(F,G)=1$.
\end{remark}

For each $c\in H^1(F,G)$, we can by \Cref{prop:extend} pick a $G$-torsor $\E_c$ on $X_C^{sch}$ such that the cohomology class corresponding to $(\E_c)_\eta$ is $c$.
Let $$G_c:=\underline{\Aut}(\E_c)$$ be the sheaf on $(Sch/X_{C}^{sch})_{\fppf}$ given by $(U\to X_C^{sch})\mapsto \Aut_G(\E_c|_U)$.
Let $$\g_c$$ be the sheaf on $(Sch/X_{C}^{sch})_{\fppf}$ given by $\Spec R\mapsto \ker(G_c(R[\epsilon]) \to G_c(R))$, where $\epsilon^2=0$.

\begin{lemma}\label{g extend}
    If $\gamma\in \g_c(F)$, then there is an open subscheme $U\subset X_C^{sch}$ such that $\gamma$ extends to an element of $\g_c(U)$, and this extension is unique for any fixed $U$. The same holds for $\gamma'\in G_c(F)$.
\end{lemma}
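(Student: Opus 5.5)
The plan is to show that $\g_c$ and $G_c$ are representable by affine schemes of finite presentation over $X_C^{sch}$. Once that is known, the statement is the usual ``spreading out'' from the generic point, using $F=\O_{X_C^{sch},\eta}=\colim_U \O(U)$.

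\textbf{Step 1: representability.} Since $\E_c$ is a $G$-torsor with $G$ affine of finite type over $E$, it is representable by an affine scheme over $X_C^{sch}$ by fppf descent of affine morphisms. Hence $G_c=\underline{\Aut}(\E_c)=\E_c\times^G G$, with $G$ acting on itself by conjugation, is an inner twist of $G_{X_C^{sch}}$. By descent of affine morphisms again, it is representable by an affine group scheme of finite presentation over $X_C^{sch}$; being of finite presentation is fppf local on the base.

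Likewise, $\g_c=\Ad(\E_c)=\E_c\times^G\g$ is a vector bundle on $X_C^{sch}$. Equivalently, it is the total space $\Spec\Sym(\Ad(\E_c)^\vee)$, which is affine of finite presentation. The identification of $\ker(G_c(R[\epsilon])\to G_c(R))$ with $\Ad(\E_c)(R)$ can be checked fppf locally, where $\E_c$ is trivial and the statement is the definition of $\g$.

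\textbf{Step 2: extension and uniqueness.} Let $Y$ be either $\g_c$ or $G_c$. Work on an affine open $V=\Spec B$ containing $\eta$; any nonempty open of the irreducible Noetherian $X_C^{sch}$ works. Then $Y|_V=\Spec A$ with $A$ a finitely presented $B$-algebra, and $F=\colim_{f\ne 0}B_f$. We have
\[Y(F)=\Hom_B(A,F)=\colim_{f}\Hom_B(A,B_f)=\colim_f Y(\Spec B_f),\]
by finite presentation. So $\gamma$ comes from some $\gamma_U\in Y(U)$ with $U=\Spec B_f$, which gives existence.

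For uniqueness on a fixed $U$, suppose two extensions $s,t\in Y(U)$ agree at $\eta$. Cover $U$ by affine opens $\Spec B'$. Since $X_C^{sch}$ is integral (as recalled in the proof of \Cref{lem:torsor on punctured disc}), the map $B'\to\Frac(B')=F$ is injective. Hence $s,t\colon A'\to B'$ agree after composing with $B'\hookrightarrow F$, so $s=t$ on each piece and therefore on $U$. This is the same argument as in the injectivity step of the lemma preceding \Cref{product-formula}, and it only uses that $Y\to X_C^{sch}$ is affine, so separatedness suffices.

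\textbf{Main obstacle.} The substantive step is Step 1, specifically the finite presentation of $G_c$. If the descent argument is cumbersome, I would fall back on the Tannakian/inner-form viewpoint: embed $G\hookrightarrow\GL_n$ and realize $G_c$ as a closed subgroup scheme of $\GL(\E_c\times^G E^n)$ cut out by finitely many equations. Then extension reduces to extending finitely many matrix entries, together with the inverse of the determinant, which is the matrix case of Step 2. Noetherianity of $X_C^{sch}$ makes finite type equivalent to finite presentation, so no subtlety remains there.
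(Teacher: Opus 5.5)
Your proof is correct, but it is organised differently from the paper's.

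\textbf{The paper's route.} The paper never shows that $G_c$ or $\g_c$ is representable. It picks an affine open $\Spec B$ and writes $\E_c|_{\Spec B}=\Spec A$, with $A$ flat and finitely presented over $B$. It then treats $\gamma$ directly as a $G$-equivariant $F[\epsilon]$-algebra endomorphism of $A\otimes_B F[\epsilon]$. Finite presentation of $A$ lets it spread this ring map out to $A\otimes_B B[\frac{1}{b}][\epsilon]$. Finally, $B$-flatness of $A$ gives injectivity of $A\otimes_B B[\frac{1}{b}]\to A\otimes_B F$. From this it concludes that the spread-out map is still $G$-equivariant, lies in the kernel of $\epsilon\mapsto 0$, and is unique.

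\textbf{Your route.} You first show that $G_c$ (as the inner twist $\E_c\times^G G$) and $\g_c$ (as the vector bundle $\Ad(\E_c)$) are affine of finite presentation over $X_C^{sch}$. After that, existence is the formal identity $Y(F)=\colim_f Y(\Spec B_f)$, and uniqueness is the injectivity of $B'\to F$.

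\textbf{Comparison.} Your approach handles $G_c$ and $\g_c$ uniformly. The equivariance, invertibility and kernel conditions are built into the representing scheme, so nothing has to be re-checked after spreading out. The price is that you must front-load two inputs. One is descent of affine schemes for the inner form. The other is the identification $\g_c\cong\Ad(\E_c)$, or some other argument that the Lie-algebra functor of $G_c$ is representable. In the paper that identification is only proved afterwards, in \Cref{ad of E_c is Lie}. Its proof does not use the present lemma, so there is no circularity. The paper's approach avoids these inputs by working directly with the coordinate ring of the torsor, at the cost of the flatness check.

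\textbf{A small citation point.} You need integrality of $X_C^{sch}$ both for $F=\colim_f B_f$ and for $B'\hookrightarrow F$. The proof you cite only records irreducibility, which does not give reducedness. Cite instead that $X_C^{sch}$ is a Dedekind scheme, i.e.\ regular, noetherian and one-dimensional. The paper's own proof uses this implicitly as well, when it writes $\Frac(B)$.
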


The idea is very simple: objects finitely presented over $\Frac(B)$ are defined over $B[\frac{1}{b}]$ for some $b$.

\begin{proof}
    Let us prove the lemma for $\g_c(F)$. The proof for $G_c(F)$ is similar.

    Pick a non-empty affine open subscheme $U'=\Spec B\subset X_C^{sch}$. Then $\E_c|_{U'}$ is representable by a flat, finitely presented affine scheme $U=\Spec A$ over $U'$.
    Then $\gamma:\E_c|_{\Spec F[\epsilon]} \to \E_c|_{\Spec F[\epsilon]}$ corresponds to a $\Frac(B)[\epsilon]$-algebra homomorphism $A\otimes_B \Frac(B)[\epsilon] \to A\otimes_B \Frac(B)[\epsilon]$.
    As $A$ is finitely presented over $B$, there exists $b\in B\setminus\set{0}$ such that the ring homomorphism arises from a $B[\frac{1}{b}][\epsilon]$-algebra homomorphism 
    \begin{equation}\label{eq:A otimes B}
        A\otimes_B B[\frac{1}{b}][\epsilon] \to A\otimes_B B[\frac{1}{b}][\epsilon].
    \end{equation}
    This is $G$-equivariant.
    By the $B$-flatness of $A$, $A\otimes_B B[\frac{1}{b}] \hookrightarrow A\otimes_B \Frac{B}$ is injective.
    It follows that \eqref{eq:A otimes B} is $G$-equivariant,
    is in $\g_c(\Spec B[\frac{1}{b}])=\ker(\Aut_G(\E_c|_{B[\frac{1}{b}][\epsilon]}) \to \Aut_G(\E_c|_{B[\frac{1}{b}]})),$
    and is the unique extension of $\gamma$ to $\Spec B[\frac{1}{b}]$.
    The uniqueness of extension to general $U$ (such that $\gamma$ extends to an element of $\g_c(U)$) can be deduced by applying the above to an affine open $U'\subset U$ and take $b=1$.
\end{proof}

\begin{lemma}\label{ad of E_c is Lie}
    We have $\Ad(\E_c) \cong \g_c$.
\end{lemma}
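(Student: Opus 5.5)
We want $\Ad(\E_c)\cong \g_c$, where $\Ad(\E_c)=\E_c\times^G \g_{X_C^{sch}}$ with $G$ acting on $\g$ by the adjoint action, and $\g_c(\Spec R)=\ker(G_c(R[\epsilon])\to G_c(R))$. The approach is to first identify $G_c=\underline{\Aut}(\E_c)$ with the inner form $\E_c\times^G G$, where $G$ acts on itself by conjugation. Then we apply the Lie algebra functor, which commutes with the contracted product because everything can be checked fppf locally, where $\E_c$ is trivial.

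\textbf{Step 1.} Construct a canonical map $\E_c\times^G G\to \underline{\Aut}(\E_c)$. On local sections $[e,g]$, it sends $[e,g]$ to the unique $G$-equivariant automorphism $\alpha$ with $\alpha(e)=eg$. This is well defined: replacing $(e,g)$ by $(eh^{-1},hgh^{-1})$ gives $\alpha(eh^{-1})=egh^{-1}$, which is the same automorphism. Since $G$ acts on $\E_c$ freely and transitively (as in the proof of \Cref{alt_defn_for_quotient_stack}), this defines a morphism of presheaves, and it sheafifies. It is an isomorphism because this can be checked fppf locally by \cite[04TQ]{stacks-project}. Locally $\E_c$ is trivial, and then both sides are $G$: every $G$-equivariant automorphism of the trivial torsor $G$ is left multiplication by some element of $G$.

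\textbf{Step 2.} Compute $\g_c$. For any $X_C^{sch}$-algebra $R$ we get
\[\g_c(R)=\ker\bigl((\E_c\times^G G)(R[\epsilon])\to (\E_c\times^G G)(R)\bigr).\]
Define a map $\E_c\times^G \g\to \g_c$ by $[e,v]\mapsto [e_{R[\epsilon]},\,1+\epsilon v]$, regarding $v\in\g(R)=\ker(G(R[\epsilon])\to G(R))$. This is compatible with the equivalence relation: conjugating by $h\in G(R)$ gives $\Ad_h(v)$ on the Lie algebra side, which matches the adjoint action used to define $\Ad(\E_c)$. Again we check the isomorphism fppf locally. There $\E_c$ is trivial, and the map becomes the identity of $\g=\ker(G(R[\epsilon])\to G(R))$.

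\textbf{Main obstacle.} The only subtle point is sheaf-theoretic. The functor $R\mapsto \g_c(R)$ evaluates $G_c$ on $R[\epsilon]$, while a trivialisation of $\E_c$ over an fppf cover $\Spec R'\to\Spec R$ must be compatible with $R'[\epsilon]=R'\otimes_R R[\epsilon]$. This is fine: $\Spec R'[\epsilon]\to \Spec R[\epsilon]$ is again an fppf cover, and restricting the trivialisation along $R'\to R'[\epsilon]$ gives a trivialisation over $\Spec R'[\epsilon]$. So both $\g_c$ and $\Ad(\E_c)$ are fppf sheaves on $(Sch/X_C^{sch})_{\fppf}$, and a map of sheaves that is an isomorphism locally is an isomorphism. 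Finally, under the identification of $\Ad(\E_c)$ with a vector bundle, the isomorphism is $\O$-linear, since the local identification $\g=\operatorname{Lie}(G)\otimes\O$ is linear.
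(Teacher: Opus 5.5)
Your proof is correct. It produces the same map as the paper: transported through your Step 1, $[e,v]\mapsto[e,1+\epsilon v]$ is the automorphism of $\E_c|_{R[\epsilon]}$ sending $e$ to $e(1+\epsilon v)$, which is exactly the paper's $\alpha^{-1}\circ\Ad_{\alpha(e)}(v)\circ\alpha$.

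The difference is in how the argument is organised. The paper fixes an fppf cover $V\to U$ and a trivialisation $\alpha$. It identifies both $\Ad(\E_c)(U)$ and $\g_c(U)$ with the descent set $\{v\in\g(V):\Ad_t(\pi_1^*v)=\pi_2^*v\}$. It then checks by hand that the resulting bijection is independent of $\alpha$ and $V$ before gluing. You instead construct a canonical map of sheaves first, going through the inner twist $\underline{\Aut}(\E_c)\cong\E_c\times^G G$, and use a cover only to check bijectivity. This makes independence of choices automatic.

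Your route also brings out two things the paper leaves implicit:
\begin{itemize}
\item The paper's phrase ``the same description holds for $\g_c(U)$'' silently uses that $R\mapsto G_c(R[\epsilon])$ is an fppf sheaf; you justify this by noting that $\Spec R'[\epsilon]\to\Spec R[\epsilon]$ is again a cover.
\item You get the group-level statement $G_c\cong\E_c\times^G G$ for free.
\end{itemize}

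One small point to state explicitly: your Step 1 map is an isomorphism of group sheaves (it sends $[e,1]$ to $\id$). That is what lets you identify the two kernels in Step 2.

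The paper's version, for its part, writes down the cocycle $t$ explicitly. This matches how the trivialisations $\iota_v$ are used immediately afterwards to view elements of $\g_c(F_v)$ inside $\g(F_v)$.
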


\begin{proof}
    Let $U$ be an affine scheme over $X_C^{sch}$.
    Pick an affine fppf cover $V\to U$ and an isomorphism $\E_c|_{V}\xrightarrow{\alpha} G|_V$.
    Let $\pi_1,\pi_2:V\times_U V \to V$ be the projections to the left and right factors respectively. Let $t= \pi_2^*(\alpha) \circ \pi_1^*(\alpha)$.
    To give an element of $\Ad(\E_c)(U)$ is the same as giving an element of $\mathrm{eq}(\Ad(\E_c)(V) \rightrightarrows \Ad(\E_c)(V\times_U V))$, i.e. an element $v\in \g(V)$ such that $\Ad_{t} (\pi_1^* v) =\pi_2^*v$. The same description holds for $\g_c(U)$. This gives an isomorphism 
    \begin{equation}\label{desired isom}
        \Ad(\E_c)(U) \cong \g_c(U)
    \end{equation}
    By tracing through the definitions, we see that this is given by descending
    \begin{align*}
        \Ad(\E_c)(V)=(\E_c \times^G \g)(V) &\isomto \g_c(V)\\
        (e,v) &\mapsto \alpha^{-1} \circ \Ad_{\alpha(e)} (v) \circ \alpha,
    \end{align*}
    where $\g_c(V)$ is viewed as an element of $\Aut_G(\E_c|_{\O(V)[\epsilon]})$ and $\Ad_{\alpha(e)} (v)$ is viewed as an element of $\Aut_G(G|_{\O(V)[\epsilon]})$.
    It is clear from this expression that \eqref{desired isom} is independent of the choice of $\alpha$ and is compatible with passing to affine fppf cover $V'\to V$ of $V$, so \eqref{desired isom} is also independent of the choice of $V$. As $U$ varies, we get an isomorphism of sheaves.
\end{proof}

For each closed point $v$ of $X_C^{sch}$, we pick a trivialisation $$\iota_v:\E_c|_{\mathbb D_v}\isomto G|_{\mathbb D_v}$$ which exists by the proof of \Cref{J-bundle etale locally trivial}.
Then $\iota_v$ induces isomorphisms $G_c|_{\mathbb D_v}\isomto G|_{\mathbb D_v}$ and $\g_c|_{\mathbb D_v}\isomto \g|_{\mathbb D_v}$. 
We shall use these isomorphisms to view elements of $G_c(F_v)$ as elements of $G(F_v)$ and to view elements of $\g_c(F_v)$ as elements of $\g(F_v)$.

Note that $\g_c(F)\subset \g_c(F_v)$.
To see this, note that by definition $\g_c(F) \subset \Aut(\E_c|_{\Spec F[\epsilon]})$.
We know that $\E_c|_{\Spec F[\epsilon]}$ is representable by an affine flat $\Spec F[{\epsilon}]$-scheme $\Spec A$.
In particular, $A\hookrightarrow A\otimes_{F[\epsilon]} F_v[\epsilon]$, so $\Aut(A)\hookrightarrow \Aut(A\otimes_{F[\epsilon]} F_v[\epsilon])$. This implies $\g_c(F)\subset \g_c(F_v)$.
In particular, elements of $\g_c(F)$ can also be viewed as elements of $\g(F_v)$.

We have the following analogue of \cite[Corollaire 1.3]{Ngo_2006}, whose proof is omitted there. The relevant \cite[Proposition 1.2]{Ngo_2006} is proved there however.
\begin{proposition}
    We have an equivalence of categories between $\Higgs(C)$ and the groupoid of triples
    \[(c,\gamma,(g_v)_{\text{closed }v\in |X_C^{sch}|})\]
    where
    \begin{itemize}
        \item $c\in H^1(F,G)$
        \item $\gamma\in \g_c(F)$
        \item $g_v\in G(F_v)/G(\O_v)$ such that\footnotemark $\Ad_{g_v^{-1}} (\gamma) \in \g(\O_v)$ for every $v$ and $g_v$ is trivial for all but finitely many $v$.
    \end{itemize}
    \footnotetext{More precisely, $\Ad_{g_v^{-1}} (\iota_v \gamma \iota_v^{-1}) \in \g(\O_v)$.}
    The morphisms are given by
    \[\Mor((c,\gamma,(g_v)),(c',\gamma',(g'_v)))=
    \begin{cases}
        \set{h\in G_c(F): hg_v\in g'_v G(\O_v)  \text{ for\footnotemark all }v, \gamma'=\Ad_{h}(\gamma)} & \text{if } c=c' \\
        \varnothing & \text{if } c\neq c'.
      \end{cases}
    \]
    \footnotetext{More precisely, $\iota_v h\iota_v^{-1}g_v\in g'_v G(\O_v)$.}
\end{proposition}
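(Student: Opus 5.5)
We construct a functor from $\Higgs(C)$ to the groupoid of triples, and show it is fully faithful and essentially surjective using the Beauville-Laszlo lemma at each closed point, in the spirit of \cite[Proposition 1.2]{Ngo_2006}.

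\textbf{Construction of the functor.} Let $(\E,\phi)\in\Higgs(C)$. Let $c\in H^1(F,G)$ be the class of $\E_\eta$. Since $(\E_c)_\eta$ has the same class, there is an isomorphism $\beta:\E_\eta\isomto(\E_c)_\eta$. Such an isomorphism spreads out, exactly as in the proof of \Cref{g extend}, to an isomorphism $\beta_U:\E|_U\isomto\E_c|_U$ over a nonempty open $U$. By \Cref{ad of E_c is Lie}, transporting $\phi_\eta$ along $\beta$ gives $\gamma\in\g_c(F)$.

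For each closed point $v$, choose a trivialisation $\iota'_v:\E|_{\mathbb D_v}\isomto G|_{\mathbb D_v}$, which exists by the proof of \Cref{J-bundle etale locally trivial}. Then set
\[
g_v:=\iota_v\circ\beta|_{\mathbb D_v^*}\circ\iota_v'^{-1}\in G(F_v).
\]
Changing $\iota'_v$ changes $g_v$ by right multiplication by an element of $G(\O_v)$, so the class $g_vG(\O_v)$ is well defined. The condition $\Ad_{g_v^{-1}}(\iota_v\gamma\iota_v^{-1})\in\g(\O_v)$ holds because this element is the image of $\phi|_{\mathbb D_v}$ under $\iota'_v$. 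For $v\in U$, the map $\beta$ extends over $\mathbb D_v$, so $g_v\in G(\O_v)$; hence $g_v$ is trivial for all but finitely many $v$.

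A different choice of $\beta$ differs by an element $h\in G_c(F)$. This replaces $(\gamma,(g_v))$ by $(\Ad_h\gamma,(hg_v))$, which is an isomorphism in the target groupoid. Likewise, an isomorphism $(\E,\phi)\to(\E',\phi')$, combined with the chosen $\beta$ and $\beta'$, yields such an $h$. This defines the functor on objects and on morphisms.

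\textbf{Full faithfulness.} A morphism $(\E,\phi)\to(\E',\phi')$ is an isomorphism $\E\isomto\E'$ respecting the Higgs fields. By \Cref{lem: injective} and the fully faithful part of the Beauville-Laszlo lemma, applied on an open $U$ together with finitely many discs, such a morphism is the same as an isomorphism over $\eta$ that extends over every $\mathbb D_v$. Here we also use that the isomorphism extends over a nonempty open, by the spreading-out argument of \Cref{g extend}.

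Transported via $\beta$ and $\beta'$, an isomorphism over $\eta$ is an $h\in G_c(F)$ with $\Ad_h\gamma=\gamma'$. Extending over $\mathbb D_v$ means exactly $\iota_vh\iota_v^{-1}g_v\in g'_vG(\O_v)$. Compatibility with the Higgs field needs to be checked only generically, by \Cref{lem: injective} together with the fact that $X_C^{sch}$ is integral. Finally, if $c\neq c'$, then $\E_\eta\not\cong\E'_\eta$, so there are no morphisms.

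\textbf{Essential surjectivity.} Given a triple $(c,\gamma,(g_v))$, use \Cref{g extend} to extend $\gamma$ to a section of $\g_c$ over some open $U$. Shrink $U$ so that $g_v\in G(\O_v)$ for all $v\in U$. Then glue $\E_c|_U$ with the trivial bundles on $\mathbb D_v$ for the finitely many $v\notin U$, using the gluing maps $\iota_v^{-1}g_v$ over $\mathbb D_v^*$. This uses the Beauville-Laszlo lemma, applied on an affine neighbourhood of each $v$ with $f$ a local equation of $v$.

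The Higgs field is glued in the same way. On $U$ take $\gamma$; on $\mathbb D_v$ take $\Ad_{g_v^{-1}}(\iota_v\gamma\iota_v^{-1})\in\g(\O_v)$. These agree on the overlaps, and the fully faithful part of Beauville-Laszlo applied to $\O\to\Ad(\E)$ glues them into a global section.

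\textbf{Main obstacle.} The delicate point is the bookkeeping showing that the glued object maps back to the given triple. This requires reconciling the choices of $\iota_v$, of $\iota'_v$, and of $\beta$, and keeping the conventions for left versus right cosets consistent. The Beauville-Laszlo gluing with finitely many points at once also needs care: we would glue one point at a time, using affine neighbourhoods that avoid the other points.
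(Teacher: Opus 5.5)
Your proposal is correct and is essentially the paper's argument. The paper defines the functor in the opposite direction, from triples to $\Higgs(C)$, by the same Beauville--Laszlo gluing of $\E_c|_U$ with trivial bundles on the $\mathbb D_v$ via $g_v^{-1}\iota_v$, and its essential surjectivity step is your construction: choose $t_\eta:\E|_U\isomto\E_c|_U$ and trivialisations $t_v$, and set $g_v=\iota_v t_\eta t_v^{-1}$. The two proofs therefore coincide with the roles of ``construction'' and ``essential surjectivity'' swapped, and your full-faithfulness argument (spreading out, gluing over finitely many discs, and generic determination of sections) supplies details the paper leaves as easy.
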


\begin{proof}
    The proof is similar to how one expresses $\Bun_G(\F_q)$ as a double adelic quotient.
    Let us denote the groupoid of triples in the proposition as $\mathcal T$.

    We first construct a functor $f:\mathcal T \to \Higgs(C)$.
    Let $(c,\gamma,(g_v)_{\text{closed }v\in |X_C^{sch}|}) \in \mathcal T$.
    By \Cref{g extend}, we can pick a non-empty open subscheme $U$ of $X_C^{sch}$ such that $g_v$ is trivial for all $v\in U$ and $\gamma$ extends to an element of $\g_c(U)$, which we still denote as $\gamma$.
    By the Beauville-Laszlo lemma, we can glue $\E_c|_U$ with the trivial $G$-bundles $G|_{\mathbb D_v}$ for $v\in |X_C^{sch}|\setminus |U|$ via the isomorphism $$(\E_c|_U)|_{\mathbb D_v^*}\xrightarrow{g_v^{-1}\iota_v} (G|_{\mathbb D_v})|_{\mathbb D_v^*}$$ to get a $G$-bundle $\E$ on $X_C^{sch}$. This is well-defined because changing the representatives of $g_v\in G(F_v)/G(\O_v)$ gives another $G$-bundle on $X_C^{sch}$ with a canonical isomorphism to $\E$. (Alternatively, one can pick a representative for each object of $\mathcal T$.)
    By the Beauville-Laszlo lemma, the sections $\gamma\in \g_c(U)\cong \Ad(\E_c)(U)$ and $\Ad_{g_v^{-1}} (\iota_v(\gamma)) \in \g(\mathbb D_v)=\Ad(G|_{\mathbb D_v})(\mathbb D_v)$ glue to give a section $\phi\in H^0(X_C^{sch},\Ad(\E))$.
    Up to canonical isomorphism, $(\E,\phi)$ is independent of the choice of $U$. 
    This defines $f$ on objects.

    Let $h\in\Mor((c,\gamma,(g_v)),(c',\gamma',(g'_v)))$.
    Then $c=c'$. Fix representative for $g_v, g'_v$ in $G(\O_v)$, which we still denote as $g_v,g'_v$.
    By \Cref{g extend}, we can pick a non-empty open subscheme $W$ of $X_C^{sch}$ such that $g_v, g'_v$ are trivial for all $v\in W$ and $\gamma,\gamma'$ extend to elements of $\g_c(W)$ and $h$ extends to an element in $G_c(W).$
    By the Beauville-Laszlo lemma, the isomorphisms $h:(\E_c|_W,\gamma) \to (\E_c|_W,\gamma')$ and $g_v'^{-1}hg_v:(G|_{\mathbb D_v}, \Ad_{g_v^{-1}} (\gamma))\to (G|_{\mathbb D_v}, \Ad_{g_v^{'-1}} (\gamma'))$ glues to an isomorphism $(\E,\phi) \to (\E',\phi')$.
    This defines the functor $F$.

    It is easy to verify using the Beauville-Laszlo lemma that $F$ is fully faithful.
    It remains to show that $F$ is essentially surjective.
    Let $(\E,\phi)\in \Higgs(C)$.
    Pick an isomorphism $t_\eta:\E_\eta \isomto (\E_c)_{\eta}$ for some $c\in H^1(F,G)$.
    This extends to $t_\eta:\E|_U \isomto \E_c|_U$ for some non-empty open subscheme $U\subset X_C^{sch}$.
    For each $v\in |X_C^{sch}|\setminus |U|$, pick an isomorphism $t_v: \E|_{\mathbb D_v} \isomto G|_{\mathbb D_v}$.
    Then
    \begin{equation*}
        G|_{\mathbb D_v^*} \xrightarrow{t_v^{-1}} \E|_{\mathbb D_v^*} \xrightarrow{t_\eta} (\E_c)|_{\mathbb D_v^*} \xrightarrow{\iota_v} G|_{\mathbb D_v^*}
    \end{equation*}
    corresponds to an element $g_v\in G(F_v)$.
    For $v\in U$, set $g_v=1$.
    Via $t_\eta$ and \Cref{ad of E_c is Lie}, we get an isomorphism $\Ad(\E)|_U \isomto \Ad(\E_c)|_U \cong \g_c|_U$. 
    The image of $\phi|_U$ gives an element in $\g_c(U)$, which can be restricted to $\eta$ to give $\gamma\in \g_c(F)$.
    Then $F(c,\gamma,(g_v)) \cong (\E,\phi).$
\end{proof}

\begin{remark}
    In analogy with \cite[after Corollaire 1.3]{Ngo_2006}, this proposition roughly suggests that the cardinality of $\Higgs(C)$ as a groupoid is given by the orbital integrals
    \[\sum_{c\in H^1(F,G)} \sum_{\gamma\in \g_c(F)/\text{conj.}} \int_{G_\gamma(F)\backslash G(\mathbb A_F)} 1_{\prod_v \g(\O_v)} (\Ad_{g^{-1}} \gamma) dg   \]
    where
    \begin{itemize}
        \item $\mathbb A_F =\prod'_v(F_v,\O_v) \cong \prod'_v (C_v^\sharp((t)),C_v^\sharp\powerseries{t})$ is a restricted direct product and $C_v^\sharp$ is the untilt of $C$ corresponding to $v$
        \item $G_\gamma$ is the centraliser of $\gamma$ in $G$
        \item $1_{\prod_v \g(\O_v)}$ is the indicator function of $\prod_v \g(\O_v)$.
    \end{itemize}
    However, as stated, this equation does not make much sense, since no Haar measures on the groups have been specified; even after fixing such measures, both the cardinality of $\Higgs(C)$ and the sums involved should be infinite. We do not attempt here to make the connection with orbital integrals fully rigorous.
\end{remark}

\appendix
\section{\'Etale topology on sousperfectoid spaces}\label{sec:etale top}
In \Cref{rmk: map FF}, we showed that $\Higgs = \Map(FF,[\g_\L/G])$, where $[\g_\L/G]$ is viewed as an fppf stack on the category of schemes over $X_S^{sch}$. We also use various other fppf stacks in the main text.
In the appendix, we show that, we can instead use adic spaces. More precisely, we use stacks on the big \'etale site of sousperfectoid spaces.
The reason for this choice of site is that, we would like to consider $G$-bundles on the adic Fargues-Fontaine curve $X_S$.
We thus want to have a site that includes $X_S$ as an object. To get the correct notion of $G$-bundles on adic spaces, which are by definition \'etale locally trivial, we use the \'etale topology. We restrict to sousperfectoid spaces, rather than using all adic spaces, because the notion of $G$-bundle is particularly well-behaved on them \cite[Theorem 19.5.2]{Berkeley} and the fibre product of sousperfectoid spaces $X\times_Z Y$ is still a sousperfectoid space if $X\to Z$ is \'etale by \Cref{lem:prod}.

One advantage of using this formalism is that, we do not have to restrict to affinoid $S$ in defining v-stack whose $S$-valued points involve the Fargues-Fontaine curve.

Let $\sPerfd$ be the category of sousperfectoid spaces, which is introduced by Hansen-Kedlaya, cf. \cite[section 6.3]{Berkeley}, \cite[section 7]{HK-sousperfectoid}.
In this section, we will study some basic properties of the \'etale topology on $\sPerfd$. 

Recall that \'etale maps of (pre)adic spaces are defined in \cite[Definition 8.2.16]{KL_1}.

\begin{lemma}\label{lem:prod}
    Let $f: X\to Z, g: Y\to Z$ be two maps of sousperfectoid spaces with $f$ \'etale. Then the fibre product $X\times_Z Y$ exists in the category of adic spaces and is a sousperfectoid space. Moreover, we get a surjection of topological spaces
    \[|X\times_Z Y| \twoheadrightarrow |X|\times_{|Z|} |Y|.\]
\end{lemma}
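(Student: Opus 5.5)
We reduce to the local structure of étale maps of sousperfectoid spaces. By \cite[Definition 8.2.16]{KL_1}, an étale map $f:X\to Z$ is, locally on $X$ and $Z$, a composite of an open immersion and a finite étale map. Since existence of fibre products and the sousperfectoid property are both local, and open immersions are harmless, it is enough to treat two cases: $f$ an open immersion, and $f$ finite étale with $Z=\Spa(A,A^+)$ affinoid sousperfectoid. We may also take $Y=\Spa(B,B^+)$ affinoid sousperfectoid. Then we glue along the open cover of $X$, using the universal property together with the fact that the construction is compatible with restricting to opens.

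For an open immersion $X\subset Z$, the fibre product is the open subspace $g^{-1}(X)\subset Y$. Open subspaces of sousperfectoid spaces are sousperfectoid, and $|g^{-1}(X)|=|X|\times_{|Z|}|Y|$, so here the map is even a bijection.

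In the finite étale case, $X=\Spa(A',A'^+)$ with $A'$ finite étale over $A$ and $A'^+$ the integral closure of $A^+$. I would set $B':=B\otimes_A A'$. This is a finite étale $B$-algebra, and I give it the canonical topology of a finite $B$-module. By \cite[Corollary 4.7]{HK-sousperfectoid}, or the fact that finite étale algebras over sousperfectoid rings are sousperfectoid \cite[section 7]{HK-sousperfectoid}, $B'$ is a sousperfectoid (hence sheafy) Huber ring. Let $B'^+$ be the integral closure of the image of $B^+\otimes A'^+$. The fibre product is then $\Spa(B',B'^+)$. The universal property is checked on $(R,R^+)$-points exactly as in the proof of \Cref{lem: open}: a map to $\Spa(B',B'^+)$ amounts to compatible maps from $(A',A'^+)$ and $(B,B^+)$ over $(A,A^+)$. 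The one point to verify is that the integrality condition on $B'^+$ is automatic, which holds because $R^+$ is integrally closed.

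For surjectivity, take $(x,y)$ with $x\in X$, $y\in Y$ and common image $z\in Z$. Base-changing to the completed residue fields, this reduces to the affinoid field over $k(z)$. It suffices to find a point of $\Spa(k(y)\otimes_{k(z)}k(x))$ over both $x$ and $y$; this is where I expect the real work. I would argue via valuations: $k(x)\otimes_{k(z)}k(y)$ is a nonzero finite étale $k(y)$-algebra, so it has a factor $L$, a finite separable extension of $k(y)$ with a $k(x)$-embedding. The valuation of $y$ (including its $+$-ring) extends to $L$. By uniqueness of extension of rank-one valuations on complete fields, the restriction to $k(x)$ is the valuation of $x$. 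Because the valuation ring $k(x)^+$ is the integral closure, its compatibility with $k(z)^+$ follows by the standard extension of valuation rings in finite extensions; the Chevalley-type extension step may need care to pick the extension restricting exactly to $k(x)^+$. This gives the required point, and hence the surjection $|X\times_Z Y|\twoheadrightarrow |X|\times_{|Z|}|Y|$.
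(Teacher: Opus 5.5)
Your reduction to open immersions and the finite étale affinoid case, and your construction of the fibre product as $\Spa(B',B'^+)$ with $B'=B\otimes_A A'$, are essentially what the paper does; the paper cites \cite[Proposition 6.3.3]{Berkeley} for the sousperfectoid property. The gap is in the surjectivity step, for two reasons.

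\textbf{The claim about $k(x)^+$ is false.} It is not in general the integral closure of $k(z)^+$ in $k(x)$. When $z$ has higher rank, $k(z)^+$ can have several extensions to the finite separable extension $k(x)$. These correspond to extensions of the valuation $k(z)^+/\mathfrak m$ on the residue field of $\O_{k(z)}$, and that valuation need not be henselian. The integral closure is only the intersection of all these extensions.

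\textbf{Choosing an arbitrary factor $L$ can fail.} Picking any factor $L$ of $k(x)\otimes_{k(z)}k(y)$ and then an extension of $k(y)^+$ cannot be repaired by choosing the extension carefully. Take $Y=X$ and $g=f$, with $k(x)/k(z)$ quadratic Galois with nontrivial automorphism $\sigma$. Suppose $k(z)^+$ has two extensions $V_1\neq V_2=\sigma(V_1)$, and let $x,y$ be the two points of $X$ over $z$ with $+$-rings $V_1$ and $V_2$. Then:
\begin{itemize}
\item $k(x)\otimes_{k(z)}k(y)\cong k(x)\times k(x)$, and each factor has degree one over $k(y)$, so $V_2$ extends uniquely.
\item On the diagonal factor $a\otimes b\mapsto ab$, this extension restricts to $V_2\neq V_1$ on $k(x)$.
\item Only the twisted factor $a\otimes b\mapsto\sigma(a)b$ gives a point over $(x,y)$.
\end{itemize}
So the factor, i.e.\ the embedding of $k(x)$, has to be chosen in terms of both $k(x)^+$ and $k(y)^+$.

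\textbf{The missing ingredient} is the conjugacy of extensions of a valuation to a normal algebraic extension.
\begin{enumerate}
\item Let $\overline W$ be a valuation ring of $\overline{k(y)}$ extending $k(y)^+$, and let $\Omega\subset\overline{k(y)}$ be the algebraic closure of $k(z)$.
\item Fix any $k(z)$-embedding $\sigma_0\colon k(x)\to\Omega$. By Chevalley's theorem, choose a valuation ring $U$ of $\Omega$ with $U\cap\sigma_0(k(x))=\sigma_0(k(x)^+)$.
\item Both $U$ and $\overline W\cap\Omega$ extend $k(z)^+$, so $\tau(U)=\overline W\cap\Omega$ for some $\tau\in\Aut(\Omega/k(z))$.
\item Put $\sigma=\tau\circ\sigma_0$. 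The compositum $L=\sigma(k(x))\,k(y)$ is the image of $a\otimes b\mapsto\sigma(a)b$, hence a factor of the tensor product.
\item The ring $W=\overline W\cap L$ satisfies $W\cap k(y)=k(y)^+$ and $W\cap\sigma(k(x))=\sigma(k(x)^+)$.
\end{enumerate}
Continuity is automatic: the coarsening of $W$ lying over $\O_{k(y)}$ must be $\O_L$, since $k(y)$ is complete. Hence $W\subseteq\O_L$, and $W$ defines a point of $\Spa(L,L^+)$, hence of the fibre product, over $x$ and $y$.

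With this step supplied, your argument goes through. The paper avoids it by citing \cite[Lemma 9.1.3]{KL_1}, or by reducing to affinoids and citing \cite{valuations}.
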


\begin{proof}
    By the usual argument, this reduces to the case where $X, Y, Z$ are affinoid sousperfectoid and $f=f_1\circ f_2\circ f_3$, where $f_1,f_3$ are open immersions and $f_2$ is finite \'etale. Then the claim that $X\times_Z Y$ exists and is sousperfectoid follows from \cite[Proposition 6.3.3]{Berkeley}.

    For the last claim, one can use \cite[Lemma 9.1.3]{KL_1}. Alternatively, one can easily reduce the claim to the case where $X,Y,Z$ are affinoids and then apply \cite{valuations}.
\end{proof}

By this lemma and \cite[Lemma 8.2.17(c)]{KL_1}, we get the \'etale site on $\sPerfd$:
\begin{definition}We denote by
    \begin{itemize}
        \item $\sPerfd_{\et}$ the  big \'etale site, whose objects are sousperfectoid spaces and coverings are \'etale covers;
        \item $\sPerfd_{X,\et}$ the  big \'etale site over a sousperfectoid space $X$, whose objects are sousperfectoid spaces over $X$ and coverings are \'etale covers;
        \item $X_\et$ the small \'etale site over a sousperfectoid space $X$, whose objects are sousperfectoid spaces \'etale over $X$ and coverings are \'etale covers.
    \end{itemize}
    By an \'etale sheaf on $X$, we mean a sheaf on $X_\et$.
\end{definition}

We shall establish some basic properties of these sites, following the strategy of \cite[sections 8, 9]{EC_diamonds}.

\begin{lemma}\label{lem:sheaf}
    The presheaves $\O,\O^+:\sPerfd_\et^{op}\to \mathrm{Set}$ which send $X$ to $\O_X(X), \O_X^+(X)$ are sheaves.
\end{lemma}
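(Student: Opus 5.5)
The plan is to reduce the statement to the analytic case, where sheafiness of $\O$ and $\O^+$ for the \'etale topology is already known. Since a presheaf is a sheaf on $\sPerfd_\et$ precisely when it satisfies the sheaf condition for every \'etale cover, it suffices to fix a sousperfectoid space $X$ and an \'etale cover $\set{U_i\to X}$, and to show that
\[\O(X)\to \prod_i \O(U_i)\rightrightarrows \prod_{i,j}\O(U_i\times_X U_j)\]
is an equaliser, and likewise for $\O^+$. By \Cref{lem:prod}, the fibre products $U_i\times_X U_j$ exist and are sousperfectoid, so this diagram makes sense inside $\sPerfd$.

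The first step is to replace the big-site question by a small-site one. All the $U_i$ and $U_i\times_X U_j$ are \'etale over $X$, so the sheaf condition for this cover only involves objects of $X_\et$. What is needed is therefore that the presheaf $U\mapsto \O_U(U)$ on $X_\et$ is a sheaf, and that $U\mapsto\O_U^+(U)$ is a sheaf as well. Because $\O$ and $\O^+$ are already sheaves for the analytic topology on each adic space, a standard reduction lets us assume $X$ is affinoid. We may further assume the cover is a finite cover by affinoids, and, following the factorisation used in the proof of \Cref{lem:prod}, that each map is a composite of rational open immersions and finite \'etale maps.

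The key input is the \'etale sheaf property of $\O$ on sousperfectoid (in particular stably uniform, sheafy) spaces. For this I would cite \cite[Theorem 8.2.22]{KL_1} together with Hansen--Kedlaya \cite[section 7]{HK-sousperfectoid}, or alternatively \cite[Proposition 6.3.3]{Berkeley}. Concretely, for the finite \'etale part I would argue as follows. If $A\to B$ is finite \'etale, then $B$ is finite projective and faithfully flat over $A$ whenever the map is surjective, so Amitsur descent gives exactness of $A\to B\rightrightarrows B\otimes_A B$. Here $B\otimes_A B$ is already complete and equals the completed tensor product, since it is a finite $A$-module. The rational-localisation part is Tate acyclicity for sheafy rings. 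Combining these two along a refinement yields the equaliser for $\O$.

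Finally, I would deduce the statement for $\O^+$ from the one for $\O$, using that $\O^+(U)=\set{f\in\O(U): |f(x)|\le 1 \text{ for all } x\in U}$. Given a compatible family $f_i\in\O^+(U_i)$, it glues to some $f\in\O(X)$, and $|f(x)|\le 1$ can be checked at each $x$ by lifting $x$ to some $U_i$, which is possible because $\coprod|U_i|\to|X|$ is surjective. I expect the main obstacle to be the reduction of a general \'etale cover to the standard form consisting of rational covers and finite \'etale maps, together with the claim that completed tensor products behave correctly. I would handle this by citing the Kedlaya--Liu framework rather than reproving it.
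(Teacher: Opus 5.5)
Your proposal is correct and follows essentially the same route as the paper. Both use analytic sheafiness to reduce to an \'etale cover of an affinoid sousperfectoid space and then invoke \cite[Theorem 8.2.22]{KL_1}; the paper cites part (c), and your Amitsur-descent and Tate-acyclicity sketch is the content of that citation. Your extra derivation of the $\O^+$ case from the $\O$ case, via $\O^+(U)=\set{f\in\O(U): |f(x)|\le 1 \ \forall x\in U}$ and surjectivity of $\coprod_i|U_i|\to|X|$, is valid and makes explicit a point the paper leaves to the reference (though \cite[Proposition 6.3.3]{Berkeley} concerns stable uniformity and stability of sousperfectoidness under rational localisation and finite \'etale maps, not the \'etale sheaf property itself).
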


\begin{proof}
    They are sheaves for the analytic topology, so it is enough to check the sheaf axioms for an \'etale cover of the form $\set{Y\to X}$ with $X$ affinoid sousperfectoid. This follows from \cite[Theorem 8.2.22(c)]{KL_1}.
\end{proof}

\begin{proposition}\label{subcanonical}
    The site $\sPerfd_\et$ is subcanonical, i.e. the presheaf represented by any $X\in\sPerfd$ is a sheaf.
\end{proposition}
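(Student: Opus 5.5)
To show that the presheaf $h_X:=\Hom(-,X)$ on $\sPerfd_\et$ is a sheaf, I take an \'etale cover $\set{Y_i\to Y}$ of sousperfectoid spaces and a family of morphisms $f_i:Y_i\to X$ that agree on the overlaps $Y_i\times_Y Y_j$; these fibre products exist by \Cref{lem:prod}. The task is to show that the $f_i$ glue uniquely to a morphism $f:Y\to X$.

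I first reduce to a simpler cover. Morphisms of adic spaces glue for the analytic topology, so $h_X$ is already a sheaf for open covers. Every \'etale map is locally a composite of open immersions and finite \'etale maps. It therefore suffices to treat a single cover $Y'\to Y$ with $Y=\Spa(A,A^+)$ affinoid sousperfectoid and $Y'=\Spa(A',A'^+)$ affinoid, \'etale and surjective over $Y$.

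The argument then has three steps.

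\textbf{Step 1: the underlying continuous map.} I show that $|Y'\times_Y Y'|\rightrightarrows|Y'|\to|Y|$ is a coequaliser of topological spaces. By \Cref{lem:prod}, $|Y'\times_Y Y'|\to|Y'|\times_{|Y|}|Y'|$ is surjective, so the map is a coequaliser of sets. It is a quotient map because \'etale maps are open (\cite[Lemma 8.2.17]{KL_1}) and surjective. This yields a unique continuous map $|f|:|Y|\to|X|$ through which every $|f_i|$ factors.

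\textbf{Step 2: the map of structure sheaves.} For each open $V\subset X$, set $U=|f|^{-1}(V)$. The maps $f_i^\sharp$ give compatible elements of $\O(U\times_Y Y')$, and \Cref{lem:sheaf} (the \'etale sheaf property of $\O$ and $\O^+$) descends them to $\O_Y(U)\to\O_X(V)$, landing in $\O^+$ on $\O^+$. The uniqueness part of the sheaf property makes the construction compatible with restriction. The result is a morphism of locally topologically ringed spaces with valuations, and the compatibility of valuations at points can be checked after pullback using the surjectivity in Step 1. Uniqueness of $f$ follows because both $|Y'|\to|Y|$ and $\O_Y\to\O_{Y'}$ are injective in the appropriate sense, again by \Cref{lem:sheaf}.

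\textbf{Step 3: the affinoid target case.} Alternatively, when $X=\Spa(B,B^+)$ is affinoid, $\Hom(Y,X)=\Hom((B,B^+),(\O(Y),\O^+(Y)))$ as continuous maps. Descent then follows directly from \Cref{lem:sheaf}, once one checks that the descended map is continuous, i.e. that $\O(Y)\to\O(Y')$ is a topological embedding. That embedding follows from \cite[Theorem 8.2.22]{KL_1}. The general $X$ reduces to this case by covering $X$ with affinoids and using Step 1 to cover $Y$ by the preimages.

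I expect the main obstacle to be the topological continuity in Step 3: verifying that $A\to A'$ is a strict embedding, and that continuity of $f^\sharp$ descends. I would deduce this from the exactness of the \'etale \v{C}ech complex with its topology for sousperfectoid spaces, as in \cite[Theorem 8.2.22]{KL_1}.
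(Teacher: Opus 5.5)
Your proposal follows essentially the paper's route. The paper also gets $|f|$ from the facts that an \'etale surjection is open, hence a quotient map, and that $|Z\times_Y Z|\to|Z|\times_{|Y|}|Z|$ is surjective (\Cref{lem:prod}); it then reduces to affinoid $X$ and concludes from the \'etale sheaf property of $\O$ and $\O^+$ (\Cref{lem:sheaf}), which is exactly your Step 1 followed by Step 3, so Step 2 is not needed. Your remark that the descended ring map must be checked to be continuous, i.e.\ that $\O(Y)\to\O(Y')$ is a topological embedding, fills in a detail the paper leaves implicit. This also follows from the open mapping theorem, since the image is a closed $A$-submodule of $A'$.
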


\begin{proof}
    Let $X\in \sPerfd$ and $\mathcal F$ be the presheaf represented by $X$. Let $q:Z\twoheadrightarrow Y$ be an \'etale cover in $\sPerfd$.
    We want to show that 
    \[\mathcal F(Y)\iso eq(\mathcal F(Z) \rightrightarrows \mathcal F(Z\times_Y Z)).\]

    By \cite[Lemma 8.2.17(b)]{KL_1}, every \'etale morphism is open.\footnote{The lemma there directly implies that every \'etale morphism whose domain is quasi-compact is an open mapping. We can deduce the same result for a general \'etale morphism by writing the domain as a union of quasi-compact open subsets.}
    Being surjective and open, the map $|q|:|Z|\to |Y|$ on the underlying topological spaces is a quotient map.

    Let $f\in eq(\mathcal F(Z) \rightrightarrows \mathcal F(Z\times_Y Z))$.
    By surjectivity of $|Z\times_Y Z| \to |Z|\times_{|Y|} |Z|$, the two composite maps $|Z|\times_{|Y|} |Z|\rightrightarrows |Z| \xrightarrow{|f|} |X|$ agree.
    As $|q|$ is a quotient map, $|f|$ factors as 
\[\begin{tikzcd}
	{|Z|} && {|X|} \\
	& {|Y|}
	\arrow["{|f|}", from=1-1, to=1-3]
	\arrow["{|q|}"', from=1-1, to=2-2]
	\arrow[from=2-2, to=1-3]
\end{tikzcd}.\]
The problem can then be considered locally on $X$, so we can assume $X$ is affinoid. The corollary now follows from the fact that $\O$, $\O^+$ are \'etale sheaves by \Cref{lem:sheaf}.
\end{proof}

\begin{remark}\label{rmk: bundle}
    In \cite[Appendix to Lecture XIX]{Berkeley}, a cohomological $G$-torsor $\E$ on a sousperfectoid space $X$ is viewed as a sheaf on the small \'etale site of $X$. By \cite[Theorem 19.5.2]{Berkeley}, $\E$ is the \'etale sheaf of sections of a geometric $G$-torsor $\mathcal P$, which is sousperfectoid. By \Cref{subcanonical}, $\Hom_X(\;,\mathcal P)$ is a sheaf on the big \'etale site of $X$, whose restriction to $X_\et$ is $\E$. It follows that we can equivalently regard a cohomological $G$-torsor as a sheaf on the big \'etale site.
\end{remark} 

We have \'etale descent of vector bundles on sousperfectoid spaces:
\begin{lemma}\label{vb-stack}
    The prestack sending $X\in\sPerfd$ to the groupoid of vector bundles on $X$ is an \'etale stack. 
\end{lemma}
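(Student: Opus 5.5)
The plan is to reduce to the two facts the paper already has at hand: the structure sheaf is an étale sheaf (\Cref{lem:sheaf}), and for affinoid sousperfectoid spaces vector bundles are the same as finite projective modules over the global sections. Full faithfulness is the easy half. Given vector bundles $\V,\V'$ on $X$, the presheaf $\underline{\Hom}(\V,\V')$ on $\sPerfd_{X,\et}$ is, locally on $X$ where both are free, a finite product of copies of $\O$. So it is an étale sheaf by \Cref{lem:sheaf}, using that the sheaf property for a morphism presheaf can be checked after refining by an analytic cover trivialising $\V,\V'$. This gives that morphisms of vector bundles descend uniquely along étale covers.

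Effectivity of descent data is the main content. First I reduce to a single étale cover $Y\to X$ with $X=\Spa(A,A^+)$ affinoid sousperfectoid, and then further to $Y$ a finite disjoint union of affinoids. This is legitimate because vector bundles already form a stack for the analytic topology, since they are defined by gluing. By the factorisation used in \Cref{lem:prod}, an étale map of affinoids is locally a composite of rational open immersions and finite étale maps. Descent along rational open covers is again analytic gluing, so the remaining case is a finite étale cover $Y=\Spa(B,B^+)\to X$, with $B$ finite étale over $A$.

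For finite étale $A\to B$, a descent datum is a finite projective $B$-module $M$ together with an isomorphism $M\otimes_{A}B\cong B\otimes_A M$ satisfying the cocycle condition. Here I use that $Y\times_X Y=\Spa(B\otimes_A B,\dots)$, which follows from \Cref{lem:prod} and the proof of \cite[Proposition 6.3.3]{Berkeley}. Since $A\to B$ is finite étale and surjective on spectra, it is faithfully flat, so ordinary faithfully flat descent of modules produces a finite projective $A$-module $N$ with $N\otimes_AB\cong M$ compatibly. I then take the vector bundle on $X$ attached to $N$.

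It remains to check that this vector bundle restricts to the given bundle on $Y$. This uses the equivalence between finite projective modules over a sheafy (sousperfectoid) affinoid and vector bundles on its adic spectrum \cite[Theorem 1.3.4]{Kedlaya-AWS}, together with its compatibility with pullback.

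The main obstacle I expect is the surjectivity needed for faithful flatness: $\Spec B\to\Spec A$ must be surjective, not just $|Y|\to|X|$. I would try to deduce it from the fact that the image of the finite étale map $\Spec B\to \Spec A$ is open and closed, while $|Y|\to|X|$ is surjective. Every maximal ideal of the analytic ring $A$ is the support of some point of $X$, so the image contains all maximal ideals, and being closed it is all of $\Spec A$. If that step resists, the fallback is to cite \cite[Theorem 8.2.22]{KL_1}, which already gives étale descent of finite projective modules over sheafy affinoids in this generality. In either version, the remaining bookkeeping is checking the cocycle compatibility under the analytic refinements, which is routine.
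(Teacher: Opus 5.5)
Your fallback is the paper's proof. The paper notes only that vector bundles already form an analytic stack, reduces to \'etale covers of affinoid sousperfectoid spaces, and invokes \cite[Theorem 8.2.22]{KL_1}: for $X=\Spa(A,A^+)$, vector bundles on $X$, finite projective $A$-modules and finite locally free $\O_{X_\et}$-modules are equivalent. This gives full faithfulness and effectivity at once. Your primary route tries to reprove that theorem by hand. The Hom-sheaf argument via \Cref{lem:sheaf} is fine, and so is faithfully flat descent along a surjective finite \'etale $A\to B$. The reduction connecting the two is where the gap lies.

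The gap is the step ``an \'etale map is locally a composite of rational open immersions and finite \'etale maps, so the remaining case is a finite \'etale cover''. In the local factorisation $U\hookrightarrow V\to W\hookrightarrow X$ of \Cref{lem:prod}, the open immersion sits on the \emph{source} side of the finite \'etale map. Moreover, $V\to X$ need not factor through the cover, nor be surjective. For example, $U$ can be a proper open subset of a connected finite \'etale double cover $V$ of an annulus that still surjects onto $X$. A descent datum for $U\to X$ then says nothing over $V\setminus U$, so it cannot be fed into descent along $V\to X$, and analytic gluing on $X$ does not bridge this.

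What you actually need is a refinement lemma: every \'etale covering of an affinoid can be refined by a composite of rational coverings and \emph{surjective} finite \'etale maps. Concretely, near each $x\in X$ there should be a surjective finite \'etale $V\to W$ over a rational $W\ni x$ with $V\to X$ factoring through the cover. Proving this needs the local structure of \'etale maps at a point, not just the factorisation. For instance, one can use a Galois finite \'etale neighbourhood with a single point over $x$ that maps into the cover near that point, then shrink $W$ using that finite maps are closed. You also need the standard argument that effective descent for such a refinement and its fibre products gives it for the original cover. This is the substantive input packaged in \cite[Theorem 8.2.22]{KL_1}, not routine bookkeeping. With that citation your proof goes through, and it then coincides with the paper's.

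There is also a small slip in the step you flagged as the main obstacle. A closed subset of $\Spec A$ containing all maximal ideals need not be all of $\Spec A$. What finishes the argument is openness of the image: its complement is then a closed set with no closed points, hence empty. More directly, a flat ring map whose image contains every maximal ideal is faithfully flat. Your observation that every maximal ideal of the complete Tate ring $A$ is the support of a point of $X$ is correct, and it is what makes this step work.
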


\begin{proof}
    This is a stack for the analytic topology, so it is enough to prove the axiom of stack for \'etale covering between affinoid sousperfectoid spaces.
    The lemma thus follows from the fact that for $X=\Spa(A,A^+)$ affinoid sousperfectoid, we have $\set{\text{finite locally free $\O_{X}$-modules}} \simeq \set{\text{finite projective }A\text{-modules}} \simeq \set{\text{finite locally free $\O_{X_{\et}}$-modules}}$ by \cite[Theorem 8.22]{KL_1}.
\end{proof}

\begin{lemma}
    Let $X\in\sPerfd$. We have equivalences of symmetric monoidal categories
    \[\{\text{vector bundles on }\sPerfd_{X,\et}\} \iso \{\text{vector bundles on }X_{\et}\} \iso \{\text{vector bundles on }X\}.\]
    By a vector bundle on $\sPerfd_{X,\et}$, we mean a sheaf $\mathcal F$ of $\O_{\sPerfd_{X,\et}}$-module on $\sPerfd_{X,\et}$ which is locally finite free, i.e. there is an \'etale cover $\{Y_i\to X\}$ of $X$ by sousperfectoid $Y_i$ such that $\mathcal F|_{\sPerfd_{,Y_i\et}} \cong \O_{\sPerfd_{,Y_i\et}}^{n_i}$ for some $n_i\in \Z_{\ge 0}$.
\end{lemma}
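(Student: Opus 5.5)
The plan is to prove the two equivalences in turn. In each case I will construct a natural functor and show it is fully faithful and essentially surjective by reducing to affinoid sousperfectoid spaces, where everything is controlled by finite projective modules.

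For the second equivalence, $\{\text{vector bundles on }X_\et\}\iso\{\text{vector bundles on }X\}$, I take the restriction functor. It regards an \'etale vector bundle $\mathcal F$ as a sheaf on the analytic site, that is, on the open subsets of $X$, which are objects of $X_\et$. In the other direction, a vector bundle $\V$ on $X$ goes to the presheaf $(Y\to X)\mapsto (f^*\V)(Y)$ on $X_\et$. This presheaf is a sheaf: locally on $X$ it is a direct summand of $\O^n$, and $\O$ is an \'etale sheaf by \Cref{lem:sheaf}. The two constructions are compatible with tensor products, so the only point is to check they are mutually inverse. Both sides are analytic stacks, the right-hand side by definition and the left-hand side by \Cref{vb-stack}. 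So it suffices to treat $X=\Spa(A,A^+)$ affinoid sousperfectoid. There both categories are identified with finite projective $A$-modules by the Kedlaya--Liu result \cite[Theorem 8.2.22]{KL_1} already used in the proof of \Cref{vb-stack}, and these identifications are compatible with the two functors.

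For the first equivalence I use restriction from the big site $\sPerfd_{X,\et}$ to the small site $X_\et$. This makes sense because $X_\et$ is a full subcategory of $\sPerfd_{X,\et}$ with the induced topology. For the inverse, given an \'etale vector bundle $\mathcal F$ on $X$, I define on $\sPerfd_{X,\et}$
\[\tilde{\mathcal F}(g:Y\to X):=\Gamma\bigl(Y,\,g^*\mathcal F\bigr),\]
where $g^*\mathcal F$ is the vector bundle on $Y$ obtained by pulling back along the previous paragraph's equivalence. This is functorial and symmetric monoidal. It is a sheaf on the big site by \Cref{vb-stack}, since sections of a vector bundle are morphisms from $\O$. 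It is locally finite free because $\mathcal F$ is trivialised on a cover $Y_i\to X$ in $X_\et$, and this cover is also a cover in the big site. It is immediate that restricting $\tilde{\mathcal F}$ to $X_\et$ gives back $\mathcal F$.

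The main obstacle is the converse composite: for a vector bundle $\mathcal G$ on the big site, the natural map $\mathcal G\to\widetilde{\mathcal G|_{X_\et}}$ must be an isomorphism. Both sides are sheaves, so I may check this after passing to a cover on which $\mathcal G$ is trivial, where both sides are $\O^n$. The remaining work is to verify that the comparison map respects these trivialisations. Here I would use that a trivialisation over a cover $\{Y_i\to X\}$ in $X_\et$ induces, for any $Y\to X$ in the big site, a trivialisation over the base-changed cover $\{Y\times_X Y_i\to Y\}$. That base change is a cover in $\sPerfd_{Y,\et}$ by \Cref{lem:prod}, with the surjectivity on topological spaces ensuring it is a covering. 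Hence both sides are computed by the same descent data, namely transition matrices in $\GL_n(\O(Y\times_X Y_{ij}))$, and they agree. The same reasoning on morphisms gives full faithfulness, since morphisms are global sections of $\underline{\Hom}$, which is again a vector bundle.
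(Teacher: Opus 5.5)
Your proposal is correct and follows essentially the paper's route: two restriction functors plus an extension functor $Y\mapsto\Gamma(Y,g^*\V)$, which is the paper's $\Hom_X(-,\Tot(\V))$ written via pullback (with sheafiness coming from \Cref{lem:sheaf} and \Cref{vb-stack} rather than \Cref{subcanonical}); you also supply the verifications the paper only calls ``not hard'', namely the reduction to affinoids via Kedlaya--Liu and the observation that a big-site bundle is trivialised on a cover lying in $X_\et$, so its descent data live on the small site. Two cosmetic points: gluing of \'etale vector bundles along open covers is general descent for sheaves on $X_\et$ rather than \Cref{vb-stack}, and the canonical comparison map is the counit $\widetilde{\mathcal G|_{X_\et}}\to\mathcal G$ (the direction you wrote is only available once invertibility is known), though your local argument applies to it verbatim.
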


\begin{proof}
    Given a vector bundle on $\sPerfd_{X,\et}$, we can restrict it to get a vector bundle on $X_\et$. This, together with the obvious maps on morphisms, define a functor which we call $a$.

    Given a vector bundle on $X_\et$, we can restrict it to get a sheaf on $X$, which is a vector bundle by \Cref{vb-stack}.  We can also restrict morphisms. We call this functor $b$.

    Given a vector bundle $\mathcal V$ on $X$, we can form a sousperfectoid geometric vector bundle $\Tot(\mathcal V)\to X$ such that $\Hom_X(\;,\Tot(\mathcal V))=\mathcal V$ as sheaves on $X$. Then $\Hom_X(\;,\Tot(\mathcal V))$, regarded as a functor on $\sPerfd_{X,\et}^{op}$, is a sheaf on $\sPerfd_{X,\et}$ as $\sPerfd_{X,\et}$ is subcanonical by \Cref{subcanonical}. To endow it with an $\O_{\sPerfd_{X,\et}}$-module structure, we can work locally and assume $\mathcal V\cong \O_X^n$, so $\Tot(\mathcal V)=\mathbb A^n_X$ and $\Hom_X(\;,\Tot(\mathcal V))=\O_{\sPerfd_{X,\et}}^n$, which has a canonical $\O_{\sPerfd_{X,\et}}$-module structure. It is easy to see this is a vector bundle on $\sPerfd_{X,\et}$. This assignment, together with the obvious maps on morphisms, define a functor which we call $c$.

    It is not hard to show that $c\circ b\circ a, a\circ c\circ b, b\circ a\circ c$ are all equivalent to the identities.
    To enhance $a$ (and similarly for $b$) to a symmetric monoidal functor, we can use the universal property of sheafification to define for vector bundles $\E_1,\E_2$ on $\sPerfd_{X,\et}$ a map $a(\E_1)\otimes a(\E_2) \to a(\E_1\otimes E_2)$, which can be easily shown to be symmetric monoidal by working locally. 
    As $c$ is a quasi-inverse of $b\circ a$, it can also be enhanced to a symmetric monoidal functor.
\end{proof}

Let us state the following easy lemma, which relates two ways to twist a sheaf by a line bundle.
\begin{lemma}\label{twist_of_line_bundle}
    Let $X$ be a sousperfectoid space and $\mathcal L$ be a line bundle. Let $\tilde{\mathcal L}=\underline{Isom}_{\O_X}(\O_X,\L)$ be the associated $\mathbb G_m$-bundle, where $\mathbb G_m$ acts via $f*a:=af$. Suppose $\mathcal F$ is an $\O_X$-module. Then we have a canonical isomorphism of $\O_X$-modules
    \[\tilde\L\times^{\mathbb G_m}\mathcal F  \iso \L \otimes_{\O_X}\mathcal F\]
    where the $\O_X$-module structure on $\tilde\L \times^{\mathbb G_m} \mathcal F$ is induced\footnote{The addition on $\tilde\L \times^{\mathbb G_m} \mathcal F$ is obtained by sheafifying $(a,x)+(a,y)=(a,x+y)$.} from $\mathcal F$.
\end{lemma}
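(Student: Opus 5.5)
I will build the isomorphism locally and glue, using that both sides are sheaves of $\O_X$-modules on $X_\et$ (or on the big \'etale site). The plan is to define a morphism of presheaves before sheafification and then check that it becomes an isomorphism locally.

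First, define the pre-sheafified map. For $U$ \'etale over $X$, send a pair $(f,x)\in \tilde\L(U)\times \mathcal F(U)$, with $f:\O_U\isomto \L|_U$, to $f(1)\otimes x\in (\L\otimes_{\O_X}\mathcal F)(U)$. I will check that this is invariant under the $\mathbb G_m(U)$-action. With the convention $g\cdot(f,x)=(f* g^{-1}, gx)$ and $f*a=af$, we get $(f*g^{-1})(1)\otimes gx = g^{-1}f(1)\otimes gx = f(1)\otimes x$. The map is also additive for the addition described in the footnote, since $(f,x)+(f,y)\mapsto f(1)\otimes(x+y)$. It is $\O_X$-linear because the module structure on the left comes from $\mathcal F$. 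By the universal property of sheafification, this yields a morphism of $\O_X$-module sheaves $\tilde\L\times^{\mathbb G_m}\mathcal F\to \L\otimes_{\O_X}\mathcal F$. It is canonical because no choices were made.

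Second, check that this is an isomorphism locally. Both sides are sheaves, so it suffices to work on a cover $\{U_i\}$ trivialising $\L$. Choose $f_i:\O_{U_i}\isomto\L|_{U_i}$. Then $\tilde\L|_{U_i}\cong\mathbb G_m$ via $f_i$, so $\tilde\L\times^{\mathbb G_m}\mathcal F|_{U_i}\cong \mathcal F|_{U_i}$ through $x\mapsto [(f_i,x)]$. Every class can be rewritten uniquely in this form, using freeness of the torsor action as in the proof of \Cref{alt_defn_for_quotient_stack}. On the other side, $\L\otimes\mathcal F|_{U_i}\cong\mathcal F|_{U_i}$ via $x\mapsto f_i(1)\otimes x$. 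Under these identifications, our map is the identity of $\mathcal F|_{U_i}$, so it is an isomorphism on each $U_i$ and hence globally.

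The main obstacle is bookkeeping rather than substance. I need to ensure that the sign and inverse conventions for the contracted product, namely the right action on the torsor and the left action on $\mathcal F$, match the stated action $f*a=af$, so that the formula really is invariant. I also need to check that the local identification of $\tilde\L\times^{\mathbb G_m}\mathcal F$ with $\mathcal F$ is compatible with the $\O_X$-module structure after sheafification. I will handle both by the explicit computation above, and the rest is formal.
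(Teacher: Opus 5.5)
Your proposal is correct and follows the same route as the paper's proof. Both define the map by sheafifying $(h,v)\mapsto h(1)\otimes v$ and then check it is an isomorphism locally where $\L$ is trivial, so that both sides reduce to $\mathcal F$. Your explicit check that the formula is invariant under $g\cdot(f,x)=(f*g^{-1},gx)$ supplies a detail the paper leaves implicit.
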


\begin{proof}
    Define the map as the sheafification of $(h,v)\mapsto h(1)\otimes v$.
    To check that this is an isomorphism, we can work locally on $X$, so we may without loss of generality assume $\L=\O_X$.
    Then $\tilde\L \times^{\mathbb G_m} \mathcal F =\mathcal F\iso \mathcal F=\L \otimes_{\O_X}\mathcal F.$
\end{proof}

As an application, we show that we can replace some uses of fppf stacks by \'etale stacks on sousperfectoid spaces. There are some clash of notations with the main text. We hope this is not too confusing.

We just give one example.
We can view $G$ as an \'etale sheaf on $\sPerfd_{X_S}$, whose $\Spa(R,R^+)$-valued points are $G(R)$.
To see that this is indeed an \'etale sheaf, note that $G$ is smooth over $\Spec E$, so its analytification\footnote{See \cite[Section 6]{foundational_adic_geometry} for the notion of analytification, which is a process that turns schemes into adic spaces.} over $\Spa E$ exists and is a sousperfectoid adic space. Thus, the presheaf it represents is an \'etale sheaf on $\sPerfd_{\Spa E}$ (and hence also on $\sPerfd_{X_S}$) by \Cref{subcanonical}.

Similarly, we can view $\g$ as an \'etale sheaf on $\sPerfd_{X_S}$, whose $\Spa(R,R^+)$-valued points are $\g(R)$.
We can then form $\g_\L:= \g\otimes_{\O_{X_S}}\L$, which is a vector bundle on $X_S$.
We can define the \'etale stack $[\g_\L/G]$ on $\sPerfd_{X_S}$.
Then $\Higgs = \Map(FF,[\g_\L/G])$, where $\Map(FF,[\mathfrak g_\L/G])$ denotes the prestack on $\Perf_{S}$ sending $T$ to the groupoid $[\mathfrak g_\L/G](X_T)$. Note that by 2-Yoneda lemma, this is also the prestack sending $T$ to the groupoid of morphisms $X_T\to[\g_\L/G]$ of stacks on $\sPerfd_{X_S}$.

\bibliographystyle{amsalpha}
\bibliography{ref}

\end{document}